\title{The Graham--Knuth--Patashnik recurrence: \\
       Symmetries and continued fractions}
\author{Jes\'us Salas\\
\small Departamento de Matem\'aticas\\[-0.8ex]
\small Universidad Carlos III de Madrid \\[-0.8ex]
\small Avda.\  de la Universidad, 30\\[-0.8ex] 
\small 28911 Legan\'es, Spain\\[2mm]
\small Grupo de Teor\'{\i}as de Campos y \\[-0.8ex]
\small F\'{\i}sica Estadística\\[-0.8ex]
\small Associate Unit UC3M--IEM (CSIC), Spain \\ 
\small\tt jsalas@math.uc3m.es\\
\and
Alan D.~Sokal\\ 
\small Department of Mathematics\\[-0.8ex] 
\small University College London\\[-0.8ex]
\small Gower Street\\[-0.8ex]
\small London WC1E 6BT\\[-0.8ex]
\small United Kingdom\\
\small\tt sokal@math.ucl.ac.uk\\
\small Department of Physics\\[-0.8ex]
\small New York University\\[-0.8ex]
\small 726 Broadway\\[-0.8ex]
\small New York, NY 10003, USA\\
\small\tt sokal@nyu.edu}
\numberwithin{equation}{section}  
\numberwithin{theorem}{section}   
\begin{document}
\bibliographystyle{plain}
\maketitle 

\begin{abstract}
We study the triangular array defined by the
Graham--Knuth--Patashnik recurrence
$$
  T(n,k)
  \;=\;
  (\alpha n + \beta k + \gamma)    \, T(n-1,k)
  \:+\:
  (\alpha' n + \beta' k + \gamma') \, T(n-1,k-1)
$$
with initial condition $T(0,k) = \delta_{k0}$
and parameters $\bm{\mu} = (\alpha,\beta,\gamma, \alpha',\beta',\gamma')$.
We show that the family of arrays $T(\bm{\mu})$ is invariant under
a 48-element discrete group isomorphic to $S_3 \times D_4$. 
Our main result is to determine all parameter sets
$\bm{\mu} \in \mathbb{C}^6$
for which the ordinary generating function
$f(x,t) = \sum_{n,k=0}^\infty T(n,k) \, x^k t^n$
is given by a Stieltjes-type continued fraction in $t$
with coefficients that are polynomials in~$x$.
We also exhibit some special cases in which $f(x,t)$
is given by a Thron-type or Jacobi-type continued fraction in $t$
with coefficients that are polynomials in~$x$.
\end{abstract}



%
%
%
%
\newcommand{\be}{\begin{equation}}
\newcommand{\ee}{\end{equation}}
\newcommand{\<}{\langle}
\renewcommand{\>}{\rangle}
\newcommand{\widebar}{\overline}
\def\reff#1{(\protect\ref{#1})}
\def\spose#1{\hbox to 0pt{#1\hss}}
\def\ltapprox{\mathrel{\spose{\lower 3pt\hbox{$\mathchar"218$}}
 \raise 2.0pt\hbox{$\mathchar"13C$}}}
\def\gtapprox{\mathrel{\spose{\lower 3pt\hbox{$\mathchar"218$}}
 \raise 2.0pt\hbox{$\mathchar"13E$}}}
\def\textprime{${}^\prime$}
\def\half{\frac{1}{2}}
\def\third{\frac{1}{3}}
\def\twothird{\frac{2}{3}}
\def\smfrac#1#2{\textstyle \frac{#1}{#2}}
\def\smhalf{\smfrac{1}{2} }

%
%
\newcommand{\restrict}{\upharpoonright}
\newcommand{\drop}{\setminus}
\renewcommand{\emptyset}{\varnothing}
\newcommand{\eqdef}{\stackrel{\rm def}{=}}
\newcommand{\rem}{\textrm{rem}}
\newcommand{\Sym}{{\mathfrak{S}}}
\def\qed{ $\square$ \bigskip}
\newcommand{\myendremark}{ $\blacksquare$ \bigskip}
\def\proofof#1{\bigskip\noindent{\sc Proof of #1.\ }}
\newcommand{\myle}{\preceq}
\newcommand{\myge}{\succeq}
\newcommand{\mygt}{\succ}

%
%
\newcommand{\cyc}{{\rm cyc}}
\newcommand{\exc}{{\rm exc}}
\newcommand{\rec}{{\rm rec}}
\newcommand{\arec}{{\rm arec}}
\newcommand{\erec}{{\rm erec}}

%
%
\newcommand{\C}{{\mathbb C}}
\newcommand{\D}{{\mathbb D}}
\newcommand{\Z}{{\mathbb Z}}
\newcommand{\N}{{\mathbb N}}
\newcommand{\R}{{\mathbb R}}
\newcommand{\Q}{{\mathbb Q}}

%
%
\newcommand{\TT}{{\mathsf T}}
\newcommand{\HH}{{\mathsf H}}
\newcommand{\VV}{{\mathsf V}}
\newcommand{\JJ}{{\mathsf J}}
\newcommand{\PP}{{\mathsf P}}
\newcommand{\DD}{{\mathsf D}}
\newcommand{\QQ}{{\mathsf Q}}
\newcommand{\RR}{{\mathsf R}}

%
%
\newcommand{\bgamma}{{\bm{\gamma}}}
\newcommand{\bdelta}{{\bm{\delta}}}
\newcommand{\bmu}{{\bm{\mu}}}
\newcommand{\bsigma}{{\bm{\sigma}}}
\newcommand{\vecbsigma}{{\vec{\bm{\sigma}}}}
\newcommand{\bpi}{{\bm{\pi}}}
\newcommand{\vecbpi}{{\vec{\bm{\pi}}}}
\newcommand{\btau}{{\bm{\tau}}}
\newcommand{\bphi}{{\bm{\phi}}}
\newcommand{\bvarphi}{{\bm{\varphi}}}
\newcommand{\bGamma}{{\bm{\Gamma}}}

%
%
\newcommand{\psibar}{ {\bar{\psi}} }
\newcommand{\varphibar}{ {\bar{\varphi}} }

%
%
\newcommand{\bfa}{ {\bf a} }
\newcommand{\bfb}{ {\bf b} }
\newcommand{\bfc}{ {\bf c} }
\newcommand{\bfp}{ {\bf p} }
\newcommand{\bfr}{ {\bf r} }
\newcommand{\bfs}{ {\bf s} }
\newcommand{\bft}{ {\bf t} }
\newcommand{\bfu}{ {\bf u} }
\newcommand{\bfv}{ {\bf v} }
\newcommand{\bfw}{ {\bf w} }
\newcommand{\bfx}{ {\bf x} }
\newcommand{\bfy}{ {\bf y} }
\newcommand{\bfz}{ {\bf z} }
\newcommand{\bfT}{ {\bf T} }
\newcommand{\bone}{ {\mathbf 1} }
\newcommand{\bzero}{ {\mathbf 0} }

%
%
\newcommand{\ba}{{\bm{a}}}
\newcommand{\bb}{{\bm{b}}}
\newcommand{\bc}{{\bm{c}}}
\newcommand{\bd}{{\bm{d}}}
\newcommand{\bee}{{\bm{e}}}
\newcommand{\bff}{{\bm{f}}}
\newcommand{\bA}{{\bm{A}}}
\newcommand{\bB}{{\bm{B}}}
\newcommand{\bC}{{\bm{C}}}
\newcommand{\bP}{{\bm{P}}}
\newcommand{\bS}{{\bm{S}}}
\newcommand{\bT}{{\bm{T}}}

%
%
\newcommand{\scra}{{\mathcal{A}}}
\newcommand{\scrb}{{\mathcal{B}}}
\newcommand{\scrc}{{\mathcal{C}}}
\newcommand{\scrd}{{\mathcal{D}}}
\newcommand{\scre}{{\mathcal{E}}}
\newcommand{\scrf}{{\mathcal{F}}}
\newcommand{\scrg}{{\mathcal{G}}}
\newcommand{\scrh}{{\mathcal{H}}}
\newcommand{\scri}{{\mathcal{I}}}
\newcommand{\scrj}{{\mathcal{J}}}
\newcommand{\scrk}{{\mathcal{K}}}
\newcommand{\scrl}{{\mathcal{L}}}
\newcommand{\scrm}{{\mathcal{M}}}
\newcommand{\scrn}{{\mathcal{N}}}
\newcommand{\scro}{{\mathcal{O}}}
\newcommand{\scrp}{{\mathcal{P}}}
\newcommand{\scrq}{{\mathcal{Q}}}
\newcommand{\scrr}{{\mathcal{R}}}
\newcommand{\scrs}{{\mathcal{S}}}
\newcommand{\scrt}{{\mathcal{T}}}
\newcommand{\scru}{{\mathcal{U}}}
\newcommand{\scrv}{{\mathcal{V}}}
\newcommand{\scrw}{{\mathcal{W}}}
\newcommand{\scrx}{{\mathcal{X}}}
\newcommand{\scry}{{\mathcal{Y}}}
\newcommand{\scrz}{{\mathcal{Z}}}

%
%
\newcommand{\ofo}{ {{}_1 \! F_1} }
\newcommand{\tfo}{ {{}_2 F_1} }

%
%
\def\hboxrm#1{ {\hbox{\scriptsize\rm #1}} }
\def\hboxsans#1{ {\hbox{\scriptsize\sf #1}} }
\def\hboxscript#1{ {\hbox{\scriptsize\it #1}} }

%
%

%
%
\newcommand{\stirlingsubset}[2]{\genfrac{\{}{\}}{0pt}{}{#1}{#2}}
\newcommand{\stirlingcycle}[2]{\genfrac{[}{]}{0pt}{}{#1}{#2}}
\newcommand{\associatedstirlingsubset}[2]%
      {\left\{\!\! \stirlingsubset{#1}{#2} \!\! \right\}}
\newcommand{\assocstirlingsubset}[3]%
      {{\genfrac{\{}{\}}{0pt}{}{#1}{#2}}_{\! \ge #3}}
\newcommand{\assocstirlingcycle}[3]{{\genfrac{[}{]}{0pt}{}{#1}{#2}}_{\ge #3}}
\newcommand{\associatedstirlingcycle}[2]{\left[\!\!%
            \stirlingcycle{#1}{#2} \!\! \right]}
\newcommand{\euler}[2]{\genfrac{\langle}{\rangle}{0pt}{}{#1}{#2}}
\newcommand{\eulergen}[3]{{\genfrac{\langle}{\rangle}{0pt}{}{#1}{#2}}_{\! #3}}
\newcommand{\eulersecond}[2]{\left\langle\!\! \euler{#1}{#2} \!\!\right\rangle}
\newcommand{\eulersecondBis}[2]{\big\langle\!\! \euler{#1}{#2} \!\!\big\rangle}
\newcommand{\eulersecondgen}[3]%
     {{\left\langle\!\! \euler{#1}{#2} \!\!\right\rangle}_{\! #3}}
\newcommand{\associatedstirlingcycleBis}[2]{\big[ \!\!%
            \stirlingcycle{#1}{#2} \!\! \big]}
\newcommand{\binomvert}[2]{\genfrac{\vert}{\vert}{0pt}{}{#1}{#2}}
\newcommand{\doublebinom}[2]{\left(\!\! \binom{#1}{#2} \!\!\right)}
\newcommand{\nueuler}[3]{{\genfrac{\langle}{\rangle}{0pt}{}{#1}{#2}}^{\! #3}}
\newcommand{\nueulergen}[4]%
{{\genfrac{\langle}{\rangle}{0pt}{}{#1}{#2}}^{\! #3}_{\! #4}}

%
%
\newenvironment{sarray}{
          \textfont0=\scriptfont0
          \scriptfont0=\scriptscriptfont0
          \textfont1=\scriptfont1
          \scriptfont1=\scriptscriptfont1
          \textfont2=\scriptfont2
          \scriptfont2=\scriptscriptfont2
          \textfont3=\scriptfont3
          \scriptfont3=\scriptscriptfont3
        \renewcommand{\arraystretch}{0.7}
        \begin{array}{l}}{\end{array}}

\newenvironment{scarray}{
          \textfont0=\scriptfont0
          \scriptfont0=\scriptscriptfont0
          \textfont1=\scriptfont1
          \scriptfont1=\scriptscriptfont1
          \textfont2=\scriptfont2
          \scriptfont2=\scriptscriptfont2
          \textfont3=\scriptfont3
          \scriptfont3=\scriptscriptfont3
        \renewcommand{\arraystretch}{0.7}
        \begin{array}{c}}{\end{array}}

\newcommand{\seqnum}[1]{\href{http://oeis.org/#1}{#1}}


\clearpage
\tableofcontents
%
%
\section{Introduction} \label{sec.intro} 

Graham, Knuth and Patashnik (GKP), in their book {\em Concrete Mathematics}\/
\cite{Graham_94}, posed the following ``research problem''
\cite[Problem~6.94, pp.~319 and~564]{Graham_94}:

\begin{problem}
    \label{problem.GKP}
Develop a general theory of the solutions to the recurrence
\begin{equation}
  T(n,k)
  \;=\;
  (\alpha n + \beta k + \gamma)    \, T(n-1,k)
  \:+\:
  (\alpha' n + \beta' k + \gamma') \, T(n-1,k-1)
  \label{eq_binomvert}
\end{equation}
for $n \ge 1$ and $k \in \Z$, with initial condition $T(0,k) = \delta_{k0}$.
(Here and in the following, $\delta_{ab}$ denotes the Kronecker delta.)
\end{problem}

By induction on $n$ we clearly have $T(n,k) = 0$ if $k < 0$ or $k > n$.
Therefore, for each choice of the parameters 
$\bmu = (\alpha,\beta,\gamma, \alpha',\beta',\gamma')$,
we obtain a unique solution $T(n,k) = T(n,k;\bmu)$,
forming a triangular array
$\bT(\bmu) = \bigl( T(n,k;\bmu) \bigr)_{0 \le k \le n}$.
Here the parameters $\bmu$ can be considered to be indeterminates,
in which case the matrix elements $T(n,k;\bmu)$
belong to the polynomial ring $\Z[\bmu]$;
or they can be real or complex numbers,
in which case the matrix elements $T(n,k;\bmu)$
are likewise real or complex numbers.
We shall take each of these two points of view
at appropriate places in this paper.

Given a triangular array $\bT = \bigl( T(n,k) \bigr)_{0 \le k \le n}$,
we define its row-generating polynomials
\be
P_n(x) \;\eqdef\; 
       \sum\limits_{k=0}^n T(n,k) \, x^k \,,
\label{def_Pn}
\ee
its ordinary generating function (ogf)
\be
f(x,t) \;\eqdef\; \sum\limits_{n\ge 0} P_n(x) \, t^n
       \;=\;  \sum\limits_{n,k \ge 0} T(n,k) \: x^k t^n  \,,
\label{def_ogf}
\ee
and its exponential generating function (egf)
\be
F(x,t) \;\eqdef\; \sum\limits_{n\ge 0} P_n(x) \, \frac{t^n}{n!}
       \;=\;  \sum\limits_{n,k \ge 0} T(n,k) \: x^k \,\frac{t^n}{n!}  \,.
\label{def_egf}
\ee
It is straightforward to check that
the row-generating polynomials $P_n(x) = P_n(x;\bmu)$
of the GKP recurrence satisfy the linear differential recurrence
\be
   P_n(x)  \;=\; \bigl[ n(\alpha+ \alpha' x) + \gamma + (\beta' + \gamma') x
                 \bigr] \, P_{n-1}(x)
      \:+\:
      x (\beta + \beta' x) \, \frac{d P_{n-1}(x)}{dx}
 \label{eq.diff_recurrence}
\ee
for $n \ge 1$, with initial condition $P_0(x) = 1$.
Similarly, the exponential generating function $F(x,t) = F(x,t;\bmu)$
satisfies the first-order linear partial differential equation
\be
   (1 - \alpha t - \alpha' xt) F_t
   \;=\;
   (\beta x + \beta' x^2) F_x  \:+\:
     (\alpha + \gamma + (\alpha' + \beta' + \gamma')x) F
 \label{eq.PDE.egf}
\ee
with initial condition $F(x,0) = 1$.

Explicit solutions for the matrix elements $T(n,k)$ or the egf $F(x,t)$
were found for some special cases of the parameters $\bmu$ by
Th\'eor\^et \cite{Theoret_94,Theoret_95a,Theoret_95b},
Neuwirth \cite{Neuwirth}, Spivey \cite{Spivey_11},
and Mansour and Shattuck \cite{Mansour_13}.
In particular, Neuwirth \cite{Neuwirth} solved the case $\alpha' = 0$,
while Spivey \cite{Spivey_11} solved three additional cases:
(S1) $\beta = -\alpha$,
(S2) $\beta = \beta' = 0$,
and (S3) $\alpha/\beta = \alpha'/\beta' + 1$. 
Wilf \cite{Wilf_04} pointed out that the PDE \reff{eq.PDE.egf}
can in principle be solved by the method of characteristics,
and he showed how to obtain the solution (albeit in an unwieldy form)
by using the {\sc Maple} function {\tt pdsolve}.
Finally, Barbero, Salas and Villase\~nor \cite{BSV}
explicitly solved the PDE \reff{eq.PDE.egf}
by the method of characteristics.
In general this solution contains inverse functions
\cite[Theorems~2.1--2.4]{BSV},
but in many combinatorially interesting cases
there exist closed-form expressions in terms of elementary functions
\cite[Appendix~A]{BSV}.
It is worthy of note that the function $\bmu \mapsto \bT(\bmu)$
is not injective:
there are some families of parameters $\bmu$ that produce the same 
triangular array $\bT$
\cite[section~2.4]{Theoret_94} \cite[section~3]{BSV}.
Finally, some additional properties of the triangular arrays 
$\bT = \bigl( T(n,k) \bigr)_{0 \le k \le n}$ have recently been obtained by
Spivey \cite{Spivey_20}.

Our goal in the present paper is twofold:
to study the symmetry group of the GKP recurrence,
and to study some continued-fraction expansions
of the ordinary generating function \reff{def_ogf}.
Let us now discuss these two goals in turn.

By a ``symmetry'' of the GKP recurrence \reff{eq_binomvert},
we mean a map $M \colon \bmu \mapsto \bmu'$
for which the array $\bT(\bmu')$ can be written in a ``simple'' way
in terms of $\bT(\bmu)$.
(See Section~\ref{sec.GKP} for more details of what we mean by ``simple''.)
Here we will show that the symmetry group of the GKP recurrence
is surprisingly large, and includes a 48-element discrete group
that is isomorphic to $S_3 \times D_4$.

It is well known that many combinatorial sequences $\ba = (a_n)_{n\ge 0}$
with $a_0=1$ lead to an ogf that can be expressed as a
continued fraction of Stieltjes type (or {\em S-fraction}\/ for short),
\be
\sum\limits_{n\ge 0} a_n \, t^n  
       \;=\; \cfrac{1}{1 - \cfrac{c_1 t}{ 1 - 
                           \cfrac{c_2 t}{ 1 - \cdots}}}
   \;,
\label{def_Stype.one}
\ee
for some coefficients $\bc=(c_i)_{i\ge 1}$.
(Both sides of this expressions are to be interpreted as
formal power series in the indeterminate $t$.)
This line of investigation goes back at least to
Euler \cite{Euler_1760,Euler_1788},
but it gained impetus following Flajolet's \cite{Flajolet_80}
seminal discovery that the S-fraction \reff{def_Stype.one}
can be interpreted combinatorially as a generating function
for Dyck paths with a weight $c_i$ for each fall from height $i$.
There are now literally dozens of sequences $\ba = (a_n)_{n \ge 0}$
of combinatorial numbers or polynomials for which
a continued-fraction expansion of the type \reff{def_Stype.one}
is explicitly known.

In the generic S-fraction \reff{def_Stype.one},
the Taylor coefficients $a_n$ are polynomials
in the Stieltjes coefficients $\bc$:
these are the {\em Stieltjes--Rogers polynomials}\/ $S_n(c_1,\ldots,c_n)$
\cite{Flajolet_80}.
If, however, one seeks conversely to express the Stieltjes coefficients $\bc$
in terms of the Taylor coefficients $\ba$,
one obtains in general rational functions, not polynomials:
\begin{subeqnarray}
   c_1  & = &  a_1
     \slabel{eq.firstfew_ci.c1}  \\
   c_2  & = &  \frac{a_2 - a_1^2}{a_1}
     \slabel{eq.firstfew_ci.c2}  \\
   c_3  & = &  \frac{a_1 a_3 - a_2^2}{a_1 \, (a_2 - a_1^2)}
     \slabel{eq.firstfew_ci.c3}  \\[-3mm]
        & \vdots &   \nonumber
 \label{eq.firstfew_ci}
\end{subeqnarray}
In particular, if one applies these formulae to the row-generating polynomials
of the GKP recurrence, $a_n = P_n(x;\bmu)$,
one obtains Stieltjes coefficients $c_i$ that, for $i \ge 2$,
are rational functions of $x$ and the parameters $\bmu$.
Nevertheless, for many specific cases of the GKP recurrence
--- including the binomial coefficients,
the Stirling cycle and Stirling subset numbers,
and the Eulerian numbers, among others ---
it is known that the Stieltjes coefficients $c_i$ are polynomials in $x$
(and fairly simple ones at that).
Consequently, the second (and principal) goal of this paper
is to obtain a complete determination of the submanifolds of $\bmu \in \C^6$
where the Stieltjes coefficients $\bc$ are polynomials in $x$.
These coefficients will always be of the form $c_i = c_{i0} + c_{i1} x$;
and while {\em a priori}\/ we allow $c_{i0}$ and $c_{i1}$
to be rational functions of the parameters $\bmu$,
we will find {\em a posteriori}\/ that they are in fact always
polynomials in $\bmu$
(or more precisely, in suitable parameters coordinatizing the given
 submanifold).
Our results are contained in Theorem~\ref{theor.main}
and Propositions~\ref{prop.I}--\ref{prop.VI}.

More generally, we shall briefly consider some continued fractions
of the Thron type (or {\em T-fractions}\/) \cite{Thron_48},
\be
\sum\limits_{n\ge 0} a_n \, t^n  
       \;=\; \cfrac{1}{1 - d_1 t - \cfrac{c_1 t}{ 1 - d_2 t -  
                           \cfrac{c_2 t}{ 1 - \cdots}}}
   \;,
\label{def_Ttype.one}
\ee
and of the Jacobi type (or {\em J-fractions}\/)
\be
\sum\limits_{n\ge 0} a_n \, t^n  
       \;=\; \cfrac{1}{1 - e_0 t - \cfrac{f_1 t^2}{ 1 - e_1 t -  
                           \cfrac{f_2 t^2}{ 1 - \cdots}}}
   \;,
\label{def_Jtype.one}
\ee
once again interpreted as formal power series in $t$.
We will find some recurrences of the GKP type \eqref{eq_binomvert}
whose ogf can be expressed as a T-fraction (or J-fraction)
in which the coefficients $\bc=(c_i)_{i\ge 1}$ and $\bd = (d_i)_{i\ge 1}$
(or $\bee=(e_i)_{i\ge 0}$ and $\bff = (f_i)_{i\ge 1}$)
are polynomials in the indeterminates $x$ and $\bmu$. 
However, this list is almost certainly incomplete;
we have not attempted to make a complete determination,
for reasons that will be explained later.

This paper is organized as follows:
In Section~\ref{sec.GKP} we analyze the symmetry group of the GKP recurrence.
In Section~\ref{sec.main} we completely characterize those GKP recurrences
whose ogf has an S-fraction representation
in which the coefficients are polynomials in~$x$.
In Section~\ref{sec.prelim.TJ}
we review some transformation formulae that will be needed
for our discussion of T-fractions and J-fractions.
In Section~\ref{sec.mainT} we show some examples
(but not a complete characterization) of GKP recurrences
whose ogf has a T-fraction or J-fraction representation
in which the coefficients are polynomials in $x$.

Finally, in Section~\ref{sec.open} we propose some problems for future work.
In particular, one motivation for the continued-fraction expansions
studied here is a conjecture \cite{Sokal_unpub} concerning
the coefficientwise Hankel-total positivity
\cite{Sokal_flajolet,Sokal_totalpos}
of the row-generating polynomials $P_n(x;\bmu)$ of the GKP recurrence.
Section~\ref{sec.hankel} is devoted to introducing
and discussing this conjecture.

Appendix~\ref{app.spivey-zhu} provides some more details
concerning the matrix product of two GKP arrays
(as well as some more general arrays),
while Appendix~\ref{app.inverse} gives a general treatment
of inverse pairs of lower-triangular arrays.
Finally, in the Supplementary Material we provide details
of the computer-assisted search that constitutes the first stage
of the proof of Theorem~\ref{theor.main}.

%
%
\section{Symmetries of the GKP recurrence} \label{sec.GKP}

In this section we will treat some symmetries of the GKP recurrence:
by this we mean maps $M \colon \bmu \mapsto \bmu'$
for which the array $\bT(\bmu')$ can be written in a simple way
in terms of $\bT(\bmu)$.
By ``simple'' we mean that the row-generating polynomials $P_n(x;\bmu')$
can be written as a M\"obius transformation of $P_n(x;\bmu)$:
\be
   P_n(x;\bmu')
   \;=\;
   [c(\bmu) x \,+\, d(\bmu)]^n \:
   P_n \biggl( \frac{a(\bmu) x \,+\, b(\bmu)}{ c(\bmu) x \,+\, d(\bmu) }
       \biggr)
\ee
for some functions $a(\bmu),\, b(\bmu),\, c(\bmu),\, d(\bmu)$.
We do not purport to find {\em all}\/ such transformations,
but we will find a fairly large (48-element) discrete group of them.
Some (but not all) of the transformations we will consider are involutions,
i.e.\ satisfy $M(M(\bmu)) = \bmu$.

\subsection{Some special symmetries} \label{sec.sym}

The most obvious symmetry is:

\medskip
{\bf Scaling.}
For parameters $\kappa$ and $\lambda$, define the map $S_{\kappa,\lambda}$ by
\be
   S_{\kappa,\lambda} (\alpha,\beta,\gamma, \alpha',\beta',\gamma')
   \;=\;
   (\kappa\alpha,\kappa\beta,\kappa\gamma,
    \lambda\alpha',\lambda\beta',\lambda\gamma')
   \;.
\label{def.Skl}
\ee
Then we have the obvious relation
\be
   T(n,k; S_{\kappa,\lambda}\bmu)
   \;=\;
   \kappa^{n-k} \lambda^k \, T(n,k;\bmu)
   \;.
\label{eq.TnkSkl}
\ee
Here the parameters $\kappa$ and $\lambda$ can be indeterminates,
or they can be real or complex numbers.
All the maps $S_{\kappa,\lambda}$ trivially commute:
we have
$S_{\kappa,\lambda} S_{\kappa',\lambda'}
 = S_{\kappa \kappa', \lambda \lambda'}$.
Thus, if we consider $\kappa$ and $\lambda$ to be nonzero complex numbers,
then the maps $S_{\kappa,\lambda}$
form a group isomorphic to $\C^* \times \C^*$,
where $\C^* \eqdef \C \setminus \{0\}$.

Later, we shall use in particular the scaling maps
$S = S_{-1,1}$, $S' = S_{1,-1}$ and $SS' = S_{-1,-1}$.
Of course these are commuting involutions,
which generate a group $\{1,S,S',SS'\}$ isomorphic to $\Z_2 \times \Z_2$.

\bigskip

Let us now turn to some discrete symmetries of the GKP recurrence.
The most important of these is:

\medskip

{\bf Duality} \cite{Theoret_94,Theoret_95a,Theoret_95b,BSV}.
Given a triangular array $\bT = \bigl( T(n,k) \bigr)_{0 \le k \le n}$,
let us define the {\em dual}\/ (or {\em reversed}\/) array
$\bT^* = \bigl( T^*(n,k) \bigr)_{0 \le k \le n}$ by
\be
   T^*(n,k)  \;\eqdef\; T(n,n-k) \,.
 \label{def_duality_binom}
\ee
The map $\bT \mapsto \bT^*$ is obviously an involution,
i.e.\ $(\bT^*)^* = \bT$.
In the GKP recurrence, the duality map $\bT \mapsto \bT^*$
can be implemented by the transformation of parameters
\cite[p.~67, Proposition~3.1.1]{Theoret_94}
\be
   \bmu \;\mapsto\; D\bmu 
   \;\eqdef\; 
   (\alpha' + \beta',-\beta',\gamma', \alpha + \beta, -\beta, \gamma) \,.
 \label{eq.duality.mu}
\ee
That is, we have
\be
   T(n,k; D\bmu)  \;=\; T(n,n-k;\bmu)
   \;.
\ee
Of course, the map $D$ is also an involution, i.e.\ $D(D(\bmu)) = \bmu$.
The corresponding generating functions transform as
\begin{subeqnarray}
\slabel{def_PnDmu}
 P_n(x;D\bmu)   & = &   x^n \, P_n\biggl( \frac{1}{x}; \bmu \biggr) \\ 
 f(x,t;D\bmu)   & = &   f\biggl( \frac{1}{x},xt; \bmu \biggr)  \\
 F(x,t;D\bmu)   & = &   F\biggl( \frac{1}{x},xt; \bmu \biggr)
\end{subeqnarray}

\smallskip

{\bf Remark.}
Neuwirth's \cite{Neuwirth} case $\alpha' = 0$
is dual to Spivey's \cite{Spivey_11} case (S1) $\beta = -\alpha$,
while Spivey's \cite{Spivey_11} cases
(S2) $\beta = \beta' = 0$
and (S3) $\alpha/\beta = \alpha'/\beta' + 1$
are self-dual.
\myendremark

\bigskip

Another important involution is the following:

\medskip

{\bf Zhu involution} \cite{Zhu_unpub}.
Define the map $Z \colon\, \bmu \mapsto Z\bmu$ by
\be
   Z \, (\alpha,\beta,\gamma, \alpha',\beta',\gamma')
   \;\eqdef\;
   \left( \alpha - \frac{\beta}{\beta'} \alpha', -\beta,
         -\beta + \gamma - \frac{\beta}{\beta'} \gamma', \,
             \alpha',\beta',\gamma'
   \right) \,.
 \label{def_transinv}
\ee  
A simple computation shows that $Z$ is an involution,
i.e.\ $Z(Z(\bmu)) = \bmu$.
Here we should either consider $\bmu$ to be indeterminates
and work in the polynomial ring $\Z[\bmu,(\beta')^{-1}]$,
or else consider $\bmu$ to be real or complex numbers
subject to the condition $\beta' \neq 0$.
The triangular array $\bT$ transforms as
\begin{subeqnarray}
    T(n,k; Z\bmu)  & = &  \sum_{j=k}^n T(n,j;\bmu) \, \binom{j}{k} \,
              \left( -\frac{\beta}{\beta'} \right)^{j-k}
          \slabel{eq.transinv.a} \\[2mm]
    T(n,k; \bmu)  & = &  \sum_{j=k}^n T(n,j; Z\bmu) \, \binom{j}{k} \,
              \left( \frac{\beta}{\beta'} \right)^{j-k}
          \slabel{eq.transinv.b}
          \label{eq.transinv}
\end{subeqnarray}
This has a simple expression in terms of the row-generating polynomials:
\be
   P_n(x;Z\bmu)
   \;=\;
   P_n \biggl(  x - \frac{\beta}{\beta'} \,; \, \bmu \biggr)
   \;.
 \label{eq.transinv_Pn}
\ee
Equivalently, the generating functions transform as
\begin{subeqnarray}
   f(x,t; Z\bmu)
   & = &
   f\biggl( x - \frac{\beta}{\beta'} , t;  \bmu \biggr)
       \\[2mm]
   F(x,t; Z\bmu)
   & = &
   F\biggl( x - \frac{\beta}{\beta'} , t;  \bmu \biggr)
\end{subeqnarray}
The identity \reff{eq.transinv.a} is a special case of
Corollary~\ref{cor6.prop.spivey.corollary5} in Appendix~\ref{app.spivey-zhu}.

Let us observe that that \eqref{eq.transinv.a} can be written
as a matrix product
\be
    \bT(Z\bmu) 
    \;=\;
    \bT(\bmu) \, B_{-\beta/\beta'}
\ee
where $B_\xi$ denotes the $\xi$-binomial matrix
\be
    B_\xi(n,k)  \;=\;  \binom{n}{k} \, \xi^{n-k} 
   \;.
 \label{def_Bxi}   
\ee
Equivalently, \eqref{eq.transinv.b} can be written as 
$\bT(\bmu) = \bT(Z\bmu) \, B_{\beta/\beta'}$.
Please note that the $\xi$-binomial matrix satisfies
$B_\xi B_{\xi'} = B_{\xi'} B_\xi = B_{\xi+\xi'}$
and hence in particular $(B_\xi)^{-1} = B_{-\xi}$.

A brute-force computation using $n=0,1,2,3$
shows that the {\em only}\/ solutions to the equations
$P_n(x;\bmu') = P_n(x + \xi; \bmu)$
valid for {\em generic}\/ parameters $\bmu$
are the identity map ($\xi=0$, $\bmu' = \bmu$)
and the map \reff{def_transinv}/\reff{eq.transinv_Pn}.

\bigskip

The dual of the Zhu involution is:

\medskip

{\bf ``Riordan'' involution} $\bm{R = DZD}$ \cite[Proposition~17]{BSV2}.
We have
\be
   R \, (\alpha,\beta,\gamma, \alpha',\beta',\gamma')
   \;\eqdef\;
   \left( \alpha,\beta,\gamma, \alpha'+\beta'-\frac{\beta'}{\beta} \alpha,
             -\beta',\gamma'+\beta'- \frac{\beta'}{\beta} \gamma
   \right) \,.
 \label{def_muhat}
\ee  
Here we should either consider $\bmu$ to be indeterminates
and work in the polynomial ring $\Z[\bmu,\beta^{-1}]$,
or else consider $\bmu$ to be real or complex numbers
subject to the condition $\beta \neq 0$.
The triangular array $\bT$ transforms as
\begin{subeqnarray}
    T(n,k; R\bmu)  & = &  \sum\limits_{j=0}^k T(n,j;\bmu) \,
     \binom{n-j}{n-k} \, \left( -\frac{\beta'}{\beta} \right)^{k-j}
          \slabel{eq.riordan.a} \\[2mm]
    T(n,k; \bmu)   & = &  \sum\limits_{j=0}^k T(n,j; R\bmu) \,
     \binom{n-j}{n-k} \, \left( \frac{\beta'}{\beta} \right)^{k-j}
          \slabel{eq.riordan.b}
          \label{eq.riordan}
\end{subeqnarray}
This has a simple expression in terms of the row-generating polynomials:
\be
   P_n(x;R\bmu)
   \;=\;
   \left(\frac{\beta - \beta' \, x}{\beta}\right)^n
   \, P_n \left(  \frac{ \beta \, x}{\beta - \beta' \, x} \,; \, \bmu
          \right)
   \;.
\ee
We say that $\bT(\bmu)$ and $\bT(R\bmu)$ form
an \textit{inverse pair} of lower-triangular arrays:
see Appendix~\ref{app.inverse}.

\subsection{Determination of the symmetry group}

Let us now consider the discrete group $\scrg$
generated by the three involutions $S$, $D$ and $Z$.
(This group also includes $S' = DSD$ and $R = DZD$.)
Of course $S^2 = D^2 = Z^2 = (S')^2 = 1$.
It is easy to see that
\begin{subeqnarray}
   & &  SZ \;=\; ZS   \\
   & &  S' Z \;=\; Z S'   \\
   & &  S S' \;=\; S' S  \\
   & &  (DS)^2 \;=\; (SD)^2 \;=\; SS'
\end{subeqnarray}
and hence $(DS)^4 = (SD)^4 = 1$.
A slightly more involved computation shows that
\be
   (DZ)^6 \;=\; (ZD)^6 \;=\; SS'
\ee
and hence $(DZ)^{12} = (ZD)^{12} = 1$.
This motivates defining the group element
\be
   X  \;\eqdef\; DZ  \;,
\ee
which acts on $\bmu$ as
\be
   X \, (\alpha,\beta,\gamma, \alpha',\beta',\gamma')
   \;=\;
   \left( \alpha' + \beta', -\beta', \gamma', \,
          \alpha-\beta- \frac{\beta}{\beta'} \alpha' ,
          \beta, \gamma - \beta - \frac{\beta}{\beta'} \gamma' 
   \right)
 \label{def_X}
\ee  
and on the row-generating polynomials as
\be
   P_n(x; X\bmu)
   \;=\;
   x^n \,  P_n \left(  \frac{1}{x} - \frac{\beta}{\beta'} \,; \, \bmu
               \right)
   \;.
\ee
We therefore consider $\scrg$ as generated by $X$
together with the two commuting involutions $S$ and $Z$.
These generators satisfy the relations
\begin{subeqnarray}
   X^{12} & = & 1  \\
   S^2    & = & 1  \\
   Z^2    & = & 1  \\
   SZ     & = & ZS   \\
   S X S  & = & X^7  \\
   Z X Z  & = & X^{11} \;=\; X^{-1}
 \label{eq.XSZ.relations}
\end{subeqnarray}
and we shall see shortly that there are no other independent relations.
Brute-force computation shows that the group $\scrg$ has 48 elements,
each of which corresponds to a distinct transformation $\bmu \mapsto \bmu'$.
There are two central elements:
the identity element~$1$, and the nontrivial central element $X^6 = SS'$.
The conjugacy classes of $\scrg$ are as follows
(here the integer labeling a conjugacy class denotes the order of its elements):
\begin{itemize}
   \item Class 1 (central, order 1, size 1):  $\{1\}$
   \item Class 2a (central, order 2, size 1): $\{ X^6 \}$
   \item Class 2b (order 2, size 2): $\{ S, SX^6 \}$
   \item Class 2c (order 2, size 2): $\{ SX^3, SX^9 \}$
   \item Class 2d (order 2, size 3): $\{ SZ, SZX^4, SZX^8 \}$
   \item Class 2e (order 2, size 3): $\{ SZX^2, SZX^6, SZX^{10} \}$
   \item Class 2f (order 2, size 6): $\{ Z, ZX^2, ZX^4, ZX^6, ZX^8, ZX^{10} \}$
   \item Class 2g (order 2, size 6): $\{ ZX, ZX^3, ZX^5, ZX^7, ZX^9, ZX^{11}\}$
   \item Class 3 (order 3, size 2): $\{ X^4, X^8  \}$
   \item Class 4a (order 4, size 2): $\{ X^3, X^9  \}$
   \item Class 4b (order 4, size 6): $\{ SZX, SZX^3, SZX^5, SZX^7, SZX^9, SZX^{11} \}$
   \item Class 6a (order 6, size 2): $\{ X^2, X^{10}  \}$
   \item Class 6b (order 6, size 4): $\{ SX, SX^5, SX^7, SX^{11}  \}$
   \item Class 6c (order 6, size 4): $\{ SX^2, SX^4, SX^8, SX^{10} \}$
   \item Class 12 (order 12, size 4): $\{ X, X^5, X^7, X^{11}  \}$
\end{itemize}

Let us now show that the group $\scrg$ can be identified
as the direct product $S_3 \times D_4$,
where $S_3$ is the permutation group on three letters
(or equivalently, the dihedral group $D_3$ of symmetries of an
 equilateral triangle)
and $D_4$ is the dihedral group of symmetries of the square:
\begin{subeqnarray}
   S_3 & = & \< a,b \colon\: a^3 = b^2 = 1, \, bab = a^{-1} \>
   \\[2mm]
   D_4 & = & \< c,d \colon\: c^4 = d^2 = 1, \, dcd = c^{-1} \>
 \slabel{def.D4}
\end{subeqnarray}
and hence
\begin{eqnarray}
   S_3 \times D_4
   & = &
   \< a,b,c,d \colon\: a^3 = b^2 = c^4 = d^2 = 1,
                           \, bab = a^{-1}, \, dcd = c^{-1},
    \qquad  \nonumber \\
   & & \qquad\qquad\;\; ac=ca, \, ad=da, \, bc=cb, \, bd=db \>
        \;.
 \label{def.S3xD4}
\end{eqnarray}
Assuming temporarily that $\scrg \simeq S_3 \times D_4$,
let us find the presentation.
Since $a$ is an element of order~3,
we must have $a = X^4$ or its inverse $X^8$;
the two choices are equivalent, so let us choose $a = X^4$.
Since $c$ is an element of order~4 that commutes with $a$,
we must have $c = X^3$ or its inverse $X^9$
(the other order-4 elements are ruled out
 because $SZ$ does not commute with $X^4$:
 from (\ref{eq.XSZ.relations}e,f) we see that
 $SZ X^4 SZ = X^{-4}$);
the two choices are equivalent, so let us choose $c = X^3$.
Note that $c^2 = X^6$ is a central element, as it should be,
and that $a$ commutes with $c$.
Then $b$ has to be a non-central order-2 element that commutes with $X^3$,
and $d$ has to be a non-central order-2 element that commutes with $X^4$.
Looking at classes 2b--2g and using (\ref{eq.XSZ.relations}d,e,f),
we see that the only choices
are $b = SZ X^{2m}$ for $m \in \{0,1,2,3,4,5\}$,
and $d = S X^{3n}$ for $n \in \{0,1,2,3\}$.
It is easily checked that all of these choices satisfy
$bd=db$, $bab = a^{-1}$ and $dcd = c^{-1}$.
For $b$ there are two inequivalent choices:
\begin{itemize}
   \item Choosing $m$ even means that the group $S_3$ generated by $a$ and $b$
      is $\{1, X^4, X^8,$ $SZ, SZX^4, SZX^8 \}$.
      We might as well choose $m=0$, hence $b = SZ$.
   \item Choosing $m$ odd means that the group $S_3$ generated by $a$ and $b$
      is $\{1, X^4, X^8,$ $SZX^2, SZX^6, SZX^{10} \} =
          \{1, X^4, X^8, S' Z, S' ZX^4, S' ZX^8 \}$
      We might as well choose $m=3$, hence $b = SZ X^6 = S' Z$.
\end{itemize}
The two inequivalent choices for $b$ are related by duality $S' = DSD$,
so there is no loss of generality in taking $b = SZ$.
On the other hand, all four choices of $d$
give rise to the same group $D_4$ generated by $c$ and $d$,
namely $\{1, X^3, X^6, X^9, S, SX^3, SX^6, SX^9 \}$.
So we might as well choose $n=0$, hence $d = S$.
We are therefore led to conjecture that $\scrg \simeq S_3 \times D_4$
with the identifications
\be
   a \,=\, X^4 ,\quad
   b \,=\, SZ ,\quad
   c \,=\, X^3 ,\quad
   d \,=\, S
   \;.
 \label{eq.S3xD4.abcd}
\ee
Since we have already checked all the relations \reff{def.S3xD4},
and we have verified that $\scrg$ has 48 elements ---
the same as $S_3 \times D_4$ ---
the conjecture is proven.

Within the group $\scrg$,
a special role is played by the subgroup $\scrg_0$
generated by $S$ and $D$ (or equivalently by $S$, $S'$ and $D$):
\be
   \scrg_0  \;=\;  \{ 1, S, S', SS', D, SD, S' D, SS' D \}
   \;.
\ee
This is a dihedral group $D_4$:
in the presentation \reff{def.D4} we can take $c = SD$ and $d = S$.
Please note, however, that this subgroup $D_4$ is {\em not}\/
the ``canonical'' subgroup $D_4$
that arises in the direct product $\scrg \simeq S_3 \times D_4$ with
the identifications \reff{eq.S3xD4.abcd}.
It is also useful to consider the conjugate subgroups
\begin{subeqnarray}
   \scrg_k  \;\eqdef\; X^{-k} \scrg_0 X^k
   & = &
   \{ 1, S, S', SS', D X^{2k}, SD X^{2k}, S' D X^{2k}, SS' D X^{2k} \}
         \\[2mm]
   & = &
   \{ 1, S, S', SS', Z X^{2k-1}, SZ X^{2k-1}, S' Z X^{2k-1}, SS' Z X^{2k-1} \}
           \nonumber \\
\end{subeqnarray}
for $k \in \Z$.
Since $\scrg_k$ involves $X^{2k}$,
and since the central element $X^6 = SS'$ belongs to $\scrg_0$,
the subgroup $\scrg_k$ depends only on $k \bmod 3$,
so it suffices to consider $k=0,1,2$.
Note that all three subgroups $\scrg_k$
contain the abelian subgroup $\{ 1, S, S', SS' \}$
and in particular the central element $SS' = X^6$.

The special role played by the subgroup $\scrg_0$ is the following:
If we examine the formulae $M \bmu$ for all elements $M \in \scrg$,
we find that $M \bmu$ is a polynomial in $\bmu$ if and only~if $M \in \scrg_0$.
In the other cases:
\begin{itemize}
   \item $M \bmu$ is a polynomial in $\bmu$ and $(\beta')^{-1}$ when
      $M \in \{X, X^7, SX, SX^7, Z, ZX^6,$ $SZ$, $SZX^6 \}$.
      This is the right coset $\scrg_0 X$,
      or equivalently the left coset $X \scrg_1$.
   \item $M \bmu$ is a polynomial in $\bmu$ and $\beta^{-1}$ when
      $M \in \{ X^5, X^{11}, SX^5, SX^{11}, ZX^4, ZX^{10},$ $SZX^4, SZX^{10} \}
       = \{X, X^7, SX, SX^7, R, RX^6, SR, SRX^6 \}$.
      This is the right coset $\scrg_0 X^5$,
      or equivalently the left coset $X^5 \scrg_5 = X^5 \scrg_2$.
   \item $M \bmu$ is a polynomial in $\bmu$, $\beta^{-1}$ and $(\beta')^{-1}$
      when $M$ is any of the other 24 elements of $\scrg$.
      These are the right cosets $\scrg_0 X^k$,
      or equivalently the left cosets $X^k \scrg_k$, for $k=2,3,4$.
\end{itemize}

\bigskip

{\bf Remark.}
The group $S_3 \times D_4$ has many other aliases:
see e.g.\ \cite{Dokchitser_GroupNames}.
\myendremark

\subsection{Further scaling properties of the GKP recurrence}
   \label{sec.scaling}

In Section~\ref{sec.sym} we studied several symmetries of the GKP recurrence,
and in particular the behavior of the triangular-array entries $T(n,k)$ 
under the trivial scaling map $S_{\kappa,\lambda}$
defined in \eqref{def.Skl}/\eqref{eq.TnkSkl}.  
We would now like to point out some less obvious scaling properties.
These properties are most naturally formulated in the first instance
for the more general ``binomial-like'' recurrence
\be
   T(n,k)
   \;=\;
   a_{n,k} \, T(n-1,k) \:+\: a'_{n,k} \, T(n-1,k-1)
 \label{eq.recursion}
\ee
for $n \ge 1$, again with initial condition $T(0,k) = \delta_{k0}$.\footnote{
   A detailed study of this type of recurrence
   can be found in the thesis of Th\'eor\^et \cite{Theoret_94}.
   He uses the term ``hyperbinomial''
   to denote the matrix $\bT = (T(n,k))_{n,k \ge 0}$
   satisfying such a recurrence.
}
Spivey \cite[Theorem~1]{Spivey_11} showed the following:

\begin{lemma}[Rescaling of a binomial-like recurrence]
   \label{lemma.recurrence.rescaling}
Let $(a_{n,k})$, $(a'_{n,k})$, $(c_n)$, $(d_k)$ and $(e_n)$
be indeterminates,
and define triangular arrays
$( T(n,k) )_{0 \le k \le n}$
and
$( T'(n,k) )_{0 \le k \le n}$
by the recurrences
\begin{eqnarray}
   T(n,k)  & = &  a_{n,k} \, T(n-1,k) \:+\: a'_{n,k} \, T(n-1,k-1)
      \label{eq.recurrence.T}   \\[2mm]
   T'(n,k) & = &  c_{n-k} e_n a_{n,k} \, T'(n-1,k) \:+\:
                  d_k e_n a'_{n,k} \, T'(n-1,k-1)
      \label{eq.recurrence.Tprime}
\end{eqnarray}
for $n \ge 1$, with initial conditions $T(0,k) = T'(0,k) = \delta_{k0}$.
Then
\be
   T'(n,k)  \;=\; c_1 \cdots c_{n-k} \:
                  d_1 \cdots d_k \:
                  e_1 \cdots e_n \:   T(n,k)
   \;.
 \label{eq.lemma.recurrence.rescaling}
\ee
\end{lemma}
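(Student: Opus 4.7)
The plan is to prove the lemma by straightforward induction on $n$, since both triangular arrays in question are defined by one-step recurrences in $n$ sharing the common initial row $T(0,k) = T'(0,k) = \delta_{k0}$. For the base case $n=0$, both sides of \reff{eq.lemma.recurrence.rescaling} agree: the only relevant index is $k=0$ (both arrays vanish outside $0 \le k \le n$), for which the right-hand side is the empty product times $1$, hence $1$, matching $T(0,0) = 1$.

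For the inductive step, assume \reff{eq.lemma.recurrence.rescaling} holds for $n-1$ and every $k$. Expanding $T'(n,k)$ via \reff{eq.recurrence.Tprime} and applying the inductive hypothesis to both $T'(n-1,k)$ and $T'(n-1,k-1)$, the key observation is that the extra factors $c_{n-k} e_n$ and $d_k e_n$ inserted in \reff{eq.recurrence.Tprime} are precisely what is needed to promote the scaling prefactor from level $n-1$ to level $n$. Indeed, the term arising from $T'(n-1,k)$ carries prefactor $c_1 \cdots c_{n-1-k} \, d_1 \cdots d_k \, e_1 \cdots e_{n-1}$, which under multiplication by $c_{n-k} e_n$ becomes $c_1 \cdots c_{n-k} \, d_1 \cdots d_k \, e_1 \cdots e_n$; while the term arising from $T'(n-1,k-1)$ carries prefactor $c_1 \cdots c_{n-k} \, d_1 \cdots d_{k-1} \, e_1 \cdots e_{n-1}$, which under multiplication by $d_k e_n$ becomes the same expression $c_1 \cdots c_{n-k} \, d_1 \cdots d_k \, e_1 \cdots e_n$. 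Pulling this common prefactor out and invoking the original recurrence \reff{eq.recurrence.T} for $T(n,k)$ yields the claim.

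There is no genuine obstacle here; the lemma is a bookkeeping statement. What is worth flagging is the combinatorial content of the specific dependence of the rescaling weights on $n-k$, $k$, and $n$: the factors $c_{n-k}$, $d_k$, and $e_n$ are chosen so that at every recursive step the two branches of \reff{eq.recurrence.Tprime} pick up exactly the same multiplicative correction relative to \reff{eq.recurrence.T}, which is precisely the condition permitting a single multiplicative scalar to be pulled out of $T'$. Any weight structure depending on $n$ and $k$ that is not expressible in this product form would generally fail to yield a diagonal rescaling of the array.
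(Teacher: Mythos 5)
Your induction is correct, and it is essentially the second of the two arguments the paper sketches (namely, verifying that the quantities $c_1 \cdots c_{n-k}\, d_1 \cdots d_k\, e_1 \cdots e_n\, T(n,k)$ satisfy the recurrence \reff{eq.recurrence.Tprime}); the paper's primary sketch is instead a combinatorial one, representing $T(N,K)$ as a weighted sum over lattice walks and observing that the rescaling multiplies every walk to $(N,K)$ by the same factor. Both routes are equally valid bookkeeping, and your closing remark about why the weights must have the product form $c_{n-k} d_k e_n$ is a sensible observation.
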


\begin{proof}[Sketch of proof]
Represent $T(N,K)$
as a sum over walks in $\N \times \N$ from $(0,0)$ to $(N,K)$,
where a ``level step'' $(n-1,k) \to (n,k)$ gets a weight $a_{n,k}$
and a ``rise'' $(n-1,k-1) \to (n,k)$ gets a weight $a'_{n,k}$,
and the weight of a walk is the product of the weights of its steps;
and analogously for $T'(N,K)$.
The result \reff{eq.lemma.recurrence.rescaling}
then follows by simple arguments \cite{Sokal_totalpos}.
Alternatively \cite{Spivey_11,Spivey_16}
one can show that the quantities \reff{eq.lemma.recurrence.rescaling}
satisfy the recurrence \reff{eq.recurrence.Tprime}.
\end{proof}

Let us now specialize Lemma~\ref{lemma.recurrence.rescaling}
to the GKP recurrence
\be
   a_{n,k} \:=\: \alpha n + \beta k + \gamma \;,\qquad
   a'_{n,k} \:=\: \alpha' n + \beta' k + \gamma' \;.
\ee
It is easy to see that there are precisely three cases in which
the recurrence \reff{eq.recurrence.Tprime} for $T'(n,k)$
is also of the GKP form:
\begin{itemize}
   \item[(a)]  $\alpha=\beta=0$ and
               $c_n = \kappa n + \lambda$, $d_k = 1$, $e_n = 1$
   \item[(b)]  $\alpha'=\beta'=0$ and
               $d_k = \kappa k + \lambda$, $c_n = e_n = 1$
   \item[(c)]  $\alpha=\beta=\alpha'=\beta'=0$ and
               $e_n = \kappa n + \lambda$, $c_n = 1$, $d_k = 1$
\end{itemize}
We thus obtain:

\begin{corollary}[Rescaling of a GKP recurrence]
   \label{cor.scaling}
Let $\alpha,\beta,\gamma,\alpha',\beta',\gamma'$ and $\kappa,\lambda$ 
be indeterminates.  Then:
\begin{itemize}
   \item[(a)]  When $\alpha=\beta=0$ we have
\begin{multline}
   T(n,k;\, \kappa\gamma,0,\lambda\gamma,\alpha',\beta',\gamma')
   \;=\; \\
   (\kappa +\lambda)(2\kappa +\lambda)\cdots [(n-k)\kappa +\lambda] \:
   T(n,k;\, 0,0,\gamma,\alpha',\beta',\gamma')
   \;.
\end{multline} 
   \item[(b)]  When $\alpha'=\beta'=0$ we have
\begin{multline}
   T(n,k;\, \alpha,\beta,\gamma,0,\kappa\gamma',\lambda\gamma')
   \;=\; \\
   (\kappa +\lambda)(2\kappa +\lambda)\cdots (k\kappa +\lambda) \:
   T(n,k;\, \alpha,\beta,\gamma,0,0,\gamma')
   \;.
\end{multline} 
   \item[(c)]  When $\alpha=\beta=\alpha'=\beta'=0$ we have
\begin{multline}
   T(n,k;\, \kappa\gamma,0,\lambda\gamma,\kappa\gamma',0,\lambda\gamma')
   \;=\; \\
   (\kappa +\lambda)(2\kappa +\lambda)\cdots (n\kappa +\lambda) \:
   T(n,k;\, 0,0,\gamma,0,0,\gamma')
   \;.
\end{multline} 
\end{itemize}
\end{corollary}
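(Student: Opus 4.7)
The plan is to derive Corollary~\ref{cor.scaling} as an immediate specialization of Lemma~\ref{lemma.recurrence.rescaling}. For each of the three cases (a), (b), (c), I would substitute the specified choice of $(c_n, d_k, e_n)$ into the rescaled recurrence \reff{eq.recurrence.Tprime}, verify that the resulting coefficients $c_{n-k}e_n\,a_{n,k}$ and $d_k e_n\,a'_{n,k}$ are again of the GKP form $An+Bk+C$ (so that $T'$ is indeed a GKP array), and finally read off the prefactor from \reff{eq.lemma.recurrence.rescaling}, noting that two of the three products $c_1\cdots c_{n-k}$, $d_1\cdots d_k$, $e_1\cdots e_n$ collapse to~$1$.

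Concretely, in case~(a), with $a_{n,k}=\gamma$, activating $c_n=\kappa n+\lambda$ while keeping $d_k=e_n=1$ turns the level-step weight into $(\kappa(n-k)+\lambda)\gamma$, a linear function of $n$ and $k$ and hence of GKP form; the rise-step weight $a'_{n,k}$ is unchanged; and the surviving prefactor in \reff{eq.lemma.recurrence.rescaling} reduces to $c_1\cdots c_{n-k}=(\kappa+\lambda)(2\kappa+\lambda)\cdots((n-k)\kappa+\lambda)$. Case~(b) is the mirror image, activating $d_k=\kappa k+\lambda$ when $a'_{n,k}=\gamma'$: the rise-step weight becomes $(\kappa k+\lambda)\gamma'$, of GKP form, and the surviving prefactor is $d_1\cdots d_k$. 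Case~(c), in which $a_{n,k}=\gamma$ and $a'_{n,k}=\gamma'$ are both constant, is the only case where $e_n=\kappa n+\lambda$ can be activated, since $e_n$ multiplies both weights simultaneously and can preserve GKP form only when neither weight has genuine $n$- or $k$-dependence; the surviving prefactor is then $e_1\cdots e_n$.

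The proof is essentially all bookkeeping: the non-trivial content sits entirely in Lemma~\ref{lemma.recurrence.rescaling}, so the main task is to match the specializations to the GKP template and collect the appropriate linear product. The one sanity check worth performing carefully is that in case~(a) the factor $c_{n-k}$, although a function of $n-k$ alone, contributes both an $n$-term ($+\kappa\gamma n$) and a $k$-term ($-\kappa\gamma k$) to the new linear coefficient, and one should be attentive to how these redistribute into the GKP parameters on the left-hand side of~(a). A separate observation — worth recording though not logically required for the corollary — is that these three activations exhaust all choices of non-constant $(c_n, d_k, e_n)$ consistent with a GKP rescaled recurrence: any other combination (for instance non-trivial $c_n$ together with $\alpha\neq 0$ or $\beta\neq 0$) would multiply a genuinely affine function of $n$ and $k$ by another affine function, producing a quadratic term that the GKP template cannot accommodate.
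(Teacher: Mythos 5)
Your approach is exactly the paper's: specialize Lemma~\ref{lemma.recurrence.rescaling} to the GKP weights, check that the rescaled coefficients in \reff{eq.recurrence.Tprime} are again affine in $n$ and $k$, and read the prefactor off \reff{eq.lemma.recurrence.rescaling}; cases (b) and (c) go through verbatim, and your exhaustiveness remark matches the paper's assertion that these are precisely the three viable activations. The one point you flag but leave unresolved in case~(a) is, however, decisive rather than cosmetic: since $c_{n-k}e_n a_{n,k} = [\kappa(n-k)+\lambda]\gamma = \kappa\gamma\,n - \kappa\gamma\,k + \lambda\gamma$, the term $-\kappa\gamma\,k$ can only be absorbed into the $\beta$ slot, so your computation actually yields the parameter vector $(\kappa\gamma,-\kappa\gamma,\lambda\gamma,\alpha',\beta',\gamma')$ on the left-hand side of (a), not $(\kappa\gamma,0,\lambda\gamma,\alpha',\beta',\gamma')$ as displayed; there is no ``redistribution'' that removes the $k$-dependence. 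The corrected form is the one the paper itself uses later (e.g.\ the multifactorials obtained from $\bmu=(0,0,1,0,0,0)$ land at $\bmu'=(\nu,-\nu,-\rho,0,0,0)$), and a direct check with $\gamma=\gamma'=\kappa=1$, $\lambda=\alpha'=\beta'=0$ confirms it: $T(2,1;1,-1,0,0,0,1)=2=(2-1)!\tbinom{2}{1}$, whereas $T(2,1;1,0,0,0,0,1)=3$. So your proof is sound; just carry the $-\kappa\gamma$ into the $\beta$ slot explicitly rather than deferring it.
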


%
%
\section{Characterization of the GKP recurrences with an S-fraction
    representation} \label{sec.main}

Our main goal in this paper is to determine all families of parameters 
$\bmu \in \C^6$ such that the corresponding ogf \eqref{def_ogf} has an
S-type continued fraction \reff{def_Stype.one}
with coefficients $c_1,c_2,\ldots$
that are {\em polynomials}\/ in~$x$ (rather than rational functions).
Our findings can be summarized in the following:

\begin{theorem}
   \label{theor.main} 
Let $\bmu=(\alpha,\beta,\gamma,\alpha',\beta',\gamma')\in \C^6$,
and let $\bT(\bmu) = \bigl( T(n,k;\bmu) \bigr)_{0 \le k \le n}$
be the triangular array determined via the recurrence \eqref{eq_binomvert}.
Then the corresponding ogf $f(x,t;\bmu)$
has a \emph{nonterminating} S-type continued fraction representation
(in the sense of formal power series in the indeterminate $t$) 
\be
f(x,t;\bmu) \;=\; \sum\limits_{n=0}^\infty P_n(x;\bmu) \, t^n
       \;=\; \cfrac{1}{1 - \cfrac{c_1 t}{ 1 -
                             \cfrac{c_2 t}{ 1 - \cdots}}}
\label{def_Stype.three}
\ee
with coefficients $c_1,c_2,\ldots$ that are \emph{polynomials} in $x$
only~if the parameter $\bmu$ belongs to
one or more of the following families: 
\begin{quote}
\begin{itemize} 
\item[F1a.\ ] $\bmu=(0,\beta,0,\alpha',-\alpha',\gamma')$
\item[F1b.\ ] $\bmu=(0,\beta,\gamma,\alpha',-\alpha',0)$
\item[F2a.\ ] $\bmu=(\alpha,-\alpha,-\alpha,\alpha',\beta',\gamma')$
\item[F2b.\ ] $\bmu=(\alpha,\beta,\gamma, 0,\beta',-\beta')$
\item[F3a.\ ] $\bmu=(0,\beta,0, 0,\beta',\gamma')$
\item[F3b.\ ] $\bmu=(\alpha,-\alpha,\gamma, \alpha',-\alpha',0)$
\item[F4a.\ ] $\bmu=(0,\kappa\beta',\kappa(\beta'+\gamma'), 0,\beta', \gamma')$
\item[F4b.\ ] $\bmu=(\alpha,-\alpha,\gamma,
                       \kappa\alpha,-\kappa\alpha,\kappa(\alpha+\gamma))$
\item[F5.\ ] $\bmu=(\alpha,0,\gamma, \alpha',0,\gamma')$
\item[F6.\ ] $\bmu=(\kappa(\alpha'+\beta'),\kappa\beta',
                       \kappa\gamma',\alpha',\beta',\gamma')$
\end{itemize} 
\end{quote}
Moreover, in all these cases $f(x,t;\bmu)$ does have a representation
\reff{def_Stype.three} with coefficients $c_1,c_2,\ldots$ that are
polynomials in $x$
(though in some degenerate cases this continued fraction might be terminating).
\end{theorem}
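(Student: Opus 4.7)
The plan is to prove the two implications of the theorem separately, using the rational-function formulas \reff{eq.firstfew_ci} (and their higher-order analogues) expressing the Stieltjes coefficients $c_i$ in terms of the Taylor coefficients $a_n = P_n(x;\bmu)$. A preliminary degree count shows that, since $\deg_x P_n = n$, the coefficient $c_i$ can be polynomial in $x$ only if it is affine, $c_i = c_{i0} + c_{i1} x$. Writing $c_i = N_i(x;\bmu)/D_i(x;\bmu)$ in lowest terms as a rational function in $x$, polynomiality of $c_i$ is equivalent to $D_i \mid N_i$, which expands into a polynomial system in $\Q[\bmu]$. The \emph{necessary} direction is then obtained by collecting these systems for $c_2, c_3, \ldots$ up to a sufficiently high level and computing a Gr\"obner basis (this is the ``computer-assisted search'' cited in the introduction); the irreducible components of the resulting ``S-fractionable'' variety should be exactly the ten families F1a--F6.

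The symbolic computation is drastically shortened by the $48$-element symmetry group $\scrg \simeq S_3 \times D_4$ developed in Section~\ref{sec.GKP}. Since every generator of $\scrg$ acts on the row-generating polynomials $P_n(x;\bmu)$ by a M\"obius-type transformation of $x$ combined with a global scaling, the property ``$f(x,t;\bmu)$ has an S-fraction in $t$ with $c_i \in \C[x]$'' is $\scrg$-invariant up to an affine change of the variable $x$. Consequently the ten families must organize into $\scrg$-orbits: F1a/F1b, F2a/F2b, F3a/F3b and F4a/F4b are dual pairs under $D$, so only a small number of inequivalent cases actually need to be verified in detail. This both shortens the algebra and serves as a sanity check: any irreducible component outside every $\scrg$-orbit of known families would signal an error.

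For the \emph{sufficient} direction, one exhibits for each family F1a--F6 an explicit S-fraction \reff{def_Stype.three} with polynomial $c_i$; this is the content of Propositions~\ref{prop.I}--\ref{prop.VI}. The standard tactics there are either to recognize a classical S-fraction for a familiar array (binomial, Eulerian, Stirling cycle, Stirling subset, \emph{etc.}) and reduce the general family to it via scaling \reff{eq.TnkSkl}, duality \reff{eq.duality.mu}, or the involutions $Z$/$R$, or to prove the S-fraction directly through a Flajolet--Viennot bijection with weighted Dyck paths or an inductive verification against the linear differential recurrence \reff{eq.diff_recurrence}. The degenerate ``terminating'' cases flagged in the theorem arise precisely when one of the Stieltjes coefficients specializes to zero on a codimension-one subvariety of the family.

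The \textbf{main obstacle} is completeness in the necessary direction: with six free parameters the polynomial system is naively intractable, and even confirming that each irreducible component is contained in one of F1a--F6 (as opposed to a proper subvariety of it) is delicate. The essential simplifications are the affineness reduction, which cuts the scalar conditions per $c_i$ from $O(i)$ down to two; a case split on the vanishing of the leading parameters $\alpha,\beta,\alpha',\beta'$, stratifying the problem into lower-dimensional subproblems where Gr\"obner computation becomes tractable; and systematic use of $\scrg$ to collapse equivalent branches before the algebraic computation is launched.
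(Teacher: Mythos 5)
Your two-step architecture (computer search for necessity, explicit continued fractions for sufficiency) matches the paper's, but both halves are executed quite differently. For the necessary direction the paper does \emph{not} form one large polynomial system and compute a Gr\"obner basis; it explicitly avoids \texttt{Solve}/\texttt{Reduce} and instead builds a decision tree level by level: at stage $c_i$ it writes $c_i = Q_i(x)/R_i(x)$, splits cases on which leading coefficients of $R_i$ vanish, and imposes the vanishing of the polynomial remainder $\rem(Q_i,R_i)$. The whole procedure is tractable only because of an empirically observed (and, in the paper, unexplained) collapse: the generic degrees $\binom{i}{2}+1$ and $\binom{i}{2}$ of $Q_i$ and $R_i$ drop to $2$ and $1$ on every surviving branch, and $R_i$ is a nonzero multiple of $c_{i-1}$, so each stage contributes a single equation of degree $1$ in some parameter, solvable by hand; the tree terminates at level $c_7$. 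Your ``preliminary degree count'' for affineness of $c_i$ is essentially this same collapse, but it is not automatic --- it is exactly the non-generic strata where the naive count fails, and the paper flags the simplification as an open problem rather than a routine observation. Your idea of using $\scrg \simeq S_3\times D_4$ to prune the search a priori is attractive and consistent with the a posteriori orbit structure (one orbit $\{1a,1b,3a,3b,4a,4b\}$, one orbit $\{2a,2b,6\}$, and family 5 fixed), but it needs care: $Z$, $R$ and $X$ are undefined on the locus $\beta=\beta'=0$ containing family 5, so the invariance argument cannot be applied uniformly. For the sufficient direction the paper is more uniform than your proposal: every one of the ten families is handled by the single mechanism of matching its exponential generating function to the master egf $\bigl((y-u)/(y-ue^{(y-u)t})\bigr)^{w/u}$ of Proposition~\ref{prop.twovar.Ma.Salas} and then invoking the Sokal--Zeng S-fraction \reff{eq.eulerian.fourvar.contfrac} with $v=y$; no Dyck-path bijections or inductions against \reff{eq.diff_recurrence} are needed. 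What your route would buy, if the elimination were feasible, is a certificate of completeness independent of the branch-by-branch bookkeeping; what the paper's route buys is an elementary, human-verifiable argument and a single unifying continued fraction behind all ten families.
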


Here families 1a and 1b are duals of each other,
and likewise for the pairs (2a,2b), (3a,3b) and (4a,4b);
families 5 and 6 are self-dual.
Note that families 1a, 1b, 3a, 3b, 4a, 4b have three free parameters,
while families 2a, 2b, 5, 6 have four free parameters;
that is, the submanifolds in $\bmu$-space are of
(complex) dimension 3 and 4, respectively.
However, we shall see later that in families 2a, 2b and 6
some of the parameters are redundant
in the sense that they have no effect on the matrix $\bT(\bmu)$
[or therefore on the coefficients $c_i$].
In these cases the manifold of matrices $\bT(\bmu)$
is of lower dimension than the manifold of parameters $\bmu$:
it is of (complex) dimension~2 for families 2a and 2b,
and (complex) dimension~3 for family 6.
Note, finally, that these families have some overlaps:
for instance, family 1a at $\alpha' = 0$
coincides with family 3a at $\beta' = 0$;
and family 2a at $\alpha=\alpha'=0$
coincides with family 3a at $\beta = 0$.

Having already observed how duality ($D$) acts on our ten families,
we can also work out how the rest of the group $\scrg$ acts on them.
Obviously $S$ and $S'$ map each family into itself.
The Zhu involution $Z$ maps
\hbox{1a $\leftrightarrow$ 4b,}
\hbox{1b $\leftrightarrow$ 3b,}
\hbox{2a $\leftrightarrow$ 6,}
\hbox{3a $\leftrightarrow$ 4a,}
and 2b into itself;
$Z$ cannot be defined on family~5 because it requires $\beta' \neq 0$.
Equivalently, the Riordan involution $R = DZD$ maps
\hbox{1a $\leftrightarrow$ 3a,}
\hbox{1b $\leftrightarrow$ 4a,}
\hbox{2b $\leftrightarrow$ 6,}
\hbox{3b $\leftrightarrow$ 4b,}
and 2a into itself;
$R$ cannot be defined on family~5 because it requires $\beta \neq 0$.
Finally, the map $X$ (which is not an involution) maps cyclically
\hbox{1a $\to$ 4a $\to$ 3b $\to$ 1a,}
\hbox{1b $\to$ 3a $\to$ 4b $\to$ 1b,}
\hbox{2a $\to$ 6 $\to$ 2b $\to$ 2a;}
$X$ cannot be defined on family~5 because it requires $\beta' \neq 0$.
It follows from this that the group $\scrg$
has one orbit $\{ \textrm{1a,1b,3a,3b,4a,4b} \}$
and another orbit $\{ \textrm{2a,2b,6} \}$.
On family~5 only the subgroup $\scrg_0 \subset \scrg$
generated by $S,S',D$ acts,
and it leaves family~5 invariant.

The details of the continued fractions
for each of the above families will be presented
in Propositions~\ref{prop.I}--\ref{prop.VI} below.
We will find that the coefficients $\bc=(c_i)_{i\ge 1}$
in each S-fraction are not only polynomials in $x$
(with coefficients that might be rational functions of the parameters $\bmu$);
in fact, they are polynomials  (with nonnegative integer coefficients)
simultaneously in {\em all}\/ the relevant variables
(in each case some subset of
 $\alpha,\beta,\gamma,\alpha',\beta',\gamma'$ and $\kappa$ as~well as~$x$)
when those variables are treated as indeterminates.
The restriction to {\em nonterminating}\/ S-fractions
will be explained in Section~\ref{sec.search}.

\bigskip

The strategy for our proof is the following: 
\begin{enumerate}
\item Make a computer-assisted search to find \emph{all} viable candidates to 
      have such an S-fraction representation (see Section~\ref{sec.search}). 
      See the Supplementary Material for a full account of this procedure. 
\item Prove (by specializing a continued fraction to be
      explained in Section~\ref{sec.SZ})
      that the ogf of each candidate found in step~1
      indeed has the conjectured S-fraction representation
      (see Section~\ref{sec.main2}). 
\end{enumerate}

%
%
\subsection{Computer-assisted search} \label{sec.search} 

Let $\ba = (a_n)_{n \ge 0}$ be a sequence in which $a_0 = 1$
and $a_1,a_2,\ldots$ are indeterminates.
Then the ogf $\sum\limits_{n \ge 0} a_n t^n$
can be expressed as an S-fraction \reff{def_Stype.one}
with coefficients $c_1,c_2,\ldots$
that are {\em rational functions}\/ of $a_1,a_2,\ldots\:$:
that is, $c_n \in \Q(a_1,\ldots,a_n)$.
The first few $c_i$ are given in \reff{eq.firstfew_ci}.

If we then substitute $a_n = P_n(x)$
where the $P_n$ are polynomials with complex coefficients in $x$
(and $P_0 = 1$),
then the coefficients $c_i$ are {\em generically}\/
rational functions of $x$.
We say ``generically'' because it is possible that
some coefficient $c_i$ might vanish,
in which case the coefficients $c_j$ for $j > i$ are ill-defined
because their denominator vanishes.
Here we restrict attention to {\em nonterminating}\/ S-fractions,
i.e.\ we insist that no $c_i$ vanishes identically.
Under this assumption, all the coefficients $c_i$
are well-defined and are rational functions of $x$ (with complex coefficients):
that is, $c_i \in \C(x)$.

We now specialize to the case in which the polynomials $P_n$
are those coming from the GKP recurrence \reff{eq_binomvert},
i.e.\ $a_n = P_n(x;\bmu)$ for some $\bmu \in \C^6$.
Our goal is to determine the submanifolds in $\C^6$
where the coefficients $c_i$ are {\em polynomials}\/
--- and not just rational functions --- in $x$.
We do this by making a computer-assisted search using {\sc Mathematica}
to find a finite set of viable candidates for such an S-fraction;
then we prove, case by case, that all of these candidates work.
It is convenient to explain this computer-assisted search
by showing explicitly its first few steps.

The row-generating polynomial $P_n(x;\bmu)$
is a polynomial of degree~$n$ in~$x$
with coefficients that are (fairly complicated) polynomials
with integer coefficients in the parameters $\bmu$.
For instance, the first few $P_n$ are
\begin{subeqnarray}
   P_0(x)   & = &   1   \\[2mm]
   P_1(x)   & = &   (\alpha+\gamma) \,+\, (\alpha'+\beta'+\gamma') x   \\[2mm]
   P_2(x)   & = &   (\alpha + \gamma) (2 \alpha + \gamma)
      \nonumber \\
    & & \;
    \,+\,
    (4 \alpha \alpha' + \beta \alpha' + 3 \alpha \beta' + \beta \beta' + 3 \gamma \alpha' + 2 \gamma \beta' + 3 \alpha \gamma' + \beta \gamma' + 2 \gamma \gamma') x   \qquad
     \nonumber \\
    & & \;
    \,+\,
   [(\alpha' + \beta' + \gamma') (2 \alpha' + 2 \beta' + \gamma')] x^2
\end{subeqnarray}
We now insert these $a_n = P_n(x;\bmu)$
into the equations \reff{eq.firstfew_ci} giving the coefficients $c_i$.

\bigskip

{\bf Coefficient $\bm{c_1}$.}
The first coefficient $c_1 = P_1(x)$ is of course a polynomial in $x$,
which is of degree at most~1.
More precisely, it is of degree~1 if $\alpha'+\beta'+\gamma' \neq 0$,
of degree~0 if $\alpha'+\beta'+\gamma' = 0$ and $\alpha+\gamma \neq 0$,
and identically zero if $\alpha'+\beta'+\gamma' = \alpha+\gamma = 0$.
This third case is now discarded because of our restriction
to nonterminating S-fractions;
and as will be seen shortly, we will need to consider
the first two cases separately.

\bigskip

{\bf Coefficient $\bm{c_2}$.}
The next coefficient $c_2$ is
\be
   c_2(x) \;=\; \frac{P_2(x)-P_1(x)^2}{P_1(x)} \;\eqdef\; 
             \frac{Q(x)}{R(x)} \,,
 \label{eq.c2}
\ee
where the denominator $R(x)$ is a polynomial of degree $\le 1$ in $x$
and the numerator $Q(x)$ is a polynomial of degree $\le 2$ in $x$,
and the coefficients in these polynomials are polynomial expressions in $\bmu$.
We need to determine the conditions under which such a rational function
$Q(x)/R(x)$ is in fact a polynomial in~$x$.
Let us explain the idea in general.

Let $Q(x) = \sum\limits_{k=0}^m a_k(\bmu) \, x^k$
and $R(x) = \sum\limits_{k=0}^n b_k(\bmu) \, x^k$
be polynomials in $x$ in which the coefficients $a_k$ and $b_k$
are polynomial expressions in some set of indeterminates $\bmu$.
The polynomial remainder $\rem(Q(x),R(x))$ is then well-defined
whenever the leading coefficient $b_n$
of the denominator polynomial is nonzero,
and in this case it is a polynomial in $x$ of degree $< n$.
On the other hand, if $b_n = 0$ and $b_{n-1} \neq 0$,
then we must consider $R$ as a polynomial of degree $n-1$,
yielding a remainder that is of degree $< n-1$;
or if $b_n = b_{n-1} = 0$ and $b_{n-2} \neq 0$,
then we must consider $R$ as a polynomial of degree $n-2$,
yielding a remainder that is of degree $< n-2$;
and so on.
The last nontrivial case is $b_n = \ldots = b_1 = 0$ and $b_0 \neq 0$:
then $R$ is a nonzero constant function and $Q/R$ is automatically a polynomial.
The completely degenerate case $b_n = \ldots = b_0 = 0$
(i.e.\ $R=0$) is excluded by our restriction to nonterminating S-fractions.
There are therefore $n+1$ nontrivial cases,
corresponding to the order of the highest nonvanishing coefficient $b_i$
($0 \le i \le n$).
For each of these cases, we obtain a set of polynomial equations
and inequations in the indeterminates $\bmu$:
namely, the equations $b_n = \ldots = b_{i+1} = 0$,
the inequation $b_i \neq 0$,
and then the equations asserting that
all the coefficients of the remainder polynomial are zero
(since this remainder polynomial has degree at most $i-1$,
 there are $i$ such equations).\footnote{
   To compute this remainder polynomial,
   we work in the ring $\C(\bmu)[x]$ of polynomials in $x$
   whose coefficients are rational functions (with complex coefficients)
   in the indeterminates $\bmu$.
}
This system of equations and inequations must then be solved,
yielding some algebraic variety in the space of parameters $\bmu$
(in our case $\C^6$).

Let us examine from this point of view
the coefficient $c_2$ given by \reff{eq.c2}.
The numerator and denominator polynomials are
\begin{subeqnarray}
   Q(x) \;=\; P_2(x)-P_1(x)^2
   & = &
   \alpha (\alpha + \gamma)
             \nonumber \\
   &   &
   \;+\,
   (2 \alpha \alpha' + \alpha' \beta + \alpha \beta' + \beta \beta' + \alpha' \gamma + \alpha \gamma' + \beta \gamma') x
             \nonumber \\
   &   &
   \;+\, (\alpha' + \beta') (\alpha' + \beta' + \gamma') x^2
       \\[2mm]
   R(x) \;=\; P_1(x)
   & = &
   (\alpha+\gamma) \,+\, (\alpha'+\beta'+\gamma') x
\end{subeqnarray}
Note that the polynomial $R(x)$ cannot be identically zero,
 because $R(x) = c_1(x)$ and we have already assumed that
$c_1(x)$ is not identically zero:
that is, $\alpha+\gamma \neq 0$ or $\alpha'+\beta'+\gamma' \neq 0$ or both.
So we have either $\deg R = 1$ or $\deg R = 0$.
Then $c_2(x) = Q(x)/R(x)$ is a polynomial in $x$ if and only~if either
\begin{itemize}
   \item[(a)]  {\bf $\bm{\deg R = 1}$:}
      The leading coefficient $\alpha'+\beta'+\gamma'$
      of the denominator polynomial $R(x)$ is nonzero,
      and the remainder
\be
   \rem(Q(x),R(x)) \;=\; \frac{\alpha+\gamma}{\alpha'+\beta'+\gamma'}
   \,
  \bigl[ (\alpha+\gamma)\beta' \,-\, (\alpha'+\beta'+\gamma')\beta \bigr]
\ee
      is zero;  or else
   \item[(b)]  {\bf $\bm{\deg R = 0}$:}
      The leading coefficient $\alpha'+\beta'+\gamma'$
      of the denominator polynomial $R(x)$ is zero,
      and the constant term $\alpha + \gamma$ is nonzero.
      In this case no constraint is imposed on the
      numerator polynomial $Q(x)$.
\end{itemize}
Therefore, we should consider three distinct cases: 
\begin{itemize}
   \item[(a${}_1$)] $\alpha'+\beta'+\gamma' \neq 0$ and $\alpha + \gamma=0$: 
           Then $\bmu =
           (\alpha,\beta,-\alpha,\alpha',\beta',\gamma')$ 
           with $\alpha'+\beta'+\gamma'\neq 0$,
           and $c_2(x) = (\alpha+\beta) + (\alpha' + \beta')x$.
   \item[(a${}_2$)] $\alpha'+\beta'+\gamma' \neq 0$ and
           $(\alpha+\gamma)\beta' = (\alpha'+\beta'+\gamma')\beta$:
           Then $\bmu =
           \Bigl(\alpha,\displaystyle\frac{(\alpha+\gamma) \beta'}{\alpha'+\beta'+\gamma'},
           \gamma,\alpha',\beta',\gamma'\Bigr)$
           with $\alpha'+\beta'+\gamma' \neq 0$,
           and $c_2(x) = \alpha + (\alpha' + \beta')x$.
   \item[(b)] $\alpha'+\beta'+\gamma'=0$ and $\alpha+\gamma \neq 0$.
           Then $\bmu = 
           (\alpha,\beta,\gamma,\alpha',\beta',-(\alpha'+\beta'))$,
           and $c_2(x) = \alpha + \alpha' x$.
\end{itemize}
In each family, the number of independent parameters in $\bmu$ is five.
And in each case we will impose, going forward, a disjunction of inequations
to guarantee that the polynomial $c_2(x)$ is not identically zero:
for instance, in case (a${}_1$) we will impose
$\alpha+\beta \neq 0$ or $\alpha' + \beta' \neq 0$.

\bigskip

{\bf Coefficient $\bm{c_3}$.}
For each of the three cases encountered in the analysis of $c_2$,
we consider separately the formula \reff{eq.firstfew_ci.c3} for $c_3$,
which becomes
\be
c_3(x) \;=\;\frac{P_1(x)P_3(x)-P_2(x)^2}{P_1(x)[P_1(x)^2-P_2(x)]} 
       \;\eqdef\; \frac{Q_3(x)}{R_3(x)}
\label{eq_a3}
\ee 
for some polynomials $Q_3(x)$ and $R_3(x)$.
If $P_1,P_2,P_3$ were arbitrary polynomials of degree 1,2,3, respectively,
one would expect generically that
the numerator polynomial $Q_3$ has degree~4
and the denominator polynomial $R_3$ has degree~3.
And indeed this is what happens if one inserts the
general expressions for $P_n(x;\bmu)$.
However, in the three special cases (a${}_1$), (a${}_2$) and (b),
the equality conditions cause the degrees of $Q_3$ and $R_3$
to be reduced to $\le 2$ and $\le 1$, respectively.
Furthermore, in all cases the denominator $R_3(x)$
turns out to be a nonzero multiple of $c_2(x)$;
so as a consequence of the inequations imposed at the preceding stage,
the polynomial $R_3(x)$ cannot be the zero polynomial.
Therefore, in each of these three cases
the remainder $\rem(Q_3(x),R_3(x))$ is simply a constant,
just as it was for $c_2$.
One thus repeats, for each of these three cases,
an analysis along the same lines as was done for $c_2$:
firstly distinguishing whether the leading coefficient of $R_3$
is nonzero or zero
(i.e.\ whether $\deg R_3 = 1$ or 0),
and then dividing the first case according to the different factors
in the remainder.
In each sub-case it turns out that we must impose one additional equality
beyond those imposed at the preceding stage;
the equation to be solved is always of degree~1
in at least one of the variables,
so we can solve for this variable as a (possibly rational)
function of the others.
At the end one finds that case (a${}_1$) divides into four sub-cases,
case (a${}_2$) divides into four sub-cases,
and case (b) divides into three sub-cases.
In this way we find 11 subfamilies in which $c_i(x)$ is a polynomial in $x$
for all $i \le 3$.
In ten of the 11 subfamilies
there are now four independent parameters in $\bmu$,
while in one there are only three.

\bigskip

{\bf Coefficients $\bm{c_4,c_5,\ldots\;}$.}
The procedure should by now be clear:
at each stage we divide each previously found case into sub-cases,
thus forming a decision tree.
Some branches of this tree terminate,
i.e.\ the solution set of the system of equations is empty.
It was necessary to go up to level $c_7$
in order to find all the terminating branches.
For each of the branches that survived to level $c_7$,
we continued this procedure through level $c_{10}$,
just to convince ourselves that the candidate was indeed viable.
Thus, we declared a branch to be viable
if, for the corresponding family $\bmu$,
the coefficients $c_i$ for $1\le i\le 10$
are polynomials in $x$ (always of degree $\le 1$)
that are not identically vanishing.
For each such branch, the coefficients $c_i$
turned out to have very simple forms
(namely, affine in $i$ separately odd and even $i$),
from which we were able to conjecture
a precise formula for all the coefficients $c_i(x;\bmu)$.

We also streamlined this procedure as follows:
whenever we obtained a branch of the decision tree
that is either identical to or a subset of a previously found branch,
then we dropped that branch as redundant.

Along the way, we sometimes obtained branches in which the most recent
coefficient $c_i(x)$ is identically vanishing.
These branches correspond to terminating S-fractions
and were therefore dropped.
Some of these terminating S-fractions are simply special cases
of the families 1a--6 of Theorem~\ref{theor.main},
in which one of the coefficients $c_i(x)$ happens to vanish.
(Indeed, there are infinitely many such special cases,
 corresponding to the choice of the index $i$ of the vanishing coefficient.)
On the other hand, a few of the terminating S-fractions are non-trivial
in the sense that they are {\em not}\/ simply specializations of
the families 1a--6.  Since these may be of some interest in their own right,
we compile them in Section~4 of the Supplementary Material.

It is also worthy of note that, at every stage in this procedure,
the simplification noted above for $c_3(x)$ [cf.\ \reff{eq_a3}] occurred:
namely, the polynomials $Q_i(x)$ and $R_i(x)$,
which one would expect generically to be
of degrees $\binom{i}{2} + 1$ and $\binom{i}{2}$,
respectively, turned out to have degrees~2 and 1 (or in a few cases even less),
as a result of the equality conditions imposed at earlier stages.
We leave it as an open problem to find a general explanation
of this remarkable simplification.

The details of this computer-assisted search are given in the
Supplementary Material.
Let us stress that our search was ``computer-assisted''
{\em only}\/ in the sense that it used {\sc Mathematica}
to perform elementary algebraic operations such as manipulation of
polynomials and rational functions, series expansions,
and {\tt PolynomialRemainder}.
We did {\em not}\/ need to use any more advanced algebraic functions
such as {\tt Solve} or {\tt Reduce}.
If one assumes the correctness of {\sc Mathematica}'s algebraic manipulations,
the rest of the proof is easily human-verifiable (though tedious)
and is explained in detail in the Supplementary Material.

The final list of candidates contains 10 members. 
Seven of them are families 1a, 1b, 2a, 2b, 3a, 3b and 5
of Theorem~\ref{theor.main}.
In these cases, the entries of $\bmu$ are linear combinations
of a subset of the parameters $(\alpha,\beta,\gamma,\alpha',\beta',\gamma')$,
and the conjectured coefficients $c_i(x;\bmu)$ are polynomials
(with nonnegative integer coefficients)
jointly in $x$ and the parameters.
Their expressions are:
\begin{quote}
\begin{itemize}
\item[F1a.] $c_{2k-1} = [\gamma' + (k-1)\alpha'] x$ and 
            $c_{2k}   = k \beta$.  
\item[F1b.] $c_{2k-1} = \gamma + (k-1)\beta$ and 
            $c_{2k}   = k \alpha' x$.  
\item[F2a.] $c_{2k-1} = [\gamma' + k(\alpha' + \beta')] x$ and 
            $c_{2k}   = k (\alpha'+\beta') x$.  
\item[F2b.] $c_{2k-1} = k \alpha + \gamma$ and 
            $c_{2k}   = k \alpha$.  
\item[F3a.] $c_{2k-1} = (\gamma' + k \beta') x$ and 
            $c_{2k}   = k (\beta+\beta' x)$.  
\item[F3b.] $c_{2k-1} = k \alpha + \gamma$ and 
            $c_{2k}   = k (\alpha + \alpha' x)$.  
\item[F5.]  $c_{2k-1} = (\gamma + \gamma' x) + k(\alpha + \alpha' x)$ 
        and $c_{2k}   = k (\alpha + \alpha' x)$.  
\end{itemize}
\end{quote}
For these families we therefore find \emph{a posteriori} that the
S-fraction \eqref{def_Stype.three} might also be valid when the
$\bmu$ (or rather, some subset of them) are treated as indeterminates. 

The other three cases require additional discussion.
For instance, family~6 comes originally from 
\be
\bmu \;=\; \left( \alpha, \frac{\alpha\, \beta'}{\alpha' + \beta'},
   \frac{\alpha\, \gamma'}{\alpha' + \beta'}, 
   \alpha', \beta', \gamma' \right) \,,
\label{def_F6_old}
\ee
leading to the coefficients
\be
c_{2k-1} \;=\; [\gamma' + k(\alpha'+\beta')] \left(\frac{\alpha}{\alpha' + \beta'} 
                + x\right) \,, \quad 
c_{2k} \;=\; k(\alpha + (\alpha' + \beta')x) \,.
\label{def_c_F6_old}
\ee
These coefficients are clearly \emph{not} polynomials in the parameters
$\alpha',\beta'$. However, we can remedy this problem by making the 
change of parameters $\alpha \eqdef \kappa (\alpha' + \beta')$. Then
\eqref{def_F6_old} reduces to the form given in Theorem~\ref{theor.main} for
family~6, and the coefficients \eqref{def_c_F6_old} become polynomials
(with nonnegative integer coefficients)
jointly in $x$ and $\alpha',\beta',\gamma',\kappa$.
A similar procedure handles families 4a and 4b.
Once these changes of parameters have been performed,
the coefficients $c_i$ are polynomials in $x$ and the chosen parameters:
\begin{quote}
\begin{itemize}
\item[F4a.] $c_{2k-1} = (\gamma' + k \beta') (\kappa+x)$ and 
            $c_{2k}   = k \beta' x$.  
\item[F4b.] $c_{2k-1} = (\gamma + k \alpha)(1 + \kappa x)$ and 
            $c_{2k}   = k \alpha$.  
\item[F6.]  $c_{2k-1} = [\gamma' + k (\alpha' + \beta')] (\kappa+x)$ 
        and $c_{2k}   = k (\alpha' + \beta')(\kappa + x)$.  
\end{itemize}
\end{quote}
We thus again find \emph{a posteriori} that the
S-fraction \eqref{def_Stype.three} might also be valid when the
chosen parameters (now including $\kappa$) are treated as indeterminates. 

Of course, all the candidate families and their S-fractions
are still at this point merely conjectures.
We will prove them in Section~\ref{sec.main2},
using some general results to be presented in Section~\ref{sec.SZ}.

%
%
\subsection{A ``master S-fraction'' for permutations} \label{sec.SZ} 

Euler \cite[section~21]{Euler_1760}\footnote{
   The paper \cite{Euler_1760},
   which is E247 in Enestr\"om's \cite{Enestrom_13} catalogue,
   was probably written circa 1746;
   it~was presented to the St.~Petersburg Academy in 1753,
   and published in 1760.
}
showed that the generating function of the factorials
can be represented as a beautiful S-fraction,
\be
   \sum_{n=0}^\infty n! \: t^n
   \;=\;
   \cfrac{1}{1 - \cfrac{1t}{1 - \cfrac{1t}{1 - \cfrac{2t}{1- \cfrac{2t}{1-\cdots}}}}}
   \;,
 \label{eq.nfact.contfrac}
\ee
with coefficients $c_{2k-1} = c_{2k} = k$.
Inspired by \reff{eq.nfact.contfrac},
Sokal and Zeng \cite{SZ} introduced the polynomials $\scrp_n(w,y,u,v)$ defined
by the continued fraction
\be
   \sum_{n=0}^\infty \scrp_n(w,y,u,v) \: t^n
   \;=\;
   \cfrac{1}{1 - \cfrac{wt}{1 - \cfrac{yt}{1 - \cfrac{(w+u)t}{1- \cfrac{(y+v)t}{1 - \cdots}}}}}
 \label{eq.eulerian.fourvar.contfrac}
\ee
with coefficients
\begin{subeqnarray}
   c_{2k-1}  & = &  w + (k-1) u \\
   c_{2k}    & = &  y + (k-1) v
 \label{def.weights.eulerian.fourvar}
\end{subeqnarray}
Clearly $\scrp_n(w,y,u,v)$ is a homogeneous polynomial of degree $n$;
it therefore has three ``truly independent'' variables.
Since $\scrp_n(1,1,1,1) = n!$, which enumerates permutations of an $n$-element set,
it is natural to expect that $\scrp_n(w,y,u,v)$ enumerates permutations of $[n]$
according to some natural trivariate statistic.
Sokal and Zeng \cite{SZ} gave
two alternative versions of this trivariate statistic;
here is one of them:

\begin{theorem}[Sokal--Zeng]
   \label{thm.perms.S}
The polynomials $\scrp_n(w,y,u,v)$ defined by
\reff{eq.eulerian.fourvar.contfrac}/\reff{def.weights.eulerian.fourvar}
have the combinatorial interpretation
\be
   \scrp_n(w,y,u,v)
   \;=\;
   \sum_{\sigma \in \Sym_n}
      w^{\cyc(\sigma)} y^{\erec(\sigma)}
         u^{n - \exc(\sigma) - \cyc(\sigma)} v^{\exc(\sigma) - \erec(\sigma)}
   \;,
 \label{eq.eulerian.fourvar.cyc}
\ee
where the sum runs over the set $\Sym_n$ of permutations of $[n]$.
\end{theorem}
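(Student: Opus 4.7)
The natural plan is to combine Flajolet's combinatorial theory of continued fractions with an explicit bijection in the spirit of Foata--Zeilberger. First, I would apply Flajolet's \cite{Flajolet_80} master theorem, which identifies the S-fraction in \reff{eq.eulerian.fourvar.contfrac} with the generating function of Dyck paths of length $2n$, in which every fall from height $i$ contributes the weight $c_i$. Because the odd-height weight splits as $c_{2k-1} = w + (k-1)u$ and the even-height weight as $c_{2k} = y + (k-1)v$, this enumeration is the same as that of \emph{labelled} Dyck paths of length $2n$, where every odd-height fall is either $w$-coloured or labelled by one of $\{1,\dots,k-1\}$ (contributing $u$), and every even-height fall is either $y$-coloured or labelled by one of $\{1,\dots,k-1\}$ (contributing $v$). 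Thus $\scrp_n(w,y,u,v)$ is the generating polynomial of these labelled Dyck paths for the statistic (number of $w$-falls, number of $y$-falls, number of $u$-labels, number of $v$-labels).

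Next I would construct a weight-preserving bijection $\Phi$ from $\Sym_n$ onto these labelled Dyck paths. The idea is to scan $i = 1, 2, \dots, n$ and read two steps from $\sigma$ at each stage: the first step records how $i$ behaves as an image (is $\sigma^{-1}(i) \lessgtr i$?), the second how it behaves as an argument (is $\sigma(i) \lessgtr i$?). This yields a Dyck word, since the number of ``opened'' constraints up through step $i$ is always nonnegative. The local type of $i$ in $\sigma$ falls into five standard classes---fixed point, cycle peak, cycle valley, cycle double-rise, cycle double-fall---and each class dictates whether the two associated steps are rises or falls of specific parities. Crucially, at each fall one must choose a label, which one assigns by reading off the rank of $i$ among a certain explicit set of ``active'' values (those which, in the partially reconstructed permutation so far, are still awaiting their image or preimage). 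One of the ranks plays a distinguished role: the rank $0$ option corresponds to opening a new cycle (at odd-height falls, giving the weight $w$) or to producing a new exceedance record (at even-height falls, giving $y$), while the other rank options give $u$ or $v$. The height of the Dyck path at the moment of a given fall equals (twice) the count of currently unresolved constraints, which is exactly the number of cycles currently ``open'' plus the number of pending exceedances, and this agrees with the number of admissible labels.

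The main obstacle is to verify that $\Phi$ simultaneously respects all four statistics $(\cyc, \erec, n - \exc - \cyc, \exc - \erec)$, especially since $\cyc$ is a global invariant of the cycle structure whereas $\erec$ is a linear statistic depending on the one-line reading. The key point to check is that scanning by values $i = 1,2,\dots,n$ turns each cycle minimum into a uniquely identifiable step (the rank-$0$ odd-height fall) and each exceedance record into a uniquely identifiable step (the rank-$0$ even-height fall), while the remaining labelled falls correctly count non-cycle-minimum non-exceedances and non-record exceedances. I would carry out this verification case-by-case on the five local types of $i$, checking that the contribution of $i$ to the local weight of the path equals its contribution to the joint statistic. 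Bijectivity would then follow by constructing an explicit inverse: from a labelled Dyck path one inductively recovers $\sigma$ by deciding, at each step, which active value receives $i$ as its image or preimage, using the label to specify the choice. The resulting statement is Theorem~\ref{thm.perms.S}.
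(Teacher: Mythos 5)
First, a point of comparison: the paper does not actually prove Theorem~\ref{thm.perms.S}. It is imported wholesale from Sokal--Zeng \cite{SZ}, and the only argument supplied in the paper is for the specialization $v=y$, by an entirely different, non-bijective route (Proposition~\ref{prop.twovar.Ma.Salas} combined with Stieltjes' Laplace-transform observation and Foata's fundamental transformation, as sketched in the Remarks closing Section~\ref{sec.SZ}). Your plan --- Flajolet's Dyck-path interpretation of the S-fraction followed by a Foata--Zeilberger-type history bijection --- is the strategy of the cited reference, so the architecture is the right one; but what you have written is a program for a proof rather than a proof.

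The gap is that the entire content of the theorem lies in the two things you defer: the precise definition of the ``active'' sets being ranked, and the case-by-case check that one labelling scheme simultaneously tracks $\cyc$, $\erec$, $n-\exc-\cyc$ and $\exc-\erec$. Moreover, the concrete assignments you do state are inconsistent with your own scanning convention. Under the scan $i=1,\dots,n$ with one step for $\sigma^{-1}(i)\lessgtr i$ and one for $\sigma(i)\lessgtr i$, a cycle minimum of a nontrivial cycle has $\sigma^{-1}(i)>i$ and $\sigma(i)>i$ and therefore emits two \emph{rises}; it cannot be ``the rank-$0$ odd-height fall'', and ``opening a new cycle'' cannot occur at a fall. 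The weight $w$ must instead attach to cycle \emph{closings} (the cycle maxima, which are cycle peaks and do emit falls); this is also what the bookkeeping forces, since cycle maxima are exactly the $\cyc(\sigma)$ distinguished non-excedances, leaving $n-\exc-\cyc$ for the exponent of $u$. A further warning sign: a cycle valley emits no falls at all, so its weight ($y$ or $v$) must be carried by a fall emitted while processing a \emph{different} index, and a fixed point emits exactly one fall, from an odd height $2m+1$ that need not equal $1$, even though its weight is forced to be $w$ with no freedom of choice. So the verification cannot be ``local in $i$'' in the way you describe, and matching the number of admissible labels at each fall to the height of the path is precisely the delicate point a complete proof must nail down.
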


\noindent
The permutation statistics appearing in \reff{eq.eulerian.fourvar.cyc}
are defined as follows.
Given a permutation $\sigma \in \Sym_n$, an index $i\in [n]$ is called a
\begin{itemize}
\item {\em record}\/ (rec) (or {\em left-to-right maximum}\/)
         if $\sigma(j) < \sigma(i)$ for all $j < i$
      [note that index 1 is always a record]; 
\item {\em antirecord}\/ (arec) (or {\em right-to-left minimum}\/)
         if $\sigma(j) > \sigma(i)$ for all $j > i$
      [note that index $n$ is always an antirecord];
\item {\em exclusive record}\/ (erec) if it is a record and not also
         an antirecord;
\item {\em exclusive antirecord}\/ (earec) if it is an antirecord and not also
      a record.
\end{itemize}
Also, an index $i\in [n]$ is called an \emph{excedance} if $\sigma(i) > i$.
The number of exclusive records, excedances and cycles of $\sigma$
are denoted by $\erec(\sigma)$, $\exc(\sigma)$ and $\cyc(\sigma)$,
respectively. 

\medskip

{\bf Remark.} The foregoing quantities are a proper subset of a more elaborate
classification of the permutations $\sigma\in \Sym_n$
--- the ``record-and-cycle classification'' into ten disjoint categories
\cite{SZ} ---
but we do not need this refinement here.
\myendremark

\bigskip

For all the candidate families in Theorem~\ref{theor.main},
their conjectured S-fraction coefficients (see Section~\ref{sec.search})
belong to a very special subclass of Theorem~\ref{thm.perms.S},
namely $c_{2k} \propto k$, so that $v=y$:
\be
   \scrp_n(w,y,u,y)
   \;=\; \sum_{\sigma \in \Sym_n} w^{\cyc(\sigma)} y^{\exc(\sigma)}
                 u^{n - \exc(\sigma) - \cyc(\sigma)}
   \;.
 \label{eq.eulerian.fourvar.cyc.v=y}
\ee
These latter polynomials have a nice explicit formula, as follows:

\begin{proposition}
   \label{prop.twovar.Ma.Salas}
The polynomial $\scrp_n(w,y,u,y)$ defined by \reff{eq.eulerian.fourvar.cyc.v=y}
can be written as
\begin{subeqnarray}
   \scrp_n(w,y,u,y)
   & = &
   \sum_{r=0}^n \stirlingsubset{n}{r} \, (y-u)^{n-r} \,
      \sum_{i=0}^r \stirlingcycle{r}{i} \, w^i \, u^{r-i}
          \\[2mm]
   & = &
   \sum_{r=0}^n \stirlingsubset{n}{r} \, (y-u)^{n-r} \,
      \prod\limits_{k=0}^{r-1} (w+ku)
   \;,
 \slabel{eq.twovar.Ma.Salas2}
 \label{eq.twovar.Ma.Salas}
\end{subeqnarray}
where $\stirlingsubset{n}{k}$ denotes the number of partitions
of an $n$-element set into $k$ nonempty blocks,
and $\stirlingcycle{n}{k}$ denotes the number of permutations
of an $n$-element set with $k$ cycles.

Furthermore, the polynomials $\scrp_n(w,y,u,y)$
have the exponential generating function
\be
   \sum_{n=0}^\infty \scrp_n(w,y,u,y) \: \frac{t^n}{n!}
   \;=\;
   \biggl( \frac{y-u}{y - u e^{(y-u)t}} \biggr) ^{\! w/u}
   \;.
 \label{eq.twovar.Ma.Salas.egf}
\ee
\end{proposition}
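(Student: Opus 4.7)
The plan is to establish the egf \reff{eq.twovar.Ma.Salas.egf} first, via the exponential formula applied to the combinatorial interpretation \reff{eq.eulerian.fourvar.cyc.v=y}, and then to deduce the explicit formula \reff{eq.twovar.Ma.Salas} by expanding that egf.

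For the first step, observe that the per-permutation weight
$w^{\cyc(\sigma)} y^{\exc(\sigma)} u^{n - \exc(\sigma) - \cyc(\sigma)}$ in \reff{eq.eulerian.fourvar.cyc.v=y} is multiplicative over cycles: a cycle of length $\ell$ with $e$ excedances contributes the factor $w \cdot y^e \cdot u^{\ell - e - 1}$ (the length-$1$ case $\ell=1, e=0$ reducing to $w$). The exponential formula therefore gives
\begin{equation*}
   \sum_{n \geq 0} \scrp_n(w, y, u, y) \, \frac{t^n}{n!}
   \;=\;
   \exp\!\left( \frac{w}{u} \, G(y, u, t) \right),
\end{equation*}
where $G(y, u, t) := \sum_{\ell \ge 1} \tfrac{t^\ell}{\ell!} \sum_{\sigma} y^{\exc(\sigma)} u^{\ell - \exc(\sigma)}$, the inner sum running over cyclic permutations of $[\ell]$. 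Now $\exp(G)$ is the egf of the ``homogeneous Eulerian polynomials'' $\widetilde{A}_n(y, u) := \sum_{\sigma \in \Sym_n} y^{\exc(\sigma)} u^{n - \exc(\sigma)}$ (by the exponential formula again), and the classical closed form $\sum_n \widetilde{A}_n(y, u) \, t^n/n! = (y - u)/(y - u e^{(y - u) t})$ then yields \reff{eq.twovar.Ma.Salas.egf}.

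For the second step, set $s(t) := (e^{(y-u)t} - 1)/(y - u)$. A direct algebraic computation gives $(y - u)/(y - u e^{(y-u)t}) = 1/(1 - u s)$, so that \reff{eq.twovar.Ma.Salas.egf} becomes $(1 - u s)^{-w/u}$. The generalized binomial series yields
\begin{equation*}
   (1 - u s)^{-w/u}
   \;=\;
   \sum_{r \ge 0} \frac{s^r}{r!} \, \prod_{k=0}^{r-1} (w + k u) \,.
\end{equation*}
The classical Stirling egf $\sum_{n \ge r} \stirlingsubset{n}{r} t^n/n! = (e^t - 1)^r/r!$, with $t$ replaced by $(y-u)t$, gives $s(t)^r / r! = \sum_{n \ge r} \stirlingsubset{n}{r}(y-u)^{n-r} t^n/n!$. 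Substituting and extracting the coefficient of $t^n/n!$ yields \reff{eq.twovar.Ma.Salas2}; the alternative form in \reff{eq.twovar.Ma.Salas} then follows from the standard Stirling-cycle identity $\prod_{k=0}^{r-1}(w + k u) = \sum_i \stirlingcycle{r}{i} w^i u^{r-i}$.

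The main obstacle is the bookkeeping in the first step: the exponent of $u$ contributed by a cycle of length $\ell$ with $e$ excedances is $\ell - e - 1$, not $\ell - e$, because the cycle minimum is an extra position that contributes to neither $y$ nor $u$. Once the $w/u$ normalization is correctly extracted, the rest of the argument is a routine manipulation of known generating functions: the classical egf for the (homogeneous) Eulerian polynomial, the Newton binomial series, and the Stirling egf.
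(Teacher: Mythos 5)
Your proof is correct, but it runs in the opposite direction from the paper's and rests on a different key input. The paper first establishes the closed form \reff{eq.twovar.Ma.Salas} by quoting Ma's formula (Lemma~\ref{lemma.Ma}) for the joint distribution of $(\cyc,\exc)$ and summing it against $w^i y^\ell u^{n-i-\ell}$ using two standard identities; it then obtains the egf \reff{eq.twovar.Ma.Salas.egf} by recognizing \reff{eq.twovar.Ma.Salas2} as a $(y-u)$-Stirling subset transform of the sequence $\prod_{k=0}^{r-1}(w+ku)$ and applying Lemma~\ref{lemma.stirlingtransform.egf} with $A(t)=(1-ut)^{-w/u}$. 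You instead derive the egf first, by observing that the weight in \reff{eq.eulerian.fourvar.cyc.v=y} is multiplicative over the cycle decomposition and applying the exponential formula twice, which reduces everything to the classical homogeneous Eulerian egf [cf.\ \reff{eq.twovar.Ma.Salas.egf_u=w}]; you then recover \reff{eq.twovar.Ma.Salas} by expanding $(1-us)^{-w/u}$ with the Newton binomial series and the Stirling-subset egf. The two routes end up manipulating the same objects (the Stirling-subset egf and the rising-factorial expansion), but yours is self-contained where the paper's is not: it does not need Ma's lemma as an external input, and indeed extracting the coefficient of $w^i y^\ell u^{n-i-\ell}$ from your derivation gives an independent proof of \reff{eq.lemma.Ma}. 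One small slip in your closing remark: the position of each cycle that carries the weight $w$ rather than $y$ or $u$ is naturally the cycle \emph{maximum} (which is never an excedance), not the cycle minimum (which, for cycles of length $\ge 2$, always is one); this does not affect your computation, since you work directly with the exponent $\ell-e-1$.
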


We shall prove the formulae (\ref{eq.twovar.Ma.Salas}a,b)
by making use of a result due to Ma \cite[Corollary~2.3]{Ma}:

\begin{lemma}[Ma]
   \label{lemma.Ma} 
For $n,i,\ell \ge 0$ with $i+\ell \le n$, define
\be
   A_n(i,\ell)
   \;\eqdef\;
   \bigl|  \{ \sigma \in \Sym_n \colon\: \cyc(\sigma)=i
                   \hbox{ and } \exc(\sigma) = \ell   \}
   \bigr|
   \;.
\ee
Then
\be
   A_n(i,\ell)
   \;=\;
   \sum\limits_{r=i}^{n-\ell} (-1)^{n-r-\ell} \, \stirlingsubset{n}{r} \,
       \stirlingcycle{r}{i} \, \binom{n-r}{\ell}
   \;.
 \label{eq.lemma.Ma}
\ee
\end{lemma}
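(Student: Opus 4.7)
The plan is to deduce Ma's formula from the master bivariate identity
\begin{equation}
   \sum_{\sigma \in \Sym_n} w^{\cyc(\sigma)} \, t^{\exc(\sigma)}
   \;=\;
   \sum_{r=0}^{n} \stirlingsubset{n}{r} \,
         \Bigl[\prod_{j=0}^{r-1}(w+j)\Bigr] \, (t-1)^{n-r},
 \label{eq.plan.master}
\end{equation}
which is a joint-cycle refinement of Frobenius's classical Eulerian identity $A_n(t) = \sum_r r!\, \stirlingsubset{n}{r} (t-1)^{n-r}$ (recovered from \eqref{eq.plan.master} at $w=1$, using $\prod_{j=0}^{r-1}(1+j) = r!$). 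Once \eqref{eq.plan.master} is in hand, the lemma is obtained by a routine coefficient extraction, as detailed below.

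First I would establish \eqref{eq.plan.master}. A clean combinatorial proof proceeds as follows: group the permutations $\sigma \in \Sym_n$ according to the set partition $\pi$ of $[n]$ into \emph{orbits} of a suitable auxiliary equivalence relation --- for instance, blocks obtained by merging fixed points with the cycles of the ``non-fixed'' part, or via a ``cycle-break'' bijection à la Foata. The Stirling number $\stirlingsubset{n}{r}$ counts such partitions with $r$ blocks; the rising factorial $\prod_{j=0}^{r-1}(w+j) = \sum_i \stirlingcycle{r}{i} w^i$ encodes the choice of a cyclic arrangement on the $r$ blocks (contributing a factor $w$ per cycle); and $(t-1)^{n-r}$ arises from an inclusion--exclusion that distributes the $n-r$ ``inner'' positions between excedance-like and non-excedance contributions. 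Alternatively, one can invoke \eqref{eq.plan.master} directly from the literature (it is equivalent to Brenti's or Ksavrelof--Zeng's expansions of the cycle-refined Eulerian polynomials), which is perhaps the cleanest route given that Lemma~\ref{lemma.Ma} is itself already attributed to Ma.

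The second step is a purely formal coefficient extraction on both sides of \eqref{eq.plan.master}. On the left, the coefficient of $w^i t^\ell$ is, by definition, $A_n(i,\ell)$. On the right, using
\begin{equation*}
   \prod_{j=0}^{r-1}(w+j) \;=\; \sum_i \stirlingcycle{r}{i}\, w^i
   \qquad\text{and}\qquad
   (t-1)^{n-r} \;=\; \sum_\ell \binom{n-r}{\ell}\, (-1)^{n-r-\ell}\, t^\ell,
\end{equation*}
the coefficient of $w^i t^\ell$ equals
\begin{equation*}
   \sum_{r=0}^{n} (-1)^{n-r-\ell}\, \stirlingsubset{n}{r}\,
      \stirlingcycle{r}{i}\, \binom{n-r}{\ell}.
\end{equation*}
The summand vanishes outside the range $i \le r \le n-\ell$ by the standard vanishing of the Stirling cycle numbers ($\stirlingcycle{r}{i}=0$ for $r<i$) and the binomial coefficient ($\binom{n-r}{\ell}=0$ for $r>n-\ell$), so the sum can be truncated to $r \in \{i, i+1, \dots, n-\ell\}$, yielding exactly \reff{eq.lemma.Ma}.

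The principal obstacle is \eqref{eq.plan.master} itself. The coefficient extraction in the second step is entirely mechanical; the combinatorial content of the lemma is concentrated in the master identity. For a fully self-contained proof one must either construct an explicit bijection tracking $(\cyc, \exc)$ simultaneously --- which is more delicate than the analogous statistic $(\cyc, \des)$ because excedances are not directly compatible with a cycle decomposition --- or else compute the bivariate exponential generating function $\sum_n \bigl(\sum_\sigma w^{\cyc(\sigma)} t^{\exc(\sigma)}\bigr) z^n/n!$ and recognize its coefficient expansion. Since the lemma is quoted from Ma, the present paper can proceed by citation, reducing the burden to verifying that Ma's Corollary~2.3 is indeed of the form stated here.
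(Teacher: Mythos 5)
The paper offers no proof of this lemma at all: it is quoted from Ma's paper (Corollary~2.3) and used as a black box, so there is no internal argument to compare yours against. Your second step --- extracting the coefficient of $w^i t^\ell$ from the bivariate identity $\sum_{\sigma \in \Sym_n} w^{\cyc(\sigma)} t^{\exc(\sigma)} = \sum_{r} \stirlingsubset{n}{r} \bigl[\prod_{j=0}^{r-1}(w+j)\bigr](t-1)^{n-r}$ using $\prod_{j=0}^{r-1}(w+j)=\sum_i \stirlingcycle{r}{i}w^i$ and the binomial expansion of $(t-1)^{n-r}$ --- is correct and entirely mechanical, and the truncation of the sum to $i \le r \le n-\ell$ is justified exactly as you say. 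The real issue is where the master identity comes from, and here there are two concerns. First, a circularity hazard: your master identity is precisely the $u=1$ specialization of the paper's Proposition~\ref{prop.twovar.Ma.Salas} (compare \reff{eq.twovar.Ma.Salas} with $u=1$, $y=t$), and in the paper that proposition is \emph{deduced from} Lemma~\ref{lemma.Ma}. So if you "invoke it from the literature" you must be careful not to cite a source that itself derives it from Ma's formula. A genuinely independent route does exist, and the paper sketches it in the Remarks following Proposition~\ref{prop.twovar.Ma.Salas}: define $A_n(y,w)$ by the egf \reff{def.Anyw}, obtain the expansion \reff{eq.Anyw.sum} via the Stirling-subset-transform Lemma~\ref{lemma.stirlingtransform.egf}, use Dillon--Roselle's interpretation of $A_n(y,w)$ in terms of descents and records, and transport to $(\exc,\cyc)$ by Foata's fundamental transformation. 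That chain proves your master identity without Ma, and would make your derivation a legitimate (and arguably more self-contained) alternative to the paper's bare citation.

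Second, your proposed "clean combinatorial proof" of the master identity is only a gesture, not a proof. The assertion that $(t-1)^{n-r}$ "arises from an inclusion--exclusion that distributes the $n-r$ inner positions" is exactly the nontrivial content of the identity and is not carried out; as you yourself note, excedances do not interact with the cycle decomposition in the simple way descents do, so "merging fixed points with the cycles of the non-fixed part" does not obviously produce a partition statistic compatible with $\exc$. As written, the proposal either rests on an unproven combinatorial claim or collapses to a citation --- in which case it accomplishes no more than the paper does, merely pushing the cited input one identity upstream. To make this a complete proof, replace the combinatorial sketch by the egf-plus-Foata chain above (or by an explicit computation of $\sum_n \bigl(\sum_\sigma w^{\cyc(\sigma)}t^{\exc(\sigma)}\bigr)z^n/n!$), and the rest goes through.
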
 

\smallskip

And to get the exponential generating function \reff{eq.twovar.Ma.Salas.egf},
we shall use the following general result:

\begin{lemma}[egf of a Stirling subset transform]
   \label{lemma.stirlingtransform.egf}
Let $R$ be a commutative ring containing the rationals,
let $\ba = (a_n)_{n \ge 0}$ be a sequence in $R$,
and let $x$ be an indeterminate;
and define the sequence $\bb = (b_n)_{n \ge 0}$ in $R[x]$ by
\be
   b_n  \;=\;  \sum_{k=0}^n \stirlingsubset{n}{k} \, a_k x^{n-k}
   \;.
\ee
(We call this the ``$x$-Stirling subset transform''.)
Then the exponential generating functions
$A(t) = \sum\limits_{n=0}^\infty a_n t^n/n!$ and
$B(t) = \sum\limits_{n=0}^\infty b_n t^n/n!$
are related by
\be
   B(t)  \;=\;  A\biggl( \frac{e^{xt} -1}{x} \biggr)
   \;.
\ee
\end{lemma}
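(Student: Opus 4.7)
The plan is to compute $B(t)$ directly by interchanging the order of summation and then invoking the classical exponential generating function for the Stirling subset numbers, namely
\[
   \sum_{n \ge k} \stirlingsubset{n}{k} \frac{t^n}{n!}
   \;=\; \frac{(e^t-1)^k}{k!}.
\]
First I would write
\[
   B(t) \;=\; \sum_{n=0}^\infty \frac{t^n}{n!} \sum_{k=0}^n \stirlingsubset{n}{k} a_k x^{n-k}
        \;=\; \sum_{k=0}^\infty a_k \sum_{n=k}^\infty \stirlingsubset{n}{k} \frac{x^{n-k} t^n}{n!},
\]
where the interchange is legitimate in the ring $R[x][[t]]$ of formal power series because for each fixed power of $t$ only finitely many terms contribute.

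Next I would factor $x^{n-k} t^n = x^{-k} (xt)^n$ out of the inner sum, so that it becomes the Stirling subset egf evaluated at $xt$. This gives
\[
   B(t) \;=\; \sum_{k=0}^\infty \frac{a_k}{x^k} \cdot \frac{(e^{xt}-1)^k}{k!}
        \;=\; \sum_{k=0}^\infty \frac{a_k}{k!} \left(\frac{e^{xt}-1}{x}\right)^{\!k}
        \;=\; A\!\left(\frac{e^{xt}-1}{x}\right),
\]
as desired.

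I do not anticipate a substantive obstacle: the only subtle points are working with formal power series in $t$ over the ring $R[x]$, so that the manipulations with $x$ in the denominator make sense formally (this is fine since $(e^{xt}-1)/x = t + xt^2/2 + \cdots$ has $xt$ as a factor to every needed order), and justifying the interchange of summations, which is immediate from the fact that $\stirlingsubset{n}{k} = 0$ for $k > n$. The result specializes correctly: at $x = 1$ it recovers the classical identity $B(t) = A(e^t-1)$ for the ordinary Stirling subset transform, and in the limit $x \to 0$ it reduces to the identity $b_n = a_n$, $B(t) = A(t)$, which is the expected consistency check.
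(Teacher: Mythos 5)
Your proof is correct and is essentially identical to the paper's: both interchange the order of summation, factor out $x^{-k}(xt)^n$, and invoke the classical Stirling subset egf $\sum_{n\ge k}\stirlingsubset{n}{k}t^n/n!=(e^t-1)^k/k!$ to arrive at $A\bigl((e^{xt}-1)/x\bigr)$. Your added remarks on the formal-power-series justification and the consistency checks at $x=1$ and $x\to 0$ are sensible but not needed beyond what the paper already does.
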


\begin{proof}
We have
\begin{subeqnarray}
   \sum\limits_{n=0}^\infty a_n \, \frac{t^n}{n!}
   & = &
   \sum\limits_{n=0}^\infty \, \sum_{k=0}^n
       \stirlingsubset{n}{k} \, a_k \, x^{n-k} \, \frac{t^n}{n!}
     \\[2mm]
   & = &
   \sum\limits_{k=0}^\infty  a_k \, x^{-k}
       \sum_{n=k}^\infty \stirlingsubset{n}{k} \, \frac{(xt)^n}{n!}
     \\[2mm]
   & = &
   \sum\limits_{k=0}^\infty  a_k \, x^{-k} \, \frac{(e^{xt} -1)^k}{k!}
       \qquad\hbox{by \cite[eq.~(7.49)]{Graham_94}}
     \\[2mm]
   & = &
   A\biggl( \frac{e^{xt} -1}{x} \biggr)
   \;.
\end{subeqnarray}
\end{proof}
 
\begin{proof}[Proof of Proposition~\ref{prop.twovar.Ma.Salas}]
Multiply \reff{eq.lemma.Ma} by $w^i y^\ell u^{n-i-\ell}$
and sum over $i$ and $\ell$.
Using the well-known identities \cite[eqs.~(7.48)/(5.12)]{Graham_94}
\begin{eqnarray}
   \sum\limits_i \stirlingcycle{r}{i} \, w^i \, u^{r-i}
   & = &
   \prod\limits_{k=0}^{r-1} (w + ku)
        \qquad
        \\[2mm]
   \sum\limits_\ell \binom{n-r}{\ell} \, y^\ell \, (-u)^{n-r-\ell}
   & = &
   (y-u)^{n-r}
\end{eqnarray}
gives (\ref{eq.twovar.Ma.Salas}a,b).
Now use Lemma~\ref{lemma.stirlingtransform.egf} with
\be
   \sum_{r=0}^\infty \sum_{i=0}^r \stirlingcycle{r}{i} \, w^i \, u^{r-i} \,
       \frac{t^r}{r!}
   \;=\;
   (1 - ut)^{-w/u}
   \;\eqdef\; A(t)
 \label{eq.egf.stirlingcycle}
\ee
\hspace*{-1.3mm}\cite[eq.~(7.55)]{Graham_94}
and $x = y-u$ to obtain \reff{eq.twovar.Ma.Salas.egf}.
\end{proof}

\smallskip

{\bf Remarks.}
1.  Lemma~\ref{lemma.stirlingtransform.egf} is a special case
of the fundamental theorem of exponential Riordan arrays (FTERA)
\cite{Barry_16,Barry_18}.

2. Let us stress that this proof of Proposition~\ref{prop.twovar.Ma.Salas}
is based on the combinatorial definition \reff{eq.eulerian.fourvar.cyc.v=y}
of the polynomials $\scrp_n(w,y,u,y)$;
it makes no use of continued fractions
or the Sokal--Zeng Theorem~\ref{thm.perms.S}.
On the other hand, Theorem~\ref{thm.perms.S}
then tells us that the ogf of these polynomials
has an S-fraction representation of the form
\reff{eq.eulerian.fourvar.contfrac} with $v=y$.
Zeng \cite{Zeng_93} has shown that the J-fraction
associated by contraction to this S-fraction
(see Proposition~\ref{prop.contraction_even} below)
can alternatively be obtained
directly from the egf \eqref{eq.twovar.Ma.Salas.egf}
by the Stieltjes--Rogers addition-formula method.
\myendremark

\medskip

Two particular cases of \reff{eq.twovar.Ma.Salas}/\reff{eq.twovar.Ma.Salas.egf}
are of especial interest:

\medskip

1) When $u=y$, we obtain the homogenized Stirling cycle polynomials:
\reff{eq.eulerian.fourvar.cyc.v=y}/\reff{eq.twovar.Ma.Salas} reduce to
\be
   \scrp_n(w,y,y,y)
   \;=\;
   \sum\limits_{r=0}^n \stirlingcycle{n}{r} \, w^r \, y^{n-r}
   \;=\;
   \prod_{k=0}^{n-1} (w + ky)
   \;,
 \label{def_SZ_Pn_formula_v=y=u}
\ee
the limit $u \to y$ of \reff{eq.twovar.Ma.Salas.egf} gives the egf
\cite[eq.~(7.55)]{Graham_94}
\be
   \sum_{n=0}^\infty \scrp_n(w,y,y,y) \: \frac{t^n}{n!}
   \;=\;
   (1 - yt)^{-w/y}
 \label{eq.twovar.Ma.Salas.egf_u=y}
\ee
[cf.\ \reff{eq.egf.stirlingcycle}],
and the ogf has the S-fraction
\be
   \sum_{n=0}^\infty \scrp_n(w,y,y,y) \: t^n
   \;=\;
   \cfrac{1}{1 - \cfrac{wt}{1 - \cfrac{yt}{1 - \cfrac{(w+y)t}{1- \cfrac{2yt}{1 - \cdots}}}}}
 \label{eq.stirlingcycle.Sfrac}
\ee
with coefficients $c_{2k-1} = w + (k-1)y$, $c_{2k} = ky$.
This S-fraction was found by Euler
\cite[section~26]{Euler_1760} \cite{Euler_1788}.\footnote{
   The paper \cite{Euler_1788},
   which is E616 in Enestr\"om's \cite{Enestrom_13} catalogue,
   was apparently presented to the St.~Petersburg Academy in 1776,
   and published posthumously in 1788.
}

\medskip

2) When $u=w$, we obtain the homogenized Eulerian polynomials:
\reff{eq.eulerian.fourvar.cyc.v=y}/\reff{eq.twovar.Ma.Salas} reduce to
\be
   \scrp_n(w,y,w,y)
   \;=\;
   \sum\limits_{k=0}^n \euler{n}{k} \,  y^k \, w^{n-k}
   \;=\;
   \sum_{r=0}^n r! \, \stirlingsubset{n}{r} \, (y-w)^{n-r} \, w^r
 \label{def_SZ_Pn_formula_v=y_w=u}
\ee
where $\euler{n}{k}$ denotes the number of permutations of $[n]$
with $k$ excedances,
\reff{eq.twovar.Ma.Salas.egf} gives the egf \cite[eq.~(7.56)]{Graham_94}
\be
   \sum_{n=0}^\infty \scrp_n(w,y,w,y) \: \frac{t^n}{n!}
   \;=\;
   \frac{y-w}{y - w e^{(y-w)t}}
   \;,
 \label{eq.twovar.Ma.Salas.egf_u=w}
\ee
and the ogf has the S-fraction
\be
   \sum_{n=0}^\infty \scrp_n(w,y,y,y) \: t^n
   \;=\;
   \cfrac{1}{1 - \cfrac{wt}{1 - \cfrac{yt}{1 - \cfrac{2wt}{1- \cfrac{2yt}{1 - \cdots}}}}}
 \label{eq.eulerian.Sfrac}
\ee
with coefficients $c_{2k-1} = kw$, $c_{2k} = ky$.
This S-fraction was found by Stieltjes
\cite[section~79]{Stieltjes_1894}.\footnote{
   Stieltjes does not specifically mention the Eulerian polynomials,
   but he does state that the continued fraction
   is the formal Laplace transform of
   $(1-y) / (e^{t(y-1)} - y)$,
   which is well known to be the exponential generating function
   of the Eulerian polynomials
   [cf.\ \reff{eq.twovar.Ma.Salas.egf_u=w} with $w=1$].
   Stieltjes also refrains from showing the proof:
   ``Pour abr\'eger, je supprime toujours les artifices qu'il faut employer
     pour obtenir la transformation de l'int\'egrale d\'efinie
     en fraction continue'' (!).
   But a proof is sketched, albeit also without much explanation,
   in the book of Wall \cite[pp.~207--208]{Wall_48}.
   The J-fraction corresponding to the contraction
   (see Proposition~\ref{prop.contraction_even} below)
   of this S-fraction
   was proven, by combinatorial methods, by Flajolet
   \cite[Theorem~3B(ii) with a slight typographical error]{Flajolet_80}.
   Also, Dumont \cite[Propositions~2 and 7]{Dumont_86}
   gave a direct combinatorial proof of the S-fraction,
   based on an interpretation of the Eulerian polynomials
   in terms of ``bipartite involutions of $[2n]$''
   and a bijection of these onto Dyck paths.
 \label{footnote.stieltjes}
}
The equality of the last two terms in \reff{def_SZ_Pn_formula_v=y_w=u}
is a well-known identity \cite[eqns.~(6.39)/(6.40)]{Graham_94}
that relates the Eulerian polynomials to the ordered Bell polynomials.%
\footnote{
   The ordered Bell polynomials appear already
   (albeit without the combinatorial interpretation)
   in Euler's book {\em Foundations of Differential Calculus,
    with Applications to Finite Analysis and Series}\/,
   first published in 1755 \cite[paragraph~172]{Euler_1755}.
   This book is E212 in Enestr\"om's \cite{Enestrom_13} catalogue.
   Furthermore, the identity \reff{def_SZ_Pn_formula_v=y_w=u}
   appears already there \cite[paragraphs~172 and 173]{Euler_1755};
   it was rediscovered a century-and-a-half later by Frobenius \cite{Frobenius_10}.
   See also \cite[pp.~150--151]{Flajolet_82} for a simple bijective proof.
}

\bigskip

{\bf Remarks.}
The polynomials \reff{eq.twovar.Ma.Salas} apparently first appeared
in the work of Carlitz \cite{Carlitz_60}.
(He considered the case $u=1$,
 but this is equivalent by homogenization to the general case.)
More specifically, Carlitz \cite[p.~422]{Carlitz_60}
(see also Dillon and Roselle \cite{Dillon_68})
showed that the generalized Eulerian polynomials $A_n(y,w)$ defined by
the exponential generating function
\be
   \biggl( \frac{1-y}{e^{(y-1)t} - y} \biggr) ^{\! w}
   \;=\;
   \sum_{n=0}^\infty A_n(y,w) \, \frac{t^n}{n!}
 \label{def.Anyw}
\ee
have the explicit expression
\be
   A_n(y,w)
   \;=\;  
   \sum_{r=0}^n \stirlingsubset{n}{r} \, (y-1)^{n-r} \,
      \prod\limits_{k=0}^{r-1} (w+k)
   \;.
 \label{eq.Anyw.sum}
\ee
(We proved this in reverse by using Lemma~\ref{lemma.stirlingtransform.egf}.)
The formula \reff{eq.lemma.Ma}
for the coefficients of the polynomials \reff{eq.Anyw.sum}
can also be found in
Dillon and Roselle \cite[eqns.~(1.3)/(3.3)/(3.5)]{Dillon_68}.

Furthermore, Dillon and Roselle \cite[section~5]{Dillon_68}
showed that $A_n(y,w)$ enumerates permutations of $[n]$
with a weight $y$ for each descent
[i.e.\ each index $i \in [n-1]$ such that $\sigma(i) > \sigma(i+1)$]
and a weight $w$ for each record.
And by Foata's fundamental transformation,
this is equivalent to enumerating permutations of $[n]$
with a weight $y$ for each excedance and $w$ for each cycle.\footnote{
   We use Foata's fundamental transformation \cite[section~I.3]{Foata_70}
   in the following form \cite[pp.~17--18]{Stanley_86}:
   Given a permutation $\sigma \in \Sym_n$,
   we write $\sigma$ in disjoint cycle notation
   with the convention that (a)~each cycle is written with its largest element
   (the {\em cycle maximum}\/) first, and
   (b)~the cycles are written in increasing order of their largest element;
   we then erase the parentheses and call the resulting word $\widehat{\sigma}$.
   The map $\sigma \mapsto \widehat{\sigma}$ is a bijection,
   because the permutation $\sigma$ can be uniquely recovered
   from $\widehat{\sigma}$
   by inserting a left parenthesis preceding each record
   and a right parenthesis where appropriate
   (i.e.\ before every internal left parenthesis and at the end).
   There is now a one-to-one correspondence between
   cycle maxima in $\sigma$ and records in $\widehat{\sigma}$,
   and between anti-excedances in $\sigma$ and descents in $\widehat{\sigma}$.
   Finally, the bijection $\sigma \mapsto R \circ \sigma \circ R$
   where $R(i) = n+1-i$ interchanges anti-excedances and excedances
   while preserving the number of cycles.
   This proves the equivalence asserted in the text.
}
So this chain of reasoning gives an alternate proof of the equivalences between
\reff{eq.eulerian.fourvar.cyc.v=y}, \reff{eq.twovar.Ma.Salas}
and \reff{eq.twovar.Ma.Salas.egf}.

On the other hand, Stieltjes \cite[section~81]{Stieltjes_1894}
observed that the continued fraction
\reff{eq.eulerian.fourvar.contfrac} with $v=y$ and $u=1$
is the formal Laplace transform of
the exponential generating function \reff{def.Anyw}.\footnote{
   Here too, Stieltjes refrained from showing the proof:
   see footnote~\ref{footnote.stieltjes}.
}
Combining this result with the just-mentioned combinatorial interpretation
of $A_n(y,w)$ in terms of excedances and cycles \cite[section~IV.3]{Foata_70}
proves the $v=y$ special case of the Sokal--Zeng Theorem~\ref{thm.perms.S}.
(By contrast, the proof of Sokal and Zeng \cite{SZ} is purely combinatorial,
and makes no use of exponential generating functions.)

See also \cite{Carlitz_77,Brenti_00,Ksavrelof_03,Savage_12,Ma}
for some later work on these polynomials.
\myendremark

%
%
\subsection{Proof that the candidate families have an S-fraction representation}
  \label{sec.main2}

We will now complete the proof of Theorem~\ref{theor.main} by showing
that each of the ten families indeed has an S-fraction
representation with the conjectured coefficients $\bc$. 
This section is organized as follows:
For each family, we will state the main result and give its proof;
then we will describe some additional properties of the given family
and mention some special cases of combinatorial interest. 
The families are grouped by duality;
two families related by duality will have essentially identical proofs,
with $c_i(x) \to x \, c_i(1/x)$.
Each proof will be based on identifying
the exponential generating function \reff{def_egf}
as a special case of \reff{eq.twovar.Ma.Salas.egf};
it will then follow from Theorem~\ref{thm.perms.S}
and Proposition~\ref{prop.twovar.Ma.Salas}
that the corresponding ordinary generating function \reff{def_ogf}
is given by the S-fraction \reff{eq.eulerian.fourvar.cyc} with $v=y$
and suitable values for $w,u,y$.

%
%
\subsubsection{Families 1a and 1b} \label{sec.family.I}

\begin{proposition}[S-fraction for families 1a and 1b]
    \label{prop.I}
The ogf $f(x,t;\bmu)$ for the recurrence \eqref{eq_binomvert} with 
$\bmu=(0,\beta,0$, $\alpha',-\alpha',\gamma')$ has an S-type continued 
fraction representation in the ring $\Z[x,\beta,\alpha',\gamma'][[t]]$ 
with coefficients 
\be
c_{2k-1} \;=\; [\gamma' \,+\, (k-1)\alpha']\, x\,, \quad 
c_{2k}   \;=\; k \beta   \;.
\label{eq.ak.Ia}
\ee
Similarly, the ogf $f(x,t;\bmu)$ for the recurrence \eqref{eq_binomvert} with
$\bmu=(0,\beta,\gamma, \alpha',-\alpha',0)$
has an S-type continued fraction representation in the ring
$\Z[x,\beta,\gamma,\alpha'][[t]]$ with coefficients
\be
c_{2k-1} \;=\; \gamma \,+\, (k-1)\beta \,, \quad
c_{2k}   \;=\; k \alpha' x   \;.
\label{eq.ak.Ib}
\ee
\end{proposition}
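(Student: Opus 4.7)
The plan is to identify the row-generating polynomials $P_n(x;\bmu)$ for family~1a as a specialization of the Sokal--Zeng polynomials $\scrp_n(w,y,u,y)$, and then read off the S-fraction \reff{eq.ak.Ia} directly from Theorem~\ref{thm.perms.S}. Family~1b will then follow by applying the duality map $D$ of \reff{eq.duality.mu}, which (as checked by a one-line computation) carries family~1a onto family~1b after a relabeling of parameters. Matching the target coefficients in \reff{eq.ak.Ia} against the Sokal--Zeng weights \reff{def.weights.eulerian.fourvar} with $v=y$ singles out the identification $w=\gamma'x$, $y=\beta$, $u=\alpha'x$, and the goal becomes to prove
\be
P_n(x;\bmu) \;=\; \scrp_n(\gamma'x,\beta,\alpha'x,\beta)
   \;.
 \label{eq.proposal.id}
\ee

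For family~1a the PDE \reff{eq.PDE.egf} simplifies (using $\alpha=\gamma=0$ and $\beta'=-\alpha'$) to
\be
(1 - \alpha'xt)\, F_t \;=\; x(\beta - \alpha'x)\, F_x \:+\: \gamma'x\, F ,
\qquad F(x,0) = 1 .
\ee
Guided by the specialization $(w,u,y) = (\gamma'x,\alpha'x,\beta)$ in the Sokal--Zeng egf \reff{eq.twovar.Ma.Salas.egf}, I would verify by logarithmic differentiation that
\be
F(x,t;\bmu) \;=\; \left(\frac{\beta - \alpha'x}{\beta - \alpha'x\,e^{(\beta-\alpha'x)t}}\right)^{\!\gamma'/\alpha'}
\ee
solves this PDE: with $G \eqdef \beta-\alpha'x$ and $\Psi \eqdef \beta-\alpha'x e^{Gt}$, both sides of the PDE, after division by $F$, reduce to the single expression $\gamma' x G\,e^{Gt}(1-\alpha'xt)/\Psi$. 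Uniqueness of the formal power series solution of the PDE then yields \reff{eq.proposal.id}. The fractional exponent $\gamma'/\alpha'$ is handled by performing the verification over $\Q(\alpha')[x,\beta,\gamma'][[t]]$; since the combinatorial formula \reff{eq.eulerian.fourvar.cyc.v=y} guarantees $\scrp_n(\gamma'x,\beta,\alpha'x,\beta) \in \Z[x,\alpha',\beta,\gamma']$, the identity \reff{eq.proposal.id} lifts to the polynomial ring.

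Given \reff{eq.proposal.id}, Theorem~\ref{thm.perms.S} immediately produces the S-fraction \reff{eq.ak.Ia} with $c_{2k-1} = (\gamma'+(k-1)\alpha')x$ and $c_{2k} = k\beta$. For family~1b, one computes from \reff{eq.duality.mu} that $D\bmu = (0,\alpha',\gamma',\beta,-\beta,0)$, which is precisely family~1b after the relabeling $(\alpha',\beta,\gamma')\mapsto(\beta_{1b},\alpha'_{1b},\gamma_{1b})$; by the duality relation $f(x,t;D\bmu) = f(1/x,xt;\bmu)$ of \reff{def_PnDmu}, an S-fraction with coefficients $c_i(x)$ for $\bmu$ becomes one with coefficients $x\,c_i(1/x)$ for $D\bmu$, and this transformation applied to \reff{eq.ak.Ia} yields exactly \reff{eq.ak.Ib}. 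The main obstacle is the egf verification of the second step: the differentiation is routine but must be set up carefully because of the fractional power. As a fallback, one could instead prove \reff{eq.proposal.id} directly by induction on $n$, checking that $\scrp_n(\gamma'x,\beta,\alpha'x,\beta)$ --- computed from the closed form \reff{eq.twovar.Ma.Salas} in Proposition~\ref{prop.twovar.Ma.Salas} --- satisfies the specialization of the differential recurrence \reff{eq.diff_recurrence} to family~1a, which is a purely polynomial computation.
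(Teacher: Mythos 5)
Your proposal is correct and follows essentially the same route as the paper: identify the egf of family~1a with the Sokal--Zeng egf \reff{eq.twovar.Ma.Salas.egf} under the specialization $w=\gamma'x$, $u=\alpha'x$, $y=v=\beta$, invoke Theorem~\ref{thm.perms.S} together with Proposition~\ref{prop.twovar.Ma.Salas} to read off the S-fraction, and obtain family~1b by duality. The only difference is that the paper simply cites \cite[eq.~(A2)]{BSV} for the egf, whereas you re-derive it by verifying that it solves the PDE \reff{eq.PDE.egf} (a computation that checks out), which makes the argument self-contained but does not change its substance.
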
   

\subsubsection*{Family 1a: $\bm{\mu=(0,\beta,0; \alpha',-\alpha',\gamma')}$}

This is Spivey's \cite{Spivey_11} case (S3)
$\alpha/\beta = \alpha'/\beta' + 1$
specialized to $\alpha=\gamma=0$.
Its egf can be computed from \cite[eq.~(A2)]{BSV}: 
\be
  F(x,t) \;=\; \left[ \frac{\beta - \alpha'\, x\, e^{(\beta-\alpha'\, x) t}}
                           {\beta - \alpha'\, x} \right]^{-\gamma'/\alpha'}
  \,.
 \label{eq.egf.Ia} 
\ee 
This is the special case of \reff{eq.twovar.Ma.Salas.egf}
with $w = \gamma' x$, $u = \alpha' x$, $y = \beta$,
so the ogf is given by the S-fraction \reff{def.weights.eulerian.fourvar} with
$w = \gamma' x$, $u = \alpha' x$, $y = v = \beta$,
exactly as stated in \eqref{eq.ak.Ia}.
This completes the proof for family~1a.

\subsubsection*{Family 1b: $\bm{\mu=(0,\beta,\gamma, \alpha',-\alpha',0)}$}

This is Spivey's \cite{Spivey_11} case (S3)
$\alpha/\beta = \alpha'/\beta' + 1$
specialized now to $\alpha=\gamma'=0$.
Its egf is \cite[eq.~(A2)]{BSV}
\be
F(x,t) \;=\; \left[ \frac{\alpha' x \,-\, \beta e^{(\alpha'x - \beta) t}}
                         {\alpha' x - \beta} \right]^{\gamma/\beta}
\,.
\label{eq.egf.Ib} 
\ee
This is the special case of \reff{eq.twovar.Ma.Salas.egf}
with $w = \gamma$, $u = \beta$, $y = \alpha' x$,
so the ogf is given by the S-fraction \reff{def.weights.eulerian.fourvar} with
$w = \gamma$, $u = \beta$, $y = v = \alpha' x$,
exactly as stated in \eqref{eq.ak.Ib}.
This completes the proof for family~1b.

Of course, this result can also be obtained by applying duality to family~1a.
Since the dual of $\bmu=(0,\beta,0, \alpha',-\alpha',\gamma')$ is 
$D\bmu=(0,\alpha',\gamma',\beta,-\beta,0)$,
we obtain family~1b from the dual of family~1a
by applying the map
$(\alpha',\gamma',\beta)\mapsto (\beta,\gamma,\alpha')$.

\subsubsection*{Particular cases}

These two families contain several specific cases of combinatorial interest:
\begin{itemize}
\item The Stirling subset numbers $\stirlingsubset{n}{k}$ have
      $\bmu=(0,1,0, 0,0,1)$ and belong to family~1a.
      The S-fraction for the ogf of the Bell polynomials
      $B_n(x) = \sum\limits_{k=0}^n \stirlingsubset{n}{k} x^k$,
      with coefficients $c_{2k-1} = x$ and $c_{2k} = k$,
      is well known.\footnote{
   Flajolet \cite[Theorem~2(ib)]{Flajolet_80} found a J-type continued fraction
   that is equivalent by contraction
   (see Proposition~\ref{prop.contraction_even} below)
   to this S-fraction.
   Later, Dumont \cite{Dumont_89} found the S-fraction directly
   by a functional-equation method,
   and Zeng \cite[Lemma~3]{Zeng_95} used this same method to find
   two $q$-generalizations of the S-fraction.
}
\item The generalized $(s,0)$-Eulerian numbers \cite{BSV2} 
      $\euler{n}{k}_{(s,0)}$ have $\bmu=(0,1,s$, $1,-1,0)$ with $s\in\N$
      and belong to family~1b.
      The case $s=1$ corresponds to the Eulerian numbers $\euler{n}{k}$
      with the Graham--Knuth--Patashnik indexing \cite[section~6.2]{Graham_94}
      \cite[\seqnum{A173018}]{OEIS}.
\item The generalized $(0,t)$-Eulerian numbers \cite{BSV2}
      $\euler{n}{k}_{(0,t)}$ have $\bmu=(0,1,0$, $1,-1,t)$ with $t\in\N$
      and belong to family~1a.
      The case $t=1$ corresponds to the Eulerian numbers with the traditional 
      indexing $\euler{n}{k}_{(0,1)} = \euler{n}{k-1}$ for $n \ge 1$ 
      \cite[\seqnum{A008292}]{OEIS}.
\end{itemize}

%
%
\subsubsection{Families 2a and 2b} \label{sec.family.II}

\begin{proposition}[S-fraction for families 2a and 2b]
    \label{prop.II}
The ogf $f(x,t;\bmu)$ for the recurrence \eqref{eq_binomvert} with 
$\bmu=(\alpha,-\alpha,-\alpha$, $\alpha',\beta',\gamma')$
has an S-type continued fraction representation in the ring
$\Z[x;\alpha,\alpha',\beta',\gamma'][[t]]$ 
with coefficients 
\be
c_{2k-1} \;=\; [\gamma' \,+\, k(\alpha'+\beta')]\, x\,, \quad
c_{2k}   \;=\; k (\alpha'+\beta')\, x   \;.
\label{eq.ak.IIa}
\ee
Similarly, the ogf $f(x,t;\bmu)$ for the recurrence \eqref{eq_binomvert} with
$\bmu=(\alpha,\beta,\gamma,0,\beta',-\beta')$ 
has an S-type continued fraction representation in the ring
$\Z[x;\alpha,\beta,\gamma,\beta'][[t]]$
with coefficients 
\be
c_{2k-1} \;=\; \gamma \,+\, k\alpha \,, \quad
c_{2k}   \;=\; k\alpha   \;.
\label{eq.ak.IIb}
\ee
\end{proposition}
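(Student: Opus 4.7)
\medskip

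\noindent\textbf{Proof proposal for Proposition~\ref{prop.II}.}
The plan is to follow exactly the same three-step strategy used for families 1a and 1b in Proposition~\ref{prop.I}: for each family, (i)~compute the exponential generating function $F(x,t;\bmu)$ in closed form; (ii)~recognize it as a specialization of \reff{eq.twovar.Ma.Salas.egf}, in fact in the degenerate case $u=y$ given by \reff{eq.twovar.Ma.Salas.egf_u=y}; then (iii)~invoke Theorem~\ref{thm.perms.S} (or equivalently Euler's classical S-fraction \reff{eq.stirlingcycle.Sfrac}) to read off the S-fraction coefficients, matching them against \eqref{eq.ak.IIa}/\eqref{eq.ak.IIb}.

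For family~2a, $\bmu=(\alpha,-\alpha,-\alpha,\alpha',\beta',\gamma')$, the first multiplier in the GKP recurrence is $\alpha(n-k-1)$. The key observation is that the parameter $\alpha$ is \emph{redundant}: a straightforward induction on $n$ shows $T(n,k;\bmu)=0$ whenever $k<n$, because for $k<n-1$ the factor $T(n-1,k)$ vanishes by the inductive hypothesis, while for $k=n-1$ the multiplier $\alpha(n-k-1)$ itself vanishes. The recurrence on the diagonal then collapses to $T(n,n)=[(\alpha'+\beta')n+\gamma']\,T(n-1,n-1)$, giving $P_n(x;\bmu)=x^n\prod_{j=1}^n[(\alpha'+\beta')j+\gamma']$ independently of $\alpha$. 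Summing the resulting series yields
\be
   F(x,t;\bmu) \;=\; \bigl[1-(\alpha'+\beta')xt\bigr]^{-1-\gamma'/(\alpha'+\beta')},
\ee
which is precisely \reff{eq.twovar.Ma.Salas.egf_u=y} with $y=u=(\alpha'+\beta')x$ and $w=[\gamma'+(\alpha'+\beta')]x$. Substituting these values into the S-fraction coefficients $c_{2k-1}=w+(k-1)y$, $c_{2k}=ky$ reproduces \eqref{eq.ak.IIa} exactly.

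For family~2b, $\bmu=(\alpha,\beta,\gamma,0,\beta',-\beta')$, there are two equally viable routes. The cleanest is to observe that family~2b is the image of family~2a under the duality map $D$ (with the explicit relabeling $\alpha\leftrightarrow \alpha'+\beta'$, $\beta\leftrightarrow -\beta'$, $\gamma\leftrightarrow \gamma'$), so \reff{def_PnDmu} gives $P_n(x;\bmu)=x^n P_n(1/x;D\bmu)$, converting the S-fraction of family~2a into that of family~2b via $c_i(x)\mapsto x\,c_i(1/x)$. Alternatively, one can argue directly: the second multiplier $\beta'(k-1)$ vanishes at $k=1$, so by induction $T(n,k;\bmu)=0$ for all $k\ge 1$, revealing $\beta$ and $\beta'$ as redundant, and the one remaining entry is $T(n,0)=\prod_{j=1}^n(\alpha j+\gamma)$. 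This yields $F(x,t;\bmu)=(1-\alpha t)^{-1-\gamma/\alpha}$, again of the form \reff{eq.twovar.Ma.Salas.egf_u=y} with $y=u=\alpha$ and $w=\alpha+\gamma$, giving \eqref{eq.ak.IIb}.

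I expect no genuine obstacle here: the whole proposition reduces to two elementary inductions establishing redundancy of parameters, followed by a direct closed-form summation that places both egfs squarely in the Stirling-cycle case $(u=y)$ covered by Euler's S-fraction \reff{eq.stirlingcycle.Sfrac}. The only point that requires slight care is to verify that the proposed redundancies of $\alpha$ (in family 2a) and of $(\beta,\beta')$ (in family 2b) are correctly identified --- but these follow cleanly from the vanishing patterns of the two multipliers.
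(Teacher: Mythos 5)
Your proposal is correct and follows essentially the same route as the paper: both establish the degeneracies $T(n,k)=\delta_{kn}\prod_{j=1}^{n}[\gamma'+j(\alpha'+\beta')]$ and $T(n,k)=\delta_{k0}\prod_{j=1}^{n}(\gamma+j\alpha)$ directly from the vanishing multipliers, and then read off the S-fraction from the Stirling-cycle case $u=y$ (equivalently Euler's S-fraction \reff{eq.stirlingcycle.Sfrac}), with the paper also recording the same egfs $[1-(\alpha'+\beta')xt]^{-(\alpha'+\beta'+\gamma')/(\alpha'+\beta')}$ and $(1-\alpha t)^{-(\alpha+\gamma)/\alpha}$ as an alternative derivation. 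The only cosmetic omission is that in the family-2a induction you should also note that the second term $(\alpha'n+\beta'k+\gamma')\,T(n-1,k-1)$ vanishes for $k<n$ since $k-1<n-1$, which is immediate from the same inductive hypothesis.
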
   

Before proceeding further, let us observe that
the coefficients \reff{eq.ak.IIa} for family~2a do not depend on $\alpha$,
and moreover they depend on $\alpha'$ and $\beta'$ only via their sum.
Similarly, the coefficients \reff{eq.ak.IIb}
for family~2b do not depend on $\beta$ or $\beta'$.
These cases illustrate the parametric ambiguities
discussed in \cite[Section~3]{BSV},
in which the map $\bmu \mapsto \bT(\bmu)$ fails to be injective.
So families 2a and 2b, which appear to be four-dimensional,
are in fact only two-dimensional.

This degeneracy also implies that family~2a,
which is defined by $\bmu=(\alpha,-\alpha,-\alpha,$ $\alpha',\beta',\gamma')$,
in fact has the same matrix $\bT(\bmu)$
as $\bmu' = (0,0,0,0,\alpha'+\beta',\gamma')$,
which is a special case of family~3a.
The S-fraction \reff{eq.ak.IIa} for family~2a
is thus obtained from the S-fraction \reff{eq.ak.IIIa} for family~3a
by making the replacements $\beta \to 0$ and $\beta' \to \alpha'+\beta'$.
Similarly, family~2b,
which is defined by $\bmu=(\alpha,\beta,\gamma,0,\beta',-\beta')$,
has the same matrix $\bT(\bmu)$
as $\bmu' = (\alpha,-\alpha,\gamma,0,0,0)$,
which is a special case of family~3b.
So the S-fraction \reff{eq.ak.IIb} for family~2b
is obtained from the S-fraction \reff{eq.ak.IIIb} for family~3b
by making the specialization $\alpha'=0$.
Consequently, families 2a and 2b are redundant
if we work at the level of the matrices $\bT(\bmu)$
rather than the parameters $\bmu$.

In fact, both of these families are also degenerate in a further sense:
in family~2a we have $T(n,k) = 0$ whenever $k \neq n$,
and in family~2b we have $T(n,k) = 0$ whenever $k \neq 0$.
{}From the recurrence \eqref{eq_binomvert}
one easily gets for family 2a
\be
   T(n,k) \;=\; \delta_{kn}\, \prod_{j=1}^{n} [\gamma' + j (\alpha'+\beta')]
 \label{eq.seq.IIa}
\ee
and hence
\be
   P_n(x) \;=\; x^n \, \prod_{j=1}^{n} [\gamma' + j (\alpha'+\beta')]  \;,
 \label{eq.Pn.IIa}
\ee
and for family 2b
\be
   T(n,k) \;=\; \delta_{k0}\, \prod_{j=1}^{n} (\gamma + j\alpha)
 \label{eq.seq.IIb}
\ee
and hence
\be
   P_n(x) \;=\; \prod_{j=1}^{n} (\gamma + j\alpha)   \;.
 \label{eq.Pn.IIb}
\ee
In these formulae we see explicitly the parametric ambiguities mentioned above.
The S-fractions \reff{eq.ak.IIa} and \reff{eq.ak.IIb}
are then simply the S-fraction \reff{eq.stirlingcycle.Sfrac}
for the homogenized Stirling cycle polynomials
with the variables $w$ and $y$ in that formula
replaced by suitable linear combinations of the parameters $\bmu$.

Alternatively, we can deduce the S-fractions directly from the
exponential generating functions.
Family~2a is Spivey's \cite{Spivey_11} case (S1) $\beta = -\alpha$
specialized to $\gamma = -\alpha$,
and its egf is \cite[eq.~(A4)]{BSV}
\be
F(x,t) \;=\; \bigl[ 1 -(\alpha' + \beta') x t 
             \bigr]^{-(\alpha'+\beta'+\gamma')/(\alpha'+\beta')}\,. 
 \label{eq.egf.IIa}
\ee
This is the special case of \reff{eq.twovar.Ma.Salas.egf_u=y}
with $w = (\alpha'+\beta'+\gamma') x$, $y = (\alpha' + \beta') x$,
and inserting these parameters into \reff{eq.stirlingcycle.Sfrac}
gives \reff{eq.ak.IIa}.
Family~2b is Neuwirth's \cite{Neuwirth} case $\alpha'=0$
specialized to $\gamma' = -\beta'$,
and its egf is \cite[eq.~(A8)]{BSV}
\be
   F(x,t) \;=\;  (1 - \alpha t)^{-(\alpha+\gamma)/\alpha} \,.
\ee
This is the special case of \reff{eq.twovar.Ma.Salas.egf_u=y}
with $w = \alpha+\gamma$, $y = \alpha$,
and inserting these parameters into \reff{eq.stirlingcycle.Sfrac}
gives \reff{eq.ak.IIb}.

\subsubsection*{Particular cases}

These two families contain several specific cases of combinatorial interest:
\begin{itemize}
\item Multifactorials: They appear in both families. To make the story 
      short, let us consider family~2b. It is obvious that 
      $\bmu=(0,0,1,0,0,0)$ leads to $T(n,k)=\delta_{k0}$. If we apply 
      Corollary~\ref{cor.scaling}(c) with $\kappa=\nu$ and 
      $\lambda=-\rho$, we get that the triangular-array entries  
\be 
      T'(n,k) \;=\; \left( \prod_{j=0}^{n-1} (n\nu-\rho-j\nu)\right) \, 
                    \delta_{k0}
\ee
      satisfy the GKP recurrence with $\bmu'=(\nu,0,-\rho,0,0,0)$.
      For instance:
      \begin{itemize}
        \item[$\circ$]
              If $\nu=1$ and $\rho=0$, then $\bmu'=(1,0,0,0,0,0)$ leads to 
              the factorials $T'(n,k)= n! \, \delta_{k0}$. In this case, 
              we recover Euler's S-fraction \reff{eq.nfact.contfrac}. 
        \item[$\circ$]
              If $\nu=2$ and $\rho=-1$, then $\bmu'=(2,0,-1,0,0,0)$ leads to
              the semi-factorials $T'(n,k)= (2n-1)!! \, \delta_{k0}$, or
              the double factorial of odd numbers
              \cite[\seqnum{A001147}]{OEIS}. The corresponding 
              S-fraction was also found by Euler \cite[section~29]{Euler_1760}. 
        \item[$\circ$]
              If $\nu=3$ and $\rho=-1$, then $\bmu'=(3,0,-1,0,0,0)$ leads to
              the triple factorials $T'(n,k)= (3n-1)!!\,! \, \delta_{k0}$
              \cite[\seqnum{A007661}]{OEIS}.
        \item[$\circ$]
              If $\nu=4$ and $\rho=-1$, then $\bmu'=(4,0,-1,0,0,0)$ leads to
              the quadruple factorials $T'(n,k)= (4n-1)!!\,!! \, \delta_{k0}$
              \cite[\seqnum{A007662}]{OEIS}.
        \item[$\circ$]
              In general, if $\nu\ge 2$ is an integer, then 
              $\bmu'=(\nu,0,-1,0,0,0)$ leads to the $\nu$-th factorials
              $T'(n,k)= \prod_{j=0}^{n-1} (n\nu-j\nu-1) \, \delta_{k0}$. 
      \end{itemize} 
\item $\bmu=(0,1,s,0,1,-1)$ leads to
      $T(n,k)=\delta_{k0} s^n$. These numbers are the
      inverse pairs of the generalized Eulerian numbers $\euler{n}{k}_{(s,-s)}$ 
      \cite{BSV2} given by $\bmu=(0,1,s,1,-1,-s)$. (See the end of  
      Section~\ref{sec.family.VI}.)  
\end{itemize}

%
%
\subsubsection{Families 3a and 3b} \label{sec.family.III}

\begin{proposition}[S-fraction for families 3a and 3b]
   \label{prop.III}
The ogf $f(x,t;\bmu)$ for the recurrence \eqref{eq_binomvert} with 
$\bmu=(0,\beta,0,0,\beta',\gamma')$ has an S-type 
continued fraction representation in the ring $\Z[x,\beta,\beta',\gamma'][[t]]$ 
with coefficients 
\be
c_{2k-1} \;=\; (\gamma' \,+\, k\beta')\, x\,, \quad
c_{2k}   \;=\; k (\beta + \beta' x)   \;.
\label{eq.ak.IIIa}
\ee
Similarly, the ogf $f(x,t;\bmu)$ for the recurrence \eqref{eq_binomvert} with
$\bmu=(\alpha,-\alpha,\gamma,\alpha',-\alpha',0)$ 
has an S-type continued fraction representation in the ring
$\Z[x,\alpha,\gamma,\alpha'][[t]]$
with coefficients 
\be
c_{2k-1} \;=\; \gamma \,+\, k\alpha \,, \quad
c_{2k}   \;=\; k (\alpha + \alpha'  x)  \;.
\label{eq.ak.IIIb}
\ee
\end{proposition}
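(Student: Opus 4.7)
The plan is to adapt the strategy already used in Sections~\ref{sec.family.I} and~\ref{sec.family.II}: for each family, compute the exponential generating function in closed form, recognize it as a specialization of the ``master'' egf~\reff{eq.twovar.Ma.Salas.egf}, and then invoke Theorem~\ref{thm.perms.S} (via Proposition~\ref{prop.twovar.Ma.Salas}) to read off the Stieltjes coefficients from \reff{def.weights.eulerian.fourvar} with $v=y$.

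For family~3a with $\bmu=(0,\beta,0,0,\beta',\gamma')$, the PDE~\reff{eq.PDE.egf} specializes to
$$F_t \;=\; x(\beta + \beta' x)\, F_x \,+\, (\beta' + \gamma')\, x\, F,$$
which I would solve by the method of characteristics (or read off from \cite[Appendix~A]{BSV}): the invariants $x e^{\beta t}/(\beta+\beta' x)$ and $F\,(\beta+\beta'x)^{(\beta'+\gamma')/\beta'}$ combined with $F(x,0)=1$ give
$$F(x,t) \;=\; \left( \frac{\beta}{\beta + \beta' x (1 - e^{\beta t})} \right)^{\!(\beta'+\gamma')/\beta'}.$$
This matches~\reff{eq.twovar.Ma.Salas.egf} exactly under the identifications $u=\beta' x$, $y=\beta+\beta' x$ (so $y-u=\beta$) and $w=(\beta'+\gamma')x$ (so $w/u=(\beta'+\gamma')/\beta'$). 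Proposition~\ref{prop.twovar.Ma.Salas} then identifies $P_n(x;\bmu)=\scrp_n(w,y,u,y)$ for these values, and Theorem~\ref{thm.perms.S} supplies the S-fraction~\reff{eq.eulerian.fourvar.contfrac} with $v=y$, giving $c_{2k-1}=w+(k-1)u=(\gamma'+k\beta')x$ and $c_{2k}=ky=k(\beta+\beta'x)$, which is~\reff{eq.ak.IIIa}.

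For family~3b, the cleanest route is duality. Applying the map $D$ of~\reff{eq.duality.mu} to the family-3a parameter vector gives $D\bmu=(\beta',-\beta',\gamma',\beta,-\beta,0)$, which under the renaming $(\beta',\beta,\gamma')\mapsto(\alpha,\alpha',\gamma)$ is precisely family~3b. Since~\reff{def_PnDmu} gives $f(x,t;D\bmu)=f(1/x,\,xt;\bmu)$, the S-fraction coefficients transform by $\tilde{c}_i(x)=x\, c_i(1/x)$; starting from family~3a this yields $\tilde{c}_{2k-1}=\gamma'+k\beta'$ and $\tilde{c}_{2k}=k(\beta x+\beta')$, and the renaming produces~\reff{eq.ak.IIIb}. (Alternatively, one can solve the PDE directly for family~3b and match to~\reff{eq.twovar.Ma.Salas.egf} with $w=\gamma+\alpha$, $u=\alpha$, $y=\alpha+\alpha' x$.) The only real work is the PDE solution for family~3a---a routine application of characteristics---together with the purely algebraic verification that the result matches the master egf; no conceptual obstacle is expected, since once the egf is recognized as a Carlitz--Dillon--Roselle form, the Sokal--Zeng machinery of Section~\ref{sec.SZ} finishes the proof.
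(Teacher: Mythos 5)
Your proposal is correct and follows essentially the same route as the paper: identify the egf of family~3a (the paper cites \cite[eq.~(A8)]{BSV} where you re-derive it by characteristics, arriving at the same closed form) as the specialization $w=(\beta'+\gamma')x$, $u=\beta'x$, $y=\beta+\beta'x$ of \reff{eq.twovar.Ma.Salas.egf}, and then read off the S-fraction from Theorem~\ref{thm.perms.S} with $v=y$. For family~3b the paper gives the direct egf computation with $w=\alpha+\gamma$, $u=\alpha$, $y=\alpha+\alpha'x$ and mentions duality as an alternative, whereas you make duality primary; both variants are correct and explicitly sanctioned in the text.
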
   

\subsubsection*{Family 3a: $\bm{\mu=(0,\beta,0;0,\beta',\gamma')}$}

Family~3a is Neuwirth's \cite{Neuwirth} case $\alpha'=0$
specialized to $\alpha = \gamma = 0$.
Its egf is \cite[eq.~(A8)]{BSV}
\be
F(x,t) \;=\; \left[ 1 + \frac{\beta'\, x}{\beta} \, 
                    \left(1 - e^{\beta\, t}\right) 
             \right]^{-(\beta'+\gamma')/\beta'} \,. 
\label{eq.egf.IIIa}
\ee
This is the special case of \reff{eq.twovar.Ma.Salas.egf}
with $w = (\beta' + \gamma') x$, $u = \beta' x$, $y = \beta + \beta' x$,
so the ogf is given by the S-fraction with coefficients \reff{eq.ak.IIIa}.

\subsubsection*{Family 3b: 
           $\bm{\mu=(\alpha,-\alpha,\gamma,\alpha',-\alpha',0)}$}

Family~3b is Spivey's \cite{Spivey_11} case (S1) $\beta = -\alpha$
specialized to $\beta' = -\alpha'$ and $\gamma' = 0$,
and its egf is \cite[eq.~(A4)]{BSV}
\be
   F(x,t) \;=\; \left[ 1 + \frac{\alpha}{\alpha'\, x} \,
                    \bigl(1 - e^{\alpha' x t} \bigr)
             \right]^{-(\alpha+\gamma)/\alpha}
   \,.
\label{eq.egf.IIIb}
\ee
This is the special case of \reff{eq.twovar.Ma.Salas.egf}
with $w = \alpha + \gamma$, $u = \alpha$, $y = \alpha + \alpha' x$,
so the ogf is given by the S-fraction with coefficients \reff{eq.ak.IIIb}.
Of course, this result can also be obtained by applying duality to family~3a.

\subsubsection*{Particular cases}

These two families contain several interesting special cases,
although some of them have already appeared in the previous two subsections: 
\begin{itemize}
\item The Stirling subset numbers $\stirlingsubset{n}{k}$ have
      $\bmu=(0,1,0, 0,0,1)$ and belong to family~3a (as well as 1a).
\item The ordered Stirling subset numbers
      (also called surjective numbers \cite{Fekete_94})
      $k! \stirlingsubset{n}{k}$
      have $\bmu=(0,1,0, 0,1,0)$ and belong to family~3a.
\item Multifactorials: we now apply Corollary~\ref{cor.scaling}(b) to 
      $\bmu=(0,0,1,0,0,0)$ with $\kappa=\nu$ and $\lambda=-\rho$. Then,
      the numbers 
\be 
      T'(n,k) \;=\; \left( \prod_{j=1}^n (j\nu - \rho) \right) \, \delta_{k0}
\ee
      satisfy the GKP recurrence with $\bmu'=(\nu,-\nu,-\rho,0,0,0)$ 
      which belongs to family~3b. By choosing $\nu$ and $\rho$ in the same 
      way as we did at the end of Section~\ref{sec.family.II}, we obtain 
      factorials, double factorials, triple factorials, etc.
\end{itemize}

%
%
\subsubsection{Families 4a and 4b} \label{sec.family.IV}

\begin{proposition}[S-fraction for families 4a and 4b]
   \label{prop.IV}
The ogf $f(x,t;\bmu)$ for the recurrence \eqref{eq_binomvert} with 
$\bmu=(0,\kappa\beta',\kappa(\beta'+\gamma'),0,\beta',\gamma')$ 
has an S-type continued fraction representation in the ring 
$\Z[x;\beta',\gamma',\kappa][[t]]$ with coefficients 
\be
c_{2k-1} \;=\; (\gamma' \,+\, k\beta')\, (\kappa + x)\,, \quad
c_{2k}   \;=\; k \beta' x  \;.
\label{eq.ak.IVa}
\ee
Similarly, the ogf $f(x,t;\bmu)$ for the recurrence \eqref{eq_binomvert} with
$\bmu=(\alpha,-\alpha,\gamma,
       \kappa\alpha, -\kappa\alpha, \kappa(\alpha+\gamma))$
has an S-type continued fraction representation in the ring
$\Z[x;\alpha,\gamma,\kappa][[t]]$
with coefficients 
\be
c_{2k-1} \;=\; (\gamma \,+\, k\alpha)\, (1+\kappa x)\,, \quad
c_{2k}   \;=\; k \alpha   \;.
\label{eq.ak.IVb}
\ee
\end{proposition}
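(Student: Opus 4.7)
My plan is to fit Proposition~\ref{prop.IV} into the unified framework already used for families~1a--3b: identify the egf $F(x,t;\bmu)$ in closed form, recognize it as a specialization of the master egf \eqref{eq.twovar.Ma.Salas.egf} at appropriate values of $(w,y,u)$, and then read off the S-fraction coefficients from Theorem~\ref{thm.perms.S} with $v=y$, which yields $c_{2k-1}=w+(k-1)u$ and $c_{2k}=ky$. Family~4b will then follow from family~4a by applying the duality relation $f(x,t;D\bmu) = f(1/x,xt;\bmu)$ of Section~\ref{sec.sym}, after verifying that the dual of family~4a coincides with family~4b modulo an obvious renaming of the free parameters.

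For family~4a the quickest route to the egf exploits the fact that family~4a is precisely the image of family~3a under the Zhu involution. Indeed, a short check using \eqref{def_transinv} shows that if $\bmu$ lies in family~3a with $\beta=-\kappa\beta'$, then $Z\bmu$ lies in family~4a. Applying \eqref{eq.transinv_Pn} with $\beta/\beta'=-\kappa$ therefore gives the shift identity $P_n(x;Z\bmu) = P_n(x+\kappa;\bmu)$, and hence $F(x,t;Z\bmu) = F(x+\kappa,t;\bmu)$. Substituting $\beta\to-\kappa\beta'$ and $x\to x+\kappa$ in the family~3a egf \eqref{eq.egf.IIIa} and then multiplying numerator and denominator by $e^{\kappa\beta' t}$, I obtain the family~4a egf in the form
\[
F(x,t) \;=\; \left[ \frac{\kappa \, e^{\kappa\beta' t}}{(\kappa+x) \,-\, x\,e^{\kappa\beta' t}} \right]^{(\beta'+\gamma')/\beta'}.
\]
To match this with \eqref{eq.twovar.Ma.Salas.egf}, I rewrite the master formula as $\bigl[(y-u)\,e^{-(y-u)t}/(y\,e^{-(y-u)t}-u)\bigr]^{w/u}$; comparison of the constant and $e^{\kappa\beta' t}$ terms in the denominator then forces $y-u=-\kappa\beta'$, $u=\beta'(\kappa+x)$, $y=\beta' x$, and $w=(\beta'+\gamma')(\kappa+x)$. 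Theorem~\ref{thm.perms.S} immediately produces the coefficients $c_{2k-1}=w+(k-1)u=(\gamma'+k\beta')(\kappa+x)$ and $c_{2k}=ky=k\beta' x$, as claimed in \eqref{eq.ak.IVa}.

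For family~4b, a direct computation using \eqref{eq.duality.mu} confirms that its dual coincides with family~4a after the renaming $\beta'\mapsto\alpha$, $\gamma'\mapsto\gamma$. If the S-fraction for $f(x,t;\bmu)$ has coefficients $\widetilde{c}_i(x)$, then \eqref{def_PnDmu} implies $f(x,t;D\bmu) = f(1/x,xt;\bmu)$, so the S-fraction for $f(x,t;D\bmu)$ has coefficients $x\,\widetilde{c}_i(1/x)$. Applying this substitution to \eqref{eq.ak.IVa} with $(\beta',\gamma')$ replaced by $(\alpha,\gamma)$ yields $x(\gamma+k\alpha)(\kappa+1/x) = (\gamma+k\alpha)(1+\kappa x)$ and $x\cdot k\alpha/x = k\alpha$, exactly as in \eqref{eq.ak.IVb}. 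The main technical obstacle is the algebraic massage required to put the shifted family~3a egf into master-formula shape; once this is done, the remaining steps are brief verifications that leverage machinery already in place from Sections~\ref{sec.sym} and~\ref{sec.SZ}.
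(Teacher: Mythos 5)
Your proposal is correct, and its engine is the same as the paper's: identify the egf as a specialization of \eqref{eq.twovar.Ma.Salas.egf} and read off the coefficients from Theorem~\ref{thm.perms.S} with $v=y$. The differences are peripheral but worth noting. The paper obtains the family-4a egf by citing \cite[eq.~(A8)]{BSV} directly and arrives at exactly your identification $w=(\beta'+\gamma')(\kappa+x)$, $u=\beta'(\kappa+x)$, $y=\beta' x$; you instead derive that egf from the family-3a egf \eqref{eq.egf.IIIa} via the Zhu involution (correctly: $Z$ applied to family~3a with $\beta=-\kappa\beta'$ lands in family~4a, and \eqref{eq.transinv_Pn} gives the shift $x\mapsto x+\kappa$). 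This buys self-containedness at the cost of some algebraic massage. For family~4b the paper again matches a second egf from \cite[eq.~(A4)]{BSV} against the master formula, whereas you invoke duality $c_i(x)\mapsto x\,c_i(1/x)$; the paper explicitly endorses this shortcut for dual pairs at the start of Section~\ref{sec.main2}, and your computation of $D\bmu$ and of the transformed coefficients checks out.

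One remark: having observed that family~4a is the Zhu image of family~3a, you could have bypassed the egf entirely. Since $P_n(x;Z\bmu)=P_n(x+\kappa;\bmu)$ implies $f(x,t;Z\bmu)=f(x+\kappa,t;\bmu)$, the S-fraction for family~4a is obtained from the already-proven family-3a S-fraction \eqref{eq.ak.IIIa} by substituting $\beta=-\kappa\beta'$ and $x\to x+\kappa$: this gives $c_{2k-1}=(\gamma'+k\beta')(\kappa+x)$ and $c_{2k}=k\bigl(-\kappa\beta'+\beta'(x+\kappa)\bigr)=k\beta' x$ immediately, with no manipulation of exponentials. Your route is valid, but this observation would have shortened the main technical step you flag as the principal obstacle.
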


\subsubsection*{Family 4a: $\bm{\mu=(0,\kappa\beta',\kappa(\beta'+\gamma'), 
                                     0,\beta',\gamma')}$}

Family~4a is Neuwirth's \cite{Neuwirth} case $\alpha'=0$
specialized to $\alpha = 0$ and $\beta/\beta' = (\gamma-\beta)/\gamma'$.
Its egf is \cite[eq.~(A8)]{BSV}
\be
F(x,t) \;=\; \left[ 1 - \frac{\kappa+ x}{\kappa} \, 
                    \bigl(1 - e^{-\kappa\beta' t} \bigr) 
             \right]^{-(\beta'+\gamma')/\beta'} \,. 
 \label{eq.egf.IVa}
\ee
This is the special case of \reff{eq.twovar.Ma.Salas.egf}
with $w = (\beta' + \gamma') (\kappa + x)$,
$u = \beta' (\kappa + x)$, $y = \beta' x$,
so the ogf is given by the S-fraction with coefficients \reff{eq.ak.IVa}.

\subsubsection*{Family 4b: $\bm{\mu=(\alpha,-\alpha,\gamma, \kappa\alpha,
                                    -\kappa\alpha, \kappa(\alpha+\gamma))}$}

Family~4b is Spivey's \cite{Spivey_11} case (S1) $\beta = -\alpha$
specialized to $\beta' = -\alpha'$ and $\gamma'/\alpha' = 1 + \gamma/\alpha$,
and its egf is \cite[eq.~(A4)]{BSV}
\be
   F(x,t)
   \;=\;
   \left[ 1 - \frac{1+\kappa x}{\kappa x} \,
                    \left(1 - e^{-\kappa\alpha x t}\right)
             \right]^{-(\alpha+\gamma)/\alpha} 
   \,.
\ee
This is the special case of \reff{eq.twovar.Ma.Salas.egf}
with $w = (\alpha + \gamma) (1 + \kappa x)$,
$u = \alpha (1 + \kappa x)$, $y = \alpha$,
so the ogf is given by the S-fraction with coefficients \reff{eq.ak.IVb}.

\subsubsection*{Particular cases}

\begin{itemize}
\item Family~4a with $\gamma' = 0, \beta=1$, i.e.\ 
$\bmu=(0,1,1, 0,1,0)$, yields a sequence \cite[\seqnum{A028246}]{OEIS}
closely related to the ordered Stirling subset numbers:
\be
T(n,k) \;=\; k!\, \stirlingsubset{n+1}{k+1}
   \;.
\ee
Note that these numbers 
and the Eulerian numbers $\euler{n}{k}$ with $\bmu=(0,1,1,1,-1,0)$
(cf.~family~1b), are related by the Riordan involution $R$.
\end{itemize}

%
%
\subsubsection{Family 5} \label{sec.family.V} 

This self-dual family is given by $\bmu=(\alpha,0,\gamma, \alpha',0,\gamma')$.
It corresponds to the case of the GKP recurrence \reff{eq_binomvert}
in which the coefficients depend only on $n$,
and is Spivey's \cite{Spivey_11} case (S2) $\beta=\beta'=0$.
It is the Type-IV case of Problem~\ref{problem.GKP}
as defined in Ref.~\cite{BSV}, and we can read all the
relevant information from \cite[eqs.~(2.15), (A20), (A21)]{BSV}:
\begin{eqnarray}
\label{eq.egf.V}
F(x,t) &=& \bigl[ 1 - (\alpha + \alpha' x) \, t 
           \bigr]^{-[(\alpha+\gamma) + (\alpha'+\gamma')x] /
                          (\alpha + \alpha' x)} \\[2mm] 
\label{eq.Pn.V}
P_n(x) &=& \prod_{k=1}^n \bigl[ (\gamma + \gamma' x) \,+\, k(\alpha + \alpha' x)
                         \bigr]   \\[2mm]
\label{eq.seq.V}
T(n,k) &=& \sum\limits_{t=0}^n \sum\limits_{s=0}^t 
\stirlingcycle{n}{t} \binom{t}{s} \binom{n-t}{k-s} 
(\alpha+\gamma)^{t-s} (\alpha'+\gamma')^s \alpha^{n-k+s-t} (\alpha')^{k-s}
\qquad
\end{eqnarray}
So these polynomials are just translates and rescalings of the
homogenized Stirling cycle polynomials \reff{def_SZ_Pn_formula_v=y=u}.
Using \reff{eq.stirlingcycle.Sfrac}
with $w = (\alpha+\gamma) + (\alpha'+\gamma')x$
and $y = (\alpha + \alpha' x)$,
we obtain:

\begin{proposition}[S-fraction for family 5]
    \label{prop.V}
The ogf $f(x,t;\bmu)$ for the recurrence \eqref{eq_binomvert} with 
$\bmu=(\alpha,0,\gamma, \alpha',0,\gamma')$ 
has an S-type continued fraction representation in the ring 
$\Z[x;\alpha,\gamma,\alpha',\gamma'][[t]]$ with coefficients 
\be
c_{2k-1} \;=\;  (\gamma + \gamma' x) \,+\, k(\alpha + \alpha' x) \,, \quad  
c_{2k}   \;=\; k (\alpha + \alpha' x)   \;.
\label{eq.ak.V}
\ee
\end{proposition}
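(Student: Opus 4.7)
The plan is to recognize family~5 as a mere translation and rescaling of the homogenized Stirling cycle polynomials, and then to invoke Euler's classical S-fraction \reff{eq.stirlingcycle.Sfrac}. This is the same template used for the other families in Section~\ref{sec.main2}, only here it is especially clean because family~5 falls into the $u=y$ degenerate subcase of the Sokal--Zeng master S-fraction, where the Stirling cycle formula applies directly.

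First, I would take as input the exponential generating function \reff{eq.egf.V}, namely
$$
   F(x,t) \;=\; \bigl[ 1 - (\alpha + \alpha' x) \, t
           \bigr]^{-[(\alpha+\gamma) + (\alpha'+\gamma')x] / (\alpha + \alpha' x)} \,,
$$
which was already derived in \cite{BSV} for Spivey's case $\beta=\beta'=0$. This has precisely the shape of the egf \reff{eq.twovar.Ma.Salas.egf_u=y} for the homogenized Stirling cycle polynomials $\scrp_n(w,y,y,y)=\prod_{k=0}^{n-1}(w+ky)$, under the identifications
$$
   w \;=\; (\alpha+\gamma) + (\alpha'+\gamma')x \;, \qquad
   y \;=\; \alpha + \alpha' x \;.
$$
This reconfirms the explicit product formula \reff{eq.Pn.V} for $P_n(x;\bmu)$.

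Next, I would apply Euler's S-fraction \reff{eq.stirlingcycle.Sfrac}, which states that the ogf of $\scrp_n(w,y,y,y)$ has the Stieltjes-type continued-fraction representation with coefficients $c_{2k-1}=w+(k-1)y$ and $c_{2k}=ky$. Substituting the values of $w$ and $y$ above gives
$$
   c_{2k-1} \;=\; (\alpha+\gamma) + (\alpha'+\gamma')x + (k-1)(\alpha + \alpha' x)
            \;=\; (\gamma + \gamma' x) + k(\alpha + \alpha' x)
$$
and $c_{2k} = k(\alpha + \alpha' x)$, which are exactly the coefficients \reff{eq.ak.V} claimed in the proposition. Since $w$ and $y$ are polynomials with nonnegative integer coefficients in the indeterminates $x,\alpha,\gamma,\alpha',\gamma'$, the identity holds in $\Z[x;\alpha,\gamma,\alpha',\gamma'][[t]]$ as asserted.

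There is essentially no obstacle here: the only ``work'' is the pattern-match between \reff{eq.egf.V} and \reff{eq.twovar.Ma.Salas.egf_u=y}, after which the continued fraction is immediate from the well-known result of Euler \cite{Euler_1760,Euler_1788}. If one preferred not to pass through the egf at all, one could instead observe directly from \reff{eq.Pn.V} that $P_n(x;\bmu) = \prod_{k=0}^{n-1}(w+ky)$ with $w,y$ as above, and then appeal to \reff{eq.stirlingcycle.Sfrac}; this bypasses any analytic content and renders the proof a purely algebraic substitution.
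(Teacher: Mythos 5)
Your proof is correct and is essentially identical to the paper's own argument: the paper likewise reads off the egf \reff{eq.egf.V} (equivalently the product formula \reff{eq.Pn.V}) from \cite{BSV}, identifies the row-generating polynomials as the homogenized Stirling cycle polynomials $\scrp_n(w,y,y,y)$ with $w = (\alpha+\gamma) + (\alpha'+\gamma')x$ and $y = \alpha + \alpha' x$, and then substitutes into Euler's S-fraction \reff{eq.stirlingcycle.Sfrac}. The algebraic simplification to the claimed coefficients \reff{eq.ak.V} checks out, so nothing further is needed.
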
   

As previously mentioned, family~5 is self-dual.
Duality \reff{eq.duality.mu} acts on family~5 by interchanging
$(\alpha,\gamma) \leftrightarrow (\alpha',\gamma')$.
   
\subsubsection*{Particular cases}

\begin{itemize}
\item The binomial coefficients $\binom{n}{k}$ have $\bmu=(0,0,1,0,0,1)$.
      In this case the ogf is a rational function $f(x,t)=1/[1-(1+x)t]$,
      and the S-fraction terminates at the first level (that is, $c_2 = 0$).
\item The Stirling cycle numbers $\stirlingcycle{n}{k}$ correspond to
      $\bmu=(1,0,-1,0,0,1)$.
\item The reversed Stirling cycle numbers $\stirlingcycle{n}{n-k}$ correspond to
      $\bmu=(0,0,1,1,0,-1)$.
\end{itemize}

%
%
\subsubsection{Family 6} \label{sec.family.VI} 

This self-dual family is given by  
$\bmu=(\kappa(\alpha'+\beta'), \kappa\beta', \kappa\gamma',
       \alpha', \beta', \gamma')$.
It is Spivey's \cite{Spivey_11} case (S3) 
$\alpha/\beta = \alpha'/\beta' + 1$
specialized to $\beta/\beta' = \gamma/\gamma'$.
We can deduce all the relevant information from \cite[eq.~(A2)]{BSV}:
\begin{eqnarray}
F(x,t) & = & \bigl[ 1 - (\alpha'+\beta')\, (\kappa + x)\, t  
             \bigr]^{-(\alpha'+\beta'+\gamma') / (\alpha'+\beta')}
  \label{eq.egf.VI}  \\[2mm]
P_n(x) & = &
    (\kappa+ x)^n \, \prod_{j=1}^n \bigl[ \gamma' + j (\alpha'+ \beta') \bigr]
  \label{eq.Pn.VI}   \\[2mm]
T(n,k) & = &   \kappa^{n-k}\, \binom{n}{k}\,  
   \prod_{j=1}^{n} \bigl[ \gamma' + j (\alpha' + \beta') \bigr]
  \label{eq.seq.VI}
\end{eqnarray}
Like family 5, these polynomials are translates and rescalings of the
homogenized Stirling cycle polynomials \reff{def_SZ_Pn_formula_v=y=u},
but now in the parameters rather than in $x$;
the $x$-dependence is a trivial power $(\kappa+ x)^n$.
Using \reff{eq.stirlingcycle.Sfrac}
with $w = (\alpha' + \beta' + \gamma') (\kappa+ x)$
and $y = (\alpha' + \beta') (\kappa+ x)$
we obtain:

\begin{proposition}[S-fraction for family 6]
    \label{prop.VI}
The ogf $f(x,t;\bmu)$ for the recurrence \eqref{eq_binomvert} with 
$\bmu=(\kappa(\alpha'+\beta'), \kappa\beta', \kappa\gamma',
       \alpha', \beta', \gamma')$.
has an S-type continued fraction representation in the ring 
$\Z[x,\alpha',\beta',\gamma',\kappa][[t]]$ with coefficients 
\be
c_{2k-1} \;=\; \bigl[ \gamma' + k(\alpha'+\beta') \bigr] \, (\kappa + x)
   \,, \quad
c_{2k}   \;=\; k (\alpha'+\beta') (\kappa + x)
\label{eq.ak.VI}
\ee
\end{proposition}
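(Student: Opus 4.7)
The plan is to recognize that the row-generating polynomials $P_n(x)$ for family~6 are, up to a trivial $x$-dependent factoring, nothing but a specialization of the homogenized Stirling cycle polynomials $\scrp_n(w,y,y,y) = \prod_{k=0}^{n-1}(w+ky)$ from \eqref{def_SZ_Pn_formula_v=y=u}, and then to read off the claimed S-fraction by invoking Euler's classical identity \eqref{eq.stirlingcycle.Sfrac}. This is exactly the strategy used for family~5, and the work in Section~\ref{sec.family.VI} has in fact already produced the explicit formula \eqref{eq.Pn.VI} which is the only ingredient needed.

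First I would start from \eqref{eq.Pn.VI}, namely $P_n(x) = (\kappa+x)^n \prod_{j=1}^n [\gamma' + j(\alpha'+\beta')]$, and rewrite each linear factor using the trivial identity $\gamma' + j(\alpha'+\beta') = (\alpha'+\beta'+\gamma') + (j-1)(\alpha'+\beta')$. Distributing one factor of $(\kappa+x)$ into each of the $n$ terms of the product then gives
\be
   P_n(x) \;=\; \prod_{k=0}^{n-1} \bigl[ (\alpha'+\beta'+\gamma')(\kappa+x) \,+\, k(\alpha'+\beta')(\kappa+x) \bigr],
\ee
which is precisely $\scrp_n(w,y,y,y)$ under the identifications $w = (\alpha'+\beta'+\gamma')(\kappa+x)$ and $y = (\alpha'+\beta')(\kappa+x)$.

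Next I would substitute these expressions for $w$ and $y$ into the Euler S-fraction \eqref{eq.stirlingcycle.Sfrac}, whose coefficients are $c_{2k-1} = w + (k-1)y$ and $c_{2k} = ky$. A one-line algebraic simplification gives $c_{2k-1} = [\gamma' + k(\alpha'+\beta')](\kappa+x)$ and $c_{2k} = k(\alpha'+\beta')(\kappa+x)$, which is exactly \eqref{eq.ak.VI}.

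I expect no real obstacle here: the whole argument is a direct specialization of a result already established in Section~\ref{sec.SZ}. The only technicality worth mentioning is that substituting polynomials in $x,\alpha',\beta',\gamma',\kappa$ for the indeterminates $w$ and $y$ preserves the formal-power-series identity in $t$, which is automatic since every Taylor coefficient in $t$ of \eqref{eq.stirlingcycle.Sfrac} is a polynomial in $w,y$ and such polynomial identities are preserved under any substitution. The conclusion that the S-fraction coefficients lie in $\Z[x,\alpha',\beta',\gamma',\kappa]$ with nonnegative integer coefficients is then manifest from their explicit form.
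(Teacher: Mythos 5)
Your proposal is correct and follows essentially the same route as the paper: the paper likewise identifies $P_n(x)$ for family~6 (via \eqref{eq.Pn.VI}, itself read off from the egf \eqref{eq.egf.VI}) as the homogenized Stirling cycle polynomials $\scrp_n(w,y,y,y)$ with $w = (\alpha'+\beta'+\gamma')(\kappa+x)$ and $y = (\alpha'+\beta')(\kappa+x)$, and then invokes Euler's S-fraction \eqref{eq.stirlingcycle.Sfrac}. The algebraic reindexing and the final simplification to \eqref{eq.ak.VI} check out.
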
   

Please note that the coefficients \reff{eq.ak.VI}
for family~6 depend on $\alpha'$ and $\beta'$ only via their sum.
So family~6, which appears to be four-dimensional,
is in fact only three-dimensional.
Moreover, the parameter $\kappa$ acts only by translation of $x$.
In fact, family~6 is derived from family~2a
by the substitution $x \mapsto x+\kappa$.
It follows that the matrix $\bT(\bmu)$ for family~6 arises from the 
one for family~2a by multiplication on the right by the 
$\kappa$-binomial matrix $B_\kappa$ [cf.\ \eqref{def_Bxi}]:
\be
   \bT(\kappa(\alpha'+\beta'), \kappa\beta', \kappa\gamma',
       \alpha', \beta', \gamma')
   \;=\;
   \bT(\alpha,-\alpha,-\alpha,\alpha', \beta', \gamma') \: B_\kappa
   \;.
 \label{eq.family6.family2a}
\ee
Indeed, this follows immediately by comparing \reff{eq.seq.IIa}
with \reff{eq.seq.VI}.

\medskip

{\bf Remark.}
By applying Corollary~\ref{cor6.prop.spivey.corollary5} to family~2a
and using \reff{eq.family6.family2a}, we see that family~6 {\em also}\/
satisfies the recurrence
\begin{eqnarray}
   T(n,k)
   & = &
   \bigl[ (\alpha + \kappa \alpha') n \,+\,
          (-\alpha + 2\kappa \beta') k  \,-\,
          \alpha \,+\, \kappa(\beta' +\gamma')
   \bigr] \, T(n-1,k)
           \nonumber \\[2mm]
   & & 
   \quad +\;
   (\alpha' n + \beta' k + \gamma') \, T(n-1,k-1)
           \nonumber \\[2mm]
   & & 
   \quad +\;
   \kappa \, (-\alpha + \kappa\beta') \, (k+1) \, T(n-1,k+1)
\end{eqnarray}
for $n \ge 1$.
(This is a recurrence of GKPZ form: see Section~\ref{subsec.GKPZ} below.)
It is far from obvious (at least to us)
that the solution of this recurrence is independent of $\alpha$
or depends on $\alpha',\beta'$ only via their sum,
much less that it satisfies the GKP recurrence \reff{eq_binomvert}
for family~6.  But it does.
\myendremark

\bigskip

As previously mentioned, family~6 is self-dual.
The action of duality on family~6 can be seen most clearly
by employing the parameter $\delta' \eqdef \alpha' + \beta'$
in place of $\alpha'$,
thereby parametrizing family~6 by the quadruplet
$(\delta',\beta',\gamma',\kappa)$.
Then duality \reff{eq.duality.mu} acts by taking
$(\delta', \beta', \gamma',\kappa) \mapsto
 (\kappa\delta', -\kappa\beta', \kappa\gamma', 1/\kappa)$.
   
\subsubsection*{Particular cases}

\begin{itemize}
\item The generalized $(s,-s)$-Eulerian numbers $\euler{n}{k}_{(s,-s)}$
      \cite{BSV2} correspond to $\bmu=(0,1,s$, $1,-1,-s)$.
      They are trivial rescalings of the binomial coefficients:
\be
\euler{n}{k}_{(s,-s)} \;=\; (-1)^k \, s^n \, \binom{n}{k}  \;.
\ee 
\item The numbers $T(n,k)=n! \binom{n}{k}$ 
      \cite[\seqnum{A196347}]{OEIS}
      can be obtained from $\bmu=(1,0,0,1,0$,$0)$. This can be seen by applying 
      Corollary~\ref{cor.scaling}(c) with $\kappa=1,\lambda=0$ to the 
      binomial coefficients. 
\end{itemize}

%
%
\section{Preliminaries for T-fractions and J-fractions}
   \label{sec.prelim.TJ}

In this section we review some transformation formulae for continued fractions
that will be needed for our discussion in Section~\ref{sec.mainT}
of T-fractions and J-fractions for the GKP recurrence.
The plan of this section is as follows:
First we review the formula for the contraction of S-fractions
or special T-fractions to J-fractions (Section~\ref{subsec.contraction}).
Then we review the theory of the binomial transform
and its action on S-fractions, special T-fractions, and J-fractions
(Section~\ref{subsec.binomialtrans}).

\subsection{Contraction}
   \label{subsec.contraction}

It is a classical result, going back to the middle of the nineteenth century,
that any S-fraction \reff{def_Stype.one}
can be re-expressed as a J-fraction \reff{def_Jtype.one}.
This operation, which is known as {\em contraction}\/, is given as follows:

\begin{proposition}[Even contraction for S-fractions]
   \label{prop.contraction_even}
We have
\be
   \cfrac{1}{1 - \cfrac{c_1 t}{1 - \cfrac{c_2 t}{1 -  \cfrac{c_3 t}{1- \cdots}}}}
   \;\;=\;\;
   \cfrac{1}{1 - c_1 t - \cfrac{c_1 c_2 t^2}{1 - (c_2 + c_3) t - \cfrac{c_3 c_4 t^2}{1 - (c_4 + c_5) t - \cfrac{c_5 c_6 t^2}{1- \cdots}}}}
   \;,
 \label{eq.contraction_even}
\ee
so that the J-fraction on the right-hand side has coefficients
\begin{subeqnarray}
   e_0  & = &  c_1
       \slabel{eq.contraction_even.coeffs.a}   \\
   e_n  & = &  c_{2n} + c_{2n+1}  \qquad\hbox{for $n \ge 1$}
       \slabel{eq.contraction_even.coeffs.b}   \\
   f_n  & = &  c_{2n-1} c_{2n}
       \slabel{eq.contraction_even.coeffs.c}
 \label{eq.contraction_even.coeffs}
\end{subeqnarray}
\end{proposition}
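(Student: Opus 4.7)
The plan is to prove the contraction formula by a direct algebraic manipulation that collapses two consecutive levels of the S-fraction into one level of the J-fraction. The key algebraic identity I will establish is: for any formal power series $X$ with constant term $1$ and any scalars $a,b,d$,
\[
  1 - dt - \frac{at}{1 - \dfrac{bt}{X}}
  \;=\;
  \bigl(1 - (a+d)t\bigr) \,-\, \frac{ab\, t^2}{X - bt} \,,
\]
which will be verified by writing both sides over the common denominator $X - bt$ and checking that the numerators coincide (both expand to $X - (a+d)tX - bt + bdt^2$).

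To apply this iteratively, for $k\ge 1$ I will introduce the tail $X_k$ of the S-fraction starting at level $2k-1$,
\[
  X_k \;=\; 1 - \cfrac{c_{2k-1} t}{1 - \cfrac{c_{2k} t}{X_{k+1}}} \,,
\]
so that the left-hand side of \reff{eq.contraction_even} equals $1/X_1$. The crucial trick is to introduce the ``shifted tails'' $V_k \eqdef X_k - c_{2k-2}t$ (with the convention $c_0 = 0$, so $V_1 = X_1$); applying the key identity with $d = c_{2k-2}$, $a = c_{2k-1}$, $b = c_{2k}$, $X = X_{k+1}$ yields precisely the recursion
\[
  V_k \;=\; 1 - (c_{2k-2} + c_{2k-1})\, t \,-\, \frac{c_{2k-1} c_{2k}\, t^2}{X_{k+1} - c_{2k}t}
       \;=\; 1 - (c_{2k-2} + c_{2k-1})\, t \,-\, \frac{c_{2k-1} c_{2k}\, t^2}{V_{k+1}} \,.
\]
Unrolling this identifies $V_1 = X_1$ with the denominator of the J-fraction in \reff{eq.contraction_even}, exhibiting the coefficients $e_0 = c_1$ (since $c_0 = 0$), $e_n = c_{2n}+c_{2n+1}$ for $n\ge 1$, and $f_n = c_{2n-1}c_{2n}$ as claimed.

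All of the denominators encountered have the form $1 + O(t)$ and are therefore units in the ring of formal power series over $\Z[c_1,c_2,\ldots]$, so the manipulations are legitimate identities of formal power series; equivalently, one may truncate after finitely many levels and observe that the $n$-th convergent of the J-fraction equals the $(2n{+}1)$-st convergent of the S-fraction, giving agreement to all orders in $t$. There is no serious obstacle: the argument is a single two-line identity iterated inductively. The only subtlety is the choice of the shifted-tail substitution $V_k = X_k - c_{2k-2}t$ that renders the recursion self-referential at every level; with that substitution in hand the proof is essentially immediate.
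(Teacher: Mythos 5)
Your proof is correct: the key identity is easily verified (both sides, multiplied by the common denominator $X-bt$, reduce to $X-(a+d)tX-bt+bdt^2$), the shifted tails $V_k=X_k-c_{2k-2}t$ with $c_0=0$ do make the recursion close up on itself at every level, and unrolling yields exactly the J-fraction with $e_0=c_1$, $e_n=c_{2n}+c_{2n+1}$, $f_n=c_{2n-1}c_{2n}$. Note, however, that the paper does not actually write out a proof of this classical proposition: it defers to three proofs in the literature --- Wall's classical argument via the three-term recurrences for the convergents, a ``very simple and elegant algebraic proof'' in Dumont--Zeng, Dumont, and Di~Francesco--Kedem, and Viennot's combinatorial proof via the bijection sending a fall--rise pair of Dyck steps at heights $2n-1,2n$ to a weight-$c_{2n-1}c_{2n}$ down-step of a Motzkin path. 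Your argument is essentially a self-contained rendition of the second of these routes (a single two-level collapsing identity iterated down the fraction), so it is in the spirit the authors intended; the combinatorial route costs more setup but explains structurally where the coefficients \reff{eq.contraction_even.coeffs} come from, whereas yours is the quickest direct verification. The one place worth tightening is the last paragraph: the cleanest justification in the formal-power-series setting is to set $c_m=0$ for all $m>2N$, which makes $X_{N+1}=1$ and turns both sides into finite (rational) fractions related by $N$ applications of your identity; since the coefficient of $t^n$ in either continued fraction depends only on $c_1,\ldots,c_n$, letting $N\to\infty$ gives the identity of formal power series, which is slightly more careful than the convergent-matching statement as you phrased it.
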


The classic algebraic proof of this result,
using the convergents of the continued fraction,
can be found in the book of Wall \cite[p.~21]{Wall_48}.
A very simple and elegant algebraic proof can be found in
\cite[Lemmas~1 and 2]{Dumont_94b}
\cite[proof of Lemma~1]{Dumont_95}
\cite[Lemma~4.5]{DiFrancesco_10}.
A beautiful and enlightening combinatorial proof,
using Flajolet's \cite{Flajolet_80} representation
of S-fractions (resp.\ J-fractions) in terms of Dyck (resp.\ Motzkin) paths,
can be found in the lectures of Viennot \cite[pp.~V-31--V-32]{Viennot_83}.

There is also \cite{Sokal_totalpos}
a generalization of Proposition~\ref{prop.contraction_even}
to a subclass of T-fractions,
namely, those with $d_i = 0$ for all {\em even}\/ levels $i$:

\begin{proposition}[Even contraction for T-fractions with $d_i = 0$ at all even levels~$i$]
   \label{prop.contraction_even.Ttype}
We have
\begin{eqnarray}
    & &  \hspace*{-1.5cm}
   \cfrac{1}{1 - d_1 t - \cfrac{c_1 t}{1 - \cfrac{c_2 t}{1 - d_3 t -  \cfrac{c_3 t}{1- \cdots}}}}
  \;\;=\;  \nonumber \\[2mm]
  & &  \hspace*{-7mm}
   \cfrac{1}{1 - (c_1 + d_1) t - \cfrac{c_1 c_2 t^2}{1 - (c_2 + c_3 + d_3) t - \cfrac{c_3 c_4 t^2}{1 - (c_4 + c_5 + d_5) t - \cfrac{c_5 c_6 t^2}{1- \cdots}}}}
   \;,
 \label{eq.contraction_even.Ttype}
\end{eqnarray}
so that the J-fraction on the right-hand side has coefficients
\begin{subeqnarray}
   e_0  & = &  c_1 + d_1   \\
   e_n  & = &  c_{2n} + c_{2n+1} + d_{2n+1}
                            \qquad\hbox{for $n \ge 1$} \\
   f_n  & = &  c_{2n-1} c_{2n}
 \label{eq.contraction_even.Ttype.coeffs}
\end{subeqnarray}
\end{proposition}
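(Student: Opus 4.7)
The plan is to generalize the classical convergent-based proof of Proposition~\ref{prop.contraction_even}, exploiting the hypothesis $d_{2k} = 0$ to ``collapse'' two successive convergent steps of the T-fraction into one step of a J-fraction.

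First, I would introduce the standard three-term recurrence satisfied by the numerators $A_n$ and denominators $B_n$ of the convergents of a T-fraction, namely $X_n = (1 - d_n t) X_{n-1} - c_{n-1} t X_{n-2}$ for $n \ge 2$ (with $X \in \{A,B\}$), together with the initial data $A_0 = 0$, $A_1 = 1$, $B_0 = 1$, $B_1 = 1 - d_1 t$. The hypothesis $d_{2k} = 0$ simplifies the recurrence at every even index: writing out the recursions for $A_{2m}$, $A_{2m-1}$ and $A_{2m-2}$, one uses the recursion at $2m-1$ and the relation at $2m-2$ (which, since $d_{2m-2}=0$, expresses $A_{2m-3}$ in terms of $A_{2m-2}$ and $A_{2m-4}$) to eliminate $A_{2m-1}$ and $A_{2m-3}$ from the first, obtaining after routine simplification
\begin{equation}
A_{2m} \;=\; \bigl[1 - (c_{2m-2} + c_{2m-1} + d_{2m-1}) t\bigr] A_{2m-2} \;-\; c_{2m-3} \, c_{2m-2} \, t^2 \, A_{2m-4},
\end{equation}
and the identical relation for the $B_n$'s. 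Setting $\widetilde X_m = X_{2m}$, this is precisely the convergent recurrence of the J-fraction on the right-hand side of \reff{eq.contraction_even.Ttype}, under the identifications $e_{m-1} = c_{2m-2} + c_{2m-1} + d_{2m-1}$ and $f_{m-1} = c_{2m-3} c_{2m-2}$ for $m \ge 2$, which match (\ref{eq.contraction_even.Ttype.coeffs}b,c).

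It then remains to handle the base case $m = 1$ by direct computation: using $d_2 = 0$ one finds $A_2 = 1$ and $B_2 = 1 - (c_1 + d_1) t$, which agrees with the first J-fraction convergent with $e_0 = c_1 + d_1$ as in (\ref{eq.contraction_even.Ttype.coeffs}a). Since the sequences $(A_{2m}, B_{2m})_{m \ge 0}$ and the J-fraction convergent sequence satisfy the same three-term recurrence with the same initial data, they coincide for all $m$; passing to the formal power series limit then yields the claimed identity in $\Z[\![t]\!]$.

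The only real potential obstacle is bookkeeping: the collapsed recurrence is valid only for $m \ge 2$ (since its derivation uses the equation at index $2m-2$), which is precisely why the base-case formula $e_0 = c_1 + d_1$ is shorter than the generic $e_n = c_{2n} + c_{2n+1} + d_{2n+1}$ --- there is no ``$c_0$'' to absorb. An alternative (but heavier) route would be a bijective argument via Flajolet's path interpretation, pairing the steps of Schr\"oder-like paths for T-fractions into single steps of Motzkin paths for J-fractions; the algebraic argument sketched above seems substantially cleaner.
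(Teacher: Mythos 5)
Your proof is correct: the collapsed three-term recurrence for the even-indexed convergents, the base case $A_2=1$, $B_2=1-(c_1+d_1)t$, and the identification with the convergents of the J-fraction (same recurrence, same initial data, hence equal, with coefficientwise convergence giving the identity of formal power series) all check out, and the bookkeeping remark about why $e_0$ lacks a ``$c_0$'' term is exactly right. The paper itself supplies no details for this proposition --- it only cites \cite{Sokal_totalpos} and notes that the result can be proven either algebraically or via Schr\"oder/Motzkin paths --- and your convergent-based argument is a clean, self-contained instantiation of the algebraic route, i.e.\ the same Wall-style method the paper invokes for the classical S-fraction contraction in Proposition~\ref{prop.contraction_even}.
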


\noindent
Proposition~\ref{prop.contraction_even.Ttype} can be proven
\cite{Sokal_totalpos} either algebraically by manipulating
the continued fraction, or combinatorially by using the
representation of T-fractions (resp.\ J-fractions)
in terms of Schr\"oder (resp.\ Motzkin) paths.

\subsection{Binomial transform}   \label{subsec.binomialtrans}

Let $\ba = (a_n)_{n \ge 0}$ be a sequence
with values in a commutative ring (with identity element) $R$,
and let $\xi$ be an element of $R$ (or an indeterminate).
Then the {\em $\xi$-binomial transform}\/ of $\ba$
is the sequence $\bb = (b_n)_{n \ge 0}$ defined by
\be
   b_n  \;=\;  \sum_{k=0}^n \binom{n}{k} \, a_k \, \xi^{n-k}
   \;,
\ee
or in other words $\bb = B_\xi \, \ba$
where $B_\xi$ is the (unit-lower-triangular) $\xi$-binomial matrix
[cf., \eqref{def_Bxi}]
\be
   (B_\xi)_{nk}  \;=\;  \binom{n}{k} \: \xi^{n-k}
   \;.
 \label{def.Bxi}
\ee

If the ring $R$ contains the rationals, then we can form the
exponential generating functions
\be
   A(t) \;\eqdef\; \sum_{n=0}^\infty a_n \: \frac{t^n}{n!} \;,\qquad
   B(t) \;\eqdef\; \sum_{n=0}^\infty b_n \: \frac{t^n}{n!}
\ee
(considered as formal power series in the indeterminate $t$),
and an easy computation shows that they are related by
\be
   B(t)  \;=\; e^{\xi t} \, A(t)
   \;.
 \label{eq.binomialtrans.egf}
\ee
Therefore, an egf exhibiting a prefactor $e^{\xi t}$ is the
necessary and sufficient signal of a $\xi$-binomial transform.

On the other hand, if we look at the {\em ordinary}\/ generating functions
\be
   a(t) \;\eqdef\; \sum_{n=0}^\infty a_n \, t^n \;,\qquad
   b(t) \;\eqdef\; \sum_{n=0}^\infty b_n \, t^n 
\ee
(again considered as formal power series in the indeterminate $t$),
Euler \cite{Euler_1755_0} showed that\footnote{
   The identity \reff{eq.binomialtrans.euler} is a special case of
   the fundamental theorem of Riordan arrays (FTRA)
   \cite[pp.~137--144]{Barry_16}.
}
\be
   b(t)
   \;=\;
   \frac{1}{1-\xi t} \; a\Bigl( \frac{t}{1-\xi t} \Bigr)
   \;.
 \label{eq.binomialtrans.euler}
\ee
Furthermore, it turns out that whenever
$a(t)$ can be expressed as an S-fraction (resp.\ J-fraction),
then $b(t)$ can be expressed as a T-fraction (resp.\ J-fraction),
as we shall now show.

Let us start with the case of an S-fraction.
Let $\bc = (c_i)_{i \ge 1}$ be indeterminates,
and define the Stieltjes--Rogers polynomials $S_n(\bc) \in \Z[\bc]$ by
\be
   \sum_{n=0}^\infty S_n(c_1,\ldots,c_n) \, t^n
   \;=\;
   \cfrac{1}{1 - \cfrac{c_1 t}{1 - \cfrac{c_2 t}{1- \cdots}}}
   \;.
 \label{eq.Stype.cfrac.0}
\ee
Similarly,
let $\bc = (c_i)_{i \ge 1}$ and $\bd = (d_i)_{i \ge 1}$ be indeterminates,
and define the Thron--Rogers polynomials $T_n(\bc,\bd) \in \Z[\bc,\bd]$ by
\be
   \sum_{n=0}^\infty T_n(c_1,\ldots,c_n,d_1,\ldots,d_n) \, t^n
       \;=\; \cfrac{1}{1 - d_1 t - \cfrac{c_1 t}{ 1 - d_2 t -
                           \cfrac{c_2 t}{ 1 - \cdots}}}
   \;.
 \label{eq.Ttype.cfrac.0}
\ee
Barry \cite[Proposition~3]{Barry_09} proved the following:

\begin{proposition}[$\xi$-binomial transform of S-fraction as T-fraction]
    \label{prop.xi_binomial}
We have
\be
\sum\limits_{k=0}^n \binom{n}{k} \, S_k(\bc) \, \xi^{n-k} \;=\;
  T_n(\bc, \xi \bone_{\rm odd})
\label{eq_xi_binomial}
\ee
as an identity in $\Z[\bc,\xi]$, where $\bone_{\rm odd} = (1,0,1,0,\ldots)$.
\end{proposition}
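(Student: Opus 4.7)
The plan is to combine the generating-function identity \reff{eq.binomialtrans.euler} for the $\xi$-binomial transform with a layer-by-layer manipulation of the continued fraction. By \reff{eq.binomialtrans.euler} applied to $a(t) = \sum_{n \ge 0} S_n(\bc)\, t^n$, the ordinary generating function of the left-hand side of \reff{eq_xi_binomial} is
\be
   b(t) \;=\; \frac{1}{1-\xi t} \, a\!\left(\frac{t}{1-\xi t}\right) ,
\ee
so the task reduces to showing that $b(t)$ equals the T-fraction $\sum_{n \ge 0} T_n(\bc, \xi \bone_{\rm odd}) \, t^n$.

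To do so I would introduce the tails of both continued fractions. Let $A_k(t)$ be the S-fraction tail satisfying $A_0 = a$ and $A_k(t) = 1/(1 - c_{k+1} t\, A_{k+1}(t))$; and let $B_k(t)$ be the T-fraction tail defined by $B_{2k}(t) = 1/(1 - \xi t - c_{2k+1} t\, B_{2k+1}(t))$ and $B_{2k+1}(t) = 1/(1 - c_{2k+2} t\, B_{2k+2}(t))$, so that $B_0(t)$ is the target T-fraction. Writing $t' = t/(1-\xi t)$, the key algebraic fact is $(1-\xi t)\,t' = t$, which yields the following two ``one-layer'' identities. First,
\be
   \frac{A_{2k}(t')}{1-\xi t} \;=\; \frac{1}{(1-\xi t)[1 - c_{2k+1} t' A_{2k+1}(t')]} \;=\; \frac{1}{1 - \xi t - c_{2k+1} t\, A_{2k+1}(t')} .
\ee
Second,
\be
   A_{2k+1}(t') \;=\; \frac{1}{1 - c_{2k+2} t' A_{2k+2}(t')} \;=\; \frac{1}{1 - c_{2k+2} t \cdot \dfrac{A_{2k+2}(t')}{1-\xi t}} .
\ee
Comparing with the defining recursions for $B_k$, the three statements
\[
\frac{A_{2k}(t')}{1-\xi t} = B_{2k}(t), \qquad A_{2k+1}(t') = B_{2k+1}(t), \qquad \frac{A_{2k+2}(t')}{1-\xi t} = B_{2k+2}(t)
\]
are equivalent, so they all hold iff any one of them does.

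To close this infinite chain rigorously in the ring of formal power series, I would invoke the standard fact that the depth-$N$ convergent of an S-fraction or T-fraction agrees with the full continued fraction modulo $t^{N+1}$. Thus for each $N$, truncating both sides at depth greater than $N$ reduces the identity $b(t) \equiv B_0(t) \pmod{t^{N+1}}$ to a finite chain of the equivalences above, closed off at the truncation depth by the trivial base case $A_M(t') = B_M(t) = 1$ for $M$ sufficiently large. Letting $N \to \infty$ gives $b(t) = B_0(t)$ as formal power series, which yields \reff{eq_xi_binomial} upon extracting the coefficient of $t^n$. The main potential obstacle is just the bookkeeping to keep straight the alternation of the two layer types (and the corresponding alternating presence of the factor $1/(1-\xi t)$); the algebraic engine is only the single rule $(1-\xi t)\, t' = t$, and no deeper computation is needed.
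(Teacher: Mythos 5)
Your argument is correct. Note that the paper supplies no proof of Proposition~\ref{prop.xi_binomial} at all: it cites Barry and then remarks only that all three propositions of that subsection ``can be proven either algebraically by manipulating the continued fraction using Euler's formula \reff{eq.binomialtrans.euler}, or combinatorially by using the expressions for the Stieltjes--Rogers, Jacobi--Rogers and Thron--Rogers polynomials in terms of Dyck, Motzkin and Schr\"oder paths'', deferring details to an unpublished reference. What you have written is a complete, self-contained execution of the first of these two routes: Euler's formula reduces the claim to showing that $\frac{1}{1-\xi t}\,a\bigl(t/(1-\xi t)\bigr)$ equals the T-fraction, and your two one-layer identities (both consequences of the single relation $(1-\xi t)\,t'=t$) show that the quantities $A_{2k}(t')/(1-\xi t)$ and $A_{2k+1}(t')$ satisfy exactly the defining recursions of the T-fraction tails $B_{2k}$ and $B_{2k+1}$; the truncation argument then legitimately closes the infinite chain, since the depth-$M$ convergents determine the full continued fractions modulo $t^{M+1}$ and substitution of $t'=t+\xi t^2+\cdots$ preserves congruences modulo powers of $t$. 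The one point of bookkeeping you flag but leave implicit is worth making explicit: the base case $A_M(t')=B_M(t)=1$ closes the chain only when $M$ is \emph{odd}, since at even depth the induction statement carries the factor $1/(1-\xi t)$ and $1/(1-\xi t)\neq 1$; choosing the truncation depth odd resolves this, so it is not a gap. The alternative combinatorial route mentioned in the paper would instead exhibit a weight-preserving bijection between the Schr\"oder paths counted by $T_n(\bc,\xi\bone_{\rm odd})$ (whose level steps, being weighted $\xi$ only at odd levels, are confined to even heights) and pairs consisting of a Dyck path counted by $S_k(\bc)$ together with $n-k$ inserted $\xi$-weighted steps; your algebraic route avoids any bijection at the cost of the formal-power-series bookkeeping you describe.
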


\noindent
So the $\xi$-binomial transform of an S-fraction with coefficients
$\bc = (c_i)_{i \ge 1}$ is a T-fraction with the same coefficients $\bc$
and in which $d_i = \xi$ (resp.\ 0) for odd (resp.\ even) $i$.

In fact, this result can be generalized \cite{Sokal_totalpos}
from S-fractions to a subclass of T-fractions,
namely, those in which $d_i = 0$ at all even levels $i$:

\begin{proposition}[$\xi$-binomial transform of a subclass of T-fractions]
   \label{prop.Tn.x-binomial}
We have
\be
   \sum_{k=0}^n \binom{n}{k} \, T_k(\bc,\bd_{\rm odd}) \, \xi^{n-k}
   \;=\;
   T_n(\bc,\bd_{\rm odd}+\xi\bone_{\rm odd})
 \label{eq.prop.Tn.x-binomial}
\ee
as an identity in $\Z[\bc,\bd_{\rm odd},\xi]$,
where $\bd_{\rm odd} = (d_1,0,d_3,0,\ldots)$
and $\bone_{\rm odd} = (1,0,1,0,\ldots)$.
\end{proposition}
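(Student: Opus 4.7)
The plan is to give an algebraic proof by manipulating the continued fraction level by level, leveraging Euler's formula \reff{eq.binomialtrans.euler} to translate the $\xi$-binomial transform into the composition $a(t) \mapsto (1-\xi t)^{-1}\, a(t/(1-\xi t))$ acting on ogfs. Letting $A_0(t) = \sum_n T_n(\bc,\bd_{\rm odd}) \, t^n$ and $B_0(t) = \sum_n T_n(\bc,\bd_{\rm odd} + \xi\bone_{\rm odd}) \, t^n$, the statement \reff{eq.prop.Tn.x-binomial} reduces to the generating-function identity
\[
  \frac{1}{1-\xi t}\, A_0\!\left(\frac{t}{1-\xi t}\right) \;=\; B_0(t).
\]

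The key device is to introduce the tails of both continued fractions. Let $A_k$ and $B_k$ denote the $k$-th tails of the LHS and RHS T-fractions, respectively, so that
\[
  A_k \;=\; \cfrac{1}{1 - d_{k+1}\, t - c_{k+1}\, t\, A_{k+1}}
  \quad\text{and}\quad
  B_k \;=\; \cfrac{1}{1 - (d_{k+1} + \xi\,[k\text{ even}])\, t - c_{k+1}\, t\, B_{k+1}},
\]
with the convention $d_{k+1} = 0$ whenever $k+1$ is even. I~would then define
\[
  \tilde A_k(t) \;\eqdef\;
  \begin{cases}
    (1-\xi t)^{-1}\, A_k(t/(1-\xi t)) & \text{$k$ even}, \\[1mm]
    A_k(t/(1-\xi t))                  & \text{$k$ odd},
  \end{cases}
\]
and verify by direct substitution that the family $\{\tilde A_k\}$ satisfies exactly the same recursion as $\{B_k\}$, which would give $\tilde A_0 = B_0$ by uniqueness.

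The recursion-matching computation is straightforward but hinges on the alternation of zero and nonzero $d$-coefficients. For $k$ even, clearing $1/(1-\xi t)$ factors from the denominator of $A_k(t/(1-\xi t))$ produces a numerator $(1-\xi t)$ that cancels the prefactor of $\tilde A_k$, and simultaneously shifts $d_{k+1}$ to $d_{k+1} + \xi$ as required. For $k$ odd, the vanishing $d_{k+1} = 0$ preserves a factor $(1-\xi t)$ in the numerator, which then combines with the prefactor $1/(1-\xi t)$ hidden in $\tilde A_{k+1}$ to cancel cleanly, yielding $\tilde A_k = 1/(1 - c_{k+1}\, t\, \tilde A_{k+1})$. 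I expect no serious obstacles: the whole argument reduces to the trivial identity $(1-\xi t)\cdot(1-\xi t)^{-1} = 1$, together with the observation that the zero entries in $\bd_{\rm odd}$ are precisely what permit the auxiliary factors $1 - \xi t$ introduced at each even level to be absorbed at the next odd level (so the result would fail for a generic T-fraction, explaining why this is the natural generalization of Barry's Proposition~\ref{prop.xi_binomial}). The only mildly delicate point is to justify the manipulations as statements about formal power series in $t$ (rather than rational functions), but this is standard: both $\tilde A_k$ and $B_k$ are uniquely determined order-by-order in $t$ from the recursion together with the initial condition $\tilde A_k(0) = B_k(0) = 1$, so matching recursions imply matching power series.
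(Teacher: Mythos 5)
Your proof is correct, and it is precisely the algebraic route that the paper itself indicates for this proposition (manipulating the continued fraction level by level via Euler's formula \reff{eq.binomialtrans.euler}, with the substitution $t \mapsto t/(1-\xi t)$ and the prefactor $(1-\xi t)^{-1}$ alternately introduced at even levels and absorbed at odd levels where $d_{k+1}=0$); the paper merely defers the details to a reference in preparation, whereas you have carried them out, including the correct order-by-order uniqueness argument for formal power series. No gaps.
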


Finally, let us consider the case in which $a(t)$ can be expressed
as a J-fraction.
Let $\bee = (e_i)_{i \ge 0}$ and $\bff = (f_i)_{i \ge 1}$ be indeterminates,
and define the Jacobi--Rogers polynomials $J_n(\bee,\bff) \in \Z[\bee,\bff]$ by
\be
   \sum_{n=0}^\infty J_n(\bee,\bff) \, t^n
       \;=\; \cfrac{1}{1 - e_0 t - \cfrac{f_1 t^2}{ 1 - e_1 t -
                           \cfrac{f_2 t^2}{ 1 - \cdots}}}
   \;.
 \label{eq.Jtype.cfrac.0}
\ee
We then have:

\begin{proposition}[$\xi$-binomial transform of J-fraction]
   \label{prop.Jn.x-binomial}
We have
\be
   \sum_{k=0}^n \binom{n}{k} \, J_k(\bee,\bff) \, \xi^{n-k}
   \;=\;
   J_n(\bee +  \xi\bone, \bff)
 \label{eq.prop.Jn.x-binomial}
\ee
as an identity in $\Z[\bee,\bff,\xi]$, where $\bone = (1,1,1,1,\ldots)$.
\end{proposition}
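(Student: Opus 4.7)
My plan is to give two proofs, algebraic and combinatorial; either would suffice, and I will sketch both.

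For the algebraic proof, let $A(t) \eqdef \sum_{n \ge 0} J_n(\bee,\bff)\, t^n$ and let $B(t)$ be the ogf of its $\xi$-binomial transform. Euler's identity \reff{eq.binomialtrans.euler} gives $B(t) = (1-\xi t)^{-1} \, A\bigl(t/(1-\xi t)\bigr)$. The strategy is to substitute $t \mapsto t/(1-\xi t)$ into the J-fraction \reff{eq.Jtype.cfrac.0} and process it level by level. After multiplying by $1/(1-\xi t)$, the outermost denominator becomes
\be
  1 \,-\, (e_0+\xi)t \,-\, \cfrac{f_1 t^2 / (1-\xi t)}{1 - e_1 t/(1-\xi t) - \cdots } .
\ee
The crucial observation is that multiplying numerator and denominator of this next-level fraction by $(1-\xi t)$ produces $f_1 t^2 / \bigl[ 1 - (e_1+\xi)t - (1-\xi t)[\cdots] \bigr]$, which is the same transformation one level deeper, with a fresh factor $(1-\xi t)$ absorbed into $[\cdots]$. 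Iterating through all levels converts every $e_i$ to $e_i+\xi$ while leaving the $f_i$ untouched, yielding exactly the J-fraction on the right-hand side of \reff{eq.prop.Jn.x-binomial}.

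For the combinatorial proof, I would invoke Flajolet's master theorem \cite{Flajolet_80}: $J_n(\bee,\bff)$ enumerates Motzkin paths of length $n$ with weight $e_i$ at each level step of height $i$ and weight $f_j$ at each descent from height $j$. The quantity $\sum_{k=0}^n \binom{n}{k} J_k(\bee,\bff)\, \xi^{n-k}$ then enumerates pairs $(\pi,S)$ where $\pi$ is a weighted Motzkin path of length $k \le n$ and $S$ is a choice of $n-k$ positions in $[n]$ at which to interleave an ``extra'' level step of weight $\xi$. Since an inserted step sits at whatever height the surrounding path occupies, the composite object is a Motzkin path of length $n$ whose level steps each carry one of two marks --- ``original'' (weight $e_i$ at height $i$) or ``inserted'' (weight $\xi$, independent of height) --- and whose descents retain their weights $f_j$. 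Summing over the two marks at each level step replaces $e_i$ by $e_i + \xi$ uniformly, reproducing $J_n(\bee+\xi\bone,\bff)$.

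The main potential obstacle is the level-by-level telescoping in the algebraic argument; but as indicated above, it reduces to the one-line identity that multiplying a fraction of the form $\frac{f_k t^2/(1-\xi t)}{1 - e_k t/(1-\xi t) - [\cdots]}$ by $(1-\xi t)/(1-\xi t)$ converts it to $\frac{f_k t^2}{1-(e_k+\xi)t - (1-\xi t)[\cdots]}$, thereby propagating the shift and the residual factor $(1-\xi t)$ to the next level. I would likely present the algebraic version in the paper, since it remains within the formal-power-series framework used elsewhere and parallels the proofs of Propositions~\ref{prop.xi_binomial} and \ref{prop.Tn.x-binomial}.
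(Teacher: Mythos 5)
Your proposal is correct: both the algebraic telescoping (each level absorbs one factor of $1-\xi t$, shifting $e_k \mapsto e_k+\xi$ and passing a residual $1-\xi t$ to the next level) and the Motzkin-path insertion argument are sound, and the formal-power-series subtlety of "iterating through all levels" is harmless since the first $n$ Taylor coefficients depend on only finitely many levels. This matches the paper, which gives no written-out proof but states that the result can be proven either algebraically via Euler's formula \reff{eq.binomialtrans.euler} or combinatorially via the Motzkin-path interpretation of the Jacobi--Rogers polynomials, citing Aigner and Barry for the statement and \cite{Sokal_totalpos} for details; your write-up simply supplies those details.
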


\noindent
In other words, the $\xi$-binomial transform simply adds a constant $\xi$
to all the coefficients $e_i$.
This result can be found in Aigner \cite[eq.~(6.15)]{Aigner_01a}
and Barry \cite[Proposition~4]{Barry_09},
but it may well go back to the late nineteenth century.

Let us remark that all three Propositions can be proven either
algebraically by manipulating the continued fraction using Euler's
formula \reff{eq.binomialtrans.euler},
or combinatorially by using the expressions for the Stieltjes--Rogers,
Jacobi--Rogers and Thron--Rogers polynomials in terms of Dyck, Motzkin
and Schr\"oder paths, respectively.
See \cite{Sokal_totalpos} for details.

By combining the contraction formula (Proposition~\ref{prop.contraction_even})
with Proposition~\ref{prop.Jn.x-binomial},
we can alternatively write the $\xi$-binomial transform of an S-fraction
as a J-fraction:

\begin{corollary}[$\xi$-binomial transform of S-fraction as J-fraction]
    \label{cor.xi_binomial.StoJ}
We have
\be
\sum\limits_{k=0}^n \binom{n}{k} \, S_k(\bc) \, \xi^{n-k} \;=\;
  J_n(\bee,\bff)
\label{eq_xi_binomial.StoJ}
\ee
where
\begin{subeqnarray}
   e_0  & = &  c_1 \,+\, \xi
       \slabel{eq.contraction_even.coeffs.a.xi}   \\
   e_n  & = &  c_{2n} + c_{2n+1} \,+\, \xi  \qquad\hbox{for $n \ge 1$}
       \slabel{eq.contraction_even.coeffs.b.xi}   \\
   f_n  & = &  c_{2n-1} c_{2n}
       \slabel{eq.contraction_even.coeffs.c.xi}
\end{subeqnarray}
\end{corollary}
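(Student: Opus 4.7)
The plan is to prove Corollary~\ref{cor.xi_binomial.StoJ} as an immediate composition of the two preceding results, namely the even-contraction formula for S-fractions (Proposition~\ref{prop.contraction_even}) and the description of how a $\xi$-binomial transform acts on a J-fraction (Proposition~\ref{prop.Jn.x-binomial}). The idea is simply: first contract the S-fraction to a J-fraction, and then binomially transform the resulting J-fraction, reading off the coefficients at each step.

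Concretely, I would first invoke Proposition~\ref{prop.contraction_even} to rewrite each Stieltjes--Rogers polynomial as a Jacobi--Rogers polynomial, $S_k(\bc) = J_k(\widehat\bee, \bff)$, where by \reff{eq.contraction_even.coeffs} we have $\widehat e_0 = c_1$, $\widehat e_n = c_{2n} + c_{2n+1}$ for $n \ge 1$, and $f_n = c_{2n-1} c_{2n}$. Substituting this identity into the left-hand side of \reff{eq_xi_binomial.StoJ} rewrites it as $\sum_{k=0}^n \binom{n}{k} J_k(\widehat\bee, \bff) \, \xi^{n-k}$. Next, I would apply Proposition~\ref{prop.Jn.x-binomial} to this sum, which evaluates it to $J_n(\widehat\bee + \xi\bone, \bff)$. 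Unpacking, the shifted vector $\widehat\bee + \xi\bone$ has entries $c_1 + \xi$ at index~$0$ and $c_{2n} + c_{2n+1} + \xi$ at index $n \ge 1$, while the $\bff$ sequence is unchanged; this is precisely the content of \reff{eq.contraction_even.coeffs.a.xi}--\reff{eq.contraction_even.coeffs.c.xi}.

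There is essentially no obstacle here: both ingredients are already proven polynomial identities (in $\Z[\bc]$ and $\Z[\bee, \bff, \xi]$ respectively), so composing them yields a polynomial identity in $\Z[\bc, \xi]$, which is exactly what the corollary asserts. The only thing worth remarking is bookkeeping of the constant term $e_0$ versus the higher-index entries $e_n$ for $n \ge 1$, since the contraction formula treats these two cases with slightly different expressions; but in both cases the $\xi$-binomial transform acts by the same universal rule of adding $\xi$, so the final formulas match without any case analysis.
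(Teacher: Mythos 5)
Your proof is correct and is exactly the paper's route: the corollary is stated there as obtained ``by combining the contraction formula (Proposition~\ref{prop.contraction_even}) with Proposition~\ref{prop.Jn.x-binomial}'', which is precisely your two-step composition of contracting $S_k(\bc)=J_k(\widehat\bee,\bff)$ and then applying the $\xi$-binomial transform to shift $\widehat\bee$ by $\xi\bone$. Nothing further is needed.
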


So the $\xi$-binomial transform of an S-fraction
can be written either as a T-fraction (Proposition~\ref{prop.xi_binomial})
or as a J-fraction (Corollary~\ref{cor.xi_binomial.StoJ}).
Of course, the T-fraction and the J-fraction are equivalent
by virtue of Proposition~\ref{prop.contraction_even.Ttype};
otherwise put, the $\xi$-binomial transform commutes with contraction
(as it must).

Similarly, by combining the contraction formula
for special T-fractions (Proposition \ref{prop.contraction_even.Ttype})
with Proposition~\ref{prop.Jn.x-binomial},
we can write the $\xi$-binomial transform of a special T-fraction
as a J-fraction:

\begin{corollary}[$\xi$-binomial transform of special T-fraction as J-fraction]
    \label{cor.xi_binomial.TtoJ}
We have
\be
\sum\limits_{k=0}^n \binom{n}{k} \, T_n(\bc, \bd_{\rm odd}) \, \xi^{n-k}
   \;=\; J_n(\bee,\bff)
\label{eq_xi_binomial.TtoJ}
\ee
where $\bd_{\rm odd} = (d_1,0,d_3,0,\ldots)$ and
\begin{subeqnarray}
   e_0  & = &  c_1 \,+\, d_1 \,+\, \xi
       \slabel{eq.contraction_evenT.coeffs.a.xi}   \\
   e_n  & = &  c_{2n} + c_{2n+1} \,+\, d_{2n+1} \,+\, \xi
       \qquad\hbox{for $n \ge 1$}
       \slabel{eq.contraction_evenT.coeffs.b.xi}   \\
   f_n  & = &  c_{2n-1} c_{2n}
       \slabel{eq.contraction_evenT.coeffs.c.xi}
\end{subeqnarray}
\end{corollary}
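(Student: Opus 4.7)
The plan is to prove this corollary by a direct composition of two results that have already been established earlier in the excerpt: the even-contraction formula for special T-fractions (Proposition \ref{prop.contraction_even.Ttype}) and the $\xi$-binomial transform formula for J-fractions (Proposition \ref{prop.Jn.x-binomial}). Since both identities are stated as polynomial identities in the relevant indeterminates, their composition is also a polynomial identity, so there are no analytic or formal-power-series subtleties to worry about.

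First, I would apply Proposition \ref{prop.contraction_even.Ttype} to the special T-fraction. This rewrites $T_n(\bc, \bd_{\rm odd})$ as $J_n(\widetilde{\bee}, \bff)$, where $\widetilde{e}_0 = c_1 + d_1$, $\widetilde{e}_n = c_{2n} + c_{2n+1} + d_{2n+1}$ for $n \ge 1$, and $f_n = c_{2n-1} c_{2n}$. Substituting this into the left-hand side of \eqref{eq_xi_binomial.TtoJ} gives
\[
   \sum_{k=0}^n \binom{n}{k} \, T_k(\bc, \bd_{\rm odd}) \, \xi^{n-k}
   \;=\;
   \sum_{k=0}^n \binom{n}{k} \, J_k(\widetilde{\bee}, \bff) \, \xi^{n-k}.
\]

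Next, I would invoke Proposition \ref{prop.Jn.x-binomial}, which asserts that the $\xi$-binomial transform of a Jacobi--Rogers polynomial simply shifts every $e_i$-coefficient by $\xi$ while leaving the $f_i$-coefficients unchanged. Applying this with $\bee = \widetilde{\bee}$ converts the right-hand side above into $J_n(\widetilde{\bee} + \xi\bone, \bff)$. A direct inspection shows that $\widetilde{\bee} + \xi\bone$ coincides precisely with the sequence $\bee$ given by \eqref{eq.contraction_evenT.coeffs.a.xi}--\eqref{eq.contraction_evenT.coeffs.b.xi}, and the $\bff$ coefficients match \eqref{eq.contraction_evenT.coeffs.c.xi}, completing the proof.

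There is really no serious obstacle here: the corollary is a formal consequence of the two cited propositions, and the only thing to check is that the two parameter shifts (contraction and binomial shift) combine in the claimed way, which is immediate by inspection. The one conceptual point worth noting, though not an obstacle, is that the order of operations is irrelevant --- one could equally well first apply the $\xi$-binomial transform at the level of special T-fractions via Proposition \ref{prop.Tn.x-binomial} (shifting $\bd_{\rm odd} \mapsto \bd_{\rm odd} + \xi\bone_{\rm odd}$) and then contract via Proposition \ref{prop.contraction_even.Ttype} --- and the two routes yield identical $(\bee, \bff)$, reflecting the fact that the $\xi$-binomial transform commutes with contraction.
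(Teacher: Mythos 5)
Your proof is correct and is essentially identical to the paper's own argument: the paper likewise obtains this corollary by combining the even-contraction formula for special T-fractions (Proposition~\ref{prop.contraction_even.Ttype}) with the $\xi$-binomial transform of J-fractions (Proposition~\ref{prop.Jn.x-binomial}), and it also notes, as you do, that the transform commutes with contraction. (Minor point: the summand in \eqref{eq_xi_binomial.TtoJ} should read $T_k$ rather than $T_n$, as your write-up correctly has it.)
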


\noindent
Once again, the $\xi$-binomial transform commutes with contraction.

%
%
\section{Some GKP recurrences with a T-fraction or J-fraction representation}
      \label{sec.mainT}

All of the S-fractions found in Propositions~\ref{prop.I}--\ref{prop.VI}
trivially give rise to T-fractions by setting all $d_i = 0$
and to J-fractions by contraction (Proposition~\ref{prop.contraction_even}).
In this section we will show some less trivial cases of the GKP recurrence
in which the ogf has a representation
as a T-fraction or a J-fraction.
However, unlike what we did for S-fractions,
we shall not attempt a complete determination.
One obstacle to a complete analysis
is that T-fractions are highly underdetermined:
corresponding to the $n$ inputs $a_1,\ldots,a_n$
there are $2n$ parameters $c_1,\ldots,c_n$ and $d_1,\ldots,d_n$.
For this reason, a systematic search for T-fractions, such as we did
Section~\ref{sec.main} for the S-fractions,
does not seem at present to be feasible.  
A systematic search seems more feasible for the J-fractions,
since they, like the S-fractions, are fully determined
(modulo degenerate cases):
the $2n$ inputs $a_1,\ldots,a_{2n}$ give rise to
$2n$ parameters $e_0,\ldots,e_{n-1}$ and $f_1,\ldots,f_n$.
But this search is more involved than what we have done for S-fractions,
and we have not yet carried it to completion.
So the examples given in this section are surely only the tip of the iceberg.

We begin by presenting some examples of GKP recurrences
whose ogf can be expressed as a T-fraction or J-fraction
arising from a binomial transform (Section~\ref{subsec.TJ}).
We then present some examples whose ogf can be expressed as a
T-fraction not arising from a binomial transform (Section~\ref{subsec.T2})
or a
J-fraction not arising from a binomial transform (Section~\ref{subsec.J2}).

\subsection{Examples of GKP recurrences with T-fraction or J-fraction
      arising from the binomial transform}
   \label{subsec.TJ}

We will now use the $\xi$-binomial transform to find some cases
of the GKP recurrence \eqref{eq_binomvert}
in which the ogf has a representation as a T-fraction or J-fraction.
More specifically,
the row-generating polynomials $P_n(x)$ of our families~7a and 7b
will be the $\xi$-binomial transforms, for suitable $\xi$,
of the row-generating polynomials for families~3a and 3b:
that is, we will have
\be
   P_n(x;\bmu') 
   \;=\;
   \sum_{k=0}^n \binom{n}{k} \, P_k(x;\bmu) \, \xi^{n-k}
\ee
for suitable $\bmu$, $\bmu'$ and $\xi$.
The T-fractions for families~7a and 7b
will then be deduced from the S-fractions for families~3a and 3b
(Proposition~\ref{prop.III})
by using Proposition~\ref{prop.xi_binomial},
and the J-fractions will be found in a similar way
by using Corollary~\ref{cor.xi_binomial.StoJ}.

Let us observe that, under duality, the coefficients $\bc$ and $\bd$
in the T-fraction behave in the same way as the coefficient $\bc$
of the S-fraction:
namely, $c_i(x) \to x \, c_i(1/x)$ and $d_i(x) \to x \, d_i(1/x)$.  
Therefore, if $f(x,t;\bmu)$ has a T-fraction representation
with coefficients that are affine in $x$,
then $f(x,t;D\bmu)$ has also a T-fraction representation
with coefficients that are also affine in $x$:
namely, $a+bx \to b+ax$.

%
%
\subsubsection{Families 7a and 7b} \label{sec.family.VII}

Families 7a and 7b are defined as follows:
\begin{quote}
\begin{itemize}
\item[F7a.\ ] $\bmu=(0,\beta,\gamma, 0,\beta',\gamma')$
\item[F7b.\ ] $\bmu=(\alpha,-\alpha,\gamma, \alpha',-\alpha',\gamma')$
\end{itemize}
\end{quote}

\noindent
Please note that family~7a 
is a generalization of family~3a (which corresponds to $\gamma=0$),
while family~7b
is a generalization of family~3b (which corresponds to $\gamma'=0$).
Furthermore, family~7a
is also a generalization of family~4a
(which corresponds to $\gamma/\beta = 1 + \gamma'/\beta'$),
while family~7b
is a generalization of family~4b
(which corresponds to $\gamma'/\alpha' = 1 + \gamma/\alpha$).
Family~7a corresponds to the case of the GKP recurrence \reff{eq_binomvert}
in which the coefficients depend only on $k$,
while family~7b is the dual case
in which the coefficients depend only on $n-k$.

The key fact is the following:

\begin{proposition}[Families~7a and 7b as binomial transforms]
                         \label{prop.VII.binomialtrans}
The row-generating polynomials of family~7a
are the $\gamma$-binomial transform of those of family~3a:
\be
   P_n(x;0,\beta,\gamma, 0,\beta',\gamma')
   \;=\;
   \sum_{k=0}^n \binom{n}{k} \, P_n(x;0,\beta,0, 0,\beta',\gamma')
                             \, \gamma^{n-k}
   \;.
 \label{eq.prop.VII.binomialtrans.7a}
\ee
Similarly, the row-generating polynomials of family~7b
are the $(\gamma' x)$-binomial transform of those of family~3b:
\be
   P_n(x; \alpha,-\alpha,\gamma, \alpha',-\alpha',\gamma')
   \;=\;
   \sum_{k=0}^n \binom{n}{k} \,
      P_n(x; \alpha,-\alpha,\gamma, \alpha',-\alpha',0) \, (\gamma' x)^{n-k}
   \;.
 \label{eq.prop.VII.binomialtrans.7b}
\ee
\end{proposition}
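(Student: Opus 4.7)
The plan is to establish each identity at the level of exponential generating functions. By formula \reff{eq.binomialtrans.egf} applied coefficient by coefficient in $x$, the identities \reff{eq.prop.VII.binomialtrans.7a} and \reff{eq.prop.VII.binomialtrans.7b} are equivalent to the egf statements
\begin{equation}
F(x,t;\, 0,\beta,\gamma,0,\beta',\gamma') \;=\; e^{\gamma t}\, F(x,t;\, 0,\beta,0,0,\beta',\gamma')
\end{equation}
and
\begin{equation}
F(x,t;\, \alpha,-\alpha,\gamma,\alpha',-\alpha',\gamma') \;=\; e^{\gamma' x t}\, F(x,t;\, \alpha,-\alpha,\gamma,\alpha',-\alpha',0)\,,
\end{equation}
respectively. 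I would verify each by substituting the appropriate ansatz into the PDE \reff{eq.PDE.egf} and invoking uniqueness of the formal-power-series solution with initial datum~$1$.

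For family~7a the computation is essentially automatic. Since $\alpha=\alpha'=0$, the coefficient of $F_t$ in \reff{eq.PDE.egf} is just $1$, so substituting $F = e^{\gamma t} G$ pulls the prefactor $e^{\gamma t}$ out of both sides of the PDE. The additional term $\gamma G$ produced by $\partial_t e^{\gamma t}$ cancels against the $\gamma G$ inside the zeroth-order coefficient on the right-hand side, and the equation satisfied by $G$ is exactly the PDE \reff{eq.PDE.egf} for $(0,\beta,0,0,\beta',\gamma')$. Since $G(x,0)=1$, uniqueness identifies $G$ with $F(x,t;\, 0,\beta,0,0,\beta',\gamma')$.

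For family~7b the computation is more delicate because $\xi=\gamma' x$ depends on $x$, so both $F_t$ and $F_x$ contribute extra terms under $F = e^{\gamma' x t} G$. The key check is that, after dividing through by $e^{\gamma' x t}$, the zeroth-order-in-$G$ contributions produced by the substitution balance: the term $(1-\alpha t -\alpha' xt)\gamma' x G$ on the left must equal $(\beta x + \beta' x^2)\gamma' t G + \gamma' x G$ coming from the right (where the final summand is the residue of the $(\alpha' + \beta' + \gamma')xG$ piece once $\beta'=-\alpha'$ is imposed). This matching, together with the matching of the $G_x$ coefficient, forces \emph{precisely} the defining constraints $\beta=-\alpha$ and $\beta'=-\alpha'$ of family~7b; what then remains is the PDE \reff{eq.PDE.egf} for $(\alpha,-\alpha,\gamma,\alpha',-\alpha',0)$, and uniqueness with $G(x,0)=1$ closes the argument.

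The main obstacle is only careful bookkeeping of cross-terms in the 7b case, and this is conceptually unenlightening but straightforward. A self-contained alternative, avoiding any appeal to PDE uniqueness, is to work directly with the linear differential recurrence \reff{eq.diff_recurrence} for $P_n(x)$. Setting $Q_n(x) \eqdef \sum_{k=0}^n \binom{n}{k} P_k(x;\bmu)\, \xi(x)^{n-k}$ and using Pascal's rule, one obtains $Q_n - \xi Q_{n-1} = \sum_k \binom{n-1}{k-1} P_k(x;\bmu)\, \xi^{n-k}$; substituting \reff{eq.diff_recurrence} for $P_k(x;\bmu)$ and re-indexing $k \mapsto k+1$ shows that $Q_n(x)$ satisfies \reff{eq.diff_recurrence} under the transformed parameters, and the base case $Q_0 = 1 = P_0(x;\bmu')$ completes the induction.
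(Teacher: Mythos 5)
Your proof is correct, but it takes a different route from the paper's. The paper simply quotes the explicit closed-form exponential generating functions from \cite{BSV}: the egf of family~7a is written down as $e^{\gamma t}$ times the egf \reff{eq.egf.IIIa} of family~3a (and similarly $e^{\gamma' x t}$ for the 7b/3b pair), and then the characterization \reff{eq.binomialtrans.egf} of the $\xi$-binomial transform finishes the argument in one line. You instead avoid the closed forms entirely: you substitute the ansatz $F = e^{\xi t}G$ (with $\xi=\gamma$, resp.\ $\xi=\gamma' x$) into the PDE \reff{eq.PDE.egf} and check that the residual equation for $G$ is exactly the PDE for the target parameter set, then invoke uniqueness of the formal-power-series solution with $G(x,0)=1$ (which is legitimate, since the PDE is equivalent to the recurrence \reff{eq.diff_recurrence} determining $P_n$ from $P_{n-1}$). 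I verified your bookkeeping for 7b: the zeroth-order cross-terms $-\alpha\gamma' x t\,G - \alpha'\gamma' x^2 t\,G$ on the left match $(\beta x+\beta' x^2)\gamma' t\,G$ on the right precisely when $\beta=-\alpha$, $\beta'=-\alpha'$, and the leftover $\gamma' x\,G$ converts the zeroth-order coefficient $\alpha+\gamma+\gamma' x$ into $\alpha+\gamma$, which is the family-3b coefficient. Your approach is more self-contained (it needs neither the external reference nor the method of characteristics), at the cost of a computation; the paper's is shorter because the closed-form egfs are already in hand and used throughout Section~3. Your sketched alternative via the recurrence \reff{eq.diff_recurrence} is also viable, though for 7b the $x$-dependence of $\xi=\gamma' x$ generates extra $\xi'$ cross-terms under the $d/dx$ in \reff{eq.diff_recurrence} that you would need to track explicitly; as written that part is only a sketch, but the PDE argument already stands on its own.
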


\begin{proof}
Family~7a is Neuwirth's \cite[Theorem~18]{Neuwirth} case $\alpha' = 0$
specialized to $\alpha = 0$.
Its egf is given by \cite[eq.~(A8)]{BSV}:
\be
F(x,t) \;=\; e^{\gamma t}  \left[
  1 + \frac{\beta'\, x}{\beta} \, \left(1 - e^{\beta\, t}\right)
  \right]^{-(\beta'+\gamma')/\beta'} \,.
 \label{eq.egf.VII.a}
\ee
Comparing this with \reff{eq.egf.IIIa},
we see that family~7a is the $\gamma$-binomial transform of family~3a.
(We will give an alternate proof of this result in
Appendix~\ref{app.spivey-zhu}:
see the Remark after Corollary~\ref{cor2.prop.spivey.corollary5}.)

Similarly, family~7b is Spivey's case (S1) $\beta = -\alpha$
specialized to $\beta' = -\alpha'$ \cite{Spivey_11}.
Its egf is given by \cite[eq.~(A4)]{BSV}:
\be
F(x,t) \;=\; e^{\gamma' x t} \,  \left[ 1 + \frac{\alpha}{\alpha'\, x} \,
                    \bigl(1 - e^{\alpha' x t} \bigr)
             \right]^{-(\alpha+\gamma)/\alpha}   \,.
 \label{eq.egf.VII.b}
\ee
Comparing this with \reff{eq.egf.IIIb},
we see that family~7b is the $(\gamma' x)$-binomial transform of family~3b.
\end{proof}

{\bf Remark.}
Since $B_{\xi_1} B_{\xi_2} = B_{\xi_1 + \xi_2}$,
it follows that the $\xi$-binomial transform of family~7a
is simply family~7a with $\gamma \to \gamma + \xi$,
and the $(\xi x)$-binomial transform of family~7b
is family~7b with $\gamma' \to \gamma' + \xi$.
\myendremark

\medskip

Combining this with Propositions~\ref{prop.III} and \ref{prop.xi_binomial},
we obtain:

\begin{corollary}[T-fraction for families 7a and 7b]
    \label{cor.VII}
The ogf $f(x,t;\bmu)$ for the recurrence \eqref{eq_binomvert} with 
$\bmu=(0,\beta,\gamma,0,\beta',\gamma')$ 
has a T-type continued fraction representation in the ring 
$\Z[x;\beta,\gamma,\beta',\gamma'][[t]]$ with coefficients 
\begin{subeqnarray}
\label{eq.coef.VIIa}
\slabel{eq.ak.VIIa}
c_{2k-1} &=& (\gamma' + k\beta')\, x\,,\qquad\,\, 
c_{2k} \;=\; k\, (\beta+\beta' x) \,, \\ 
d_{2k-1} &=& \gamma \,, \qquad\qquad\qquad \;\; d_{2k} \;=\; 0
\end{subeqnarray}
Moreover, an analogous T-type representation exists in
$\Z[x;\alpha,\gamma,\alpha',\gamma'][[t]]$ for the dual parameter
$\bmu=(\alpha,-\alpha,\gamma, \alpha',
         -\alpha', \gamma')$ with coefficients 
\begin{subeqnarray}
\label{eq.coef.VIIb}
\slabel{eq.ak.VIIb}
c_{2k-1} &=&  \gamma + k\alpha  \,,\qquad\,\, 
c_{2k} \;=\; k (\alpha+\alpha'\, x) \,, \\ 
d_{2k-1} &=& \gamma' x \,, \quad\qquad \quad d_{2k} \;=\; 0
\end{subeqnarray}
\end{corollary}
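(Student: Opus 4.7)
The proof plan is essentially to assemble three ingredients already established in the paper: Proposition~\ref{prop.III} (S-fraction for family~3a/3b), Proposition~\ref{prop.VII.binomialtrans} (family~7a/7b as a binomial transform of family~3a/3b), and Proposition~\ref{prop.xi_binomial} (the $\xi$-binomial transform sends an S-fraction with coefficients $\bc$ to a T-fraction with the same $\bc$ and $\bd = \xi \bone_{\rm odd}$). Let me outline the path for family~7a; family~7b is entirely analogous by duality.

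First I would recall from Proposition~\ref{prop.III} that for $\bmu_0 = (0,\beta,0,0,\beta',\gamma')$ the ordinary generating function $f(x,t;\bmu_0)$ is given by the S-fraction \reff{def_Stype.one} with $c_{2k-1} = (\gamma'+k\beta')x$ and $c_{2k} = k(\beta+\beta'x)$. Equivalently, if we write $a_n = P_n(x;\bmu_0)$ then $a_n = S_n(\bc)$ for this choice of $\bc$. Next I would invoke Proposition~\ref{prop.VII.binomialtrans}, which gives the key identity
\[
   P_n(x;\bmu) \;=\; \sum_{k=0}^n \binom{n}{k}\, P_k(x;\bmu_0)\, \gamma^{n-k},
\]
where $\bmu = (0,\beta,\gamma,0,\beta',\gamma')$ is the parameter of family~7a. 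That is, the sequence $P_n(x;\bmu)$ is precisely the $\gamma$-binomial transform of $P_n(x;\bmu_0)$.

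Now I would apply Proposition~\ref{prop.xi_binomial} with $\xi = \gamma$. It states that
\[
   \sum_{k=0}^n \binom{n}{k}\, S_k(\bc)\, \gamma^{n-k} \;=\; T_n\bigl(\bc,\, \gamma\bone_{\rm odd}\bigr),
\]
so combining the two previous displays yields $P_n(x;\bmu) = T_n(\bc, \gamma\bone_{\rm odd})$ with $\bc$ exactly as in family~3a. Translated back to a continued-fraction statement via \reff{eq.Ttype.cfrac.0}, this says that $f(x,t;\bmu)$ has the T-fraction representation asserted in \reff{eq.coef.VIIa}, namely $c_{2k-1}=(\gamma'+k\beta')x$, $c_{2k}=k(\beta+\beta'x)$, $d_{2k-1}=\gamma$, and $d_{2k}=0$. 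For family~7b, the only change is that Proposition~\ref{prop.VII.binomialtrans} expresses its row polynomials as the $(\gamma' x)$-binomial transform of those of family~3b; applying Proposition~\ref{prop.xi_binomial} with $\xi = \gamma' x$ produces the T-fraction \reff{eq.coef.VIIb}.

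There is no real obstacle here, since the entire content of the corollary is just the composition of identities already proved; the only thing to check carefully is that the notational conventions match, in particular that the $\xi$ in Proposition~\ref{prop.xi_binomial} is allowed to be the $x$-dependent quantity $\gamma' x$ (which is fine because that proposition is an identity in $\Z[\bc,\xi]$, and here we simply work over $\Z[x,\alpha,\gamma,\alpha',\gamma']$ specializing $\xi \mapsto \gamma' x$). If one wished to avoid even this minor point for family~7b, one could instead apply duality \reff{eq.duality.mu} directly to the family~7a result, using that duality sends any T-fraction coefficient $h(x)$ to $x\, h(1/x)$, which sends $\gamma \mapsto \gamma' x$ and correctly transforms the $\bc$'s of family~3a into those of family~3b.
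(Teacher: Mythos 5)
Your proposal is correct and follows exactly the paper's own route: the authors likewise obtain Corollary~\ref{cor.VII} by combining Proposition~\ref{prop.VII.binomialtrans} (family~7a/7b as the $\gamma$- resp.\ $(\gamma' x)$-binomial transform of family~3a/3b) with Proposition~\ref{prop.III} and Proposition~\ref{prop.xi_binomial}. Your remark that $\xi$ may be specialized to the $x$-dependent quantity $\gamma' x$ because Proposition~\ref{prop.xi_binomial} is a polynomial identity in $\Z[\bc,\xi]$ is a correct and worthwhile clarification of a point the paper leaves implicit.
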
   

Similarly, combining Proposition~\ref{prop.VII.binomialtrans}
with Proposition~\ref{prop.III} and Corollary~\ref{cor.xi_binomial.StoJ},
we obtain:

\begin{corollary}[J-fraction for families 7a and 7b]
    \label{cor.VII.J}
The ogf $f(x,t;\bmu)$ for the recurrence \eqref{eq_binomvert} with 
$\bmu=(0,\beta,\gamma,0,\beta',\gamma')$ 
has a J-type continued fraction representation in the ring 
$\Z[x;\beta,\gamma,\beta',\gamma'][[t]]$ with coefficients 
\begin{subeqnarray}
   e_n  & = &  [\gamma + (\beta' + \gamma') x]  \,+\,
                  n (\beta + 2\beta' x)   \\[1mm]
   f_n  & = &  n (\gamma' + n\beta') x (\beta + \beta' x)
 \label{eq.Jfrac.family7a}
\end{subeqnarray}
Moreover, an analogous J-type representation exists in
$\Z[x;\alpha,\gamma,\alpha',\gamma'][[t]]$ for the dual parameter
$\bmu=(\alpha,-\alpha,\gamma, \alpha',
         -\alpha', \gamma')$ with coefficients 
\begin{subeqnarray}
   e_n  & = &  (\alpha + \gamma + \gamma' x) \,+\,
                  n (2\alpha + \alpha' x)   \\[1mm]
   f_n  & = &  n (\gamma + n\alpha) (\alpha + \alpha' x) 
 \label{eq.Jfrac.family7b}
\end{subeqnarray}
\end{corollary}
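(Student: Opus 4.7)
The plan is to derive Corollary~\ref{cor.VII.J} mechanically from the three results already established: Proposition~\ref{prop.VII.binomialtrans}, Proposition~\ref{prop.III}, and Corollary~\ref{cor.xi_binomial.StoJ}. For family~7a, Proposition~\ref{prop.VII.binomialtrans} tells us that the row-generating polynomials $P_n(x;0,\beta,\gamma,0,\beta',\gamma')$ are the $\gamma$-binomial transform of the row-generating polynomials of family~3a, i.e.\ of $P_n(x;0,\beta,0,0,\beta',\gamma')$. Proposition~\ref{prop.III} tells us that family~3a has an S-fraction representation with coefficients $c_{2k-1} = (\gamma' + k\beta')x$ and $c_{2k} = k(\beta + \beta' x)$. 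Thus Corollary~\ref{cor.xi_binomial.StoJ} applies with $\xi = \gamma$, and gives us directly a J-fraction expression for the ogf of family~7a.

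What remains is verification of the explicit formulae. Substituting $\xi = \gamma$ and the above $c_i$ into (\ref{eq.contraction_even.coeffs.a.xi}--c), one gets $e_0 = c_1 + \gamma = \gamma + (\beta' + \gamma')x$; for $n \ge 1$,
\begin{align*}
e_n &\;=\; c_{2n} + c_{2n+1} + \gamma
       \;=\; n(\beta + \beta' x) \,+\, \bigl(\gamma' + (n+1)\beta'\bigr)x \,+\, \gamma \\
    &\;=\; \bigl[\gamma + (\beta' + \gamma')x\bigr] \,+\, n(\beta + 2\beta' x),
\end{align*}
which matches \reff{eq.Jfrac.family7a}; and
\[
   f_n \;=\; c_{2n-1} c_{2n} \;=\; (\gamma' + n\beta')x \cdot n(\beta + \beta' x) \;=\; n(\gamma' + n\beta') x (\beta + \beta' x),
\]
again as claimed. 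This settles family~7a.

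For family~7b, Proposition~\ref{prop.VII.binomialtrans} identifies it as the $(\gamma' x)$-binomial transform of family~3b, whose S-fraction coefficients (from Proposition~\ref{prop.III}) are $c_{2k-1} = \gamma + k\alpha$ and $c_{2k} = k(\alpha + \alpha' x)$. Applying Corollary~\ref{cor.xi_binomial.StoJ} with $\xi = \gamma' x$ and running through the same substitution (now in $R[x]$ with $\xi$ involving $x$) gives $e_0 = (\alpha + \gamma) + \gamma' x$, $e_n = (\alpha + \gamma + \gamma' x) + n(2\alpha + \alpha' x)$ for $n \ge 1$, and $f_n = n(\gamma + n\alpha)(\alpha + \alpha' x)$, precisely \reff{eq.Jfrac.family7b}. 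Alternatively this dual case follows by applying the duality transformations $c_i(x) \mapsto x\, c_i(1/x)$ and $e_n(x), f_n(x) \mapsto x\, e_n(1/x),\, x^2 f_n(1/x)$ termwise to the family~7a result.

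There is essentially no obstacle: the only substantive inputs are already proven, and the verification is pure bookkeeping of polynomial arithmetic in the parameters. One minor point worth flagging is that Corollary~\ref{cor.xi_binomial.StoJ} is stated with $\xi$ in the ground ring; for family~7b we take $\xi = \gamma' x \in \Z[x;\alpha,\gamma,\alpha',\gamma']$, which is legitimate since the Stieltjes--Rogers polynomials $S_k(\bc)$ and the Jacobi--Rogers polynomials $J_n(\bee,\bff)$ are universal polynomial identities valid in any commutative ring containing the parameters.
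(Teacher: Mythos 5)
Your proposal is correct and follows exactly the route the paper takes: combining Proposition~\ref{prop.VII.binomialtrans}, the S-fractions of Proposition~\ref{prop.III}, and Corollary~\ref{cor.xi_binomial.StoJ} with $\xi=\gamma$ (resp.\ $\xi=\gamma' x$), and your arithmetic for the resulting $e_n$ and $f_n$ checks out in both cases. The remark that $\xi$ may be taken in $\Z[x;\ldots]$ because the underlying identities are universal polynomial identities is a worthwhile clarification, but it does not constitute a departure from the paper's argument.
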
   

\bigskip

{\bf Remarks.}
1.  The dual of $\bmu=(0,\beta,\gamma, 0,\beta',\gamma')$
is $D\bmu = (\beta',-\beta',\gamma';\beta,-\beta,\gamma)$,
so family~7b is obtained from family~7a
by applying duality followed by the map
$(\beta',\gamma',\beta,\gamma)\mapsto (\alpha,\gamma,\alpha',\gamma')$.
The T-fraction for family~7b can then be deduced by duality 
from the one for family~7a, by using the observations made at
the beginning of this section.

2. The special case $\bmu=(0,0,1;0,1,0)$ of family~7a
leads to the injective numbers $\text{Inj}(n,k) = n!/(n-k)!$
\cite{Fekete_94} \cite[\seqnum{A008279}]{OEIS}.
The dual array $T(n,k) = n!/k!$ \cite[\seqnum{A094587}]{OEIS}
is the special case $\bmu=(1,-1,0;0,0,1)$ of family~7b.

3. As noted earlier, families~2a and 2b are equivalent,
at the level of the matrices $\bT(\bmu)$,
to special cases of families~3a and 3b, respectively.
Therefore, the $\gamma$-binomial transform of family~2a
is a special case of family~7a,
and the $(\gamma' x)$-binomial transform of family~2b
is a special case of family~7b.
\myendremark
   
\subsection{Examples of GKP recurrences with T-fraction 
      not arising from the binomial transform}
   \label{subsec.T2}

\subsubsection{Families 8a and 8b: Generalized Ward polynomials and their dual}

In a very recent paper \cite{wardpoly},
Elvey Price and Sokal have given \cite[Theorem~1.2]{wardpoly}
a T-fraction for the ordinary generating function
of some polynomials $W_n(x,u,z,w)$ that generalize the Ward polynomials
\cite[\seqnum{A134991/A181996/A269939}]{OEIS}:
the coefficients of this T-fraction are
\be
   c_i \,=\, x + (i-1)u  \,,\quad  d_i \,=\, z + (i-1)w
   \,.
 \label{eq.wardpoly.Tfrac}
\ee
Moreover, in the special case $u=x$, these polynomials satisfy a
linear recurrence of GKP form \cite[Corollary~B.2]{wardpoly}:
setting $W_n(x,x,z,w) = \sum\limits_{k=0}^n W_{n,k}(z,w) \, x^k$,
the triangular array $\bigl( W_{n,k}(z,w) \bigr)_{0 \le k \le n}$ satisfies
\be
   W_{n,k}  \;=\;  (wk+z) \, W_{n-1,k}  \:+\: (n+k-1) \, W_{n-1,k-1}
   \quad\hbox{for $n \ge 1$}
 \label{eq.ward.recurrence.zw}
\ee
with initial condition $W_{0,k} = \delta_{k0}$.
So the ogf of the GKP recurrence with $\bmu = (0,\beta,\gamma,1,1$, $-1)$
has a T-fraction with coefficients $c_i = ix$, $d_i = \gamma + (i-1)\beta$.
And by applying the scaling $S_{1,\lambda}$ [cf.\ \reff{def.Skl}]
to this T-fraction, we deduce that the ogf of the GKP recurrence with
$\bmu = (0,\beta,\gamma,\alpha',\alpha',-\alpha')$
has a T-fraction with coefficients $c_i = i\alpha' x$,
$d_i = \gamma + (i-1)\beta$.
Finally, by applying duality to this result,
we deduce that the ogf of the GKP recurrence with
$\bmu = (2\widehat{\alpha},-\widehat{\alpha},-\widehat{\alpha},\alpha',-\alpha',\gamma')$
has a T-fraction with coefficients $c_i = i\widehat{\alpha}$,
$d_i = \gamma' + (i-1)\alpha'$.
(We use the change of parameters $\alpha \eqdef 2\widehat{\alpha}$
 to avoid fractions.)

We summarize this by defining
family 8a (the rescaled generalized Ward polynomials at $u=x$)
and family 8b (their dual):
\begin{quote}
\begin{itemize}
\item[F8a.\ ] $\bmu = (0,\beta,\gamma,\alpha',\alpha',-\alpha')$
\item[F8b.\ ] $\bmu = (2\widehat{\alpha},-\widehat{\alpha},-\widehat{\alpha},\alpha',-\alpha',\gamma')$
\end{itemize}
\end{quote}
We then have:

\begin{proposition}[T-fraction for families 8a and 8b]
   \label{prop.wardpoly}
The ogf $f(x,t;\bmu)$ for the recurrence \eqref{eq_binomvert} with
$\bmu= (0,\beta,\gamma,\alpha',\alpha',-\alpha')$
has a T-type continued fraction representation in the ring
$\Z[x;\beta,\gamma,\alpha'][[t]]$ with coefficients
\be
         c_i \,=\, i\alpha' x \,,\quad
         d_i \,=\, \gamma + (i-1)\beta
         \,.
 \label{eq.prop.wardpoly.1}
\ee
Moreover, an analogous T-type representation exists in
$\Z[x;\widehat{\alpha},\alpha',\gamma'][[t]]$ for the dual parameter
$\bmu = (2\widehat{\alpha},-\widehat{\alpha},-\widehat{\alpha},\alpha',-\alpha',\gamma')$
with coefficients
\be
         c_i \,=\, i\widehat{\alpha}  \,,\qquad
         d_i \,=\, [\gamma' + (i-1)\alpha'] \, x
         \,.
 \label{eq.prop.wardpoly.2}
\ee
\end{proposition}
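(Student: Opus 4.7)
The plan is to import the T-fraction for the generalized Ward polynomials from Elvey Price and Sokal \cite{wardpoly} and transport it to family~8a by specialization and rescaling; family~8b will then follow from family~8a by duality, using the fact (noted at the beginning of Section~\ref{subsec.TJ}) that duality transforms T-fraction coefficients by $c_i(x) \mapsto x\,c_i(1/x)$ and $d_i(x) \mapsto x\,d_i(1/x)$.

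First I would invoke \cite[Theorem~1.2]{wardpoly}, which asserts that the ogf of $W_n(x,u,z,w)$ admits a T-fraction with coefficients \reff{eq.wardpoly.Tfrac}. Specializing to $u=x$, these coefficients become $c_i = ix$ and $d_i = z+(i-1)w$. By \cite[Corollary~B.2]{wardpoly}, the triangular array of coefficients $W_{n,k}(z,w)$ of $W_n(x,x,z,w) = \sum_k W_{n,k}(z,w)\,x^k$ satisfies the GKP recurrence \reff{eq.ward.recurrence.zw}, which corresponds to the parameter choice $\bmu = (0,w,z,1,1,-1)$. Relabelling $w\mapsto\beta$ and $z\mapsto\gamma$, we obtain a T-fraction representation for $\bmu=(0,\beta,\gamma,1,1,-1)$ with $c_i = ix$, $d_i = \gamma+(i-1)\beta$.

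Next I would introduce the parameter $\alpha'$ via the scaling map $S_{1,\alpha'}$ defined in \reff{def.Skl}. By \reff{eq.TnkSkl}, one has $T(n,k;S_{1,\alpha'}\bmu) = (\alpha')^k\,T(n,k;\bmu)$, and hence $P_n(x;S_{1,\alpha'}\bmu) = P_n(\alpha' x;\bmu)$. Substituting $x \to \alpha' x$ in the T-fraction above therefore yields a T-fraction for $\bmu = (0,\beta,\gamma,\alpha',\alpha',-\alpha')$ with coefficients $c_i = i\alpha' x$ and $d_i = \gamma+(i-1)\beta$, which is exactly \reff{eq.prop.wardpoly.1}. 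This proves the family~8a statement. To obtain family~8b, I would apply duality: by \reff{eq.duality.mu}, $D\bmu = (2\alpha',-\alpha',-\alpha',\beta,-\beta,\gamma)$, and the transformation rule $c_i(x),d_i(x) \mapsto x\,c_i(1/x), x\,d_i(1/x)$ sends the coefficients to $c_i = i\alpha'$, $d_i = [\gamma+(i-1)\beta]\,x$. Relabelling $(\alpha',\beta,\gamma) \mapsto (\widehat{\alpha},\alpha',\gamma')$ gives precisely \reff{eq.prop.wardpoly.2}.

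No step here is really an obstacle, since all of the analytic content is imported from \cite{wardpoly}; the remaining work is purely algebraic bookkeeping. The one point deserving a brief verification is the transformation rule for T-fraction coefficients under duality. This follows routinely: $P_n(x;D\bmu) = x^n\,P_n(1/x;\bmu)$ implies that the ogfs satisfy $f(x,t;D\bmu) = f(1/x,xt;\bmu)$, and substituting $t \mapsto xt$ into a T-fraction with coefficients $c_i(x),d_i(x)$ evaluated at $1/x$ produces a T-fraction in $t$ with the stated transformed coefficients.
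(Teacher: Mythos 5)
Your proposal is correct and follows essentially the same route as the paper: import the T-fraction of \cite[Theorem~1.2]{wardpoly} together with the GKP recurrence of \cite[Corollary~B.2]{wardpoly} at $u=x$, identify the parameters as $\bmu=(0,\beta,\gamma,1,1,-1)$, introduce $\alpha'$ via the scaling $S_{1,\alpha'}$, and pass to family~8b by duality using $c_i(x),d_i(x)\mapsto x\,c_i(1/x),\,x\,d_i(1/x)$. Your explicit verification of the scaling step ($P_n(x;S_{1,\alpha'}\bmu)=P_n(\alpha' x;\bmu)$) and of the duality rule for T-fraction coefficients is a welcome bit of extra care, but the argument is the paper's.
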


\medskip

Let us remark that the proof of this result in \cite{wardpoly}
is rather indirect.
The polynomials $W_n(x,u,z,w)$ are defined combinatorially,
as generating polynomials for ``super-augmented perfect matchings''
of $[2n]$ with suitable weights.
The T-fraction \reff{eq.wardpoly.Tfrac}
and the recurrence \reff{eq.ward.recurrence.zw}
are then proven by combinatorial arguments (of very different forms).
It is an interesting open problem to prove Proposition~\ref{prop.wardpoly}
by direct arguments leading from the recurrence to the T-fraction
(or vice versa).

\subsubsection{Families 9a and 9b: Conjectured T- and J-fractions}

We have found empirically a T-fraction for the family
\begin{quote}
\begin{itemize}
\item[F9a.\ ] $\bmu = (0, \beta, (\kappa+1)\beta, -\widehat{\alpha}', 2 \widehat{\alpha}', \kappa\widehat{\alpha}')$
\end{itemize}
\end{quote}
and its dual
\begin{quote}
\begin{itemize}
\item[F9b.\ ] $\bmu = (\alpha, -2\alpha, \kappa\alpha, \alpha', -\alpha',
                            (\kappa+1)\alpha')$
\end{itemize}
\end{quote}
(In family 9a we have introduced the change of parameters
   $\alpha' \eqdef -\widehat{\alpha}'$ in order to
   have plus signs in the T-fraction.)

\begin{conjecture}[T-fraction for families 9a and 9b]
   \label{conj.9a9b}
The ogf $f(x,t;\bmu)$ for the recurrence \eqref{eq_binomvert} with
$\bmu = (0, \beta, (\kappa+1)\beta, -\widehat{\alpha}', 2 \widehat{\alpha}', \kappa\widehat{\alpha}')$
has a T-type continued fraction representation in the ring
$\Z[x;\beta,\widehat{\alpha}',\kappa][[t]]$ with coefficients
\begin{subeqnarray}
   c_{2k-1} &=&  (\kappa+k) \widehat{\alpha}' x  \,,\qquad\,\, 
   c_{2k} \;=\;  k \widehat{\alpha}' x \,, \\ 
   d_{2k-1} &=&  (\kappa+2k-1) \beta \,, \quad
   d_{2k} \;=\; 0
\end{subeqnarray}
Moreover, an analogous T-type representation exists in
$\Z[x;\alpha,\alpha',\kappa][[t]]$ for the dual parameter
$\bmu = (\alpha, -2\alpha, \kappa\alpha, \alpha', -\alpha', (\kappa+1)\alpha')$
with coefficients
\begin{subeqnarray}
   c_{2k-1} &=&  (\kappa+k) \alpha  \,,\qquad\qquad
   c_{2k} \;=\;  k\alpha \,, \\ 
   d_{2k-1} &=&  (\kappa+2k-1) \alpha' x \,, \quad\, d_{2k} \;=\; 0
\end{subeqnarray}
\end{conjecture}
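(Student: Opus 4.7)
My plan is to prove Conjecture~\ref{conj.9a9b} in four steps, the main idea being to reduce the T-fraction to an equivalent J-fraction via contraction, establish that J-fraction from an explicit closed-form expression for the exponential generating function of family~9a, and finally deduce the 9b statement by duality.

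\textbf{Step 1 (Contraction).} First I would apply Proposition~\ref{prop.contraction_even.Ttype} to the conjectured T-fraction for family~9a; the resulting equivalent J-fraction has coefficients
\[ e_n \;=\; (\kappa+2n+1)(\widehat{\alpha}' x + \beta), \qquad f_n \;=\; n(\kappa+n)(\widehat{\alpha}' x)^2. \]
Inverting the contraction formula (that is, $c_1 = e_0 - d_1$, $c_{2n} = f_n/c_{2n-1}$, $c_{2n+1} = e_n - c_{2n} - d_{2n+1}$) then shows that, given this J-fraction, the choice $d_{2k-1} = (\kappa+2k-1)\beta$ and $d_{2k} = 0$ uniquely forces $c_{2k-1} = (\kappa+k)\widehat{\alpha}' x$ and $c_{2k} = k\widehat{\alpha}' x$. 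Hence it suffices to prove the J-fraction.

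\textbf{Step 2 (Closed form for the egf).} Next I would solve the linear PDE \reff{eq.PDE.egf} for family~9a, which in the coordinate $z = \widehat{\alpha}' x$ reads
\[ (1+zt)\,F_t \;=\; z(\beta+2z)\,F_z \,+\, (\kappa+1)(\beta+z)\,F. \]
The substitution $F = G^{-(\kappa+1)}$ reduces this to $(1+zt)\,G_t = z(\beta+2z)\,G_z - y\,G$, where $y := z+\beta$. Differentiating the PDE $n$ times in $t$ and evaluating at $t=0$ yields the recurrence $G_{n+1} = z(\beta+2z)\,\partial_z G_n - (y+nz)\,G_n$ for the Taylor coefficients $G_n := \partial_t^n G\big|_{t=0}$, and a straightforward induction establishes $G_{2k} = \lambda^k$ and $G_{2k+1} = -\lambda^k y$, where $\lambda := \beta(\beta+2z) = y^2-z^2$. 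Summing the Taylor series of $G$ then gives the closed form
\[ F(x,t) \;=\; \bigl[\cosh(\mu t)\,-\,(y/\mu)\,\sinh(\mu t)\bigr]^{-(\kappa+1)}, \qquad \mu := \sqrt{\lambda}. \]

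\textbf{Step 3 (From egf to J-fraction).} This is the main obstacle. The monic orthogonal polynomial recurrence associated to the J-fraction of Step~1, after the substitution $p_n(u) = y^n q_n(u/y)$, becomes a two-parameter deformation of the classical Laguerre recurrence, namely $q_{n+1}(v) = (v-\kappa-2n-1)\,q_n(v) - n(\kappa+n)\,w^2\,q_{n-1}(v)$ with $w := z/y$; at $w=1$ it reduces to the monic Laguerre recurrence with parameter $\kappa$, whose moments are the rising factorials $(\kappa+1)_n$. To establish the J-fraction, I see two plausible routes: (a) apply the Stieltjes--Rogers addition-formula method, as Zeng~\cite{Zeng_93} did for the $v=y$ specialization of Theorem~\ref{thm.perms.S}, exploiting the quasi-exponential structure of $G(z,t)$ in $t$; or (b) identify $P_n(x)$ with the formal moments of the orthogonal polynomial system $\{q_n\}$ and evaluate those moments by Motzkin-path enumeration, matching them to the Taylor coefficients of the egf. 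The two-variable ($y,z$) nature of the problem and the fact that the egf is not of Sheffer form $A(t)\,e^{xB(t)}$ make this step non-routine; an alternative would be to seek an inverse-pair relationship (in the sense of Appendix~\ref{app.inverse}) between family~9a and one of the previously treated families, from which the continued fraction might be read off.

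\textbf{Step 4 (Duality for family~9b).} Finally, a direct computation shows that the duality map $D$ of \reff{eq.duality.mu}, applied to the parameter vector of family~9a, produces (under the renaming $\widehat{\alpha}' \mapsto \alpha$ and $\beta \mapsto \alpha'$) exactly the parameter vector of family~9b. As noted at the start of Section~\ref{subsec.TJ}, under duality the T-fraction coefficients transform as $c_i(x) \to x\,c_i(1/x)$ and $d_i(x) \to x\,d_i(1/x)$, and applying these transformations to the family~9a T-fraction yields precisely the claimed family~9b T-fraction. Thus once Step~3 is carried out, the dual statement is immediate.
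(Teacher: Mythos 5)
First, note that the paper does not prove this statement: it is stated as a conjecture, verified numerically for $0 \le n \le 20$, and the paper's only structural observation is that (via Proposition~\ref{prop.contraction_even.Ttype}) the T-fraction is equivalent to the corresponding J-fraction. So there is no proof in the paper to compare against, and your proposal must be judged on its own terms. Your Steps 1, 2 and 4 are correct and check out. Step 1 correctly reduces the problem to the J-fraction with $e_n = (\kappa+2n+1)(\beta + \widehat{\alpha}'x)$, $f_n = n(\kappa+n)(\widehat{\alpha}'x)^2$, and the back-solving argument is sound. Step 2 is a genuine contribution: I verified the recurrence $G_{n+1} = z(\beta+2z)\,\partial_z G_n - (y+nz)G_n$, the induction giving $G_{2k} = \lambda^k$ and $G_{2k+1} = -\lambda^k y$ with $\lambda = y^2 - z^2 = \beta(\beta+2z)$, and hence the closed form $F = [\cosh(\mu t) - (y/\mu)\sinh(\mu t)]^{-(\kappa+1)}$. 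Step 4 is a routine and correct duality computation.

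The genuine gap is Step 3, which you yourself flag as "the main obstacle" and then leave as a menu of plausible routes rather than an argument. As it stands the proposal is a plan, not a proof: nothing in Steps 1, 2 or 4 establishes the J-fraction, and route (a), route (b) and the inverse-pair idea are all unexecuted. However, you are closer to closing this gap than you realize, because your own Step-2 formula is already in the Carlitz/Zhu form that the paper's machinery handles. Writing $\cosh(\mu t) - (y/\mu)\sinh(\mu t) = \tfrac{1}{2\mu}[(\mu-y)e^{\mu t} + (\mu+y)e^{-\mu t}]$ and using $(\mu-y)(\mu+y) = \mu^2 - y^2 = -z^2$ together with $(\mu+y)^2 = 2\mu(\mu+y) + z^2$, one finds $G = e^{-\mu t}\bigl[1 - b(e^{2\mu t}-1)\bigr]$ with $b = z^2/\bigl(2\mu(\mu+y)\bigr)$, hence $F = e^{at}\bigl[1 - b(e^{ct}-1)\bigr]^{-\Delta}$ with $a = (\kappa+1)\mu$, $c = 2\mu$, $\Delta = \kappa+1$. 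The paper's footnote to Proposition~\ref{prop.zhu} spells out exactly how an egf of this form yields a J-fraction for the ogf: the Stieltjes/Sokal--Zeng S-fraction (Theorem~\ref{thm.perms.S} with $v=y$, proved via Proposition~\ref{prop.twovar.Ma.Salas}) for the $a=0$ part, contraction (Proposition~\ref{prop.contraction_even}), and then an $a$-binomial transform (Proposition~\ref{prop.Jn.x-binomial}), giving $e_n = a + bc\Delta + nc(2b+1)$ and $f_n = nb(b+1)(\Delta+n-1)c^2$. A direct computation with your $a,b,c,\Delta$ gives $a + bc\Delta = (\kappa+1)y$, $c(2b+1) = 2y$, and $b(b+1)c^2 = z^2$, which is precisely the conjectured J-fraction.

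Two cautions if you pursue this. First, the intermediate S-fraction coefficients involve $\mu = \sqrt{\beta(\beta+2\widehat{\alpha}'x)}$ and are therefore not polynomials in $x$ (consistent with family 9a being absent from Theorem~\ref{theor.main}); you must work in a quadratic extension of $\Q[x,\beta,\widehat{\alpha}',\kappa]$ and observe that the final J-fraction coefficients land back in the base ring. Since all the cited results are formal identities valid over any commutative $\Q$-algebra, this is legitimate, but it needs to be said. Second, you must justify that the egf/ogf correspondence here is the termwise one ($n!\,a_n \leftrightarrow a_n$), i.e.\ that the "formal Laplace transform" step in Stieltjes's observation is being applied coefficientwise; this is how the paper uses it, but your write-up should make the logical chain from Proposition~\ref{prop.twovar.Ma.Salas} through Theorem~\ref{thm.perms.S} explicit rather than appealing vaguely to "the quasi-exponential structure of $G$".
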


\noindent
We have verified Conjecture~\ref{conj.9a9b} for $0 \le n \le 20$.

It will be observed that the T-fractions of Conjecture~\ref{conj.9a9b}
have $d_i = 0$ at all even levels $i$.
Therefore, Proposition~\ref{prop.contraction_even.Ttype}
implies that Conjecture~\ref{conj.9a9b} is equivalent to:

%
%
\begin{conjecture}[%
                   J-fraction for families 9a and 9b]
The ogf $f(x,t;\bmu)$ for the recurrence \eqref{eq_binomvert} with
$\bmu = (0, \beta, (\kappa+1)\beta, \alpha', -2 \alpha', -\kappa\alpha')$
has a J-type continued fraction representation in the ring
$\Z[x;\beta,\widehat{\alpha}',\kappa][[t]]$ with coefficients
\begin{subeqnarray}
    e_n  & = &  (\kappa+2n+1) (\beta + \widehat{\alpha}' x)  \\[1mm]
    f_n  & = &  n (\kappa + n) (\widehat{\alpha}' x)^2
\end{subeqnarray}
Moreover, an analogous J-type representation exists in
$\Z[x;\alpha,\alpha',\kappa][[t]]$ for the dual parameter
$\bmu = (\alpha, -2\alpha, \kappa\alpha, \alpha', -\alpha', (\kappa+1)\alpha')$
with coefficients
\begin{subeqnarray}
    e_n  & = &  (\kappa+2n+1) (\alpha + \alpha' x)   \\[1mm]
    f_n  & = &  n (\kappa + n) \alpha^2
\end{subeqnarray}
\end{conjecture}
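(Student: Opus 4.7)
The first step is almost trivial in principle: the J-fraction statement is logically equivalent to the T-fraction Conjecture~\ref{conj.9a9b} via the even-contraction formula (Proposition~\ref{prop.contraction_even.Ttype}), which applies because the T-fraction of Conjecture~\ref{conj.9a9b} has $d_{2k}=0$ for all $k\ge 1$. A direct check confirms the coefficient match: $c_1+d_1=(\kappa+1)\widehat{\alpha}'x+(\kappa+1)\beta=(\kappa+1)(\beta+\widehat{\alpha}'x)$; $c_{2n}+c_{2n+1}+d_{2n+1}=n\widehat{\alpha}'x+(\kappa+n+1)\widehat{\alpha}'x+(\kappa+2n+1)\beta=(\kappa+2n+1)(\beta+\widehat{\alpha}'x)$; and $c_{2n-1}c_{2n}=(\kappa+n)\widehat{\alpha}'x\cdot n\widehat{\alpha}'x=n(\kappa+n)(\widehat{\alpha}'x)^2$, and identically for family~9b. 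So the plan is to prove Conjecture~\ref{conj.9a9b} itself and invoke this contraction.

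To prove the T-fraction, I would work on the dual side, family~9b, where $\alpha\ne 0$. The parameters $(\alpha,-2\alpha,\kappa\alpha,\alpha',-\alpha',(\kappa+1)\alpha')$ make the coefficients of the PDE~\reff{eq.PDE.egf} factor nicely: $\beta x+\beta' x^2=-x(2\alpha+\alpha'x)$ and $\alpha+\gamma+(\alpha'+\beta'+\gamma')x=(\kappa+1)(\alpha+\alpha'x)$, so the method of characteristics of~\cite{BSV} should yield a closed-form egf of the shape $F(x,t)=\Phi(x,t)^{-(\kappa+1)}$ for an explicit $\Phi$ involving $(2\alpha+\alpha'x)t$ and an exponential. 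The conjectured J-fraction is then classical in appearance: specializing $\alpha'x=0$ collapses the coefficients to $e_n=(\kappa+2n+1)\alpha$, $f_n=n(\kappa+n)\alpha^2$, which is exactly the Stieltjes J-fraction for the ogf $\sum_n(\kappa+1)_n(\alpha t)^n$ of rising factorials, i.e.\ the Laguerre/gamma moment continued fraction. My principal approach is therefore the Stieltjes--Rogers addition-formula method (used by Zeng~\cite{Zeng_93} for the $\scrp_n(w,y,u,y)$ family): write the bivariate egf $H(t,s)=F(x,t+s)$, expand it bilinearly in a pair of Sheffer-like sequences $\bigl(\varphi_n(t),\psi_n(s)\bigr)$ tailored to the shifted-Laguerre three-term recurrence, and read off $(e_n,f_n)$ from the coefficients.

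The main obstacle is that $f_n=n(\kappa+n)\alpha^2$ is a genuine quadratic in~$n$, not a product of linear factors that separate by path-height; equivalently, the T-fraction has non-vanishing $d_{2k-1}$. Consequently Theorem~\ref{thm.perms.S} and Proposition~\ref{prop.twovar.Ma.Salas}, which covered all ten families of Theorem~\ref{theor.main}, do \emph{not} apply here, and the combinatorics must involve Schr\"oder paths rather than Dyck paths. The addition formula therefore has to encode both the Laguerre parameter-shift by $\kappa$ and the affine $x$-dependence $\alpha+\alpha'x$ simultaneously, and selecting the correct Sheffer pair is the substantive step. A useful backup is to turn Favard's theorem around: the conjectured $(e_n,f_n)$ determine orthogonal polynomials $Q_n(u)$ by $Q_{n+1}=(u-e_n)Q_n-f_nQ_{n-1}$; one would identify these $Q_n$ as an affine image of Laguerre polynomials $L_n^{(\kappa)}$ in the variable $u\mapsto(u-\beta-\widehat{\alpha}'x)/(\widehat{\alpha}'x)$, and then verify that the moments $\mu_n$ of the associated weight agree with $P_n(x;\bmu)$ by matching exponential generating functions against the closed-form $F(x,t)$ derived above. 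A third route, in the spirit of Flajolet~\cite{Flajolet_80} and Foata's fundamental transformation, is to construct a bijection between labelled Schr\"oder paths with the prescribed Thron weights and a permutation-like combinatorial model that satisfies the GKP recurrence for family~9a/9b directly; this would give the most conceptually satisfying proof but seems to require inventing a new combinatorial statistic.
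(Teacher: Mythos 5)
Your first paragraph is exactly the content of the paper's own treatment: the J-fraction statement is obtained from Conjecture~\ref{conj.9a9b} by the even-contraction formula for T-fractions with $d_i=0$ at even levels (Proposition~\ref{prop.contraction_even.Ttype}), and your explicit verification of $e_0=c_1+d_1$, $e_n=c_{2n}+c_{2n+1}+d_{2n+1}$ and $f_n=c_{2n-1}c_{2n}$ against the conjectured coefficients is correct. That equivalence is all the paper establishes; the statement is labelled a conjecture precisely because the underlying T-fraction of Conjecture~\ref{conj.9a9b} is itself only verified empirically (for $0\le n\le 20$), not proven.

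The remainder of your proposal is therefore where the real work would lie, and as written it is a research plan rather than a proof. Two concrete gaps: (i)~You assume the method of characteristics yields a closed-form egf $F(x,t)=\Phi(x,t)^{-(\kappa+1)}$ in elementary functions. Families 9a and 9b lie outside all of the solved cases (Neuwirth's $\alpha'=0$, Spivey's (S1)--(S3)): for 9a one has $\alpha/\beta=0$ while $\alpha'/\beta'+1=1/2$, and for 9b one has $\alpha/\beta=-1/2$ while $\alpha'/\beta'+1=0$. So the general solution of \reff{eq.PDE.egf} here is expected to involve inverse functions, and the existence of an elementary $\Phi$ is unestablished; without it neither the addition-formula method nor the moment comparison can be run. (ii)~The Favard route identifies the monic orthogonal polynomials attached to $(e_n,f_n)$ as affine images of Laguerre polynomials $L_n^{(\kappa)}$ --- which is fine as far as it goes, and your specialization $\alpha'x=0$ correctly recovers the rising-factorial J-fraction $e_n=(\kappa+2n+1)\alpha$, $f_n=n(\kappa+n)\alpha^2$ --- but verifying that the moments of the corresponding linear functional equal $P_n(x;\bmu)$ for general $x$ is exactly the open problem restated, not a reduction of it. So the proposal correctly reproduces the paper's equivalence step but does not close the conjecture; any of your three routes would need the substantive missing ingredient (a closed-form egf, the right Sheffer pair, or a new combinatorial model) actually supplied.
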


{\bf Remark.}
Since the recurrence for family~9a has a term
$(-n+2k+\kappa)\widehat{\alpha}' \, T(n-1,k-1)$ on the right-hand side,
and the recurrence for family~9b has a term $(n-2k)\alpha \, T(n-1,k)$
--- neither of which is nonnegative for all $0 \le k \le n$ ---
it is far from obvious that the resulting coefficients $T(n,k)$ will be
polynomials with nonnegative coefficients in $x$
and $\beta,\kappa,\widehat{\alpha}'$ (or $x$ and $\alpha,\kappa,\alpha'$).
But this property is an immediate consequence of the T-fractions or J-fractions.
Indeed, the T-fractions imply the much stronger property
of coefficientwise Hankel-total positivity
(see Section~\ref{sec.hankel} below).
It would be good to understand, directly from the recurrence,
why one gets polynomials with nonnegative coefficients.
\myendremark

\subsection{Examples of GKP recurrences with J-fraction 
      not arising from the binomial transform}
   \label{subsec.J2}

\subsubsection{Family 1c}   \label{subsec.family1c}

Family 1c is defined as follows:
\begin{quote}
\begin{itemize}
\item[F1c.\ ] $\bmu=(0,\beta,\gamma, \alpha',-\alpha',\gamma')$
\end{itemize}
\end{quote}

\noindent
It is a simultaneous generalization
of family~1a (which corresponds to $\gamma=0$)
and family~1b (which corresponds to $\gamma'=0$).
Family~1c is self-dual: duality acts by interchanging
$(\beta,\gamma) \leftrightarrow (\alpha',\gamma')$.

Family 1c is Spivey's \cite{Spivey_11} case (S3)
$\alpha/\beta = \alpha'/\beta' + 1$
specialized to $\alpha=0$.
Its egf is \cite[eq.~(A2)]{BSV}
\be
   F(x,t)
   \;=\;
   e^{(\gamma/\beta)(\beta - \alpha' x)t}
   \left( \frac{\beta - \alpha' x e^{(\beta - \alpha' x)t}}
           {\beta - \alpha' x}
   \right) ^{\! -\gamma/\beta - \gamma'/\alpha'}
   \,.
  \label{eq.egf.Ic}
\ee
For family~1c we have the following J-fraction:

\begin{proposition}[J-fraction for family 1c]
    \label{prop.family1c}
The ogf $f(x,t;\bmu)$ for the recurrence \eqref{eq_binomvert} with
$\bmu=(0,\beta,\gamma, \alpha',-\alpha',\gamma')$
has a J-type continued fraction representation in the ring
$\Z[x;\beta,\gamma,\alpha',\gamma'][[t]]$ with coefficients
\begin{subeqnarray}
   e_n  & = &  (\gamma + n\beta) \,+\, (\gamma' + n\alpha') x  \\[1mm]
   f_n  & = &  n [\beta\gamma' + \gamma\alpha' + (n-1)\beta\alpha'] x
 \label{eq.Jfrac.family1c}
\end{subeqnarray}
\end{proposition}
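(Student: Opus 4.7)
The plan is to recognize the exponential generating function \reff{eq.egf.Ic} as a product $e^{\xi t}\, G(x,t)$, where $G(x,t)$ is the egf of a suitable specialization of the Sokal--Zeng polynomials $\scrp_n(w,y,u,y)$ of Proposition~\ref{prop.twovar.Ma.Salas}. This exhibits $P_n(x;\bmu)$ as a $\xi$-binomial transform of polynomials whose ogf admits an S-fraction by Theorem~\ref{thm.perms.S}, from which the J-fraction then follows via Corollary~\ref{cor.xi_binomial.StoJ}.

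Setting $\mu \eqdef \beta - \alpha' x$ and $\rho \eqdef -\gamma/\beta - \gamma'/\alpha'$, I would factor \reff{eq.egf.Ic} as
\be
   F(x,t) \;=\; e^{\xi t}\, G(x,t) \,, \quad
   \xi \;=\; \frac{\gamma}{\beta}\mu \;=\; \gamma \,-\, \frac{\gamma\alpha'}{\beta}\, x \,, \quad
   G(x,t) \;=\; \left( \frac{\beta - \alpha' x\, e^{\mu t}}{\beta - \alpha' x} \right)^{\!\rho} .
\ee
Comparing with \reff{eq.twovar.Ma.Salas.egf} under the parametrization $y = \beta$, $u = \alpha' x$ (so $y-u = \mu$) and $w/u = -\rho$, that is, $w = (\gamma\alpha' + \beta\gamma') x / \beta$, I identify $G(x,t) = \sum_n \scrp_n(w,\beta,\alpha' x,\beta)\, t^n/n!$. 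Hence by \reff{eq.binomialtrans.egf}, the row polynomial $P_n(x;\bmu)$ is the $\xi$-binomial transform of $\widetilde P_n \eqdef \scrp_n(w,\beta,\alpha' x,\beta)$.

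By Theorem~\ref{thm.perms.S}, the ordinary generating function $\sum_n \widetilde P_n\, t^n$ is the S-fraction \reff{eq.eulerian.fourvar.contfrac}/\reff{def.weights.eulerian.fourvar} with $c_{2k-1} = w + (k-1)\alpha' x$ and $c_{2k} = k\beta$. Applying Corollary~\ref{cor.xi_binomial.StoJ} with this S-fraction and with the above $\xi$, I would obtain a J-fraction for $f(x,t;\bmu)$ with $e_n = c_{2n} + c_{2n+1} + \xi$ (and $e_0 = c_1 + \xi$) and $f_n = c_{2n-1}c_{2n}$. Direct substitution then gives $e_n = n\beta + [w + n\alpha' x] + \xi = (\gamma + n\beta) + (\gamma' + n\alpha')x$ and $f_n = n\beta \, [w + (n-1)\alpha' x] = n\bigl[ \beta\gamma' + \gamma\alpha' + (n-1)\beta\alpha' \bigr] x$, with the $\beta^{-1}$ contributions telescoping away. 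These coincide with \reff{eq.Jfrac.family1c}.

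The main (and essentially only) obstacle is that $w$ and $\xi$ carry a factor of $\beta^{-1}$, so the preceding manipulations live naturally in $\Z[x;\beta,\gamma,\alpha',\gamma'][\beta^{-1}][[t]]$ rather than in $\Z[x;\beta,\gamma,\alpha',\gamma'][[t]]$. However, the row polynomials $P_n(x;\bmu)$ are polynomials in $x,\beta,\gamma,\alpha',\gamma'$ (immediate from the recurrence \reff{eq_binomvert}), and the Jacobi--Rogers polynomials $J_n(\bee,\bff)$ with the claimed $(\bee,\bff)$ are likewise polynomials in these variables; so the identity, which holds in the localization, must already hold in $\Z[x;\beta,\gamma,\alpha',\gamma'][[t]]$. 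As a sanity check, the specializations $\gamma = 0$ and $\gamma' = 0$ recover the J-fractions obtained by contracting (Proposition~\ref{prop.contraction_even}) the S-fractions of families 1a and 1b in Proposition~\ref{prop.I}, respectively.
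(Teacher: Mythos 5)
Your proof is correct: the factorization of \reff{eq.egf.Ic} as $e^{\xi t}G(x,t)$, the identification of $G$ with the egf \reff{eq.twovar.Ma.Salas.egf} at $y=v=\beta$, $u=\alpha' x$, $w=(\gamma\alpha'+\beta\gamma')x/\beta$, and the passage through Theorem~\ref{thm.perms.S} and Corollary~\ref{cor.xi_binomial.StoJ} all check out, and the arithmetic producing \reff{eq.Jfrac.family1c} (including the cancellation of the $\beta^{-1}$ terms in $w+\xi$) is right. The paper, by contrast, does not prove Proposition~\ref{prop.family1c} directly at all: it obtains it as the $\kappa=0$ specialization of Zhu's Proposition~\ref{prop.zhu}, whose own proof goes through a Dobi\'nski-type formula for $P_n(x)$ proved by induction, then the egf, then the Stieltjes--Rogers addition-formula method. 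Your route is essentially the alternative derivation that the paper only sketches in a footnote to Section~\ref{subsec.GKPZ} (S-fraction for the $a=0$ part of the egf $e^{at}[1-b(e^{ct}-1)]^{-\Delta}$, contraction, then an $a$-binomial transform), but carried out concretely and self-containedly with the paper's own tools, exactly parallel to the treatment of families~7a and 7b in Proposition~\ref{prop.VII.binomialtrans} and Corollary~\ref{cor.VII.J}. What your approach buys is independence from the external reference \cite{Zhu_20}; what the paper's approach buys is the more general GKPZ statement with the extra parameter $\kappa$. Your handling of the localization issue is adequate: since $\Z[x,\beta,\gamma,\alpha',\gamma']$ is a domain that injects into its localization at $\beta$ (and at $\alpha'$, which also appears in the exponent of \reff{eq.egf.Ic}), an identity between two $t$-series with polynomial coefficients verified in the localization descends to the polynomial ring.
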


\noindent
Proposition~\ref{prop.family1c} is in fact a special case
of a recent result of Zhu \cite{Zhu_20},
as will be discussed in Section~\ref{subsec.GKPZ}.

%
%

\section{Open questions}   \label{sec.open}

We conclude this paper by proposing some open problems that arise naturally
from our work.

\subsection{T-fractions, J-fractions, and binomial transforms}
   \label{sec.open1}

The main result of the present paper, Theorem~\ref{theor.main},
is a complete list of all families of parameters $\bmu \in \C^6$
for which the ordinary generating function \eqref{def_ogf}
of the GKP recurrence has an S-fraction expansion \reff{def_Stype.one}
with coefficients $c_1,c_2,\ldots$
that are {\em polynomials}\/ in~$x$ (rather than rational functions).
In Section~\ref{sec.mainT} we exhibited some additional families
where the ogf \eqref{def_ogf} has an expansion
as a T-fraction \reff{def_Ttype.one} or a J-fraction \reff{def_Jtype.one}
with polynomial coefficients, but this list was not systematic.
Some of these families arose from the $\xi$-binomial transform,
but even this sublist was not systematic.
This naturally suggests the following open problems:

\begin{problem}[Classification of T-fractions]
   \label{prob.Tfrac}
Determine all parameters $\bmu \in \C^6$
for which the ordinary generating function \eqref{def_ogf}
has a T-fraction expansion \reff{def_Ttype.one}
with coefficients $c_1,c_2,\ldots$ and $d_1,d_2,\ldots$
that are {\em polynomials}\/ in~$x$.
\end{problem}

\begin{problem}[Classification of J-fractions]
   \label{prob.Jfrac}
Determine all parameters $\bmu \in \C^6$
for which the ordinary generating function \eqref{def_ogf}
has a J-fraction expansion \reff{def_Jtype.one}
with coefficients $e_0,e_1,\ldots$ and $f_1,f_2,\ldots$
that are {\em polynomials}\/ in~$x$.
\end{problem}

\begin{problem}[Classification of $\xi$-binomial transforms]
   \label{prob.binomial1}
Determine all parameter sets $(\bmu,\bmu',\xi_0,\xi_1) \in \C^{14}$ for which
\be
   P_n(x;\bmu')
   \;=\;
   \sum_{k=0}^n \binom{n}{k} \, P_k(x;\bmu) \: (\xi_0 + \xi_1 x)^{n-k}
   \;.
 \label{eq.prob.binomial1.1bis}
\ee

In the special case $\xi_1 = 0$, this problem becomes:
Determine all parameter sets $(\bmu,\bmu',\xi)$ $\in \C^{13}$ for which
\be
   P_n(x;\bmu')
   \;=\;
   \sum_{k=0}^n \binom{n}{k} \, P_k(x;\bmu) \: \xi^{n-k}
 \label{eq.prob.binomial1.1}
\ee
or equivalently
\be
   \bT(\bmu')  \;=\;  B_\xi \: \bT(\bmu)
 \label{eq.prob.binomial1.2}
\ee
where $B_\xi$ is the $\xi$-binomial matrix \reff{def.Bxi}.
\end{problem}

\bigskip

\noindent
{\bf Remark.}  The special case $\xi_0 = 0$ can also be written
by a formula similar to \eqref{eq.prob.binomial1.2}.
Indeed, \reff{eq.prob.binomial1.1bis} reads
\be
   \sum_{j=0}^n T(n,j;\bmu') \, x^j
   \;=\;
   \sum_{k=0}^n \binom{n}{k} \sum_{j=0}^k T(k,j;\bmu) \, x^j \, (\xi x)^{n-k}
   \;.
\ee
Extracting the coefficient of $x^\ell$
and then substituting $\ell = n-k+m$, we obtain
\be
   T(n,n-m;\bmu')
   \;=\;
   \sum_{k=0}^n \binom{n}{k} \, \xi^{n-k} \, T(k,k-m;\bmu)
\ee
or in other words
\be
   T(n,m;D\bmu')
   \;=\;
   \sum_{k=0}^n \binom{n}{k} \, \xi^{n-k} \, T(k,m;D\bmu)
   \;,
\ee
i.e.
\be
\bT(D\bmu') \;=\; B_\xi \: \bT(D\bmu) \,,
\ee
which is is just \eqref{eq.prob.binomial1.2} applied to the duals
$D\bmu'$ and $D\bmu$.
So the two special cases $\xi_0 = 0$ and $\xi_1 = 0$ are related by duality,
and we obtain an essentially new problem only when
both $\xi_0$ and $\xi_1$ are nonzero.
\myendremark 

\bigskip

A related problem, in which the $\xi$-binomial matrix acts on the right, is:

\begin{problem}[Classification of $x$-shift transforms]
   \label{prob.binomial2}
Determine all parameter sets $(\bmu,\bmu'$, $\xi)$ $\in \C^{13}$ for which
\be
   P_n(x;\bmu')
   \;=\;
   P_n(x+\xi;\bmu)
 \label{eq.prob.binomial2.1}
\ee
or equivalently
\be
   \bT(\bmu')  \;=\;  \bT(\bmu) \: B_\xi
   \;.
 \label{eq.prob.binomial2.2}
\ee
\end{problem}

\noindent
In Section~\ref{sec.sym} we mentioned that
a brute-force computation using $n=0,1,2,3$
showed that the only solutions to the equations
$P_n(x;\bmu') = P_n(x + \xi; \bmu)$
valid for {\em generic}\/ parameters $\bmu$
are the identity map ($\xi=0$, $\bmu' = \bmu$)
and the Zhu involution
($\xi = -\beta/\beta'$ with $\bmu'$ given by \reff{def_transinv}).
However, there almost certainly do exist solutions
that are valid for lower-dimensional sets of $\bmu$;
the goal of Problem~\ref{prob.binomial2} is to find them all.

Finally, Problems~\ref{prob.binomial1} and \ref{prob.binomial2}
can be amalgamated as follows:

\begin{problem}[Classification of two-sided $\xi$-binomial transforms]
   \label{prob.binomial3}
Determine all parameter sets $(\bmu,\bmu',\xi_0,\xi_1,\xi_2) \in \C^{15}$
for which
\be
   P_n(x;\bmu')
   \;=\;
   \sum_{k=0}^n \binom{n}{k} \, P_k(x+\xi_2;\bmu) \: (\xi_0 + \xi_1 x)^{n-k}
   \;.
 \label{eq.prob.binomial3}
\ee
\end{problem}

\subsection{Coefficientwise Hankel-total positivity} \label{sec.hankel}

We begin by recalling
that a finite or infinite matrix of real numbers is called
{\em totally positive}\/ (TP) if all its minors are nonnegative,
and {\em totally positive of order~$r$} (TP${}_r$)
if all its minors of size $\le r$ are nonnegative.
Background information on totally positive matrices can be found
in \cite{Karlin_68,Gantmacher_02,Pinkus_10,Fallat_11};
they have application to many fields of pure and applied mathematics.
In particular, it is known
\cite[Th\'eor\`eme~9]{Gantmakher_37} \cite[section~4.6]{Pinkus_10}
that an infinite Hankel matrix $(a_{i+j})_{i,j \ge 0}$
of real numbers is totally positive if and only~if the underlying sequence
$(a_n)_{n \ge 0}$ is a Stieltjes moment sequence,
i.e.\ the moments of a positive measure on $[0,\infty)$.

Let us now consider sequences and matrices, not of real numbers,
but of polynomials (with integer or real coefficients)
in one or more indeterminates~$\bfx$:
in applications they will often be generating polynomials that enumerate
some combinatorial objects with respect to one or more statistics.
We equip the polynomial ring $\R[\bfx]$ with the coefficientwise
partial order:  that is, we say that $P$ is nonnegative
(and write $P \myge 0$)
in case $P$ is a polynomial with nonnegative coefficients.
We then say that a matrix with entries in $\R[\bfx]$ is
\emph{coefficientwise totally positive}
if all its minors are polynomials with nonnegative coefficients;
and analogously for coefficientwise total positivity of order~$r$.
We say that a sequence $\ba = (a_n)_{n \ge 0}$ with entries in $\R[\bfx]$
is \emph{coefficientwise Hankel-totally positive}
if its associated infinite Hankel matrix is coefficientwise totally positive;
and likewise for the version of order $r$.
Coefficientwise Hankel-total positivity of a sequence of polynomials
$(P_n(\bfx))_{n \ge 0}$ obviously {\em implies}\/
the pointwise Hankel-total positivity (i.e.\ the Stieltjes moment property)
for all $\bfx \ge \bzero$, but it is vastly stronger.

The key fact connecting S-fractions to coefficientwise Hankel-total positivity
is the following result \cite{Sokal_flajolet,Sokal_totalpos},
which is an immediate consequence of old ideas of Viennot
\cite[pp.~IV-13--IV-15]{Viennot_83}:

\begin{theorem}[Total positivity of S-fractions]
   \label{thm.TP.Sfrac}
Let $\bc = (c_i)_{i \ge 1}$ be indeterminates,
and define the Stieltjes--Rogers polynomials $S_n(\bc) \in \Z[\bc]$ by
\be
   \sum_{n=0}^\infty S_n(c_1,\ldots,c_n) \, t^n
   \;=\;
   \cfrac{1}{1 - \cfrac{c_1 t}{1 - \cfrac{c_2 t}{1- \cdots}}}
   \;.
 \label{eq.Stype.cfrac}
\ee
Then the sequence $\bS = ( S_n(\bc) )_{n \ge 0}$
is coefficientwise Hankel-totally positive in the indeterminates~$\bc$.

In particular, if we specialize the $c_i$ to be polynomials with
nonnegative real coefficients in some indeterminates~$\bfx$,
then the specialized sequence $\bS = ( S_n(\bc) )_{n \ge 0}$
is coefficientwise Hankel-totally positive in the indeterminates~$\bfx$.
\end{theorem}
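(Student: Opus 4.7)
The plan is to follow Viennot's combinatorial approach to total positivity of moment sequences \cite{Viennot_83}, adapted to the coefficientwise setting. The idea is to interpret every entry of the infinite Hankel matrix as a sum of weighted paths in a planar acyclic digraph, and then extract each minor as a nonnegative combination of monomials in $\bc$ via the Lindström--Gessel--Viennot (LGV) lemma.

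First I would invoke Flajolet's \cite{Flajolet_80} interpretation: $S_n(\bc)$ is the generating polynomial for Dyck paths of length $2n$, where each down-step from height $i$ to $i-1$ contributes a factor $c_i$. Then I would lift this to a bijective model for the Hankel entries: construct a weighted planar digraph with distinguished source vertices $A_0, A_1, A_2, \ldots$ and sink vertices $B_0, B_1, B_2, \ldots$, arranged in boundary order along two opposite sides of the graph, such that the weighted paths from $A_i$ to $B_j$ are in weight-preserving bijection with Dyck paths of length $2(i+j)$. The standard realization (essentially the one on pages~IV-13--IV-15 of \cite{Viennot_83}) places $A_i$ and $B_j$ on a tilted lattice so that the first $i$ steps of a path from $A_i$ form the ``ascending half'' of a Dyck path and the last $j$ steps form the ``descending half''; all edge weights are either $1$ or some $c_h$. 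After this setup, $S_{i+j}(\bc) = \sum_{P \colon A_i \to B_j} w(P)$, and each weight $w(P)$ is a monomial in $\bc$ with coefficient $1$.

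The LGV lemma \cite{GesselViennot_85} then applies: because the digraph is planar and the sources and sinks are in boundary order, for any $0 \le i_1 < \cdots < i_k$ and $0 \le j_1 < \cdots < j_k$ one has
\[
  \det \bigl( S_{i_a + j_b}(\bc) \bigr)_{1 \le a,b \le k}
  \;=\;
  \sum_{(P_1,\ldots,P_k)}  \prod_{a=1}^{k} w(P_a),
\]
where the sum runs over $k$-tuples of \emph{vertex-disjoint} paths with $P_a \colon A_{i_a} \to B_{j_a}$. Since each $w(P_a)$ is a monomial in $\bc$ with coefficient $1$, every minor of the Hankel matrix of $\bS$ is a polynomial with nonnegative integer coefficients in $\bc$. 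This is precisely coefficientwise Hankel-total positivity. The final assertion follows immediately: if each $c_i$ is specialized to a polynomial with nonnegative real coefficients in $\bfx$, then every monomial $\prod w(P_a)$ becomes a product of such polynomials and hence itself a polynomial $\myge 0$ in $\bfx$, and nonnegative combinations preserve this property.

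The main obstacle is Step~2 --- producing the planar digraph with boundary-ordered sources and sinks in which paths correspond bijectively to Dyck paths with the correct weights. Once that construction is in hand, the LGV lemma does all the remaining work in one stroke, and no cancellation can occur between different path tuples. An equivalent route, which avoids explicit graph drawings, is to establish the Cholesky-type factorization $H = L\,\Delta\,L^{T}$ where $\Delta = \mathrm{diag}(1,\, c_1 c_2,\, c_1 c_2 c_3 c_4,\,\ldots)$ and $L$ is unit-lower-triangular with polynomial entries that themselves admit a bidiagonal factorization into factors with entries in $\{0,1\} \cup \{c_i\}$; each bidiagonal factor is coefficientwise TP, and the product of coefficientwise TP matrices is coefficientwise TP. Either route yields the theorem, with the LGV formulation being the most conceptually transparent.
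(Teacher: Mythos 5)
Your proof is correct and coincides with the argument the paper points to: the paper states Theorem~\ref{thm.TP.Sfrac} without proof, attributing it to Viennot's ideas \cite[pp.~IV-13--IV-15]{Viennot_83} (with full details in \cite[section~9]{latpath_SRTR}), and those ideas are exactly the combination of Flajolet's Dyck-path interpretation of $S_n(\bc)$ \cite{Flajolet_80} with the Lindstr\"om--Gessel--Viennot lemma that you employ. The only quibble is that in the standard planar realization (sources $A_i=(-2i,0)$, sinks $B_j=(2j,0)$, steps $(1,\pm 1)$ in the upper half-plane) the $A_i\to B_j$ paths are arbitrary Dyck paths of semilength $i+j$ rather than paths whose first $i$ steps are ascents, but this does not affect the argument, and your alternative $L\,\Delta\,L^{T}$ factorization route is likewise sound.
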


Moreover, this result generalizes to T-fractions
\cite{Sokal_flajolet,Sokal_totalpos}:

\begin{theorem}[Total positivity of T-fractions]
   \label{thm.TP.Tfrac}
Let $\bc = (c_i)_{i \ge 1}$ and $\bd = (d_i)_{i \ge 1}$ be indeterminates,
and define the Thron--Rogers polynomials $T_n(\bc,\bd) \in \Z[\bc,\bd]$ by
\be
   \sum_{n=0}^\infty T_n(c_1,\ldots,c_n,d_1,\ldots,d_n) \, t^n
       \;=\; \cfrac{1}{1 - d_1 t - \cfrac{c_1 t}{ 1 - d_2 t -  
                           \cfrac{c_2 t}{ 1 - \cdots}}}
   \;.
 \label{eq.Ttype.cfrac}
\ee
Then the sequence $\bT = ( T_n(\bc,\bd) )_{n \ge 0}$
is coefficientwise Hankel-totally positive in the indeterminates
$\bc$ and $\bd$.

In particular, if we specialize the $c_i$ and $d_i$ to be polynomials with
nonnegative real coefficients in some indeterminates~$\bfx$,
then the specialized sequence $\bT = ( T_n(\bc,\bd) )_{n \ge 0}$
is coefficientwise Hankel-totally positive in the indeterminates~$\bfx$.
\end{theorem}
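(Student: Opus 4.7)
The plan is to adapt the Lindström--Gessel--Viennot (LGV) approach that underlies Viennot's \cite{Viennot_83} combinatorial proof of Theorem~\ref{thm.TP.Sfrac} (and which is the engine of the general results of \cite{Sokal_flajolet,Sokal_totalpos}) from Dyck paths to Schr\"oder paths. The first step is to set up the combinatorial model for $T_n(\bc,\bd)$. A standard extension of Flajolet's theorem gives
$$T_n(\bc,\bd) \;=\; \sum_{\omega} \mathrm{wt}(\omega),$$
where the sum runs over all lattice paths $\omega$ from $(0,0)$ to $(n,0)$ in $\Z_{\ge 0} \times \Z_{\ge 0}$ using rise steps $(i,h) \to (i+1,h+1)$, level steps $(i,h) \to (i+1,h)$ and fall steps $(i,h) \to (i+1,h-1)$, with weight $1$ for each rise, $d_{h+1}$ for each level step at height $h$, and $c_h$ for each fall from height $h$ to $h-1$. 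Equivalently, introducing the tridiagonal Hessenberg ``production matrix''
$$P(\bc,\bd) \;=\;
\begin{pmatrix}
d_1 & 1 & 0 & 0 & \cdots \\
c_1 & d_2 & 1 & 0 & \cdots \\
0 & c_2 & d_3 & 1 & \cdots \\
0 & 0 & c_3 & d_4 & \cdots \\
\vdots & & & & \ddots
\end{pmatrix},$$
one has $T_n(\bc,\bd) = (P^n)_{00}$.

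The second step factors the infinite Hankel matrix $H = (T_{i+j}(\bc,\bd))_{i,j \ge 0}$ via a path-splitting identity. Define two auxiliary arrays in $\Z[\bc,\bd]$: $A_{n,k} \eqdef (P^n)_{0k}$, the generating polynomial of weighted paths from height $0$ to height $k$ in $n$ steps, and $\widetilde A_{n,k} \eqdef (P^n)_{k0}$, the analogue from height $k$ to height $0$. Associativity of matrix multiplication gives
$$T_{i+j}(\bc,\bd) \;=\; (P^{i+j})_{00} \;=\; \sum_{k \ge 0} A_{i,k} \, \widetilde A_{j,k},$$
so that $H = A \widetilde A^{\top}$ as an identity of infinite matrices over $\Z[\bc,\bd]$.

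The third and central step is to show that $A$ and $\widetilde A$ are each coefficientwise totally positive as matrices over $\Z[\bc,\bd]$. I would do this by a direct application of LGV to the planar directed network with vertex set $\Z \times \Z_{\ge 0}$ and the three edge types above (the network is horizontally translation-invariant, since all weights depend only on heights). To compute an $r \times r$ minor $\det(A_{n_\alpha, k_\beta})$ with $n_1 < \cdots < n_r$ and $k_1 < \cdots < k_r$, reinterpret $A_{n_\alpha, k_\beta}$ as the weighted count of paths from source $s_\alpha = (-n_\alpha, 0)$ to sink $t_\beta = (0, k_\beta)$; because the sources lie on a vertical line in decreasing horizontal order while the sinks lie on a second vertical line in increasing vertical order, the source--sink configuration is ``non-crossing'' in the LGV sense, and each non-identity matching forces a shared vertex. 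LGV then yields
$$\det(A_{n_\alpha, k_\beta}) \;=\; \sum_{\text{non-intersecting systems}} \mathrm{wt},$$
manifestly a polynomial in $\bc,\bd$ with nonnegative integer coefficients. The analogous argument (with paths reversed in time) handles $\widetilde A$, and Cauchy--Binet then expresses every minor of $H = A \widetilde A^{\top}$ as a nonnegative-integer-coefficient sum of products of minors of $A$ and of $\widetilde A$, yielding coefficientwise Hankel-total positivity of $\bT$.

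The main obstacle is verifying the LGV compatibility condition for a network that mixes diagonal (rise/fall) steps with horizontal (level) steps: one must check that any non-identity matching of sources to sinks really produces a path system in which two paths share a vertex (not merely a naively drawn crossing), which rests on the strict planarity of the underlying graph and on the chosen placement of the $s_\alpha$ and $t_\beta$. Once this is carried out, the specialization statement in the second half of the theorem is automatic, since substituting polynomials with nonnegative real coefficients in some indeterminates $\bfx$ for the $c_i$ and $d_i$ is a ring homomorphism $\Z[\bc,\bd] \to \R[\bfx]$ that preserves nonnegativity of coefficients, and therefore preserves coefficientwise total positivity of every matrix.
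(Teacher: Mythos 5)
First, a point of reference: the paper does not actually prove Theorem~\ref{thm.TP.Tfrac}; it cites \cite{Sokal_flajolet,Sokal_totalpos} and points to \cite[section~9]{latpath_SRTR} for the proofs. Your proposal must therefore stand on its own, and it fails at the very first step: the combinatorial model is wrong. The tridiagonal production matrix $P$ you write down satisfies
\[
   \sum_{n=0}^\infty (P^n)_{00}\, t^n
   \;=\;
   \cfrac{1}{1 - d_1 t - \cfrac{c_1 t^2}{1 - d_2 t - \cfrac{c_2 t^2}{1-\cdots}}}
   \;,
\]
i.e.\ it generates the J-fraction \reff{def_Jtype.one} with $e_i = d_{i+1}$ and $f_i = c_i$ --- the numerators carry $t^2$ because a rise--fall pair consumes two unit steps --- and not the T-fraction \reff{eq.Ttype.cfrac}. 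Already at first order the two disagree: $T_1(\bc,\bd) = c_1 + d_1$, whereas $(P)_{00} = d_1$. The correct extension of Flajolet's theorem (Roblet--Viennot \cite{Roblet_96}) represents $T_n(\bc,\bd)$ as a sum over Schr\"oder paths from $(0,0)$ to $(2n,0)$ whose level steps are the \emph{long} steps $(2,0)$, weighted $d_{h+1}$ at height $h$, with $c_h$ for each fall from height $h$.

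Second, even for the series your $P$ does generate, Steps 2--3 would fail, and the failure is precisely the ``delicate'' point the paper flags right after the theorem: Hankel-total positivity of a J-fraction does \emph{not} hold coefficientwise in $\bee,\bff$. Concretely, for your output matrix $A_{n,k} = (P^n)_{0k}$ one computes
\[
   \det\begin{pmatrix} A_{1,0} & A_{1,1} \\ A_{2,0} & A_{2,1} \end{pmatrix}
   \;=\;
   \det\begin{pmatrix} d_1 & 1 \\ d_1^2 + c_1 & d_1 + d_2 \end{pmatrix}
   \;=\;
   d_1 d_2 - c_1 \;,
\]
which is not a polynomial with nonnegative coefficients, so $A$ is not coefficientwise totally positive and the Cauchy--Binet step collapses. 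The LGV compatibility condition that you yourself flag as ``the main obstacle'' genuinely fails on the Motzkin network: a rise of one path simultaneous with a fall of another lets the height difference jump from $-1$ to $+1$, producing a crossing with no shared vertex, so Lindstr\"om's lemma yields a signed sum --- consistent with the negative term above. The route indicated by \cite[section~9]{latpath_SRTR} avoids both problems: one works with the lower-triangular array of partial Schr\"oder paths and shows that its production matrix is coefficientwise totally positive by exhibiting it as a product of bidiagonal matrices with entries among $1$, $c_i$, $d_i$; the general production-matrix machinery (of which your factorization $H = A\widetilde{A}^{\top}$ is the correct skeleton) then delivers the theorem. Your closing remark that specialization to nonnegative polynomials is a nonnegativity-preserving ring homomorphism is correct and would indeed finish the argument once the main claim is in place.
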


There is also a result for J-fractions \cite{Sokal_flajolet,Sokal_totalpos},
but it is more delicate;
Hankel-total positivity does {\em not}\/ hold coefficientwise
in the parameters $\bee = (e_i)_{i \ge 0}$ and $\bff = (f_i)_{i \ge 1}$,
but only when those parameters satisfy suitable inequalities.
The paper \cite{Sokal_totalpos} describing this general theory
for S-, T- and J-fractions is not yet publicly available,
but the foregoing results (as well as some much stronger ones)
can be found (with proofs) in \cite[section~9]{latpath_SRTR}.

Now, simple inspection of the results summarized in 
Propositions~\ref{prop.I}--\ref{prop.VI} shows that
all the coefficients $c_i$ are polynomials
with {\em nonnegative}\/ integer coeffcients in the variable $x$ and 
the parameters $\alpha,\beta,\gamma,\alpha',\beta',\gamma',\kappa$
(or a subset of them).
Therefore, Theorem~\ref{thm.TP.Sfrac} implies that
the corresponding sequence $\bP = (P_n(x;\bmu))_{n \ge 0}$
of row-generating polynomials is coefficientwise Hankel-totally positive,
jointly in all these indeterminates.
Similarly, in Corollary~\ref{cor.VII} and Proposition~\ref{prop.wardpoly},
all the coefficients $c_i$ and $d_i$ are polynomials
with nonnegative integer coeffcients in $x$ and the parameters,
so Theorem~\ref{thm.TP.Tfrac} implies coefficientwise Hankel-total positivity.
Our results in this paper therefore imply:

\begin{corollary}
In all the families~1a--6 of Theorem~\ref{theor.main},
as well as families~7a and 7b of Corollary~\ref{cor.VII}
and families~8a and 8b of Proposition~\ref{prop.wardpoly},
the sequence $\bP = (P_n(x;\bmu))_{n \ge 0}$
of row-generating polynomials is coefficientwise Hankel-totally positive,
jointly in all the indeterminates
$x$ and $\alpha,\beta,\gamma,\alpha',\beta',\gamma',\kappa$.
\end{corollary}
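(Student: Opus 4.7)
The plan is essentially a routine verification followed by invocation of the general total positivity theorems for S- and T-fractions already established in the paper. The key observation is that for every family listed, the continued-fraction coefficients $c_i$ (and for families 7a/7b and 8a/8b the $d_i$ as well) produced by Propositions~\ref{prop.I}--\ref{prop.VI}, Corollary~\ref{cor.VII}, and Proposition~\ref{prop.wardpoly} are polynomials with \emph{nonnegative integer coefficients} jointly in $x$ and the relevant subset of $\{\alpha,\beta,\gamma,\alpha',\beta',\gamma',\kappa\}$. Once this pointwise check is done, the Hankel-total positivity is free: Theorem~\ref{thm.TP.Sfrac} immediately gives the conclusion for families 1a--6, and Theorem~\ref{thm.TP.Tfrac} does so for 7a, 7b, 8a, 8b.

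Concretely, I would proceed family by family, simply reading off the $c_i$ (and $d_i$) from the statements already proved. For families 1a--6 I would tabulate, for each family, the explicit forms (for example, for family 4a, $c_{2k-1}=(\gamma'+k\beta')(\kappa+x)$ and $c_{2k}=k\beta' x$; for family 5, $c_{2k-1}=(\gamma+\gamma' x)+k(\alpha+\alpha' x)$ and $c_{2k}=k(\alpha+\alpha' x)$), and verify by inspection that each is a product of factors each of which is a polynomial with nonnegative integer coefficients in the listed indeterminates. Since the set of polynomials with nonnegative integer coefficients is closed under addition and multiplication, this is trivial in every case. The same inspection handles families 7a/7b (where $c_{2k-1},c_{2k},d_{2k-1}$ are manifestly nonnegative and $d_{2k}=0$) and families 8a/8b.

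With the nonnegativity verified, I would then invoke Theorem~\ref{thm.TP.Sfrac} for families 1a--6 to conclude that the sequence of Stieltjes--Rogers polynomials $S_n(\bc)$ in the indeterminates $\bc$ is coefficientwise Hankel-totally positive, so that the specialization to the indeterminates $x$ and the listed parameters--being a substitution of polynomials with nonnegative integer coefficients--preserves coefficientwise Hankel-total positivity (this preservation under such specializations is precisely the second part of Theorem~\ref{thm.TP.Sfrac}). The same argument with Theorem~\ref{thm.TP.Tfrac} in place of Theorem~\ref{thm.TP.Sfrac} handles families 7a, 7b, 8a, 8b. Since $P_n(x;\bmu)$ is identified with the corresponding Stieltjes--Rogers or Thron--Rogers polynomial evaluated at these nonnegative specializations, the corollary follows.

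There is no real obstacle in this proof: the heavy lifting has already been done, first in establishing the explicit S- and T-fractions (Propositions~\ref{prop.I}--\ref{prop.VI}, Corollary~\ref{cor.VII}, Proposition~\ref{prop.wardpoly}), and second in the general total-positivity theorems (Theorems~\ref{thm.TP.Sfrac} and~\ref{thm.TP.Tfrac}). The only ``work'' is the case-by-case nonnegativity check of the continued-fraction coefficients, which is routine. The most error-prone step, and hence the one to be executed most carefully, is simply keeping track of which parameters appear (and with which sign) in each family--particularly in families 2a, 2b, and 6, where certain parameters are redundant and where the change of variable $\alpha=\kappa(\alpha'+\beta')$ (for family~6) and the analogous reparametrizations for 4a and 4b are essential to make the coefficients manifestly nonnegative polynomials in the \emph{chosen} indeterminates.
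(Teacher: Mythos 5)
Your proposal is correct and follows exactly the paper's own argument: the paper likewise observes by inspection that all the coefficients $c_i$ (and $d_i$) in Propositions~\ref{prop.I}--\ref{prop.VI}, Corollary~\ref{cor.VII} and Proposition~\ref{prop.wardpoly} are polynomials with nonnegative integer coefficients in $x$ and the relevant parameters, and then invokes Theorem~\ref{thm.TP.Sfrac} for the S-fraction families and Theorem~\ref{thm.TP.Tfrac} for the T-fraction families. Your added care about the reparametrizations in families 4a, 4b and 6 (introducing $\kappa$) matches the paper's setup and is the right place to be careful.
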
  

But vastly more appears to be true:  it seems that
coefficientwise Hankel-total positivity is a {\em general}\/ property
of the GKP recurrence, not just of the special families 1a--8b studied here.
Namely, we conjecture:

\begin{conjecture}[Coefficientwise Hankel-total positivity of the GKP recurrence]
   \label{conj.hankel}
The sequence $\bP = (P_n(x;\bmu))_{n \ge 0}$
of row-generating polynomials of the GKP recurrence
is coefficientwise Hankel-totally positive,
jointly in all seven indeterminates
$x$ and $\alpha,\beta,\gamma,\alpha',\beta',\gamma'$.
\end{conjecture}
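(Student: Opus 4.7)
The plan is to seek a factorization of the Hankel matrix $H(x;\bmu) \eqdef \bigl(P_{i+j}(x;\bmu)\bigr)_{i,j\ge 0}$ in the form $H = L D L^T$, with $L$ unit-lower-triangular and $D$ diagonal, such that both factors have entries that are polynomials with nonnegative coefficients in $x$ and $\alpha,\beta,\gamma,\alpha',\beta',\gamma'$.  Such a factorization would imply coefficientwise Hankel-total positivity by standard arguments.  My first step would be to compute this factorization explicitly for Hankel matrices of order $N \le 6$ or so using symbolic algebra, look for closed-form expressions for the diagonal entries $D_{nn}$ and for the off-diagonal entries of $L$ as polynomials in $\bmu$ and $x$, and then attempt to prove those formulae by induction on $n$ using either the GKP recurrence \reff{eq_binomvert} or the differential recurrence \reff{eq.diff_recurrence}.

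A complementary and perhaps more conceptual route is via \emph{production matrices} in the sense of Viennot and Aigner.  One would construct a linear operator $\mathcal{P} = \mathcal{P}(\bmu)$ on the space of polynomials in $x$, depending polynomially on $\bmu$, whose iterated application advances the sequence $(P_n)_{n \ge 0}$, and then seek to factor $\mathcal{P}$ as a product of coefficientwise totally positive bidiagonal operators.  A natural candidate is suggested by the action of the first-order differential operator $\mathcal{D}_n \eqdef [n(\alpha + \alpha' x) + \gamma + (\beta' + \gamma')x] + x(\beta + \beta' x)\,d/dx$ appearing in \reff{eq.diff_recurrence}: in the monomial basis this operator is bidiagonal with entries that are manifestly polynomials with nonnegative integer coefficients in $\bmu$ and $x$, but its explicit $n$-dependence means that it is not directly a production matrix in the usual sense, so some rearrangement (perhaps to a ``shifted'' or inhomogeneous basis) will be necessary.

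The hard part will be that Theorem~\ref{theor.main} shows that for generic $\bmu$ the ogf admits no S-fraction with polynomial coefficients in $\bmu$, and the analogous negative result is expected (though not yet proved) for T-fractions and J-fractions.  Consequently, the conjecture cannot be deduced from Theorem~\ref{thm.TP.Sfrac} or Theorem~\ref{thm.TP.Tfrac} applied directly on the full seven-parameter manifold, and a genuinely new mechanism is needed.  Promising directions include: (i)~introducing a \emph{multivariate master polynomial} in more than seven indeterminates that specializes to $P_n(x;\bmu)$ and \emph{does} admit a Stieltjes- or Thron-type expansion with polynomial coefficients ---  in the spirit of how families~7a and~7b of Corollary~\ref{cor.VII} were captured by embedding families~3a and~3b in a $\xi$-binomial-transform framework; (ii)~a \emph{branched continued fraction} of Stieltjes--Rogers type accommodating the extra parameters of the GKP recurrence; (iii)~extending the J-fraction result of Zhu \cite{Zhu_20} on the GKPZ recurrence (of which Proposition~\ref{prop.family1c} is a special case) to cover all of $\bmu \in \C^6$; or (iv)~identifying $P_n(x;\bmu)$ combinatorially as a weighted generating polynomial for labeled objects (e.g.\ increasing trees or decorated permutations) with monomial weights in $x$ and $\bmu$, so that a continued fraction in enlarged variables can be read off via Flajolet's bijective path methods \cite{Flajolet_80}.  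Each of these avenues will require substantial new ideas beyond the techniques developed in the present paper.
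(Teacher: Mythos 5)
You are addressing Conjecture~\ref{conj.hankel}, and the first thing to say is that the paper contains no proof of it: the authors' only evidence is a computer verification up to the $8 \times 8$ Hankel matrix (plus order-2 and order-3 checks for larger matrices), together with the much weaker coefficientwise-in-$x$ log-convexity results of Liu--Wang \cite{Liu_07} and Chen--Wang--Yang \cite{Chen_11} quoted in Section~\ref{sec.hankel}. Your text is, as you implicitly concede, a research programme rather than a proof: none of the proposed avenues is carried out, no lemma is established, and the statement remains exactly as open after your discussion as before it. That is not a failing relative to the paper --- the authors do not prove the conjecture either --- but your submission should not be presented as a proof attempt; it proves nothing.

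Within the programme there is one concrete technical slip worth flagging. In the $H = LDL^{\mathrm{T}}$ plan you ask only that $L$ and $D$ have entries that are polynomials with nonnegative coefficients. Entrywise nonnegativity of a unit-lower-triangular $L$ is total positivity of order~1 only; it does not make $L$ totally positive, and hence does not, via Cauchy--Binet, make $H = LDL^{\mathrm{T}}$ totally positive. What you actually need is that $L$ itself be coefficientwise totally positive, e.g.\ via a factorization of $L$ into nonnegative bidiagonal factors --- which is essentially equivalent to exhibiting a coefficientwise totally positive production matrix, and so collapses your first route into your second. There you correctly identify the real obstruction: the operator in \reff{eq.diff_recurrence} depends explicitly on $n$, so it is not an autonomous production matrix, and Theorem~\ref{theor.main} shows that the tridiagonal (S-fraction) mechanism fails for generic $\bmu$, so Theorems~\ref{thm.TP.Sfrac} and \ref{thm.TP.Tfrac} cannot be invoked on the full seven-parameter family. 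Your suggested escape routes --- master polynomials in enlarged variable sets, branched continued fractions as in \cite{latpath_SRTR}, the GKPZ extension of Zhu \cite{Zhu_20}, or a combinatorial model with monomial weights --- are sensible and are essentially the same directions the authors themselves indicate in Sections~\ref{sec.open} and \ref{sec.hankel}, but each is left entirely undeveloped here.
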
  

\noindent
This conjecture was made a few years ago by one of us \cite{Sokal_unpub}
and was confirmed at that time up to the $8 \times 8$ Hankel matrix
$(P_{i+j}(x;\bmu))_{0 \le i,j \le 7}$.\footnote{
   This computation was performed in {\sc Mathematica};
   it required approximately $1.2 \times 10^7$ seconds CPU time
   and 587 GB memory,
   on a system using Intel Xeon E7-8837 processors running at 2.67 GHz.
}

In fact, a slightly stronger conjecture appears to be true:
in place of the usual GKP recurrence
\begin{equation}
  T(n,k)
  \;=\;
  (\alpha n + \beta k + \gamma)    \, T(n-1,k)
  \:+\:
  (\alpha' n + \beta' k + \gamma') \, T(n-1,k-1)
   \;,
  \label{eq_binomvert2}
\end{equation}
we can write instead
\begin{equation}
  T(n,k)
  \;=\;
  [\widetilde{\alpha} (n-1) + \widetilde{\beta} k + \widetilde{\gamma}]
       \, T(n-1,k)
  \:+\:
  [\widetilde{\alpha}' (n-1) + \widetilde{\beta}' (k-1) + \widetilde{\gamma}']
       \, T(n-1,k-1)
   \;,
  \label{eq_binomvert.stronger}
\end{equation}
which is equivalent to \reff{eq_binomvert2} with
\be
   \alpha = \widetilde{\alpha} ,\quad
   \beta = \widetilde{\beta} ,\quad
   \gamma = \widetilde{\gamma} - \widetilde{\alpha} ,\quad
   \alpha' = \widetilde{\alpha}' ,\quad
   \beta' = \widetilde{\beta}' ,\quad
   \gamma' = \widetilde{\gamma}' - \widetilde{\alpha}' - \widetilde{\beta}'
   \;.
\ee
Even in this new parametrization,
we apparently still get coefficientwise Hankel-total positivity:

\begin{conjecture}[Coefficientwise Hankel-total positivity of the GKP recurrence, strong version]
   \label{conj.hankel.strong}
The sequence $\bP = (P_n(x;\widetilde{\bmu}))_{n \ge 0}$
of row-generating polynomials of the recurrence \reff{eq_binomvert.stronger}
is coefficientwise Hankel-totally positive,
jointly in all seven indeterminates $x$ and
$\widetilde{\alpha},\widetilde{\beta},\widetilde{\gamma},
 \widetilde{\alpha}',\widetilde{\beta}',\widetilde{\gamma}'$.
\end{conjecture}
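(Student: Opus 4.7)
The plan is to attack Conjecture~\ref{conj.hankel.strong} via the \emph{production-matrix method}, which refines the S- and T-fraction arguments used in Theorems~\ref{thm.TP.Sfrac} and \ref{thm.TP.Tfrac}. Recall that if a sequence $\ba = (a_n)_{n \ge 0}$ in a partially ordered ring is produced by a lower-Hessenberg matrix $\bP$ in the sense that $a_n$ equals the $(0,0)$-entry of $\bP^n$, and if $\bP$ is itself coefficientwise totally positive in the indeterminates, then the Hankel matrix of $\ba$ inherits coefficientwise total positivity (this is the Karlin--Lindstr\"om--Viennot circle of ideas, made precise in \cite{Sokal_totalpos}). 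For S-fractions the production matrix is tridiagonal with entries read off from the Stieltjes coefficients $c_i$; for T-fractions it is lower bidiagonal plus a diagonal. In the full six-parameter GKP family there is generically neither an S-fraction nor a T-fraction with polynomial coefficients in $x$, so the production matrix, if it exists, must be a more complicated lower-Hessenberg object whose entries are polynomials with nonnegative integer coefficients in $x$ and the six parameters of $\widetilde{\bmu}$.

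Concretely, I would proceed in three steps. First, use the empirical data that already verifies the conjecture through the $8 \times 8$ Hankel matrix to fit a candidate lower-Hessenberg production matrix $\bP(x,\widetilde{\bmu})$, treating its on- and below-diagonal entries as unknowns and imposing that the $(0,0)$-entry of $\bP^n$ agree with $P_n(x;\widetilde{\bmu})$ for $n \le 8$; the hope is that a unique (or at least minimal) solution whose entries are manifestly nonnegative polynomials will emerge, and that the shifted parametrization \reff{eq_binomvert.stronger} is precisely what makes this nonnegativity visible. Second, prove that the conjectured $\bP$ actually produces the full sequence $(P_n(x;\widetilde{\bmu}))_{n \ge 0}$, either algebraically by induction using the GKP recurrence, or combinatorially by exhibiting a weighted lattice-path model in the spirit of the super-augmented perfect matchings used by Elvey Price and Sokal \cite{wardpoly} for the Ward polynomials. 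Third, invoke the general production-matrix theorem from \cite{Sokal_totalpos} to conclude coefficientwise Hankel-total positivity.

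A complementary, more algebraic route is to search for a factorization $\bT(\widetilde{\bmu}) = L_1 L_2 \cdots L_m$ of the GKP array as a product of simpler coefficientwise TP matrices --- $\xi$-binomial matrices $B_\xi$, bidiagonal TP matrices, or exponential-Riordan factors --- and to deduce Hankel-TP of the row-generating polynomials from composition properties of TP matrices; the 48-element symmetry group $\scrg \simeq S_3 \times D_4$ together with matrix identities such as \reff{eq.family6.family2a} and $\bT(Z\bmu) = \bT(\bmu)\,B_{-\beta/\beta'}$ suggest that such structured factorizations do exist on natural subfamilies and might be glued into one valid across the whole of $\C^6$. The main obstacle, in either approach, is that the generic GKP moment sequence is not known to come from a continued fraction of any classical type, so the production matrix is genuinely lower-Hessenberg rather than tridiagonal, and its entries must be conjured from scratch; and even once they are in hand, proving their coefficientwise total positivity jointly in all seven indeterminates typically requires an explicit bidiagonal LDU factorization of $\bP$, or equivalently a bijective path-counting interpretation, neither of which is automatic from the recurrence alone. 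That the conjecture is cleanest in the variables $\widetilde{\bmu}$ is a strong hint that the right combinatorial model lives in those coordinates, so a preliminary step is to re-express \reff{eq_binomvert.stronger} in a form where every coefficient is visibly a nonnegative combination of $\widetilde{\alpha},\widetilde{\beta},\widetilde{\gamma},\widetilde{\alpha}',\widetilde{\beta}',\widetilde{\gamma}'$ before beginning the search.
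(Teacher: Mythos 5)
This statement is an open conjecture in the paper: the authors offer no proof, only numerical verification up to the $8 \times 8$ Hankel matrix (and partial order-$2$ and order-$3$ checks), together with the much weaker published results of Liu--Wang and Chen--Wang--Yang cited in Section~\ref{sec.hankel}. Your text is likewise not a proof but a research programme, and every load-bearing step in it is left unexecuted: no candidate production matrix $\bP(x,\widetilde{\bmu})$ is exhibited, its existence with coefficientwise totally positive entries is not established, and no factorization $\bT(\widetilde{\bmu}) = L_1 \cdots L_m$ into TP factors is produced. So there is a genuine gap --- namely, the entire argument.

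Beyond the incompleteness, there is a structural reason to doubt that the first route can work as described. For a Hankel-TP sequence, the natural production matrix attached to the moment problem is the tridiagonal Jacobi matrix, and its entries are determined by the sequence (they are the $e_n, f_n$ of the J-fraction, hence rational functions of the $a_n$ as in \reff{eq.firstfew_ci}); they are not free unknowns one can fit. Theorem~\ref{theor.main} shows that these continued-fraction coefficients are polynomial in $x$ only on ten low-dimensional subvarieties of $\C^6$, so for generic $\widetilde{\bmu}$ the tridiagonal production matrix has entries that are genuinely rational in $x$, and the hypothesis of the production-matrix theorem (coefficientwise total positivity of $\bP$) cannot hold in the polynomial ring. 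Passing to a non-tridiagonal lower-Hessenberg $\bP$ does give more freedom, but then $\bP$ is massively underdetermined by the condition that $(\bP^n)_{00} = P_n(x;\widetilde{\bmu})$ for $n \le 8$, and total positivity of a production matrix is only a sufficient condition for Hankel-TP of the output, so even an exhaustive failed search would prove nothing. In short: your proposal correctly identifies the known machinery and the obstructions to applying it, but it establishes neither the conjecture nor any new partial result, and should not be presented as a proof.
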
  

\noindent
It was in fact this stronger conjecture that was confirmed
\cite{Sokal_unpub} up to the $8 \times 8$ Hankel matrix.
We have now also confirmed the coefficientwise Hankel-total positivity
of orders~2 and 3 up to the $21 \times 21$ and $9 \times 9$ Hankel matrices,
respectively.\footnote{
   For order~2, this computation was done by checking
   the strong log-convexity \reff{eq.SLC},
   coefficientwise in all seven indeterminates,
   up to $n = 38$.
   This is equivalent \cite{Sokal_totalpos}
   to the coefficientwise total positivity of order~2
   of the $21 \times 21$ Hankel matrix.
   This computation took $1.19 \times 10^7$ seconds CPU time and 19 GB memory.

   For order~3, this computation was done by the direct method:
   it took 656116 seconds CPU time and 157 GB memory.
}

Please observe that \reff{eq_binomvert.stronger} is exactly what we need
to ensure that the {\em matrix elements}\/ $T(n,k)$ are
polynomials with nonnegative coefficients in the parameters
$\widetilde{\alpha},\widetilde{\beta},\widetilde{\gamma},
 \widetilde{\alpha}',\widetilde{\beta}',\widetilde{\gamma}'$,
since we have written $n-1$ (resp.\ $k-1$) precisely in those terms
where we know that $n-1$ (resp.\ $k-1$) must be nonnegative
if the corresponding term is to make a nonzero contribution.
In other words, it is immediate from \reff{eq_binomvert.stronger}
that the sequence $\bP = (P_n(x;\widetilde{\bmu}))_{n \ge 0}$
is coefficientwise Hankel-totally positive {\em of order~1}\/
in $x$ and
$\widetilde{\alpha},\widetilde{\beta},\widetilde{\gamma},
 \widetilde{\alpha}',\widetilde{\beta}',\widetilde{\gamma}'$.
But the coefficientwise Hankel-total positivity of higher order
is decidedly nontrivial!

Some very weak versions of
Conjectures~\ref{conj.hankel} and \ref{conj.hankel.strong}
have been proven.
Liu and Wang \cite[Theorem~4.1 and Remark~4.2]{Liu_07}
showed that if $\alpha,\beta,\gamma,\alpha',\beta',\gamma'$
are real numbers satisfying
\begin{subeqnarray}
   & &
   \alpha \,\ge\,0 \,,\quad
   \alpha + \beta \,\ge\,0 \,,\quad
   \alpha + \gamma \,\ge\,0
         \\[2mm]
   & &
   \alpha' \,\ge\,0 \,,\quad
   \alpha' + \beta' \,\ge\,0 \,,\quad
   \alpha' + \beta' + \gamma' \,\ge\,0
         \\[2mm]
   & &
   \beta \alpha' - \alpha \beta' \,\ge\, 0
         \\[2mm]
   & &
   \beta (\alpha' + \beta') - \alpha \beta' \,\ge\, 0
         \\[2mm]
   & &
   \beta (\alpha' + \beta' + \gamma') - (\alpha + \gamma) \beta' \,\ge\, 0
 \label{eq.liu-wang}
\end{subeqnarray}
then the sequence $\bP = (P_n(x;\widetilde{\bmu}))_{n \ge 0}$
is coefficientwise log-convex in~$x$, i.e.\
\be
   P_n(x) \, P_{n+2}(x) \:-\: P_{n+1}(x)^2
   \; \myge_x \; 0
\ee
for all $n \ge 0$.
This is equivalent to the coefficientwise nonnegativity (in~$x$)
of all the {\em contiguous}\/ $2 \times 2$ minors
of the Hankel matrix $(P_{i+j}(x;\bmu))_{i,j \ge 0}$;
it is weaker than the full coefficientwise Hankel-total positivity of order~2.

Chen, Wang and Yang \cite[Theorem~2.4]{Chen_11} showed that
under the slightly different hypotheses
\begin{subeqnarray}
   & &
   \alpha \,\ge\,0 \,,\quad
   \beta \,\ge\,0 \,,\quad
   \alpha + \beta + \gamma \,\ge\,0
         \\[2mm]
   & &
   \alpha' \,\ge\,0 \,,\quad
   \beta' \,\ge\,0 \,,\quad
   \alpha' + \beta' + \gamma' \,\ge\,0
 \label{eq.chen-wang-yang}
\end{subeqnarray}
--- which neither imply nor are implied by \reff{eq.liu-wang} ---
the sequence $\bP = (P_n(x;\widetilde{\bmu}))_{n \ge 0}$
is coefficientwise strongly log-convex in~$x$, i.e.\
\be
   P_m(x) \, P_{n+2}(x) \:-\: P_{m+1}(x) \, P_{n+1}(x)
   \; \myge_x \; 0
 \label{eq.SLC}
\ee
for all $n \ge m \ge 0$.
It can be shown \cite{Sokal_totalpos}
that the coefficientwise strong log-convexity (in~$x$)
is equivalent to the coefficientwise (in~$x$)
Hankel-total positivity of order~2.

But these results are very far from proving even the
coefficientwise Hankel-total positivity of order~2
--- much less the coefficientwise Hankel-total positivity of all orders ---
in the seven indeterminates jointly.

\subsection{A generalization: The Graham--Knuth--Patashnik--Zhu recurrence}
   \label{subsec.GKPZ}

Inspired by a very recent paper of Zhu \cite{Zhu_20},
we would like to propose the following generalization of the GKP recurrence,
which we shall call the
{\em Graham--Knuth--Patashnik--Zhu}\/ (GKPZ) {\em recurrence}\/:
\begin{eqnarray}
  T(n,k)
  & = &
  (\alpha n + \beta k + \gamma)    \, T(n-1,k)
  \:+\:
  (\alpha' n + \beta' k + \gamma') \, T(n-1,k-1)
  \qquad
         \nonumber \\[1mm]
  & & \;\;
  \;+\;
  \sigma \, (n-k+1) \, T(n-1,k-2)
  \:+\:
  \tau \, (k+1) \, T(n-1,k+1)
  \label{eq.GKPZ}
\end{eqnarray}
for $n \ge 1$ and $k \in \Z$, with initial condition $T(0,k) = \delta_{k0}$
and parameters
$\bmu = (\alpha,\beta,\gamma,$ $\alpha',\beta',\gamma',\sigma,\tau)$.
Please note that because the coefficient $n-k+1$
in the ${T(n-1,k-2)}$ term vanishes when $k=n+1$,
it follows by induction on~$n$ that $T(n,k) = 0$ when $k > n$.
Similarly, because the coefficient $k+1$ in the ${T(n-1,k+1)}$ term
vanishes when $k=-1$, it follows that $T(n,k) = 0$ when $k < 0$.
So the matrix $\bT(\bmu) = \bigl( T(n,k;\bmu) \bigr)_{n \ge 0,\, k \in \Z}$
remains lower-triangular even in the presence of the two new terms.

A simple computation shows that the dual array $T^*(n,k) \eqdef T(n,n-k)$
satisfies a GKPZ recurrence with parameters
\be
   D\bmu 
   \;\eqdef\; 
   (\alpha' + \beta',-\beta',\gamma', \alpha + \beta, -\beta, \gamma,
    \tau,\sigma) \,.
 \label{eq.duality.GKPZ}
\ee
This is simply the GKP duality \reff{eq.duality.mu}
together with an interchange of $\sigma$ and $\tau$.

For the special case \cite[eq.~(1.3)]{Zhu_20}\footnote{
   The connection with Zhu's \cite[eq.~(1.3)]{Zhu_20}
   variables $\lambda, d, a_1, a_2, b_1, b_2$
   is $\kappa = d/\lambda$ and then
   $a_1 = \beta/\lambda$, $a_2 = \gamma/\lambda$,
   $b_1 = \alpha' + \kappa\beta$,
   $b_2 = \gamma' + \kappa(\beta-\gamma)$.
}
\be
   \bmu  \;=\; (0, \beta, \gamma, \alpha', -\alpha' + \kappa\beta, \gamma',
                \kappa\alpha',0)
   \;,
 \label{eq.GKPZ.zhu}
\ee
Zhu \cite{Zhu_20} proved
a Dobi\'nski-type formula for the row-generating polynomials $P_n(x)$
\cite[Theorem~2.2(i)]{Zhu_20},
an explicit formula for the exponential generating function
\cite[Theorem~2.2(ii)]{Zhu_20},
a J-fraction for the ordinary generating function
\cite[Theorem~2.7(i)]{Zhu_20},
and coefficientwise Hankel-total positivity (in~$x$)
for various further-specialized cases of the parameters $\bmu$
\cite[Theorem~2.7(iii)]{Zhu_20}.
The egf and J-fraction can be stated as follows:

\begin{proposition}[Zhu \cite{Zhu_20}]
   \label{prop.zhu}
For the GKPZ recurrence \eqref{eq.GKPZ} with parameters
$\bmu = (0, \beta, \gamma, \alpha', -\alpha' + \kappa\beta, \gamma',
                \kappa\alpha',0)$:
\begin{itemize}
   \item[(a)]
The egf $F(x,t;\bmu)$ is of the form
\be
   F(x,t)  \;=\; e^{at} \, \bigl[ 1 - b (e^{ct} - 1) \bigr]^{-\Delta}
 \label{eq.egf.zhu0}
\ee
where either
\begin{subeqnarray}
   a  & = & \frac{\gamma}{\beta} \, (\beta - \alpha' x)  \\[2mm]
   b  & = & {(\alpha' + \kappa\beta) x}{\beta - \alpha' x}   \\[2mm]
   c  & = & \beta - \alpha' x  \\[2mm]
   \Delta & = &  \frac{\gamma}{\beta}  \:+\:
                 \frac{\gamma' + \kappa(\beta-\gamma)}{\alpha' + \kappa\beta}
 \slabel{eq.Delta.zhu1}
 \label{eq.egf.zhu1}
\end{subeqnarray}
or alternatively
\begin{subeqnarray}
   a  & = & - \, \frac{\gamma' + \kappa(\beta-\gamma)}{\alpha' + \kappa\beta}
              \, (\beta - \alpha' x)  \\[2mm]
   b  & = & - \, \frac{\beta (1 + \kappa x)}{\beta - \alpha' x}  \\[2mm]
   c  & = & - (\beta - \alpha' x)  \\[2mm]
   \Delta & = &  \frac{\gamma}{\beta}  \:+\:
                 \frac{\gamma' + \kappa(\beta-\gamma)}{\alpha' + \kappa\beta}
 \slabel{eq.Delta.zhu2}
 \label{eq.egf.zhu2}
\end{subeqnarray}
   \item[(b)]
The ogf $f(x,t;\bmu)$ has a J-type continued fraction representation
in the ring $\Z[x,\beta,\gamma$, $\alpha',\kappa][[t]]$ with coefficients 
\begin{subeqnarray}
   e_n  & = &  (\gamma + n\beta)(1 + \kappa x)  \:+\: 
      [\gamma' + \kappa(\beta-\gamma) + n(\alpha' + \kappa\beta)] x
    \\[1mm]
   f_n & = & n [\beta\gamma' + \gamma\alpha' + \kappa\beta^2 +
                (n-1) \beta (\alpha' + \kappa\beta)] x (1 + \kappa x)
 \label{eq.Jfrac.zhu}
\end{subeqnarray}
\end{itemize}
\end{proposition}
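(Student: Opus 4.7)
The plan is to prove both parts by reducing to the Sokal--Zeng framework already developed in the paper. First, I would extend the egf PDE \reff{eq.PDE.egf} to the GKPZ recurrence. Multiplying \reff{eq.GKPZ} by $x^k t^{n-1}/(n-1)!$ and summing over $n \ge 1$ and $k \in \Z$, the $\sigma$-term contributes $\sigma x^2(t F_t - x F_x)$ and the $\tau$-term contributes $\tau F_x$, giving
\[
   (1 - \alpha t - \alpha' xt - \sigma x^2 t) F_t \;=\; (\tau + \beta x + \beta' x^2 - \sigma x^3) F_x + [(\alpha+\gamma) + (\alpha'+\beta'+\gamma')x] F
\]
with $F(x,0) = 1$. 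Specializing to Zhu's parameters $\bmu = (0,\beta,\gamma,\alpha',-\alpha'+\kappa\beta,\gamma',\kappa\alpha',0)$, the coefficients factor neatly and the PDE becomes
\[
   (1 - \alpha' xt(1+\kappa x)) F_t \;=\; x(1+\kappa x)(\beta - \alpha' x) F_x + [\gamma + (\gamma' + \kappa\beta)x] F \,.
\]

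For part~(a), I would verify the ansatz $F = e^{at} u^{-\Delta}$ with $u = 1 - b(e^{ct}-1)$ and $(a,b,c,\Delta)$ as in \reff{eq.egf.zhu1} by direct substitution. Computing $F_t/F = a + \Delta bc\, e^{ct}/u$ and $F_x/F = a't + \Delta[b'(e^{ct}-1) + b c' t\, e^{ct}]/u$, the purely polynomial-in-$t$ terms cancel using $c' = -\alpha'$, and the key identity $be^{ct} = (1+b) - u$ eliminates the transcendental factor, reducing the PDE to a relation linear in $1/u$. Matching the coefficients of $u^0$ and $u^1$ to zero then collapses to the single identity $1+b = \beta(1+\kappa x)/c$ (which holds because $(\alpha'+\kappa\beta)x + (\beta - \alpha' x) = \beta(1+\kappa x)$) together with the stated value of $\Delta$; the initial condition $F(x,0)=1$ is automatic since $u(x,0)=1$, and uniqueness for first-order linear PDEs (by characteristics) completes the argument. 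The equivalence of the two forms follows from the short algebraic identity $1 - b_2(e^{c_2 t}-1) = e^{-c_1 t}[1 - b_1(e^{c_1 t}-1)]$ together with $a_1 - a_2 = c_1 \Delta$, both immediate from the definitions.

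For part~(b), I would identify the main factor with the Sokal--Zeng egf \reff{eq.twovar.Ma.Salas.egf}. Setting $c = y - u$, $b = u/(y-u)$, $\Delta = w/u$, one has
\[
   [1 - b(e^{ct}-1)]^{-\Delta} \;=\; \left( \frac{y-u}{y - u e^{(y-u)t}} \right)^{\! w/u} \;=\; \sum_{n \ge 0} \scrp_n(w,y,u,y) \, \frac{t^n}{n!} \,,
\]
with the correspondence $u = (\alpha'+\kappa\beta)x$, $y = \beta(1+\kappa x)$, $w = [\gamma\alpha'/\beta + \gamma' + \kappa\beta]x$. Therefore $P_n(x;\bmu)$ is the $a(x)$-binomial transform of the Sokal--Zeng polynomial $\scrp_n(w,y,u,y)$, where $a = (\gamma/\beta)(\beta - \alpha' x)$. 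By Theorem~\ref{thm.perms.S} with $v = y$, the ogf $\sum_n \scrp_n(w,y,u,y)\, t^n$ admits the S-fraction with $c_{2k-1} = w + (k-1)u$ and $c_{2k} = ky$; Corollary~\ref{cor.xi_binomial.StoJ} with $\xi = a$ then converts this into the J-fraction $e_0 = c_1 + a$, $e_n = c_{2n} + c_{2n+1} + a$, $f_n = c_{2n-1} c_{2n}$, and routine algebraic simplification yields the stated expressions \reff{eq.Jfrac.zhu}.

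The hard part is the direct PDE verification: one must correctly track the cancellations (the polynomial-in-$t$ terms vanish via $c' = -\alpha'$, and the $e^{ct}$-dependence collapses via $be^{ct} = (1+b) - u$ together with $b'/b = \beta/(xc)$) to reduce everything to a polynomial identity in $u$. Once the egf is established, the remainder follows cleanly from the continued-fraction machinery already assembled in Sections~\ref{sec.SZ} and~\ref{sec.prelim.TJ}.
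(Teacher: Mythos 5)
Your proposal is correct, and it is considerably more self-contained than what the paper actually does: the paper states Proposition~\ref{prop.zhu} as a citation of Zhu, who proves a Dobi\'nski-type formula for $P_n(x)$ by induction on $n$ and extracts the egf from it, and the paper only sketches in a footnote how the J-fraction then follows from the egf. Your part~(a) replaces Zhu's induction by extending the PDE \reff{eq.PDE.egf} to the GKPZ recurrence and verifying the ansatz directly; the key steps check out: the specialized PDE $(1-\alpha' xt(1+\kappa x))F_t = x(1+\kappa x)(\beta-\alpha' x)F_x + [\gamma+(\gamma'+\kappa\beta)x]F$ is correct, the identities $1+b=\beta(1+\kappa x)/c$ and $b'/b=\beta/(xc)$ together with the matching of the $u^{-1}$ and $u^{0}$ coefficients do reduce everything to the stated value of $\Delta$, and the two parameter sets are related exactly as you say. (Note that the printed formula for $b$ in \reff{eq.egf.zhu1} has a typographical error --- a missing fraction bar --- and your reading $b=(\alpha'+\kappa\beta)x/(\beta-\alpha' x)$ is the intended one.) Your part~(b), with $u=(\alpha'+\kappa\beta)x$, $y=\beta(1+\kappa x)$, $w=\Delta u$, followed by Theorem~\ref{thm.perms.S} at $v=y$ and Corollary~\ref{cor.xi_binomial.StoJ} with $\xi=a$, is essentially the route sketched in the paper's footnote (Stieltjes's S-fraction for the $a=0$ egf, contraction, then an $a$-binomial transform); the resulting $e_n=a+w+n(y+u)$ and $f_n=ny\,[w+(n-1)u]$ do simplify to \reff{eq.Jfrac.zhu}. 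What your approach buys is a proof entirely within the machinery of Sections~\ref{sec.SZ} and~\ref{sec.prelim.TJ}, with no appeal to Zhu's paper; what it forgoes is the combinatorial content (the Dobi\'nski-type formula) that Zhu's route supplies.
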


\noindent
There is of course also a dual result to Proposition~\ref{prop.zhu},
whose statement we leave to the reader.

We remark that Zhu \cite{Zhu_20}
proves the Dobi\'nski-type formula for $P_n(x)$ \cite[Theorem~2.2(i)]{Zhu_20}
by induction on $n$;
from this he deduces the formula \reff{eq.egf.zhu0}--\reff{eq.egf.zhu2}
for the egf \cite[Theorem~2.2(ii)]{Zhu_20} by a straightforward computation.
Finally, he deduces the J-fraction \reff{eq.Jfrac.zhu}
\cite[Theorem~2.7(i)]{Zhu_20}
from the egf by the Stieltjes--Rogers addition-formula method
\cite[pp.~203--207]{Wall_48}.\footnote{
   Let us remark that the argument from this egf to the J-fraction
   for the corresponding ogf was essentially already known to Stieltjes.
   More precisely, as noted already in Section~\ref{sec.SZ},
   Stieltjes \cite[section~81]{Stieltjes_1894} observed that
   the S-fraction \reff{eq.eulerian.fourvar.contfrac} with $v=y$ and $u=1$
   is the formal Laplace transform of the exponential generating function
   \reff{def.Anyw};
   or in other words, the ogf corresponding to the egf \reff{eq.egf.zhu0}
   specialized to $a=0$ has an S-fraction with coefficients
   $c_{2k-1} = bc (\Delta+k-1)$, $c_{2k} = k (b+1)c$.
   This leads by contraction (Proposition~\ref{prop.contraction_even})
   to a J-fraction with coefficients
   $e_n = bc\Delta + nc(2b+1)$, $f_n = nb(b+1)(\Delta+n-1)c^2$.
   Then one restores $a$ by applying an $a$-binomial transform
   (Proposition~\ref{prop.Jn.x-binomial}), leading to a J-fraction with
   $e_n = a + bc\Delta + nc(2b+1)$, $f_n = nb(b+1)(\Delta+n-1)c^2$.
   Inserting either \reff{eq.egf.zhu1} or \reff{eq.egf.zhu2}
   then yields \reff{eq.Jfrac.zhu}.
}

In two special cases, the GKPZ parameters \reff{eq.GKPZ.zhu}
have $\sigma = \tau = 0$ and thus reduce to a GKP recurrence:
\begin{itemize}
   \item[1)] When $\kappa = 0$, the GKPZ parameters \reff{eq.GKPZ.zhu}
reduce to our family~1c (Section~\ref{subsec.family1c}),
and the J-fraction \reff{eq.Jfrac.zhu} reduces to \reff{eq.Jfrac.family1c}.
   \item[2)] When $\alpha' = 0$, the GKPZ parameters \reff{eq.GKPZ.zhu}
reduce to our family~7a (Section~\ref{sec.family.VII}),
and the J-fraction \reff{eq.Jfrac.zhu} reduces to \reff{eq.Jfrac.family7a}.
\end{itemize}

It is a very interesting open problem to generalize our work
in the present paper, and the proposed work in the preceding two subsections,
to the GKPZ recurrence.

\section*{Acknowledgements}

We acknowledge useful discussions with 
Eduardo J.S.~Villa\-se\~nor and J.~Fernando Barbero. 
We also thank Bao-Xuan Zhu for making available a copy
of his unpublished manuscript \cite{Zhu_unpub},
and Frank Johnson for very helpful information
concerning the discrete group $\scrg$.
One of us (J.S.)\ is grateful for the hospitality
of University College London,
where part of this work was done.
This research was supported in part
by the Spanish MINECO grant FIS2014-57387-C3-3-P;
by the 
FEDER/Ministerio de Ciencia, Innovaci\'on y Universidades--Agencia Estatal
de Investigaci\'on grant FIS2017-84440-C2-2-P;
by the Madrid Government
(Comunidad de Madrid--Spain) under the Multiannual Agreement with UC3M in the
line of Excellence of University Professors (EPUC3M23) and in the context
of the V-PRICIT (Regional Plan for Scientific Research and Technological
Innovation);
and by U.K.~Engineering and Physical Sciences Research Council
grant EP/N025636/1.

\appendix

\section{Matrix product of two GKP arrays}  \label{app.spivey-zhu}

In this appendix we provide more details concerning the matrix product
$\bT(\bmu) \, \bT(\widehat{\bmu})$ of two GKP arrays
(as well as some more general arrays),
with special attention to the case in which one of the two arrays
is the $\xi$-binomial matrix $B_\xi$ defined in \reff{def.Bxi}.
Some of these results can be found in the very interesting
(but apparently little-known) thesis of Th\'eor\^et \cite{Theoret_94},
whose approach we largely follow in Section~\ref{app.spivey-zhu.1}.
A few of our results are also in Spivey \cite{Spivey_11},
but it seems to us that our formulations and proofs are simpler.

\subsection{General results}   \label{app.spivey-zhu.1}

It is convenient to start from the more general ``binomial-like''
recurrence introduced in Section~\ref{sec.scaling}.  We have
\cite[p.~11, proof of Proposition~1.1.2]{Theoret_94}
\cite[p.~199]{Theoret_95b}:

%
%
\begin{proposition}[Matrix product of two ``binomial-like'' arrays]
   \label{prop.app.1}
\hspace*{-0.7mm}Let $\bA = \bigl( A(n,k) \bigr)_{0 \le k \le n}$ 
and $\bB = \bigl( B(n,k) \bigr)_{0 \le k \le n}$
be triangular arrays defined by the recurrences
\begin{subeqnarray}
  A(n,k)  & = &  a_{n,k} \, A(n-1,k) \:+\: a'_{n,k} \, A(n-1,k-1)
     \slabel{eq0.prop.app.1.a} \\[1mm]
  B(n,k)  & = &  b_{n,k} \, B(n-1,k) \:+\: b'_{n,k} \, B(n-1,k-1)
     \slabel{eq0.prop.app.1.b}
 \label{eq0.prop.app.1}
\end{subeqnarray}
for $n \ge 1$, with initial conditions $A(0,k) = B(0,k) = \delta_{k0}$.
Then
\begin{eqnarray}
   \sum_{j=0}^n A(n,j) \, B(j,k)
   & = &
   \sum_{j=0}^{n-1} (a_{n,j} + a'_{n,j+1} b_{j+1,k}) \, A(n-1,j) \, B(j,k)
         \nonumber \\[-1mm]
   &  &
   +\;  \sum_{j=0}^{n-1} a'_{n,j+1} b'_{j+1,k} \, A(n-1,j) \, B(j,k-1)
   \;+\;  \delta_{n0} \delta_{k0}
   \;.
   \qquad
 \label{eq.prop.app.1}
\end{eqnarray}
\end{proposition}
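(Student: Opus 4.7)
The strategy is a direct computation: substitute the recurrence \reff{eq0.prop.app.1.a} for $A(n,j)$ into the left-hand side, then use the recurrence \reff{eq0.prop.app.1.b} for $B(j+1,k)$. The base case $n=0$ must be handled separately, which accounts for the $\delta_{n0}\delta_{k0}$ term on the right-hand side.

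More concretely, for $n \ge 1$ I would proceed in three steps. First, substitute $A(n,j) = a_{n,j} A(n-1,j) + a'_{n,j} A(n-1,j-1)$ into the left-hand side to obtain
\be
   \sum_{j=0}^n a_{n,j} A(n-1,j) B(j,k)
   \;+\;
   \sum_{j=0}^n a'_{n,j} A(n-1,j-1) B(j,k)
   \;.
\ee
Second, shift the summation index $j \to j+1$ in the second sum and use the boundary conditions $A(n-1,j) = 0$ for $j < 0$ or $j > n-1$ to rewrite both sums as sums over $j \in \{0,1,\ldots,n-1\}$. This produces
\be
   \sum_{j=0}^{n-1} a_{n,j} A(n-1,j) B(j,k)
   \;+\;
   \sum_{j=0}^{n-1} a'_{n,j+1} A(n-1,j) B(j+1,k)
   \;.
\ee
Third, apply the recurrence \reff{eq0.prop.app.1.b} to rewrite $B(j+1,k) = b_{j+1,k} B(j,k) + b'_{j+1,k} B(j,k-1)$ and collect terms according to whether $B$ is evaluated at $(j,k)$ or $(j,k-1)$. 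This yields the claimed identity \reff{eq.prop.app.1}.

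The base case $n=0$ is immediate: the left-hand side equals $A(0,0) B(0,k) = \delta_{k0}$, while both sums on the right-hand side are empty and the $\delta_{n0}\delta_{k0}$ term supplies exactly $\delta_{k0}$.

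There is no genuine obstacle here: the argument is just bookkeeping of summation bounds and index shifts, with the only subtlety being the need to separate out the $n=0$ case (since the recurrence \reff{eq0.prop.app.1.a} is valid only for $n \ge 1$). The same method will then serve as the engine for deducing the subsequent corollaries about the special case where one of the arrays is a GKP array or the $\xi$-binomial matrix $B_\xi$.
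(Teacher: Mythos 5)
Your proof is correct and follows essentially the same route as the paper: substitute the recurrence for $A(n,j)$, use the vanishing of $A(n-1,j)$ outside $0 \le j \le n-1$ to reindex, and expand $B(j+1,k)$ via its own recurrence before collecting terms (the paper merely expands $B(j,k)$ before shifting the index rather than after, which is the same computation). The treatment of the $n=0$ base case also matches the paper's.
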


\begin{proof}
The formula obviously holds for $n=0$.
For $n \ge 1$ we have
\begin{subeqnarray}
   & &
   \!\!\!\!
   \sum_{j=0}^n A(n,j) \, B(j,k)
        \nonumber \\
   & &
   \;=\;
   \sum_{j=0}^{n-1} a_{n,j} \, A(n-1,j) \, B(j,k)
          \nonumber \\[-2mm]
   & & \qquad
   \;+\;  \sum_{j=1}^{n} a'_{n,j} \, A(n-1,j-1) \,
       \bigl[ b_{j,k} \, B(j-1,k) \:+\: b'_{j,k} \, B(j-1,k-1) \bigr]
       \qquad
         \\
   & &
   \;=\;
   \sum_{j=0}^{n-1} a_{n,j} \, A(n-1,j) \, B(j,k)
          \nonumber \\[-2mm]
   & & \qquad
   \;+\;  \sum_{j=0}^{n-1} a'_{n,j+1} \, A(n-1,j) \,
       \bigl[ b_{j+1,k} \, B(j,k) \:+\: b'_{j+1,k} \, B(j,k-1) \bigr]
   \;.
\end{subeqnarray}
Grouping terms gives \reff{eq.prop.app.1}.
\end{proof}

Proposition~\ref{prop.app.1}
does not in general give a recurrence for the matrix product $\bC = \bA \bB$,
because the coefficients on the right-hand side of \reff{eq.prop.app.1}
depend in general on $j$ (not just on $n$ and $k$).
However, this dependence is eliminated whenever
$a_{n,k}$ and $a'_{n,k}$ depend only on $n$
and also $b_{n,k}$ and $b'_{n,k}$ depend only on $k$.
In this case we have \cite[p.~11, Exemple~1]{Theoret_94}
\cite[proof of Theorem~4.8]{Brenti_95}
\cite[Theorem~10]{Neuwirth}:

\begin{corollary}
   \label{cor.prop.app.1}
Let $\bA = \bigl( A(n,k) \bigr)_{0 \le k \le n}$
and $\bB = \bigl( B(n,k) \bigr)_{0 \le k \le n}$
be triangular arrays defined by the recurrences
\begin{subeqnarray}
  A(n,k)  & = &  a_{n} \, A(n-1,k) \:+\: a'_{n} \, A(n-1,k-1)
     \\[1mm]
  B(n,k)  & = &  b_{k} \, B(n-1,k) \:+\: b'_{k} \, B(n-1,k-1)
\end{subeqnarray}
for $n \ge 1$, with initial conditions $A(0,k) = B(0,k) = \delta_{k0}$.
Then the matrix product $\bC = \bA \bB$ satisfies the recurrence
\be
   C(n,k)
   \;=\;
   (a_{n} + a'_{n} b_{k}) \, C(n-1,k)  \:+\:  a'_{n} b'_{k} \, C(n-1,k-1)
 \label{eq.cor.prop.app.1}
\ee
for $n \ge 1$, with initial condition $C(0,k) = \delta_{k0}$.
\end{corollary}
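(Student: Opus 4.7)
The plan is to derive Corollary~\ref{cor.prop.app.1} as a direct specialization of Proposition~\ref{prop.app.1}. First, I would substitute the assumption $a_{n,k} = a_n$ (independent of $k$) and $b_{n,k} = b_k$ (independent of $n$), and similarly for the primed coefficients, into formula \reff{eq.prop.app.1}. Under these hypotheses, $a_{n,j} = a_n$, $a'_{n,j+1} = a'_n$, $b_{j+1,k} = b_k$, and $b'_{j+1,k} = b'_k$, so the coefficients $a_n + a'_n b_k$ and $a'_n b'_k$ appearing on the right-hand side of \reff{eq.prop.app.1} are independent of the summation index~$j$.

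Having made this observation, I would pull these $j$-independent factors outside the two sums, reducing the right-hand side (for $n \ge 1$) to
\[
   (a_n + a'_n b_k) \sum_{j=0}^{n-1} A(n-1,j)\, B(j,k)
   \,+\, a'_n b'_k \sum_{j=0}^{n-1} A(n-1,j)\, B(j,k-1).
\]
Then I would identify each of these sums as an entry of the matrix product $\bC = \bA \bB$: since $\bA$ is lower-triangular, $A(n-1,j) = 0$ for $j > n-1$, and hence $\sum_{j=0}^{n-1} A(n-1,j) B(j,m) = C(n-1,m)$ for any $m$. This immediately yields the claimed recurrence \reff{eq.cor.prop.app.1} for $n \ge 1$. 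The initial condition $C(0,k) = \delta_{k0}$ follows from $A(0,j) = \delta_{j0}$ and $B(0,k) = \delta_{k0}$, which also matches the $\delta_{n0}\delta_{k0}$ term in \reff{eq.prop.app.1}.

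The proof is essentially a one-line substitution; there is no genuine obstacle beyond being careful that the coefficient $a_n + a'_n b_k$ in the first sum of \reff{eq.prop.app.1} indeed collapses to the stated form, and that the lower-triangularity of $\bA$ is what extends the summation range correctly. The content of the corollary lies entirely in the structural observation that $j$-independence of the coefficients is precisely what converts the general ``convolution-type'' identity of Proposition~\ref{prop.app.1} into a clean two-term recurrence for the product matrix.
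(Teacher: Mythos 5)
Your proposal is correct and is exactly the argument the paper intends: Corollary~\ref{cor.prop.app.1} is stated as an immediate specialization of Proposition~\ref{prop.app.1}, obtained by observing that when $a_{n,k},a'_{n,k}$ depend only on $n$ and $b_{n,k},b'_{n,k}$ only on $k$, the coefficients on the right-hand side of \reff{eq.prop.app.1} lose their $j$-dependence and the two sums collapse to entries of $\bC = \bA\bB$. Your added remarks on triangularity and the $\delta_{n0}\delta_{k0}$ term correctly fill in the details the paper leaves implicit.
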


In fact, Th\'eor\^{e}t \cite[p.~13, Corollaire~1.1.3]{Theoret_94}
exhibits a more general situation in which
the coefficients on the right-hand side of \reff{eq.prop.app.1}
are independent of $j$:
   
\begin{corollary}
   \label{cor.prop.theoret}
Let $\bA = \bigl( A(n,k) \bigr)_{0 \le k \le n}$
and $\bB = \bigl( B(n,k) \bigr)_{0 \le k \le n}$
be triangular arrays defined by the recurrences
\begin{subeqnarray}
  A(n,k)  & = &  (\alpha_n + \beta_n \gamma_k) \, A(n-1,k) \:+\:
                  \beta_n \delta_k \, A(n-1,k-1)
     \\[1mm]
  B(n,k)  & = &  \frac{\phi_k - \gamma_{n-1}}{\delta_n} \, B(n-1,k)
                     \:+\: \frac{\psi_k}{\delta_n} \, B(n-1,k-1)
 \label{eq.cor.prop.theoret.1}
\end{subeqnarray}
for $n \ge 1$, with initial conditions $A(0,k) = B(0,k) = \delta_{k0}$.
Then the matrix product $\bC = \bA \bB$ satisfies the recurrence
\be
   C(n,k)
   \;=\;
   (\alpha_n + \beta_n \phi_k) \, C(n-1,k)  \:+\:  \beta_n \psi_k \, C(n-1,k-1)
 \label{eq.cor.prop.theoret.2}
\ee
for $n \ge 1$, with initial condition $C(0,k) = \delta_{k0}$.
\end{corollary}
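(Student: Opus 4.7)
The strategy is to reduce Corollary~\ref{cor.prop.theoret} to a direct substitution into the general formula \reff{eq.prop.app.1} from Proposition~\ref{prop.app.1}, and to verify that with the prescribed choices of coefficients the $j$-dependence on the right-hand side of \reff{eq.prop.app.1} vanishes identically. Once this cancellation is established, the double sums collapse into constant (in~$j$) multiples of $C(n-1,k)$ and $C(n-1,k-1)$, giving exactly the recurrence \reff{eq.cor.prop.theoret.2}.

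Explicitly, I would set
\[
   a_{n,k} \,=\, \alpha_n + \beta_n \gamma_k, \quad
   a'_{n,k} \,=\, \beta_n \delta_k, \quad
   b_{n,k} \,=\, \frac{\phi_k - \gamma_{n-1}}{\delta_n}, \quad
   b'_{n,k} \,=\, \frac{\psi_k}{\delta_n},
\]
and evaluate the two bracketed coefficients in \reff{eq.prop.app.1}. For the coefficient of $A(n-1,j)B(j,k)$ we obtain
\[
   a_{n,j} + a'_{n,j+1}\, b_{j+1,k}
   \;=\;
   (\alpha_n + \beta_n \gamma_j) \,+\,
   \beta_n \delta_{j+1} \cdot \frac{\phi_k - \gamma_j}{\delta_{j+1}};
\]
the factors $\delta_{j+1}$ cancel, and then the $\beta_n \gamma_j$ terms cancel, leaving $\alpha_n + \beta_n \phi_k$, which no longer depends on~$j$. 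For the coefficient of $A(n-1,j)B(j,k-1)$ the same $\delta_{j+1}$ cancellation gives $a'_{n,j+1}\, b'_{j+1,k} = \beta_n \psi_k$, again independent of~$j$.

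With these two constants factored out of the respective $j$-sums in \reff{eq.prop.app.1}, the identity collapses to
\[
   C(n,k)
   \;=\;
   (\alpha_n + \beta_n \phi_k)\, C(n-1,k)
   \,+\, \beta_n \psi_k\, C(n-1,k-1)
   \,+\, \delta_{n0}\delta_{k0}
\]
for all $n,k \ge 0$, which yields \reff{eq.cor.prop.theoret.2} for $n\ge 1$ together with the initial value $C(0,k)=\delta_{k0}$ (also immediate from the fact that $\bA$ and $\bB$ are unit-lower-triangular). There is no genuine obstacle in the proof; the only real content is the observation that the form of $b_{n,k}$ in \reff{eq.cor.prop.theoret.1}, with $\gamma_{n-1}$ subtracted in the numerator and $\delta_n$ in the denominator, is precisely engineered so that these cancellations take place. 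As a sanity check, Corollary~\ref{cor.prop.app.1} is recovered as the special case $\gamma_k \equiv 0$, $\delta_k \equiv 1$, $\phi_k = b_k$, $\psi_k = b'_k$, $\alpha_n = a_n$, $\beta_n = a'_n$.
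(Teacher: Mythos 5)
Your proof is correct and is exactly the paper's own argument: the paper's proof of this corollary is the one-line instruction ``Apply Proposition~\ref{prop.app.1}'', and you have simply carried out that substitution and verified the cancellation of the $j$-dependence explicitly. The computation of the two coefficients $a_{n,j}+a'_{n,j+1}b_{j+1,k}=\alpha_n+\beta_n\phi_k$ and $a'_{n,j+1}b'_{j+1,k}=\beta_n\psi_k$ is accurate, so nothing further is needed.
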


\begin{proof}
Apply Proposition~\ref{prop.app.1}.
\end{proof}

{\bf Remarks.}
1.  Note that the coefficients $\gamma$ and $\delta$ occur with both
$n$ (or $n-1$) and $k$ as subscripts,
while $\alpha$ and $\beta$ occur only with $n$,
and $\phi$ and $\psi$ only with $k$.
Note also that $\bdelta$ is trivially eliminated
from the matrix product $\bC = \bA \bB$
as a consequence of the rescaling lemma
(Lemma~\ref{lemma.recurrence.rescaling}).
The elimination of $\bgamma$ is, however, less obvious.

2. Th\'eor\^{e}t proves \cite[pp.~13--14, Remarque~1]{Theoret_94}
that \reff{eq.cor.prop.theoret.1} is {\em the most general}\/
pair of ``binomial-like'' recurrences
(with nonvanishing coefficients $a_{n,k}, a'_{n,k}, b_{n,k}, b'_{n,k}$)
in which the coefficients on the right-hand side of \reff{eq.prop.app.1}
are independent of $j$.
However, he also shows later \cite[section~2.6]{Theoret_94}
some other cases in which the matrix product of two binomial-like arrays
is binomial-like, using a sufficient condition that is different from
Proposition~\ref{prop.app.1}.
\myendremark

Specializing Proposition~\ref{prop.app.1} to the GKP recurrence
\begin{subeqnarray}
   & &
   a_{n,k} \:=\: \alpha n + \beta k + \gamma \;,\qquad
   a'_{n,k} \:=\: \alpha' n + \beta' k + \gamma'
         \\[1mm]
   & &
   b_{n,k} \:=\: \widehat{\alpha} n + \widehat{\beta} k + \widehat{\gamma}
       \;,\qquad
   b'_{n,k} \:=\: \widehat{\alpha}' n + \widehat{\beta}' k + \widehat{\gamma}'
\end{subeqnarray}
we obtain \cite[Corollary~5]{Spivey_11}:
   
\begin{proposition}[Matrix product of two GKP arrays]
   \label{prop.spivey.corollary5}
Let $\bA = \bigl( A(n,k) \bigr)_{0 \le k \le n}$
and $\bB = \bigl( B(n,k) \bigr)_{0 \le k \le n}$
be triangular arrays defined by the recurrences
\begin{subeqnarray}
  A(n,k)
  & = &
  (\alpha n + \beta k + \gamma)    \, A(n-1,k)
  \:+\:
  (\alpha' n + \beta' k + \gamma') \, A(n-1,k-1)
  \qquad
     \\[1mm]
  B(n,k)
  & = &
  (\widehat{\alpha} n + \widehat{\beta} k + \widehat{\gamma})    \, B(n-1,k)
  \:+\:
  (\widehat{\alpha}' n + \widehat{\beta}' k + \widehat{\gamma}') \, B(n-1,k-1)
  \qquad
 \slabel{eq0.prop.spivey.corollary5.B}
\end{subeqnarray}
for $n \ge 1$, with initial conditions $A(0,k) = B(0,k) = \delta_{k0}$.
Then
\begin{multline}
\sum_{j=0}^n A(n,j) \, B(j,k)
   \;=\;
   \sum_{j=0}^{n-1} A(n-1,j) \, B(j,k) \\ \,
      \times \left[ (\alpha n + \beta j + \gamma) \:+\:
             (\alpha' n + \beta' (j+1) + \gamma')
             (\widehat{\alpha} (j+1) + \widehat{\beta} k + \widehat{\gamma})
      \right]
           \\
   \qquad\qquad\quad   \;+\;
   \sum_{j=0}^{n-1} A(n-1,j) \, B(j,k-1) \,
       (\alpha' n + \beta' (j+1) + \gamma')
       (\widehat{\alpha}' (j+1) + \widehat{\beta}' k + \widehat{\gamma}')
           \\[2mm]
  \qquad\qquad\qquad \;+\; \delta_{n0} \delta_{k0} \;. \hfill
\label{eq.prop.spivey.corollary5}
\end{multline}
\end{proposition}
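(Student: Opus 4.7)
The proposal is to derive Proposition~\ref{prop.spivey.corollary5} as an immediate specialization of Proposition~\ref{prop.app.1}, which has already been proved in the excerpt. Concretely, I would apply Proposition~\ref{prop.app.1} with the GKP choices
\begin{subeqnarray}
 & & a_{n,k} \;=\; \alpha n + \beta k + \gamma,\qquad
     a'_{n,k} \;=\; \alpha' n + \beta' k + \gamma' \\[1mm]
 & & b_{n,k} \;=\; \widehat{\alpha} n + \widehat{\beta} k + \widehat{\gamma},\qquad
     b'_{n,k} \;=\; \widehat{\alpha}' n + \widehat{\beta}' k + \widehat{\gamma}'
\end{subeqnarray}
and substitute these into \reff{eq.prop.app.1}. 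The factor $a_{n,j}$ becomes $\alpha n + \beta j + \gamma$; the factor $a'_{n,j+1}$ becomes $\alpha' n + \beta'(j+1) + \gamma'$ (with the shift $k \mapsto j+1$ in the second argument); similarly $b_{j+1,k} = \widehat{\alpha}(j+1) + \widehat{\beta} k + \widehat{\gamma}$ and $b'_{j+1,k} = \widehat{\alpha}'(j+1) + \widehat{\beta}' k + \widehat{\gamma}'$ (now with the first argument shifted to $j+1$). Plugging these four expressions into the two sums on the right-hand side of \reff{eq.prop.app.1} reproduces verbatim the claimed identity \reff{eq.prop.spivey.corollary5}, including the Kronecker term $\delta_{n0}\delta_{k0}$.

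Since Proposition~\ref{prop.app.1} is proved by a one-line splitting argument (use the recurrence \reff{eq0.prop.app.1.b} for $B(j,k)$ under the sum defining $(\mathbf{A}\mathbf{B})(n,k)$, then reindex and use \reff{eq0.prop.app.1.a} for $A(n,j)$), there is nothing else to verify: the proof of Proposition~\ref{prop.spivey.corollary5} consists of writing ``This is Proposition~\ref{prop.app.1} specialized to the GKP coefficients, with the arguments $(n,j)$ of the $a$-weights and $(j+1,k)$ of the $b$-weights read off from the summation index shifts.'' There is no genuine obstacle; the only point requiring a moment's care is tracking the index shifts (the $a'$ weight is evaluated at $(n,j+1)$ and both $b$-weights at $(j+1,k)$ or $(j+1,k-1)$, not at $(n,j)$ or $(j,k)$), and making sure that the affine expressions are reported with the correct shifted arguments $j+1$ rather than $j$. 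Once those substitutions are made carefully, the identity \reff{eq.prop.spivey.corollary5} follows immediately.
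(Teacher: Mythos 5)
Your proposal is correct and is precisely what the paper does: the paper introduces Proposition~\ref{prop.spivey.corollary5} with the words ``Specializing Proposition~\ref{prop.app.1} to the GKP recurrence,'' substituting exactly the affine weights you list, with the same index shifts $a'_{n,j+1}$, $b_{j+1,k}$, $b'_{j+1,k}$. Nothing further is needed.
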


\noindent
Let us remark that Spivey \cite{Spivey_11}
used the variant notation \reff{eq_binomvert.stronger} for the GKP recurrence;
restating his result \cite[Corollary~5]{Spivey_11}
in terms of the standard notation \reff{eq_binomvert}
and simplifying the expression slightly yields
\reff{eq.prop.spivey.corollary5}.

And specializing further to
$\beta = \beta' = \widehat{\alpha} = \widehat{\alpha}' = 0$,
we obtain the GKP special case of Corollary~\ref{cor.prop.app.1}:

\begin{corollary}
   \label{cor.prop.spivey.corollary5}
Let $\bA = \bigl( A(n,k) \bigr)_{0 \le k \le n}$
and $\bB = \bigl( B(n,k) \bigr)_{0 \le k \le n}$
be triangular arrays defined by the recurrences
\begin{subeqnarray}
  A(n,k)
  & = &
  (\alpha n + \gamma)    \, A(n-1,k)
  \:+\:
  (\alpha' n + \gamma') \, A(n-1,k-1)
  \qquad
     \\[1mm]
  B(n,k)
  & = &
  (\widehat{\beta} k + \widehat{\gamma})    \, B(n-1,k)
  \:+\:
  (\widehat{\beta}' k + \widehat{\gamma}') \, B(n-1,k-1)
  \qquad
\end{subeqnarray}
for $n \ge 1$, with initial conditions $A(0,k) = B(0,k) = \delta_{k0}$.
Then the matrix product $\bC = \bA \bB$ satisfies the recurrence
\begin{eqnarray}
   C(n,k)
   & = &
      \bigl[ (\alpha n + \gamma) \:+\:
             (\alpha' n + \gamma') (\widehat{\beta} k + \widehat{\gamma})
      \bigr] \,
      C(n-1,k)
          \nonumber \\[1mm]
   & & \qquad\quad
   +\:
       (\alpha' n + \gamma') (\widehat{\beta}' k + \widehat{\gamma}')
         \, C(n-1,k-1)
 \label{eq.cor.prop.spivey.corollary5}
\end{eqnarray}
for $n \ge 1$, with initial condition $C(0,k) = \delta_{k0}$.
\end{corollary}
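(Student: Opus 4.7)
My plan is to derive the corollary as a direct specialization of Proposition~\ref{prop.spivey.corollary5}, exploiting the fact that the four vanishing parameters $\beta = \beta' = \widehat{\alpha} = \widehat{\alpha}' = 0$ are precisely those that introduce $j$-dependence into the coefficients appearing inside the sums on the right-hand side of \reff{eq.prop.spivey.corollary5}.

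First, I would simply substitute $\beta = \beta' = \widehat{\alpha} = \widehat{\alpha}' = 0$ into the expression \reff{eq.prop.spivey.corollary5}. The four linear factors reduce to $\alpha n + \gamma$, $\alpha' n + \gamma'$, $\widehat{\beta} k + \widehat{\gamma}$, and $\widehat{\beta}' k + \widehat{\gamma}'$, respectively. Crucially, none of these depends on the summation index $j$ any longer, so the bracketed quantities $(\alpha n + \gamma) + (\alpha' n + \gamma')(\widehat{\beta} k + \widehat{\gamma})$ and $(\alpha' n + \gamma')(\widehat{\beta}' k + \widehat{\gamma}')$ may be factored out of the sums over $j$.

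Next, for $n \ge 1$ the inhomogeneous term $\delta_{n0} \delta_{k0}$ vanishes, so one is left with
\[
C(n,k) \;=\; \bigl[ (\alpha n + \gamma) + (\alpha' n + \gamma')(\widehat{\beta} k + \widehat{\gamma}) \bigr] \sum_{j=0}^{n-1} A(n-1,j) B(j,k) \,+\, (\alpha' n + \gamma')(\widehat{\beta}' k + \widehat{\gamma}') \sum_{j=0}^{n-1} A(n-1,j) B(j,k-1).
\]
Since $\bA$ is lower-triangular, $A(n-1,j) = 0$ for $j \ge n$, so extending each sum up to $j=n$ introduces no new terms; the two sums are therefore exactly $C(n-1,k)$ and $C(n-1,k-1)$, yielding \reff{eq.cor.prop.spivey.corollary5}. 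The initial condition $C(0,k) = \delta_{k0}$ is immediate from $A(0,j) = \delta_{j0}$ and $B(0,k) = \delta_{k0}$.

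There is essentially no obstacle: the whole content of the corollary is the observation that the vanishing of precisely these four parameters is what makes the $j$-dependence disappear from \reff{eq.prop.spivey.corollary5}, converting an identity that merely expresses the matrix product in terms of row-reduced sums into an honest recurrence for $C(n,k)$. This is also the GKP specialization of Corollary~\ref{cor.prop.app.1}, which may be noted as an alternative route: $\bA$ with $\beta = \beta' = 0$ fits the template of \reff{eq0.prop.app.1.a} with coefficients depending only on $n$, and $\bB$ with $\widehat{\alpha} = \widehat{\alpha}' = 0$ fits \reff{eq0.prop.app.1.b} with coefficients depending only on $k$, so Corollary~\ref{cor.prop.app.1} applies directly and produces \reff{eq.cor.prop.spivey.corollary5} upon substituting the affine forms of $a_n, a'_n, b_k, b'_k$.
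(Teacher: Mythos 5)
Your proposal is correct and follows exactly the paper's route: the corollary is obtained by setting $\beta = \beta' = \widehat{\alpha} = \widehat{\alpha}' = 0$ in Proposition~\ref{prop.spivey.corollary5}, at which point the coefficients inside the sums in \reff{eq.prop.spivey.corollary5} lose their $j$-dependence, can be factored out, and the remaining sums are recognized as $C(n-1,k)$ and $C(n-1,k-1)$. Your closing observation that this is equally the GKP specialization of Corollary~\ref{cor.prop.app.1} is precisely how the paper itself frames the statement.
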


Of course, the binomial-like recurrence \reff{eq.cor.prop.spivey.corollary5}
need not in general be of GKP form, due to the possibility of coefficients
involving the product $nk$.
But these terms vanish if, in addition,
either $\alpha' = 0$ or $\widehat{\beta} = \widehat{\beta}' = 0$:

\begin{corollary}
   \label{cor2.prop.spivey.corollary5}
Let $\bA = \bigl( A(n,k) \bigr)_{0 \le k \le n}$
and $\bB = \bigl( B(n,k) \bigr)_{0 \le k \le n}$
be triangular arrays defined by the recurrences
\begin{subeqnarray}
  A(n,k)
  & = &
  (\alpha n + \gamma)    \, A(n-1,k)
  \:+\:
  (\alpha' n + \gamma') \, A(n-1,k-1)
  \qquad
     \\[1mm]
  B(n,k)
  & = &
  (\widehat{\beta} k + \widehat{\gamma})    \, B(n-1,k)
  \:+\:
  (\widehat{\beta}' k + \widehat{\gamma}') \, B(n-1,k-1)
  \qquad
 \slabel{eq1.cor2.prop.spivey.corollary5.B}
\end{subeqnarray}
for $n \ge 1$, with initial conditions $A(0,k) = B(0,k) = \delta_{k0}$.
Assume further that
either $\alpha' = 0$ or $\widehat{\beta} = \widehat{\beta}' = 0$.
Then the matrix product $\bC = \bA \bB$ satisfies the GKP recurrence
\begin{eqnarray}
   C(n,k)
   & = &
      \bigl[ (\alpha + \alpha' \widehat{\gamma}) n +
             \gamma' \widehat{\beta} k + (\gamma + \gamma' \widehat{\gamma})
      \bigr] \,
      C(n-1,k)
          \nonumber \\[1mm]
   & & \qquad\quad
   +\:
       (\alpha' \widehat{\gamma}' n + \gamma' \widehat{\beta}' k
          + \gamma' \widehat{\gamma}')
         \, C(n-1,k-1)
 \label{eq.cor2.prop.spivey.corollary5}
\end{eqnarray}
for $n \ge 1$, with initial condition $C(0,k) = \delta_{k0}$.
\end{corollary}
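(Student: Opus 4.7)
The plan is to invoke Corollary~\ref{cor.prop.spivey.corollary5} directly. That corollary, applied to the triangular arrays $\bA$ and $\bB$ given by \eqref{eq1.cor2.prop.spivey.corollary5.B}, already yields a binomial-like recurrence for $C(n,k) = \sum_{j=0}^n A(n,j) B(j,k)$, namely
\[
   C(n,k)
   \;=\;
   \bigl[ (\alpha n + \gamma) +
          (\alpha' n + \gamma')(\widehat{\beta} k + \widehat{\gamma})
   \bigr] \, C(n-1,k)
   \;+\;
   (\alpha' n + \gamma')(\widehat{\beta}' k + \widehat{\gamma}') \, C(n-1,k-1) \,,
\]
with initial condition $C(0,k)=\delta_{k0}$ inherited from $A(0,j)=B(0,k)=\delta_{k0}$.

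Upon expanding the two products, the only obstruction to this recurrence being of GKP form (i.e., affine in $(n,k)$ jointly) is the appearance of the cross-terms $\alpha'\widehat{\beta}\,nk$ in the coefficient of $C(n-1,k)$ and $\alpha'\widehat{\beta}'\,nk$ in the coefficient of $C(n-1,k-1)$. The two hypotheses $\alpha' = 0$ and $\widehat{\beta} = \widehat{\beta}' = 0$ are precisely the two minimal linear conditions that simultaneously kill both offending cross-terms. Under either hypothesis we have $\alpha'\widehat{\beta} = \alpha'\widehat{\beta}' = 0$, so the expansion collapses cleanly.

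It then remains to collect like terms. The coefficient of $C(n-1,k)$ becomes
\[
   (\alpha n + \gamma) \,+\, \alpha'\widehat{\gamma}\, n \,+\, \gamma'\widehat{\beta}\, k \,+\, \gamma'\widehat{\gamma}
   \;=\;
   (\alpha + \alpha'\widehat{\gamma}) n \,+\, \gamma'\widehat{\beta}\, k \,+\, (\gamma + \gamma'\widehat{\gamma}) \,,
\]
and the coefficient of $C(n-1,k-1)$ becomes
\[
   \alpha'\widehat{\gamma}'\, n \,+\, \gamma'\widehat{\beta}'\, k \,+\, \gamma'\widehat{\gamma}' \,,
\]
matching exactly the claimed recurrence \eqref{eq.cor2.prop.spivey.corollary5}.

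Thus the entire proof reduces to invoking Corollary~\ref{cor.prop.spivey.corollary5} and performing a trivial algebraic reorganization. There is no real difficulty; the substantive content of the corollary lies in having identified the two minimal families of parameter constraints under which $\bA\bB$ remains inside the GKP family. One could in principle ask for a classification of \emph{all} such constraints, but since the two cross-term coefficients $\alpha'\widehat{\beta}$ and $\alpha'\widehat{\beta}'$ are independent monomials in the indeterminates $\alpha',\widehat{\beta},\widehat{\beta}'$, these two alternatives (and their intersection) exhaust the possibilities.
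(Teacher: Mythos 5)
Your proof is correct and follows exactly the route the paper takes: it specializes Proposition~\ref{prop.spivey.corollary5} (via Corollary~\ref{cor.prop.spivey.corollary5}) and observes that the hypothesis kills precisely the $nk$ cross-terms $\alpha'\widehat{\beta}$ and $\alpha'\widehat{\beta}'$, after which the expansion collapses to the stated GKP form. The paper leaves this expansion implicit in the surrounding text; you have simply written it out.
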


\medskip

{\bf Remark.}
In Corollary~\ref{cor2.prop.spivey.corollary5},
the matrix $\bB$ is precisely the matrix $\bT$ of family~7a
(with hats inserted on the parameters);
and if we further specialize to
$(\alpha,\gamma,\alpha',\gamma') = (0,\xi,0,1)$,
then $\bA$ is the $\xi$-binomial matrix $B_\xi$.
In this case the matrix product $B_\xi \bT$
satisfies a GKP recurrence \reff{eq.cor2.prop.spivey.corollary5}
of the same form \reff{eq1.cor2.prop.spivey.corollary5.B}
but with $\widehat{\gamma} \to \widehat{\gamma} + \xi$.
This gives the promised direct proof
that family~7a is the $\gamma$-binomial transform of family~3a
(Proposition~\ref{prop.VII.binomialtrans}).
\hspace*{-2mm}
\myendremark

\medskip

Alternatively, we can obtain GKP recurrences from
Corollary~\ref{cor.prop.theoret} by setting
\be
  \alpha_n \:=\: \alpha n + \gamma \,,\quad
  \beta_n \:=\: \gamma'  \,,\quad
  \gamma_k \:=\: (\beta/\gamma') k  \,,\quad
  \delta_k \:=\: 1  \,,\quad
  \phi_k \:=\: \widehat{\beta} k   \,,\quad
  \psi_k \:=\: \widehat{\beta}' k + \widehat{\gamma}'
   \:.
\ee
We then have:

\begin{corollary}
   \label{cor.prop.theoret.2}
Let $\bA = \bigl( A(n,k) \bigr)_{0 \le k \le n}$
and $\bB = \bigl( B(n,k) \bigr)_{0 \le k \le n}$
be triangular arrays defined by the recurrences
\begin{subeqnarray}
  A(n,k)  & = &  (\alpha n + \beta k + \gamma) \, A(n-1,k) \:+\:
                  \gamma' \, A(n-1,k-1)
     \\[2mm]
  B(n,k)  & = &  \Bigl( - \frac{\beta}{\gamma'} n + \widehat{\beta} k +
                          \frac{\beta}{\gamma'} \Bigr) \, B(n-1,k)
                  \:+\: (\widehat{\beta}' k + \widehat{\gamma}') \, B(n-1,k-1)
\end{subeqnarray}
for $n \ge 1$, with initial conditions $A(0,k) = B(0,k) = \delta_{k0}$.
Then the matrix product $\bC = \bA \bB$ satisfies the recurrence
\be
   C(n,k)
   \;=\;
   (\alpha n + \gamma' \widehat{\beta} k + \gamma) \, C(n-1,k)
      \:+\:  \gamma' (\widehat{\beta}' k + \widehat{\gamma}') \, C(n-1,k-1)
\ee
for $n \ge 1$, with initial condition $C(0,k) = \delta_{k0}$.
\end{corollary}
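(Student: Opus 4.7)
The plan is to derive Corollary~\ref{cor.prop.theoret.2} as a direct specialization of the preceding Corollary~\ref{cor.prop.theoret}, exactly along the lines indicated by the parameter choice displayed in the paragraph just before the statement. The entire argument consists of substituting
\be
   \alpha_n = \alpha n + \gamma \,,\;
   \beta_n = \gamma' \,,\;
   \gamma_k = (\beta/\gamma')\,k \,,\;
   \delta_k = 1 \,,\;
   \phi_k = \widehat{\beta}\,k \,,\;
   \psi_k = \widehat{\beta}'\,k + \widehat{\gamma}'
\ee
into the hypotheses \reff{eq.cor.prop.theoret.1} of Corollary~\ref{cor.prop.theoret} and checking that the resulting pair of recurrences matches the pair in the statement of Corollary~\ref{cor.prop.theoret.2}.

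First I would verify the recurrence for $\bA$: with the above identifications, $\alpha_n + \beta_n \gamma_k = (\alpha n + \gamma) + \gamma' \cdot (\beta/\gamma')\, k = \alpha n + \beta k + \gamma$ and $\beta_n \delta_k = \gamma'$, reproducing the $\bA$-recurrence in the statement. Next I would verify the recurrence for $\bB$: $(\phi_k - \gamma_{n-1})/\delta_n = \widehat{\beta} k - (\beta/\gamma')(n-1) = -(\beta/\gamma')\,n + \widehat{\beta} k + \beta/\gamma'$ and $\psi_k/\delta_n = \widehat{\beta}' k + \widehat{\gamma}'$, which matches the given $\bB$-recurrence. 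Applying the conclusion \reff{eq.cor.prop.theoret.2} of Corollary~\ref{cor.prop.theoret} then yields directly
\be
   C(n,k) \;=\; (\alpha n + \gamma' \widehat{\beta} k + \gamma) \, C(n-1,k)
          \:+\: \gamma'(\widehat{\beta}' k + \widehat{\gamma}') \, C(n-1,k-1) \,,
\ee
which is the desired GKP recurrence for $\bC = \bA\bB$.

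Because the whole proof reduces to this mechanical substitution, there is essentially no obstacle. The only minor subtlety worth noting is the appearance of $\beta/\gamma'$ in the identifications, which tacitly requires $\gamma' \neq 0$; this is legitimate if one works in the ring $\Z[\alpha,\beta,\gamma,\gamma',\widehat{\beta},\widehat{\beta}',\widehat{\gamma}'][(\gamma')^{-1}]$ (or if one treats $\gamma'$ as a nonzero complex parameter). The degenerate case $\gamma'=0$ is of no interest here, since then the $\bA$-recurrence loses its $A(n-1,k-1)$ term and forces $\bA$ to be diagonal, in which case the conclusion reduces to a triviality. Thus the content of the corollary really lies entirely in having already established the general framework of Corollary~\ref{cor.prop.theoret}, and the remaining work is routine bookkeeping.
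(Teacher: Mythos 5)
Your proof is correct and is exactly the paper's own argument: the corollary is obtained by the stated substitution $\alpha_n = \alpha n + \gamma$, $\beta_n = \gamma'$, $\gamma_k = (\beta/\gamma')k$, $\delta_k = 1$, $\phi_k = \widehat{\beta}k$, $\psi_k = \widehat{\beta}'k+\widehat{\gamma}'$ into Corollary~\ref{cor.prop.theoret}, which the paper displays immediately before the statement. Your remark about the implicit requirement $\gamma'\neq 0$ (working in a ring with $(\gamma')^{-1}$ adjoined) is a sensible addition consistent with how the paper handles similar issues elsewhere.
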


\bigskip

Going back to Proposition~\ref{prop.spivey.corollary5},
another way to handle the $j$-dependent terms
--- rather than assuming that they are nonexistent,
i.e.\ $\beta = \beta' = \widehat{\alpha} = \widehat{\alpha}' = 0$ ---
is to transform them away by employing identities
to convert the $j$-dependence to $n$- or $k$-dependence.
This works in some special cases, as a result of the following simple fact:

\begin{lemma}[Identities for nearly-binomial matrices]
   \label{lemma.binomial}
\hfill\break
\vspace*{-5mm}
\begin{itemize}
   \item[(a)]  If the matrix $\bT$ is of the form
\be
   T(n,k)  \;=\;  \binom{n}{k} \, \gamma^{n-k} \, f(k)
 \label{eq.lemma.binomial.1a}
\ee
for some function $f(k)$, then for every integer $r \ge 0$,
\be
   (n-k)^{\underline{r}} \; T(n,k)
   \;=\;
   \gamma^r \, n^{\underline{r}} \; T(n-r,k)
   \;.
 \label{eq.lemma.binomial.2a}
\ee
[Here $x^{\underline{r}} \,\eqdef\, x(x-1) \cdots (x-r+1)$.]
This holds in particular whenever $\bT = \bT(\bmu)$ with
$\bmu = (0,0,\gamma,0,\beta',\gamma')$, in~which case
\be
   T(n,k)
   \;=\;
   \binom{n}{k} \, \gamma^{n-k} \, \prod_{j=1}^k (\gamma' + j\beta')
   \;.
 \label{eq.lemma.binomial.3a}
\ee
   \item[(b)]  If the matrix $\bT$ is of the form
\be
   T(n,k)  \;=\;  \binom{n}{k} \, (\gamma')^k \, g(n-k)
 \label{eq.lemma.binomial.1b}
\ee
for some function $g(n-k)$, then for every integer $r \ge 0$,
\be
   k^{\underline{r}} \; T(n,k)
   \;=\;
   (\gamma')^r \, n^{\underline{r}} \; T(n-r,k-r)
   \;.
 \label{eq.lemma.binomial.2b}
\ee
This holds in particular whenever $\bT = \bT(\bmu)$ with
$\bmu = (\alpha,-\alpha,\gamma,0,0,\gamma')$, in~which case
\be
   T(n,k)
   \;=\;
   \binom{n}{k} \, (\gamma')^k \, \prod_{j=1}^{n-k} (\gamma + j\alpha)
   \;.
 \label{eq.lemma.binomial.3b}
\ee
\end{itemize}
\end{lemma}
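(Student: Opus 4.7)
The plan is to handle each part in two steps: first establish the explicit closed form for $T(n,k)$ in the stated GKP case, and then derive the shift identity as a consequence of a standard falling-factorial manipulation of binomial coefficients.

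For part~(a), I would verify \eqref{eq.lemma.binomial.3a} by induction on~$n$. Inserting the ansatz $T(n,k) = \binom{n}{k}\gamma^{n-k} \prod_{j=1}^k (\gamma' + j\beta')$ into the GKP recurrence for $\bmu = (0,0,\gamma,0,\beta',\gamma')$, namely $T(n,k) = \gamma\, T(n-1,k) + (\beta' k + \gamma')\, T(n-1,k-1)$, and pulling out the common factor $\gamma^{n-k} \prod_{j=1}^k (\gamma' + j\beta')$ (using that $\gamma'+k\beta'$ is the last factor of the product), the recurrence reduces to Pascal's identity $\binom{n-1}{k} + \binom{n-1}{k-1} = \binom{n}{k}$. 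The initial condition $T(0,k) = \delta_{k0}$ is immediate. Next, to prove \eqref{eq.lemma.binomial.2a} for general $T$ of the form \eqref{eq.lemma.binomial.1a}, I would invoke the elementary identity
\begin{equation}
   (n-k)^{\underline{r}} \, \binom{n}{k} \;=\; n^{\underline{r}} \, \binom{n-r}{k}
   \,,
 \label{eq.proof.falling1}
\end{equation}
which is immediate from the common factorial form $n!/[k!\,(n-k-r)!]$ on both sides (both sides vanish when $n-k<r$). Multiplying \eqref{eq.proof.falling1} by $\gamma^{n-k} f(k) = \gamma^r \cdot \gamma^{(n-r)-k} f(k)$ gives \eqref{eq.lemma.binomial.2a} at once.

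Part~(b) is the exact dual. One can either repeat the argument using the companion identity $k^{\underline{r}} \binom{n}{k} = n^{\underline{r}} \binom{n-r}{k-r}$ (again a one-line factorial check) together with a direct verification that $T(n,k) = \binom{n}{k}(\gamma')^k \prod_{j=1}^{n-k}(\gamma+j\alpha)$ satisfies the GKP recurrence for $\bmu = (\alpha,-\alpha,\gamma,0,0,\gamma')$, or else deduce everything from part~(a) via the duality involution~$D$ of Section~\ref{sec.GKP}, under which $(0,0,\gamma,0,\beta',\gamma') \mapsto (\beta',-\beta',\gamma',0,0,\gamma)$ and the factor $\gamma^{n-k}$ is interchanged with $(\gamma')^k$ after reindexing $k \mapsto n-k$.

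The lemma thus reduces entirely to Pascal's identity and two falling-factorial rewrites of binomial coefficients; there is no serious obstacle, and no step beyond routine verification.
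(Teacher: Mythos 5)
Your proposal is correct and matches the paper's approach: the paper disposes of this lemma with the single line ``An easy computation,'' and your write-up simply supplies the details of exactly that computation --- the factorial identities $(n-k)^{\underline{r}}\binom{n}{k}=n^{\underline{r}}\binom{n-r}{k}$ and $k^{\underline{r}}\binom{n}{k}=n^{\underline{r}}\binom{n-r}{k-r}$, plus an inductive check of the closed forms against the GKP recurrences via Pascal's rule. All the steps check out, including the duality shortcut for part~(b).
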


\begin{proof}
An easy computation.
\end{proof}

We next discuss some special cases of Proposition~\ref{prop.spivey.corollary5}
(or more generally, Proposition~\ref{prop.app.1})
where Lemma~\ref{lemma.binomial} can be used to remove the $j$-dependence;
we divide these according as the special matrix
(i.e.\ the one to which Lemma~\ref{lemma.binomial} is applied)
is on the left (Section~\ref{app.spivey-zhu.left})
or on the right (Section~\ref{app.spivey-zhu.right}).

\subsection{Special matrix on the left}  \label{app.spivey-zhu.left}

We distinguish two cases, according as we apply part~(a) or (b)
of Lemma~\ref{lemma.binomial} to the matrix $\bA$:

\paragraph{Case 1: $\bm{\alpha = \beta = \alpha' = 0}$.}

When $\bA$ is given by a GKP recurrence with $\alpha = \beta = \alpha' = 0$,
we can handle a matrix $\bB$ that is more general than a GKP recurrence:
namely, it can be given by a binomial-like recurrence \reff{eq0.prop.app.1.b}
in which $b_{n,k}$ and $b'_{n,k}$ are affine in $n$
but with coefficients depending in an arbitrary way on $k$:
\be
   a_{n,k} \,=\, \gamma \,,\quad
   a'_{n,k} \,=\, \beta' k + \gamma' \,,\quad
   b_{n,k} \,=\, \widehat{\alpha}_k n + \widehat{\gamma}_k \,,\quad
   b'_{n,k} \,=\, \widehat{\alpha}'_k n + \widehat{\gamma}'_k
   \;.
 \label{eq.case1.specializations}
\ee
We then start from Proposition~\ref{prop.app.1}
with the specializations \reff{eq.case1.specializations},
and isolate the terms proportional to $j$ or $j^2$:
the matrix $\bC = \bA \bB$ satisfies
\begin{eqnarray}
   C(n,k)
   & \!=\! &
   \bigl[ (\beta' + \gamma')
          (\widehat{\alpha}_k + \widehat{\gamma}_k)
          \,+\, \gamma
   \bigr] \,  C(n-1,k)
          \nonumber \\[2mm]
   & & 
   \;+\;
    (\beta' + \gamma')
    (\widehat{\alpha}'_k + \widehat{\gamma}'_k)
          \, C(n-1,k-1)
    \hspace*{-2cm}
           \nonumber \\
   & & 
   \;+\;
   \bigl[ \beta' (\widehat{\alpha}_k + \widehat{\gamma}_k)
          \,+\,
          (\beta' + \gamma') \widehat{\alpha}_k
   \bigr]
   \sum_{j=1}^{n-1} j \, A(n-1,j) \, B(j,k)
           \nonumber \\[-2mm]
   & & 
   \;+\;
   \beta' \widehat{\alpha}_k
   \sum_{j=1}^{n-1} j^2 \, A(n-1,j) \, B(j,k)
           \nonumber \\[-2mm]
   & & 
   \;+\;
   \bigl[ \beta' (\widehat{\alpha}'_k + \widehat{\gamma}'_k)
          \,+\,
          (\beta' + \gamma') \widehat{\alpha}'_k
   \bigr]
   \sum_{j=1}^{n-1} j \, A(n-1,j) \, B(j,k-1)
           \nonumber \\[-2mm]
   & & 
   \;+\;
   \beta' \widehat{\alpha}'_k
   \sum_{j=1}^{n-1} j^2 \, A(n-1,j) \, B(j,k-1)
 \label{eq1.case1}
\end{eqnarray}
for $n \ge 1$.
Now apply Lemma~\ref{lemma.binomial}(a) to the matrix $\bA$:
from \reff{eq.lemma.binomial.2a} with $r=1,2$ we deduce
\begin{subeqnarray}
   j \, A(n,j)
   & = &
   n \, A(n,j) \:-\: \gamma n \, A(n-1,j)
      \\[2mm]
   j^2 \, A(n,j)
   & = &
   n^2 \, A(n,j) \:-\: \gamma n(2n-1) \, A(n-1,j)
                 \:+\: \gamma^2 n(n-1) \, A(n-2,j)
         \nonumber \\
 \label{eq.jton.1}
\end{subeqnarray}
Inserting (\ref{eq.jton.1}a,b) with $n \to n-1$
on the right-hand side of \reff{eq1.case1},
we obtain:

\begin{corollary}
   \label{cor3.prop.spivey.corollary5}
Let $\bA = \bigl( A(n,k) \bigr)_{0 \le k \le n}$
and $\bB = \bigl( B(n,k) \bigr)_{0 \le k \le n}$
be triangular arrays defined by the recurrences
\begin{subeqnarray}
  A(n,k)
  & = &
   \gamma   \, A(n-1,k)
  \:+\:
  (\beta' k + \gamma') \, A(n-1,k-1)
  \qquad
     \\[1mm]
  B(n,k)
  & = &
  (\widehat{\alpha}_k n + \widehat{\gamma}_k)    \, B(n-1,k)
  \:+\:
  (\widehat{\alpha}'_k n + \widehat{\gamma}'_k) \, B(n-1,k-1)
  \qquad\quad
\end{subeqnarray}
for $n \ge 1$, with initial conditions $A(0,k) = B(0,k) = \delta_{k0}$.
Then the matrix product $\bC = \bA \bB$ satisfies the recurrence
\begin{eqnarray}
   C(n,k)
   & = &
      c_1 \, C(n-1,k) \:+\: c_2 \, C(n-1,k-1)
          \nonumber \\[1mm]
   & & 
   \;+\;
      c_3 \, C(n-2,k) \:+\: c_4 \, C(n-2,k-1)
          \nonumber \\[1mm]
   & &
   \;+\;
      c_5 \, C(n-3,k) \:+\: c_6 \, C(n-3,k-1)
 \label{eq.cor3.prop.spivey.corollary5}
\end{eqnarray}
for $n \ge 1$, with initial condition $C(0,k) = \delta_{k0}$,
where
\begin{subeqnarray}
   c_1  & = &  (\beta' n + \gamma') \,
               (\widehat{\alpha}_k n + \widehat{\gamma}_k)
                 \:+\: \gamma  \\[1mm]
   c_2  & = &  (\beta' n + \gamma') \,
               (\widehat{\alpha}'_k n + \widehat{\gamma}'_k)
                   \\[1mm]
   c_3  & = &  - (n-1) \gamma 
      \bigl[\gamma' \widehat{\alpha}_k +
            \beta' (\widehat{\gamma}_k + (2n-1) \widehat{\alpha}_k)
      \bigr]   \\[1mm]
   c_4  & = &  - (n-1) \gamma 
      \bigl[\gamma' \widehat{\alpha}'_k +
            \beta' (\widehat{\gamma}'_k + (2n-1) \widehat{\alpha}'_k)
      \bigr]   \\[1mm]
   c_5  & = &  (n-1)(n-2) \gamma^2 \beta' \widehat{\alpha}_k \\[1mm]
   c_6  & = &  (n-1)(n-2) \gamma^2 \beta' \widehat{\alpha}'_k 
\end{subeqnarray}
\end{corollary}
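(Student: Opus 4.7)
The plan is to follow essentially the strategy already outlined in the paragraph preceding the statement, and simply verify that the coefficients match. I would start from the master identity of Proposition~\ref{prop.app.1} with the specializations
\[
   a_{n,k} \,=\, \gamma \,,\qquad a'_{n,k} \,=\, \beta' k + \gamma' \,,\qquad
   b_{n,k} \,=\, \widehat{\alpha}_k n + \widehat{\gamma}_k \,,\qquad
   b'_{n,k} \,=\, \widehat{\alpha}'_k n + \widehat{\gamma}'_k \,,
\]
and expand the two products on the right-hand side of \reff{eq.prop.app.1} as polynomials in $j$ of degree $\le 2$. Sorting by powers of $j$ produces the intermediate identity \reff{eq1.case1}. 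Here the $j^0$ contribution collapses immediately to a linear combination of $C(n-1,k)$ and $C(n-1,k-1)$, while the $j^1$ and $j^2$ contributions survive as weighted sums still depending explicitly on $j$.

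The next step is to eliminate the $j^1$ and $j^2$ factors by exploiting the structure of $\bA$. Under the specialization $\alpha=\beta=\alpha'=0$, equation \reff{eq.lemma.binomial.3a} gives $A(n,k) = \binom{n}{k}\gamma^{n-k}\prod_{j=1}^{k}(\gamma'+j\beta')$, so Lemma~\ref{lemma.binomial}(a) applies and yields the identities (\ref{eq.jton.1}a,b). Applied with $n\to n-1$ and summed against $B(j,k)$ or $B(j,k-1)$, these reduce
\[
\sum_{j\ge 0} j\,A(n-1,j)\,B(j,\cdot) \;=\; (n-1)\,C(n-1,\cdot) - \gamma(n-1)\,C(n-2,\cdot),
\]
and analogously for $j^2$, where $C(n-2,\cdot)$ and $C(n-3,\cdot)$ appear together with the coefficients $-\gamma(n-1)(2n-3)$ and $\gamma^2(n-1)(n-2)$. (The upper limit of summation is irrelevant because $A(n-r,j)$ vanishes once $j>n-r$, and the lower limit can be taken to be~$0$ since $j=0$ contributes nothing to a sum weighted by $j$ or $j^2$.) Inserting these into \reff{eq1.case1} reorganizes the entire right-hand side as a linear combination of $C(n-r,k)$ and $C(n-r,k-1)$ for $r=1,2,3$.

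The final step is purely bookkeeping: one collects the coefficients of each of the six terms and shows that they equal $c_1,\dots,c_6$. For instance, the coefficient of $C(n-1,k)$ picks up $\gamma+(\beta'+\gamma')(\widehat{\alpha}_k+\widehat{\gamma}_k)$ from the $j^0$ piece, $(n-1)\bigl[\beta'(\widehat{\alpha}_k+\widehat{\gamma}_k)+(\beta'+\gamma')\widehat{\alpha}_k\bigr]$ from the $j^1$ piece, and $(n-1)^2\beta'\widehat{\alpha}_k$ from the $j^2$ piece; expanding and regrouping identifies this with $(\beta' n+\gamma')(\widehat{\alpha}_k n+\widehat{\gamma}_k)+\gamma = c_1$. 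The computations for $c_2,\dots,c_6$ are fully analogous, and the coefficients $c_3, c_4$ combine a linear contribution (from $j^1$) with the linear-in-$n$ contribution $-\gamma(n-1)(2n-3)\beta'\widehat{\alpha}_k$ coming from $j^2$, producing precisely $-(n-1)\gamma\bigl[\gamma'\widehat{\alpha}_k+\beta'(\widehat{\gamma}_k+(2n-1)\widehat{\alpha}_k)\bigr]$.

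There is no conceptual obstacle; the only nontrivial step is the application of Lemma~\ref{lemma.binomial}(a), which requires recognizing that the specialization $\alpha=\beta=\alpha'=0$ makes $\bA$ precisely of the near-binomial form \reff{eq.lemma.binomial.1a}. Everything else is elementary algebra, and the main thing to guard against is sign and index errors in expanding $(n-1)^2$ and $(2(n-1)-1)=2n-3$ versus the $(2n-1)$ appearing inside $c_3$ and $c_4$—one must check that the discrepancy between $2n-3$ and $2n-1$ is exactly absorbed by the linear-in-$n$ part of $\beta'(\widehat{\alpha}_k+\widehat{\gamma}_k)+(\beta'+\gamma')\widehat{\alpha}_k$, which it is.
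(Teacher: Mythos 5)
Your proposal is correct and follows essentially the same route as the paper: specialize Proposition~\ref{prop.app.1} to obtain \reff{eq1.case1}, apply Lemma~\ref{lemma.binomial}(a) via \reff{eq.jton.1} with $n\to n-1$ to eliminate the $j$- and $j^2$-weighted sums, and collect coefficients. Your bookkeeping checks out, including the observation that the $2n-3$ from the $j^2$ identity combines with the two extra $\beta'\widehat{\alpha}_k$ terms from the $j^1$ piece to produce the $2n-1$ in $c_3$ and $c_4$.
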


This is rather complicated, but it simplifies significantly
when also $\beta' = 0$, in which case $\bA$ is a rescaled binomial:
$A(n,k) = \binom{n}{k} \gamma^{n-k} (\gamma')^k$.
Normalizing to $\gamma' = 1$ and writing $\gamma = \xi$,
then $\bA$ becomes the $\xi$-binomial matrix $B_\xi$ defined in \reff{def.Bxi},
and Corollary~\ref{cor3.prop.spivey.corollary5} becomes:

\begin{corollary}
   \label{cor4.prop.spivey.corollary5}
Let $\bB = \bigl( B(n,k) \bigr)_{0 \le k \le n}$
be a triangular array defined by the recurrence
\be
  B(n,k)
  \;=\;
  (\widehat{\alpha}_k n + \widehat{\gamma}_k)    \, B(n-1,k)
  \:+\:
  (\widehat{\alpha}'_k n + \widehat{\gamma}'_k) \, B(n-1,k-1)
\ee
for $n \ge 1$, with initial condition $B(0,k) = \delta_{k0}$.
Then the matrix product $\bC = B_\xi \, \bB$ satisfies the recurrence
\begin{eqnarray}
   C(n,k)
   & = &
   (\widehat{\alpha}_k n + \widehat{\gamma}_k + \xi) \, C(n-1,k)
   \:+\:
   (\widehat{\alpha}'_k n + \widehat{\gamma}'_k) \, C(n-1,k-1)
          \nonumber \\[0.5mm]
   & & 
   \;-\; (n-1)\xi \bigl[\widehat{\alpha}_k \, C(n-2,k) \:+\:
                        \widehat{\alpha}'_k \, C(n-2,k-1) \bigr]
 \label{eq.cor4.prop.spivey.corollary5}
\end{eqnarray}
for $n \ge 1$, with initial condition $C(0,k) = \delta_{k0}$.
\end{corollary}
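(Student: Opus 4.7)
The plan is to deduce Corollary~\ref{cor4.prop.spivey.corollary5} as a direct specialization of the already-established Corollary~\ref{cor3.prop.spivey.corollary5}. The key observation is that the $\xi$-binomial matrix $B_\xi$, defined by $B_\xi(n,k) = \binom{n}{k}\, \xi^{n-k}$, satisfies the two-term recurrence $B_\xi(n,k) = \xi\, B_\xi(n-1,k) + B_\xi(n-1,k-1)$ with $B_\xi(0,k)=\delta_{k0}$. This is precisely the recurrence defining the matrix $\bA$ of Corollary~\ref{cor3.prop.spivey.corollary5} under the parameter specialization $\gamma = \xi$, $\gamma' = 1$, $\beta' = 0$. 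Thus, taking $\bA = B_\xi$ in that corollary and keeping the matrix $\bB$ completely general, we can read off a recurrence for $\bC = B_\xi\, \bB$ by substituting these values into the coefficient formulae (a)--(f) of \reff{eq.cor3.prop.spivey.corollary5}.

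Carrying out this substitution gives
\be
   c_1 \,=\, \widehat{\alpha}_k n + \widehat{\gamma}_k + \xi \,,\qquad
   c_2 \,=\, \widehat{\alpha}'_k n + \widehat{\gamma}'_k \,,
\ee
\be
   c_3 \,=\, -(n-1)\xi\,\widehat{\alpha}_k \,,\qquad
   c_4 \,=\, -(n-1)\xi\,\widehat{\alpha}'_k \,,
\ee
while $c_5$ and $c_6$ both vanish because each carries an explicit factor of $\beta' = 0$. Inserting these into \reff{eq.cor3.prop.spivey.corollary5} collapses the six-term recurrence to the four-term recurrence \reff{eq.cor4.prop.spivey.corollary5}, as claimed.

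There is no real obstacle here; the only mild point worth noting is the structural simplification. The $C(n-3,\cdot)$ terms, which in general arose from the $j^2 A(n-1,j)$ contribution processed via the $r=2$ case of \reff{eq.lemma.binomial.2a} and picked up a factor of $\gamma^2$ times the leading coefficient $\beta'$, disappear entirely once $\beta' = 0$, since in that case the matrix $\bA$ is purely binomial and Lemma~\ref{lemma.binomial}(a) with $f(k)\equiv 1$ expresses both $j\,B_\xi(n-1,j)$ and $j^2\, B_\xi(n-1,j)$ in terms of at most $B_\xi(n-2,j)$, not $B_\xi(n-3,j)$. As a sanity check (and alternative route), one could bypass Corollary~\ref{cor3.prop.spivey.corollary5} altogether and prove Corollary~\ref{cor4.prop.spivey.corollary5} directly from Proposition~\ref{prop.app.1}: set $a_{n,k}=\xi$, $a'_{n,k}=1$, isolate the $j$-dependent terms on the right-hand side of \reff{eq.prop.app.1}, and eliminate the single factor of $j$ using the identity $j\,B_\xi(n-1,j) = (n-1)\,B_\xi(n-1,j) - \xi(n-1)\,B_\xi(n-2,j)$ from Lemma~\ref{lemma.binomial}(a). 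Either path yields \reff{eq.cor4.prop.spivey.corollary5} by a short and routine computation.
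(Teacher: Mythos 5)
Your proof is correct and coincides with the paper's own derivation: Corollary~\ref{cor4.prop.spivey.corollary5} is obtained there precisely by specializing Corollary~\ref{cor3.prop.spivey.corollary5} to $\beta'=0$, $\gamma'=1$, $\gamma=\xi$, exactly as you do, and your coefficient substitutions check out. One minor slip in your closing aside: the $C(n-3,\cdot)$ terms drop out because the $j^2$ sums in \reff{eq1.case1} carry the prefactors $\beta'\widehat{\alpha}_k$ and $\beta'\widehat{\alpha}'_k$, which vanish when $\beta'=0$, not because \reff{eq.lemma.binomial.2a} with $r=2$ stops at $B_\xi(n-2,j)$ --- applied to the pure $\xi$-binomial matrix it still produces a $B_\xi(n-3,j)$ term.
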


\noindent
When also $\widehat{\alpha}_k = \widehat{\alpha}'_k = 0$ for all $k$,
the terms $C(n-2,k)$ and $C(n-2,k-1)$ disappear,
and we have a special case of Corollary~\ref{cor.prop.app.1}.

Specializing Corollary~\ref{cor4.prop.spivey.corollary5} to the GKP case
\be
   \widehat{\alpha}_k \,=\, \widehat{\alpha} \,,\quad
   \widehat{\gamma}_k \,=\, \widehat{\beta} k + \widehat{\gamma} \,,\quad
   \widehat{\alpha}'_k \,=\, \widehat{\alpha}' \,,\quad
   \widehat{\gamma}'_k \,=\, \widehat{\beta}' k + \widehat{\gamma}'
   \;,
 \label{eq.alphagamma.GKP}
\ee
we get:

\begin{corollary}
   \label{cor4bis.prop.spivey.corollary5}
Let $\bB = \bigl( B(n,k) \bigr)_{0 \le k \le n}$
be a triangular array defined by the recurrence
\be
  B(n,k)
  \;=\;
  (\widehat{\alpha} n + \widehat{\beta} k + \widehat{\gamma})    \, B(n-1,k)
  \:+\:
  (\widehat{\alpha}' n + \widehat{\beta}' k + \widehat{\gamma}') \, B(n-1,k-1)
\ee
for $n \ge 1$, with initial condition $B(0,k) = \delta_{k0}$.
Then the matrix product $\bC = B_\xi \, \bB$ satisfies the recurrence
\begin{eqnarray}
   C(n,k)
   & = &
   (\widehat{\alpha} n + \widehat{\beta} k + \widehat{\gamma} + \xi) \, C(n-1,k)
   \:+\:
   (\widehat{\alpha}' n + \widehat{\beta}' k + \widehat{\gamma}') \, C(n-1,k-1)
          \nonumber \\[0.5mm]
   & &
   \;-\; (n-1)\xi \bigl[\widehat{\alpha} \, C(n-2,k) \:+\: \widehat{\alpha}' \, C(n-2,k-1) \bigr]
 \label{eq.cor4bis.prop.spivey.corollary5}
\end{eqnarray}
for $n \ge 1$, with initial condition $C(0,k) = \delta_{k0}$.
\end{corollary}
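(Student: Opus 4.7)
The plan is to derive Corollary~\ref{cor4bis.prop.spivey.corollary5} as an immediate specialization of the already-established Corollary~\ref{cor4.prop.spivey.corollary5}. The hypothesis that $\bB$ obeys the GKP recurrence with parameters $(\widehat{\alpha},\widehat{\beta},\widehat{\gamma},\widehat{\alpha}',\widehat{\beta}',\widehat{\gamma}')$ is exactly the GKP specialization \eqref{eq.alphagamma.GKP} of the more general binomial-like recurrence appearing in Corollary~\ref{cor4.prop.spivey.corollary5}; namely, one takes
$\widehat{\alpha}_k = \widehat{\alpha}$,
$\widehat{\gamma}_k = \widehat{\beta} k + \widehat{\gamma}$,
$\widehat{\alpha}'_k = \widehat{\alpha}'$,
$\widehat{\gamma}'_k = \widehat{\beta}' k + \widehat{\gamma}'$.
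So the first step is simply to verify that this substitution is admissible (which is clear) and then to read off the conclusion from \eqref{eq.cor4.prop.spivey.corollary5}.

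Under this substitution, the level-$(n-1)$ coefficients in \eqref{eq.cor4.prop.spivey.corollary5} become
$\widehat{\alpha}_k n + \widehat{\gamma}_k + \xi = \widehat{\alpha} n + \widehat{\beta} k + \widehat{\gamma} + \xi$
and
$\widehat{\alpha}'_k n + \widehat{\gamma}'_k = \widehat{\alpha}' n + \widehat{\beta}' k + \widehat{\gamma}'$,
exactly as stated in \eqref{eq.cor4bis.prop.spivey.corollary5}. Moreover, because $\widehat{\alpha}_k$ and $\widehat{\alpha}'_k$ are, in the GKP specialization, constants (independent of $k$) equal to $\widehat{\alpha}$ and $\widehat{\alpha}'$ respectively, the level-$(n-2)$ correction terms reduce immediately to $-(n-1)\xi\bigl[\widehat{\alpha}\, C(n-2,k) + \widehat{\alpha}'\, C(n-2,k-1)\bigr]$, matching the desired conclusion with no further manipulation.

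There is no substantive obstacle: all the nontrivial work has already been done in establishing Corollary~\ref{cor4.prop.spivey.corollary5}, which itself rests on Proposition~\ref{prop.app.1} together with Lemma~\ref{lemma.binomial}(a) to convert the $j$-dependent terms arising from the matrix product into $n$-dependent corrections via the rescaled-binomial structure of $B_\xi$. If one wished, for completeness, to give a self-contained proof of Corollary~\ref{cor4bis.prop.spivey.corollary5} without appealing to Corollary~\ref{cor4.prop.spivey.corollary5}, one could instead apply Proposition~\ref{prop.spivey.corollary5} directly with $\bA = \bT(0,0,\xi,0,0,1) = B_\xi$ and $\bB$ of GKP form; the $j^2$-terms in \eqref{eq1.case1} are then absent because $\beta' = 0$ in the parameters of $B_\xi$ (equivalently, $A(n,k) = \binom{n}{k}\xi^{n-k}$ has the simple binomial form \eqref{eq.lemma.binomial.1a}), so only the $r=1$ identity in \eqref{eq.jton.1} is needed, and this produces precisely the single correction term shown in \eqref{eq.cor4bis.prop.spivey.corollary5}.
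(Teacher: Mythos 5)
Your proof is correct and is exactly the paper's own argument: the paper obtains Corollary~\ref{cor4bis.prop.spivey.corollary5} by specializing Corollary~\ref{cor4.prop.spivey.corollary5} to the GKP case \eqref{eq.alphagamma.GKP}, which is precisely the substitution you perform. The alternative self-contained route you sketch at the end is a reasonable bonus but not needed.
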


\noindent
When also $\widehat{\alpha} = \widehat{\alpha}' = 0$,
the terms $C(n-2,k)$ and $C(n-2,k-1)$ disappear,
and we are back in the situation discussed in the Remark
after Corollary~\ref{cor2.prop.spivey.corollary5}.

The form \reff{eq.cor4bis.prop.spivey.corollary5}
of the output matrix in Corollary~\ref{cor4.prop.spivey.corollary5}
suggests that one consider, as input,
a triangular array $\bB = \bigl( B(n,k) \bigr)_{0 \le k \le n}$
defined by a similar recurrence. Then we can obtain, using essentially
the same proof as for Corollary~\ref{cor4.prop.spivey.corollary5}, the
following more general result:

\begin{corollary} \label{cor8.gen}
Let $\bB = \bigl( B(n,k) \bigr)_{0 \le k \le n}$
be a triangular array defined by the recurrence
\begin{multline}
 B(n,k)
 \;=\;
 (\widehat{\alpha}_k n + \widehat{\gamma}_k) \, B(n-1,k)
 \;+\;
 (\widehat{\alpha}'_k n + \widehat{\gamma}'_k) \, B(n-1,k-1) \\ \quad
 \;+\;
 (n-1) \, \widehat{\delta}_k \, B(n-2,k)
 \;+\;
 (n-1) \, \widehat{\delta}'_k \, B(n-2,k-1)
   \qquad
 \label{eq.cor8.gen.0}
\end{multline}
for $n \ge 1$, with initial condition $B(0,k) = \delta_{k0}$.
Then the matrix product $\bC = B_\xi \, \bB$ satisfies the recurrence
\begin{multline}
  C(n,k)
  \;=\;
  (\widehat{\alpha}_k n + \widehat{\gamma}_k + \xi) \, C(n-1,k)
  \;+\;
  (\widehat{\alpha}'_k n + \widehat{\gamma}'_k) \, C(n-1,k-1) \\ \qquad
  \;+\; (n-1)\, (\widehat{\delta}_k - \xi \widehat{\alpha}_k)
  \, C(n-2,k)
 \;+\; (n-1)\, (\widehat{\delta}'_k - \xi \widehat{\alpha}'_k)
 \, C(n-2,k-1)
 \label{eq.cor8.gen}
\end{multline}
for $n \ge 1$, with initial condition $C(0,k) = \delta_{k0}$.
\end{corollary}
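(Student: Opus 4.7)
The strategy is to imitate the proof of Corollary~\ref{cor4.prop.spivey.corollary5}, working directly from the defining sum
$$C(n,k) \;=\; \sum_{j=0}^{n} B_\xi(n,j) \, B(j,k)$$
and using the recurrences for $B_\xi$ and $\bB$ together with the identities from Lemma~\ref{lemma.binomial}(a) to re-express everything in terms of $C(n-1,\cdot)$ and $C(n-2,\cdot)$. The new feature, compared with Corollary~\ref{cor4.prop.spivey.corollary5}, is that the recurrence \reff{eq.cor8.gen.0} for $\bB$ carries two extra terms in $B(j-1,k)$ and $B(j-1,k-1)$ with coefficient proportional to $(j-1)$; these will combine with the $B(j,\cdot)$ terms weighted by $(j+1)\widehat{\alpha}^{(\prime)}_k$ to produce the combinations $\widehat{\delta}_k - \xi\widehat{\alpha}_k$ and $\widehat{\delta}'_k - \xi\widehat{\alpha}'_k$ appearing in \reff{eq.cor8.gen}.

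Concretely, I first use the Pascal-type recurrence $B_\xi(n,j) = \xi B_\xi(n-1,j) + B_\xi(n-1,j-1)$ to peel off one row, obtaining
$$C(n,k) \;=\; \xi\, C(n-1,k) \:+\: \sum_{j \ge 0} B_\xi(n-1,j)\, B(j+1,k)$$
after the shift $j \to j-1$ in the second piece. Next I expand $B(j+1,k)$ using \reff{eq.cor8.gen.0} (applied with $n$ replaced by $j+1$), producing four sums: two weighted by $(j+1)\widehat{\alpha}^{(\prime)}_k + \widehat{\gamma}^{(\prime)}_k$ and paired with $B(j,k^{(\prime)})$, and two weighted by $j\widehat{\delta}^{(\prime)}_k$ and paired with $B(j-1,k^{(\prime)})$.

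The key step is to eliminate the $j$-dependent weights. Since $B_\xi(n,j) = \binom{n}{j}\xi^{n-j}$ is of the form \reff{eq.lemma.binomial.1a} with $\gamma = \xi$ and $f \equiv 1$, Lemma~\ref{lemma.binomial}(a) with $r=1$ gives $(n-1-j)B_\xi(n-1,j) = \xi(n-1)B_\xi(n-2,j)$, equivalently
$$(j+1)\, B_\xi(n-1,j) \;=\; n\, B_\xi(n-1,j) \:-\: \xi(n-1)\, B_\xi(n-2,j),$$
which, after multiplication by $B(j,k)$ or $B(j,k-1)$ and summation over $j$, converts the $\widehat{\alpha}_k, \widehat{\alpha}'_k$ contributions into terms $n\widehat{\alpha}^{(\prime)}_k C(n-1,k^{(\prime)})$ and $-\xi(n-1)\widehat{\alpha}^{(\prime)}_k C(n-2,k^{(\prime)})$. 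The companion identity $j\, B_\xi(n-1,j) = (n-1)\, B_\xi(n-2,j-1)$ (again a form of Lemma~\ref{lemma.binomial}(a), or simply Pascal) reduces the $\widehat{\delta}^{(\prime)}_k$ sums: after the reindexing $j \to j+1$ they become $(n-1)\widehat{\delta}_k C(n-2,k)$ and $(n-1)\widehat{\delta}'_k C(n-2,k-1)$. Collecting and grouping all the resulting terms according to whether they multiply $C(n-1,\cdot)$ or $C(n-2,\cdot)$ yields exactly \reff{eq.cor8.gen}.

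This proof is essentially pure bookkeeping; there is no serious obstacle. The only mild subtlety is the boundary behaviour for $n=1,2$, which is handled automatically by the conventions $B(-1,\cdot) = 0$, $C(-1,\cdot) = 0$ and by the vanishing of the $(n-1)$ prefactor at $n=1$; the initial condition $C(0,k) = \delta_{k0}$ is inherited immediately from $B(0,k) = \delta_{k0}$ and $B_\xi(0,j) = \delta_{j0}$.
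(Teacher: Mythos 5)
Your proof is correct and follows essentially the same route as the paper's: peel off one step of the Pascal recurrence for $B_\xi$, insert the recurrence \reff{eq.cor8.gen.0} for $\bB$, and eliminate the residual $j$-dependence using the identities of Lemma~\ref{lemma.binomial}. The only cosmetic differences are that the paper delegates the first two terms to the already-established Corollary~\ref{cor4.prop.spivey.corollary5} rather than recomputing them inline, and that the identity $j\,B_\xi(n-1,j)=(n-1)\,B_\xi(n-2,j-1)$ you invoke is an instance of Lemma~\ref{lemma.binomial}(b) (absorption), not of part~(a) --- a harmless mislabel.
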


\begin{proof}
The matrix $\bC = B_\xi\, \bB$ satisfies
\begin{subeqnarray}
C(n,k) &=& \sum\limits_{j=0}^n B_\xi(n,j)\, B(j,k) \\
 &=& \sum\limits_{j=0}^n \bigl[ \xi\, B_\xi(n-1,j) + B_\xi(n-1,j-1)
  \bigr] \, B(j,k) \\
 &=& \xi \, C(n-1,k) + \sum\limits_{j=1}^n B_\xi(n-1,j-1) \, 
B(j,k) \,.
\end{subeqnarray}
Now insert the recurrence \reff{eq.cor8.gen.0} for $B(j,k)$.
The first two terms on the right-hand side produce the terms
given in Corollary~\ref{cor4.prop.spivey.corollary5}.
So we only need to deal with the last two terms on the right-hand side.
The contribution of the first one is
\begin{subeqnarray}
   & &
   \widehat{\delta}_k \, \sum\limits_{j=1}^n (j-1)\, B_\xi(n-1,j-1) \, B(j-2,k)
            \nonumber \\[-2mm]
   & & \qquad\qquad
   =\;
   \widehat{\delta}_k \, \sum\limits_{j=0}^{n-1} j\, B_\xi(n-1,j) \, B(j-1,k)
           \\[1mm]
   & & \qquad\qquad
   =\;
   \widehat{\delta}_k \, (n-1) \, C(n-2,k) \qquad
      \hbox{[using \reff{eq.lemma.binomial.2b} with $r=1$]} \,.
\end{subeqnarray}
Similarly, the contribution of the second term is
$\widehat{\delta}'_k \, (n-1) \, C(n-2,k-1)$.
These two contributions plus those given by
Corollary~\ref{cor4.prop.spivey.corollary5} give the claimed formula
\eqref{eq.cor8.gen}.
\end{proof}

\paragraph{Case 2: $\bm{\beta = -\alpha}$, $\bm{\alpha' = \beta' = 0}$.}

As in Case 1, we can handle a matrix $\bB$
given by a binomial-like recurrence \reff{eq0.prop.app.1.b}
in which $b_{n,k}$ and $b'_{n,k}$ are affine in $n$
but with coefficients depending in an arbitrary way on $k$:
\be
   a_{n,k} \,=\, \alpha (n-k) + \gamma \,,\quad
   a'_{n,k} \,=\, \gamma' \,,\quad
   b_{n,k} \,=\, \widehat{\alpha}_k n + \widehat{\gamma}_k \,,\quad
   b'_{n,k} \,=\, \widehat{\alpha}'_k n + \widehat{\gamma}'_k
   \;.
 \label{eq.case2.specializations}
\ee
We then insert the specializations \reff{eq.case2.specializations}
into Proposition~\ref{prop.app.1}
and isolate the terms proportional to $j$
(note that the terms proportional to $j^2$ vanish in this case):
the matrix $\bC = \bA \bB$ satisfies
\begin{eqnarray}
   C(n,k)
   & \!=\! &
   \bigl[ \alpha n + \gamma
                   + \gamma' (\widehat{\alpha}_k + \widehat{\gamma}_k)
   \bigr] \,  C(n-1,k)
   \:+\:
    \gamma' (\widehat{\alpha}'_k + \widehat{\gamma}'_k)
          \, C(n-1,k-1)
    \hspace*{-2cm}
           \nonumber \\
   & & 
   \;+\;
   (\gamma' \widehat{\alpha}_k - \alpha)
   \sum_{j=1}^{n-1} j \, A(n-1,j) \, B(j,k)
   \;+\;
   \gamma' \widehat{\alpha}'_k
   \sum_{j=1}^{n-1} j \, A(n-1,j) \, B(j,k-1)
           \nonumber \\[-3mm]
 \label{eq1.case2}
\end{eqnarray}
for $n \ge 1$.
Now apply Lemma~\ref{lemma.binomial}(b) with $r=1$ to the matrix $\bA$:
\be
   j \, A(n-1,j)  \;=\; \gamma' \, (n-1) \, A(n-2,j-1)
   \;.
\ee
And apply the recurrence \reff{eq0.prop.spivey.corollary5.B}
to $B(j,k)$ and $B(j,k-1)$ in order to convert $j$ to $j-1$
(so that we can again get a matrix product $\bA \bB$).
The last two terms of \reff{eq1.case2} become
\begin{eqnarray}
   & &
   \hspace*{-1cm}
   \gamma' \, (n-1) (\gamma' \widehat{\alpha}_k - \alpha)
   \sum_{j=1}^{n-1} A(n-2,j-1) \,\times\,
            \nonumber \\
   & &
      \bigl[ (\widehat{\alpha}_k j + \widehat{\gamma}_k)
                 \, B(j-1,k)
             \:+\:
             (\widehat{\alpha}'_k j + \widehat{\gamma}'_k)
                 \, B(j-1,k-1)
      \bigr]
   \,.
            \nonumber \\[2mm]
   & &
   \hspace*{-1cm}
   \;+\;
   (\gamma')^2 \, (n-1)  \, \widehat{\alpha}'_k
   \sum_{j=1}^{n-1} A(n-2,j-1) \,\times\,
            \nonumber \\
   & &
      \bigl[ (\widehat{\alpha}_{k-1} j + \widehat{\gamma}_{k-1})
                 \, B(j-1,k-1)
             \:+\:
             (\widehat{\alpha}'_{k-1} j + \widehat{\gamma}'_{k-1})
                 \, B(j-1,k-2)
      \bigr]
   \,.
    \qquad
\end{eqnarray}
The appearance here of the terms
$\widehat{\alpha}_i j$ and $\widehat{\alpha}'_i j$ (for $i = k, k-1$)
would lead in general to an infinite regress of $j$-dependence,
but this $j$-dependence disappears if $\widehat{\alpha}'_k = 0$ for all $k$
and moreover either (a) $\gamma' \widehat{\alpha}_k = 0$ for all $k$
or (b) $\gamma' \widehat{\alpha}_k = \alpha$ for all $k$.
We then have:

\begin{corollary}
   \label{cor5.prop.spivey.corollary5}
Let $\bA = \bigl( A(n,k) \bigr)_{0 \le k \le n}$
and $\bB = \bigl( B(n,k) \bigr)_{0 \le k \le n}$
be triangular arrays defined by the recurrences
\begin{subeqnarray}
  A(n,k)
  & = &
   (\alpha (n-k) + \gamma)  \, A(n-1,k)
  \:+\:
   \gamma' \, A(n-1,k-1)
  \qquad
     \\[1mm]
  B(n,k)
  & = &
  (\widehat{\alpha}_k n + \widehat{\gamma}_k)    \, B(n-1,k)
  \:+\:
  \widehat{\gamma}'_k \, B(n-1,k-1)
  \qquad\quad
\end{subeqnarray}
for $n \ge 1$, with initial conditions $A(0,k) = B(0,k) = \delta_{k0}$.
Suppose further that either
$\gamma' \widehat{\alpha}_k = 0$ for all $k$
or $\gamma' \widehat{\alpha}_k = \alpha$ for all $k$.
Then the matrix product $\bC = \bA \bB$ satisfies the recurrence
\begin{eqnarray}
   C(n,k)
   & = &
      c'_1 \, C(n-1,k) \:+\: c'_2 \, C(n-1,k-1)
          \nonumber \\[1mm]
   & & 
   \;+\;
      c'_3 \, C(n-2,k) \:+\: c'_4 \, C(n-2,k-1)
 \label{eq.cor5.prop.spivey.corollary5}
\end{eqnarray}
for $n \ge 1$, with initial condition $C(0,k) = \delta_{k0}$,
where
\begin{subeqnarray}
   c'_1  & = &  \alpha n \,+\, \gamma \,+\,
               \gamma' (\widehat{\alpha}_k + \widehat{\gamma}_k)
           \\[1mm]
   c'_2  & = &  \gamma' \, \widehat{\gamma}'_k
           \\[1mm]
   c'_3  & = &  (n-1) \, \gamma' \, (\gamma' \widehat{\alpha}_k - \alpha)
                             \, \widehat{\gamma}_k
           \\[1mm]
   c'_4  & = &  (n-1) \, \gamma' \, (\gamma' \widehat{\alpha}_k - \alpha) 
                 \, \widehat{\gamma}'_k
\end{subeqnarray}
In particular, when $\gamma' \widehat{\alpha}_k = \alpha$ for all $k$,
we have $c'_3 = c'_4 = 0$ and hence a binomial-like recurrence for $\bC$.
\end{corollary}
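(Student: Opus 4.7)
The plan is to apply Proposition~\ref{prop.app.1} with the specializations \reff{eq.case2.specializations}, specialized further to $\widehat{\alpha}'_k=0$ for all $k$ as dictated by the form of $\bB$ in the statement, and then to handle the single residual $j$-dependent sum via Lemma~\ref{lemma.binomial}(b) combined with one application of the $\bB$-recurrence. First I would substitute the specializations into \reff{eq.prop.app.1}: the coefficient of $A(n-1,j)\,B(j,k)$ splits as
\be
a_{n,j}+a'_{n,j+1}b_{j+1,k} \;=\; \bigl[\alpha n+\gamma+\gamma'(\widehat{\alpha}_k+\widehat{\gamma}_k)\bigr] \:+\: j\,(\gamma'\widehat{\alpha}_k-\alpha),
\ee
while $a'_{n,j+1}b'_{j+1,k}=\gamma'\widehat{\gamma}'_k$ is independent of $j$ (this is precisely where $\widehat{\alpha}'_k=0$ is used; it kills the potential $j$-linear term in the second sum, and no $j^2$ terms arise because $a'_{n,k}$ is constant in $k$). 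Summing, I obtain the preliminary identity
\be
   C(n,k) \;=\; c'_1\,C(n-1,k) \:+\: c'_2\,C(n-1,k-1) \:+\: (\gamma'\widehat{\alpha}_k-\alpha)\sum_{j=0}^{n-1} j\,A(n-1,j)\,B(j,k),
\ee
with $c'_1,c'_2$ exactly as stated in the corollary.

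Next I would reduce the residual sum. Because $\beta=-\alpha$ and $\alpha'=\beta'=0$, the matrix $\bA$ has the form \reff{eq.lemma.binomial.1b}/\reff{eq.lemma.binomial.3b}, so Lemma~\ref{lemma.binomial}(b) with $r=1$ gives $j\,A(n-1,j)=\gamma'(n-1)\,A(n-2,j-1)$. Shifting $j\mapsto j+1$ and invoking the $\bB$-recurrence on $B(j+1,k)$ produces
\be
   \sum_{j} A(n-2,j-1)\,B(j,k) \;=\; (\widehat{\alpha}_k+\widehat{\gamma}_k)\,C(n-2,k) \:+\: \widehat{\gamma}'_k\,C(n-2,k-1) \:+\: \widehat{\alpha}_k\sum_{j}j\,A(n-2,j)\,B(j,k).
\ee
In general the last term would spawn an infinite regress, since each further application of Lemma~\ref{lemma.binomial}(b) followed by the $\bB$-recurrence regenerates a $j$-dependent sum with coefficient $\widehat{\alpha}_k$.

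The key observation, and the resolution of this obstacle, is that the residual $j$-sum comes multiplied by the outer prefactor $(\gamma'\widehat{\alpha}_k-\alpha)\,\gamma'(n-1)$, so the product $(\gamma'\widehat{\alpha}_k-\alpha)\,\widehat{\alpha}_k$ controls whether the regress terminates at one step. Under the hypothesis $\gamma'\widehat{\alpha}_k=\alpha$, the outer factor is identically zero, the entire residual contribution vanishes, and $c'_3=c'_4=0$. Under the hypothesis $\gamma'\widehat{\alpha}_k=0$, either $\gamma'=0$ (in which case $\bC=\bA$ trivially satisfies the recurrence) or else $\widehat{\alpha}_k=0$ for all $k$, which kills the inner sum directly. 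In both subcases the identity $(\gamma'\widehat{\alpha}_k-\alpha)\,\widehat{\alpha}_k=0$ allows me to replace $(\widehat{\alpha}_k+\widehat{\gamma}_k)$ by $\widehat{\gamma}_k$ in the preceding display, and collecting the surviving terms yields \reff{eq.cor5.prop.spivey.corollary5} with the stated $c'_3,c'_4$. The main obstacle is precisely this termination of the regress; the two stated hypotheses are essentially the minimal conditions under which a single application of Lemma~\ref{lemma.binomial}(b) and the $\bB$-recurrence suffices to close the computation.
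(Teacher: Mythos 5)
Your proof is correct and follows essentially the same route as the paper: specialize Proposition~\ref{prop.app.1} to isolate the single $j$-linear residual sum, convert $j\,A(n-1,j)$ via Lemma~\ref{lemma.binomial}(b) with $r=1$, reindex and apply the $\bB$-recurrence, and then use the hypothesis to annihilate both the regenerated $j$-sum and the spurious $\widehat{\alpha}_k$ contribution (via $(\gamma'\widehat{\alpha}_k-\alpha)\,\gamma'\,\widehat{\alpha}_k=0$), which is exactly the paper's mechanism. The only cosmetic difference is that you impose $\widehat{\alpha}'_k=0$ from the outset, whereas the paper carries a general $\widehat{\alpha}'_k$ through the intermediate computation before discarding it.
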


The further specialization of Corollary~\ref{cor5.prop.spivey.corollary5}
to the GKP case \reff{eq.alphagamma.GKP} is easily written out.
In particular, when $\widehat{\alpha}' = 0$
and $\gamma' \widehat{\alpha} = \alpha$,
then the recurrence for $\bC$ is of GKP form.

\bigskip

{\bf Remarks.}
1.  The ``suppose further'' hypotheses of
Corollary~\ref{cor5.prop.spivey.corollary5}
are satisfied when $\gamma'  = 0$, but this is a trivial case:
the matrix $\bA$ vanishes outside the zeroth column, so that $\bA \bB = \bA$.

2.  Suppose that $\widehat{\alpha}_k$ has a constant value
$\widehat{\alpha}$ (independent of $k$)
but that we suppress the hypothesis on $\gamma' \widehat{\alpha}$.
Then the difference between the left-hand and right-hand sides of
\reff{eq.cor5.prop.spivey.corollary5} is, for each pair $n,k$,
a polynomial in the parameters that has
$\gamma' \widehat{\alpha} \, (\gamma' \widehat{\alpha} - \alpha)$
as an overall factor.
\myendremark

\paragraph{The intersection of cases~1 and 2:
$\bm{\alpha = \beta = \alpha' = \beta' =
     \widehat{\alpha}_k = \widehat{\alpha}'_k = 0}$.}

Let us now compare Corollary~\ref{cor3.prop.spivey.corollary5}
specialized to $\beta' = \widehat{\alpha}_k = \widehat{\alpha}'_k = 0$
with Corollary~\ref{cor5.prop.spivey.corollary5}
specialized to $\alpha = \widehat{\alpha}_k = 0$.
We obtain a recurrence of GKP type with
\begin{subeqnarray}
   c_1 \;=\; c'_1  & = &  \gamma' \, \widehat{\gamma}_k \:+\: \gamma
                   \\[1mm]
   c_2 \;=\; c'_2  & = &  \gamma' \, \widehat{\gamma}'_k
                   \\[1mm]
   c_3 \;=\; c'_3  & = &  0
                   \\[1mm]
   c_4 \;=\; c'_4  & = &  0
                   \\[1mm]
   c_5 & = &  0
                   \\[1mm]
   c_6 & = &  0
\end{subeqnarray}

\subsection{Special matrix on the right}  \label{app.spivey-zhu.right}

Once again we distinguish two cases, according as we apply part~(a) or (b)
of Lemma \ref{lemma.binomial} to the matrix $\bB$:

\paragraph{Case 3: $\bm{\widehat{\alpha} = \widehat{\beta} = \widehat{\alpha}'
   = 0}$.}

We start from Proposition~\ref{prop.spivey.corollary5}
specialized to $\widehat{\alpha} = \widehat{\beta} = \widehat{\alpha}' = 0$,
and isolate the terms proportional to $j$
(note that the terms proportional to $j^2$ vanish in this case):
the matrix $\bC = \bA \bB$ satisfies
\begin{eqnarray}
   C(n,k)
   & \!=\! &
   \bigl[ (\alpha n + \gamma) \,+\,
          \widehat{\gamma} (\alpha' n + \beta' + \gamma')
   \bigr] \,  C(n-1,k)
         \nonumber \\[2mm]
   & & 
   \;+\;
    (\alpha' n + \beta' + \gamma') \, (\widehat{\beta}' k + \widehat{\gamma}')
          \, C(n-1,k-1)
    \hspace*{-2cm}
           \nonumber \\
   & & 
   \;+\;
   (\beta + \beta' \widehat{\gamma})
   \sum_{j=1}^{n-1} j \, A(n-1,j) \, B(j,k)
         \nonumber \\[-2mm]
   & & 
   \;+\;
   \beta' (\widehat{\beta}' k + \widehat{\gamma}')
   \sum_{j=1}^{n-1} j \, A(n-1,j) \, B(j,k-1)
 \label{eq1.case3}
\end{eqnarray}
for $n \ge 1$.
The next step would be to apply
Lemma~\ref{lemma.binomial}(a) with $r=1$ to the matrix $\bB$:
\be
   j \, B(j,k-1)
   \;=\;
   (k-1) \, B(j,k-1) \:+\: \widehat{\gamma} j \, B(j-1,k-1)
   \;.
\ee
But this leads to an infinite regress of $j$-dependence
(except in the degenerate case $\widehat{\gamma} = 0$,
 which leads to a matrix $\bB$ that vanishes outside the zeroth column).
So we do not know how to obtain a recurrence for $\bC = \bA \bB$
in this case.

\paragraph{Case 4: $\bm{\widehat{\beta} = -\widehat{\alpha}}$,
                   $\bm{\widehat{\alpha}' = \widehat{\beta}' = 0}$.}

Similarly to what was done in Cases 1 and 2
--- but now with the roles of $\bA$ and $\bB$,
and also $n$ and $k$, reversed ---
we can handle a matrix $\bA$ given by
a binomial-like recurrence \reff{eq0.prop.app.1.a}
in which $a_{n,k}$ and $a'_{n,k}$ are affine in $k$
but with coefficients depending in an arbitrary way on $n$:
\be
   a_{n,k} \,=\, \beta_n k + \gamma_n \,,\quad
   a'_{n,k} \,=\, \beta'_n k +\gamma'_n \,,\quad
   b_{n,k} \,=\, \widehat{\alpha} (n-k) + \widehat{\gamma}  \,,\quad
   b'_{n,k} \,=\, \widehat{\gamma}'
   \;.
 \label{eq.case4.specializations}
\ee
We insert the specializations \reff{eq.case4.specializations}
into Proposition~\ref{prop.app.1}
and isolate the terms proportional to $j$ or $j^2$:
the matrix $\bC = \bA \bB$ satisfies
\begin{eqnarray}
   C(n,k)
   & \!=\! &
   \bigl[ \gamma_n \,+\,
          (\beta'_n + \gamma'_n)
          (-\widehat{\alpha} k + \widehat{\alpha} + \widehat{\gamma})
   \bigr] \,  C(n-1,k)
           \nonumber \\[2mm]
   & & 
   \;+\;
    \widehat{\gamma}' (\beta'_n + \gamma'_n)
          \, C(n-1,k-1)
    \hspace*{-2cm}
           \nonumber \\[-1mm]
   & & 
   \;+\;
   \bigl[ \beta_n
          \,+\,
          \beta'_n (-\widehat{\alpha} k + \widehat{\alpha} + \widehat{\gamma})
          \,+\,
          \widehat{\alpha} (\beta'_n + \gamma'_n)
   \bigr]
   \sum_{j=1}^{n-1} j \, A(n-1,j) \, B(j,k)
           \nonumber \\[-2mm]
   & & 
   \;+\;
   \beta'_n \widehat{\alpha}
   \sum_{j=1}^{n-1} j^2 \, A(n-1,j) \, B(j,k)
           \nonumber \\[-1mm]
   & & 
   \;+\;
   \beta'_n \widehat{\gamma}'
   \sum_{j=1}^{n-1} j \, A(n-1,j) \, B(j,k-1)
 \label{eq1.case4}
\end{eqnarray}
for $n \ge 1$.
Now apply Lemma~\ref{lemma.binomial}(b) in reverse to the matrix $\bB$:
from \reff{eq.lemma.binomial.2b} with $r=1,2$ we deduce
\be
   j \, B(j,k)
   \;=\;
   \frac{1}{\widehat{\gamma}'} \, (k+1) \, B(j+1,k+1)  \:-\: B(j,k)
 \label{eq.jtok.4.a}
\ee
and a similar but more complicated formula for $j^2 B(j,k)$.
Applying the recurrence \reff{eq0.prop.spivey.corollary5.B}
to $B(j+1,k+1)$ in order to convert $j+1$ to $j$,
we can rewrite \reff{eq.jtok.4.a} as
\be
   j \, B(j,k)
   \;=\;
   k \, B(j,k)
     \:+\:
   \frac{1}{\widehat{\gamma}'} \, (k+1) \,
         [\widehat{\alpha} (j-k) + \widehat{\gamma}] \, B(j,k+1)
   \;.
\ee
But this leads to an infinite regress of $j$-dependence
unless we assume $\widehat{\alpha} = 0$, so we do this henceforth;
this also kills the $j^2$ term in \reff{eq1.case4}.
Then $\bB$ is a rescaled binomial,
$B(n,k) = \binom{n}{k} \widehat{\gamma}^{n-k} (\widehat{\gamma}')^k$,
and we have
\be
   j \, B(j,k)
   \;=\;
   k \, B(j,k)
     \:+\:
     \frac{\widehat{\gamma}}{\widehat{\gamma}'} \, (k+1) \, B(j,k+1)
   \;.
 \label{eq.case4.jB}
\ee
Now insert \reff{eq.case4.jB} into \reff{eq1.case4}.
Normalizing to $\widehat{\gamma}' = 1$ and writing $\widehat{\gamma} = \xi$,
then $\bB$ becomes the $\xi$-binomial matrix $B_\xi$ defined in \reff{def.Bxi},
and we obtain:

\begin{corollary}
   \label{cor.case4bis}
Let $\bA = \bigl( A(n,k) \bigr)_{0 \le k \le n}$
be a triangular array defined by the recurrence
\be
  A(n,k)  \;=\;  (\beta_n k + \gamma_n) \, A(n-1,k) \:+\:
                  (\beta'_n k +\gamma'_n) \, A(n-1,k-1)
 \label{eq.cor.case4bis.0}
\ee
for $n \ge 1$, with initial condition $A(0,k) = \delta_{k0}$.
Then the matrix product $\bC = \bA \, B_\xi$ satisfies the recurrence
\begin{eqnarray}
   C(n,k)
   & = &
   \bigl[ (\beta_n + 2\xi \beta'_n) k \,+\,
          \gamma_n + \xi(\beta'_n +\gamma'_n)
   \bigr] \, C(n-1,k)
           \nonumber \\[2mm]
   & &
   \quad +\;
   (\beta'_n k + \gamma'_n) \, C(n-1,k-1)
           \nonumber \\[2mm]
   & &
   \quad +\;
   \xi \, (\beta_n + \xi\beta'_n) \, (k+1) \, C(n-1,k+1)
 \label{eq.cor.case4bis}
\end{eqnarray}
for $n \ge 1$, with initial condition $C(0,k) = \delta_{k0}$.
\end{corollary}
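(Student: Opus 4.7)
The plan is to prove the corollary by a direct matrix-element computation, exploiting the simple algebraic identity satisfied by the binomial matrix. I would start from the definition $C(n,k) = \sum_j A(n,j) \, B_\xi(j,k)$ and insert the recurrence \reff{eq.cor.case4bis.0} for $A(n,j)$ on the right-hand side. This produces four sums, each of the shape $\sum_j [\text{linear in } j] \cdot A(n-1,j-\epsilon) \, B_\xi(j,k)$ for $\epsilon\in\{0,1\}$. The whole proof then reduces to rewriting these sums in terms of $C(n-1,\cdot\,)$.

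The key identity is the $r=1$ case of Lemma~\ref{lemma.binomial}(b) applied in reverse to $B_\xi$: since $j \binom{j}{k} = k \binom{j}{k} + (k+1) \binom{j}{k+1}$, one gets
\be
 j \, B_\xi(j,k) \;=\; k \, B_\xi(j,k) \:+\: \xi(k+1) \, B_\xi(j,k+1),
\ee
which is precisely what is displayed as \reff{eq.case4.jB} in the text immediately preceding the corollary. This identity is what converts the troublesome $j$-prefactors (which would otherwise leave the recurrence non-local in $j$) into a local linear combination in the output variable $k$. The crucial point is that the infinite regress discussed in the main text for Case~4 is avoided precisely because we have specialized to $\widehat\alpha=0$ (i.e.\ pure binomial $B_\xi$, not the more general GKP matrix); this is really the only place where the hypothesis bites.

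Once the identity is in hand, I would process the four sums in order. The two sums with $A(n-1,j)$ are handled immediately: the $\gamma_n$-sum gives $\gamma_n C(n-1,k)$, and the $\beta_n j$-sum gives $\beta_n k \, C(n-1,k) + \beta_n \xi(k+1) \, C(n-1,k+1)$. The two sums involving $A(n-1,j-1)$ require a reindexing $j \to j+1$, followed by an application of the Pascal-type recurrence $B_\xi(j+1,k) = \xi B_\xi(j,k) + B_\xi(j,k-1)$ to reduce $B_\xi(j+1,\cdot\,)$ to $B_\xi(j,\cdot\,)$; then a second application of the key identity is needed on the $j' = j+1-1$ factor to eliminate the remaining $j'$. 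This gives a sum of $C(n-1,k-1)$, $C(n-1,k)$, and $C(n-1,k+1)$ contributions.

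The main obstacle is purely bookkeeping: it is the $\beta'_n$ term, which after the shift $j\to j+1$ produces a factor $(j'+1)B_\xi(j'+1,k)$, and hence after expansion contributes to {\em all three}\/ displaced entries $C(n-1,k\pm 1)$ and $C(n-1,k)$. Collecting the coefficients of $C(n-1,k)$ from all four sums must reproduce the announced $(\beta_n + 2\xi\beta'_n)k + \gamma_n + \xi(\beta'_n+\gamma'_n)$, and in particular the non-obvious coefficient $2\xi\beta'_n$ of $k$ arises only after adding the contribution $\beta'_n\xi(k+1)$ coming from the key identity to the contribution $\beta'_n\xi k$ coming from the Pascal recurrence. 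Once this addition is carried out correctly, the $C(n-1,k-1)$ coefficient collapses to $\beta'_n k + \gamma'_n$ and the $C(n-1,k+1)$ coefficient to $\xi(\beta_n + \xi\beta'_n)(k+1)$, matching \reff{eq.cor.case4bis}. The initial condition $C(0,k)=\delta_{k0}$ is immediate from $A(0,j) = \delta_{j0}$ and $B_\xi(0,k) = \delta_{k0}$.
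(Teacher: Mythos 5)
Your proposal is correct and follows essentially the same route as the paper: the paper reaches \reff{eq1.case4} by specializing the general product formula of Proposition~\ref{prop.app.1} (which packages your ``insert the $A$-recurrence, reindex, apply the Pascal recurrence for $B_\xi$'' steps) and then eliminates the residual $j$-dependence with exactly your key identity \reff{eq.case4.jB}, i.e.\ Lemma~\ref{lemma.binomial}(b) applied in reverse. Your coefficient bookkeeping (in particular the origin of the $2\xi\beta'_n k$ term as $\xi(k+1)+\xi k$ from the two Pascal branches) checks out against \reff{eq.cor.case4bis}.
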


Specializing Corollary~\ref{cor.case4bis} to the GKP case
\be
   \beta_n \,=\, \beta \,,\quad
   \gamma_n \,=\, \alpha n + \gamma \,,\quad
   \beta'_n \,=\, \beta' \,,\quad
   \gamma'_n \,=\, \alpha' n + \gamma' \,,
 \label{eq.betagamma.GKP}
\ee
we get:

\begin{corollary}
   \label{cor6.prop.spivey.corollary5}
Let $\bA = \bigl( A(n,k) \bigr)_{0 \le k \le n}$
be a triangular array defined by the GKP recurrence
\be
  A(n,k)
  \;=\;
  (\alpha n + \beta k + \gamma)    \, A(n-1,k)
  \:+\:
  (\alpha' n + \beta' k + \gamma') \, A(n-1,k-1)
\ee
for $n \ge 1$, with initial condition $A(0,k) = \delta_{k0}$.
Then the matrix product $\bC = \bA \, B_\xi$ satisfies the recurrence
\begin{eqnarray}
   C(n,k)
   & = &
   \bigl[ (\alpha + \xi \alpha') n \,+\,
          (\beta + 2\xi \beta') k  \,+\,
          \gamma + \xi(\beta' +\gamma')
   \bigr] \, C(n-1,k)
           \nonumber \\[2mm]
   & & 
   \quad +\;
   (\alpha' n + \beta' k + \gamma') \, C(n-1,k-1)
           \nonumber \\[2mm]
   & & 
   \quad +\;
   \xi \, (\beta + \xi\beta') \, (k+1) \, C(n-1,k+1)
 \label{eq.cor6.prop.spivey.corollary5}
\end{eqnarray}
for $n \ge 1$, with initial condition $C(0,k) = \delta_{k0}$.
\end{corollary}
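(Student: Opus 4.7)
The plan is to derive Corollary~\ref{cor6.prop.spivey.corollary5} as an immediate specialization of the already-established Corollary~\ref{cor.case4bis}. First I would verify that the GKP recurrence for $\bA$ with parameters $(\alpha,\beta,\gamma,\alpha',\beta',\gamma')$ is precisely the recurrence \reff{eq.cor.case4bis.0} under the identifications \reff{eq.betagamma.GKP}, namely $\beta_n = \beta$, $\gamma_n = \alpha n + \gamma$, $\beta'_n = \beta'$, and $\gamma'_n = \alpha' n + \gamma'$. This is a matter of inspection, since the GKP coefficients $\alpha n + \beta k + \gamma$ and $\alpha' n + \beta' k + \gamma'$ have the form $\beta_n k + \gamma_n$ and $\beta'_n k + \gamma'_n$ with the $n$-dependence absorbed into $\gamma_n$ and $\gamma'_n$.

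Next, I would substitute these four choices into the three coefficients appearing in the recurrence \reff{eq.cor.case4bis}. For the coefficient of $C(n-1,k)$, the computation
\[
 (\beta_n + 2\xi \beta'_n) k + \gamma_n + \xi(\beta'_n + \gamma'_n)
 \;=\; (\beta + 2\xi\beta')k + (\alpha n + \gamma) + \xi(\beta' + \alpha' n + \gamma')
\]
regroups to give exactly $(\alpha+\xi\alpha')n + (\beta+2\xi\beta')k + \gamma + \xi(\beta'+\gamma')$. For the coefficient of $C(n-1,k-1)$, the substitution gives $\beta' k + \alpha' n + \gamma'$, which is just $\alpha' n + \beta' k + \gamma'$. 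For the coefficient of $C(n-1,k+1)$, we obtain $\xi(\beta + \xi\beta')(k+1)$ directly. These match the three claimed expressions in \reff{eq.cor6.prop.spivey.corollary5}.

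There is no genuine obstacle here, since all of the combinatorial and algebraic work has already been done in the proof of Corollary~\ref{cor.case4bis} (which in turn relied on Proposition~\ref{prop.app.1} and the reverse form of Lemma~\ref{lemma.binomial}(b) applied to $\bB = B_\xi$). The only point that merits a brief remark is that the initial condition $C(0,k) = \delta_{k0}$ transfers correctly because $\bA$ and $B_\xi$ are both unit lower-triangular, so their product is unit lower-triangular as well. Thus the entire proof of Corollary~\ref{cor6.prop.spivey.corollary5} reduces to writing out the specialization, and can be stated in a single line citing Corollary~\ref{cor.case4bis}.
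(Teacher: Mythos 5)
Your proposal is correct and is exactly the paper's own derivation: the paper obtains Corollary~\ref{cor6.prop.spivey.corollary5} by specializing Corollary~\ref{cor.case4bis} via the substitutions \reff{eq.betagamma.GKP}, and your verification of the three resulting coefficients matches the stated recurrence. Nothing further is needed.
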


\noindent
The $\xi = 1$ special case of this result was obtained by
Spivey \cite[Theorem~10]{Spivey_11}.
In the special case $\xi = 1$ and $\alpha' = \beta' = 0$,
a combinatorial proof was given by
Mansour and Shattuck \cite[Theorem~2.3]{Mansour_13}.

Let us observe that the recurrence \reff{eq.cor6.prop.spivey.corollary5}
is a special case of
the Graham--Knuth--Patashnik--Zhu recurrence \reff{eq.GKPZ}.
It reduces to a GKP recurrence in two cases:
the trivial case $\xi = 0$,
and the case $\xi = -\beta/\beta'$
corresponding to the Zhu involution \reff{def_transinv}.
This suggests (but does not prove)
that the matrix product $\bC = \bT(\bmu) \, B_\xi$
satisfies a GKP recurrence for {\em generic}\/ parameters $\bmu$
{\em only if}\/ $\xi = 0$ or $\xi = -\beta/\beta'$.
We have verified, by a brute-force computation using $n=0,1,2,3$,
that this is indeed the case:
the only solutions to the equations $\bT(\bmu') = \bT(\bmu) \, B_\xi$
valid for generic parameters $\bmu$
are the identity map $\xi = 0$ and the Zhu involution $\xi = -\beta/\beta'$.
Of course, there are additional solutions valid on subvarieties
in $\bmu$-space; the goal of Problem~\ref{prob.binomial2} is to find them all.

The observation that \reff{eq.cor6.prop.spivey.corollary5}
is a special case of the GKPZ recurrence suggests that we can go farther,
and generalize Corollary~\ref{cor6.prop.spivey.corollary5}
by {\em starting}\/ from a matrix $\bA$ satisfying the GKPZ recurrence
--- or even more strongly, generalize Corollary~\ref{cor.case4bis}
by starting from a matrix $\bA$ that satisfies
an amalgamation of \reff{eq.cor.case4bis.0} and the GKPZ recurrence.
This idea leads to the following:

\begin{corollary}
 \label{cor.case4.GKPZ}
Let $\bA = \bigl( A(n,k) \bigr)_{0 \le k \le n}$
be a triangular array defined by the recurrence
\begin{eqnarray}
 A(n,k)
 & = &
 (\beta_n k + \gamma_n) \, A(n-1,k)
 \:+\:
 (\beta'_n k +\gamma'_n) \, A(n-1,k-1)
 \qquad
 \nonumber \\[1mm]
 & &
   \hspace*{-1cm}
 \;+\;
 \sigma_n \, (n-k+1) \, A(n-1,k-2)
 \:+\:
 \tau_n \, (k+1) \, A(n-1,k+1)
   \qquad
 \label{eq.case4.GKPZ}
\end{eqnarray}
for $n \ge 1$, with initial condition $A(0,k) = \delta_{k0}$.
Then the matrix product $\bC = \bA \, B_\xi$ satisfies the recurrence
\begin{eqnarray}
 C(n,k)
 & = &
 \bigl[ (\beta_n + 2\xi \beta'_n - 3\xi^2 \sigma_n) k \,+\,
 \gamma_n + \xi(\beta'_n +\gamma'_n) + \xi^2 \sigma_n (n-1)
 \bigr] \, C(n-1,k)
 \nonumber \\[2mm]
 & &
 \;+\;
 \bigl[ (\beta'_n - 3 \xi \sigma_n) k \,+\,
 \gamma'_n + \xi\sigma_n(2n+1) \bigr] \, C(n-1,k-1)
 \nonumber \\[2mm]
 & &
 \;+\;
 \sigma_n \, (n-k+1) \, C(n-1,k-2)
 \nonumber \\[2mm]
 & &
 \;+\;
 (\tau_n + \xi\beta_n + \xi^2 \beta'_n - \xi^3\sigma_n) \, (k+1) \,
 C(n-1,k+1)
 \label{eq.cor.case4.GKPZ}
\end{eqnarray}
for $n \ge 1$, with initial condition $C(0,k) = \delta_{k0}$.
\end{corollary}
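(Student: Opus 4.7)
The plan is to follow the same strategy that underlies Corollary~\ref{cor.case4bis}: expand
\[
C(n,k) \;=\; \sum_{j} A(n,j)\, B_\xi(j,k)
\]
by substituting the recurrence \eqref{eq.case4.GKPZ} for $A$, and then convert each occurrence of the summation variable $j$ (including all the index-shifted arguments $A(n-1,j-1),\, A(n-1,j-2),\, A(n-1,j+1)$) into a $k$-dependent expression by means of identities for the $\xi$-binomial matrix $B_\xi(j,k) = \binom{j}{k}\xi^{j-k}$. Two identities will do all the work. The basic recurrence $B_\xi(j+1,k) = \xi B_\xi(j,k) + B_\xi(j,k-1)$, iterated once, gives
\[
B_\xi(j+2,k) \;=\; \xi^{2}\, B_\xi(j,k) \:+\: 2\xi\, B_\xi(j,k-1) \:+\: B_\xi(j,k-2).
\]
Specializing Lemma~\ref{lemma.binomial}(b) (with $r=1$ and $\widehat{\gamma}'=1$) produces the pair
\[
j\,B_\xi(j,k) \,=\, k\,B_\xi(j,k) + \xi(k+1)\,B_\xi(j,k+1), \qquad
j\,B_\xi(j-1,k) \,=\, (k+1)\,B_\xi(j,k+1).
\]

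First I would dispatch the four ``GKP-type'' terms of \eqref{eq.case4.GKPZ} --- those carrying $\gamma_n,\gamma'_n,\beta_n,\beta'_n$ --- exactly as in the proof of Corollary~\ref{cor.case4bis}: the basic recurrence handles the $\gamma_n$ and $\gamma'_n$ terms, while the first displayed identity absorbs the factor $j$ in the $\beta_n$ and $\beta'_n$ terms (after re-indexing $A(n-1,j-1) \to A(n-1,i)$ with $i = j-1$). This step alone reproduces the $\sigma_n$- and $\tau_n$-free part of \eqref{eq.cor.case4.GKPZ}. For the $\tau_n$ term the substitution $i = j+1$ rewrites
\[
\tau_n\sum_j (j+1)\,A(n-1,j+1)\,B_\xi(j,k)
\;=\;
\tau_n\sum_i i\,A(n-1,i)\,B_\xi(i-1,k),
\]
and the second displayed identity collapses this at once to $\tau_n(k+1)\,C(n-1,k+1)$. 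For the $\sigma_n$ term, the substitution $i = j-2$ gives $\sigma_n\sum_i (n-i-1)A(n-1,i)B_\xi(i+2,k)$; I would then expand $B_\xi(i+2,k)$ via the first identity above, split $(n-i-1) = (n-1) - i$, and process each of the resulting sums $\sum_i i\,A(n-1,i)\,B_\xi(i,\ell)$ for $\ell = k,\,k-1,\,k-2$ using the first displayed identity. This produces contributions of the $\sigma_n$ branch to \emph{all four} cells $C(n-1,k-2),\,C(n-1,k-1),\,C(n-1,k),\,C(n-1,k+1)$.

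The main obstacle is purely bookkeeping. The $\sigma_n$ branch, through the interplay of the $(n-i-1)$ factor with the three-term expansion of $B_\xi(i+2,k)$ and the subsequent reduction of $i\,B_\xi(i,\ell)$ at three different values of $\ell$, generates six numerical contributions that must be recombined into the four shapes $\sigma_n(n-k+1)$, $\sigma_n\xi(2n-3k+1)$, $\sigma_n\xi^{2}(n-1-3k)$, and $-\sigma_n\xi^{3}(k+1)$ appearing in \eqref{eq.cor.case4.GKPZ}. These then add cleanly to the $\beta_n,\beta'_n,\tau_n$ contributions to produce the stated coefficients. There is no conceptual difficulty --- off-by-one errors in the shifts $k\pm 1,\,k-2$ are the only real risk --- and checking the $\sigma_n = \tau_n = 0$ specialization against Corollary~\ref{cor.case4bis}, as well as the $\beta_n = \beta'_n = 0$ (GKPZ-proper) specialization, provides useful sanity checks.
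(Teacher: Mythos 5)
Your proposal is correct and follows essentially the same route as the paper's proof: expand $C(n,k)=\sum_j A(n,j)\,B_\xi(j,k)$ by substituting the recurrence for $A$, then eliminate the $j$-dependence using $\xi$-binomial identities (the absorption identity for the $\tau_n$ term, and the expansion of $B_\xi$ across two index steps for the $\sigma_n$ term). Your two-step handling of the $\sigma_n$ branch --- shifting the index, expanding $B_\xi(i+2,k)$, then absorbing the factor $i$ at the three shifted columns --- is just a reorganization of the paper's combined identities for $B_\xi(j,k)$ and $j\,B_\xi(j,k)$ in terms of $B_\xi(j-2,\cdot)$, and it reproduces exactly the stated coefficients.
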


\begin{proof}
The matrix $\bC = \bA\, B_\xi$ satisfies
\begin{eqnarray}
C(n,k)
   & = &
 \sum\limits_{j=0}^n (\beta_n j + \gamma_n) \, A(n-1,j) \, B_\xi(j,k)
      \;+\;
   \sum\limits_{j=0}^n (\beta'_n j + \gamma'_n)  \, A(n-1,j-1) \, B_\xi(j,k)
        \nonumber \\
 & &
   \hspace*{-2cm}
   \;+\; \sigma_n \, \sum\limits_{j=0}^n (n-j+1) \, A(n-1,j-2) \, B_\xi(j,k)
 \;+\; \tau_n \, \sum\limits_{j=0}^n (j+1) \, A(n-1,j+1) \, B_\xi(j,k)
        \nonumber \\[-3mm]
 \label{eq1.coroA13}
\end{eqnarray}
for $n \ge 1$.
The first term on the right-hand side of \eqref{eq1.coroA13}
can be handled by using the identity
\be
   j \, B_\xi(j,k)
   \;=\;
   k B_\xi(j,k)  \:+\: \xi\, (k+1) \, B_\xi(j,k+1)
   \;.
\ee
This yields
\be
   (\beta_n k + \gamma_n) \, C(n-1,k) \;+\;
 \xi \beta_n \, (k+1) \, C(n-1,k+1) \,.
\label{line1.coroA13}
\ee
The second term on the right-hand side of \eqref{eq1.coroA13}
can be dealt with by using the identities
\begin{subeqnarray}
   B_\xi(j,k)
   & = &
   B_\xi(j-1,k-1)  \:+\: \xi \, B_\xi(j-1,k)
      \\[2mm]
   j \, B_\xi(j,k)
   & = &
   k \, B_\xi(j-1,k-1)  \:+\: (2k+1) \xi \, B_\xi(j-1,k)
                        \:+\: (k+1) \xi^2 B_\xi(j-1,k+1)
        \nonumber \\
\end{subeqnarray}
This yields
\begin{multline}
\xi \, (2 \beta'_n k + \beta'_n + \gamma'_n) \, C(n-1,k) \;+\;
 (\beta'_n k +\gamma'_n) \, C(n-1,k-1) \\ \qquad
  \;+\; \xi^2 \, (k+1) \beta'_n \, C(n-1,k+1) \,.
\label{line2.coroA13}
\end{multline}
In the third term on the right-hand side of \eqref{eq1.coroA13},
we use the identities
\begin{subeqnarray}
   B_\xi(j,k)
   & = &
   B_\xi(j-2,k-2)  \:+\: 2\xi \, B_\xi(j-2,k-1)  \:+\: \xi^2 \, B_\xi(j-2,k)
      \\[2mm]
   j \, B_\xi(j,k)
   & = &
   k \, B_\xi(j-2,k-2)  \:+\: (3k+1) \xi \, B_\xi(j-2,k-1)
         \nonumber \\
   & & \quad
                        \:+\: (3k+2) \xi^2 B_\xi(j-2,k)
                        \:+\: (k+1) \xi^3 B_\xi(j-2,k+1)
     \qquad
\end{subeqnarray}
This yields
\begin{multline}
\sigma_n \, \bigl[ \xi^2 \, (n-3k-1)\, C(n-1,k) \;+\;
 \xi \, (2n-3k+1)\, C(n-1,k-1)
 \\ \qquad \;+\;
 (n-k+1) \, C(n-1,k-2) - \xi^3\, (k+1) \, C(n-1,k+1) \bigr] \,.
\label{line3.coroA13}
\end{multline}
And finally, the fourth term on the right-hand side of \eqref{eq1.coroA13}
is handled by using the absorption/extraction identity
$(j+1) \, B_\xi(j,k) = (k+1) \, B_\xi(j+1,k+1)$, yielding
\be
\tau_n \, (k+1) \, C(n-1,k+1) \,.
\label{line4.coroA13}
\ee
Putting together the partial results
\eqref{line1.coroA13}/\eqref{line2.coroA13}/%
\eqref{line3.coroA13}/\eqref{line4.coroA13},
we arrive at the claimed result \eqref{eq.cor.case4.GKPZ}.
\end{proof}

Specializing Corollary~\ref{cor.case4.GKPZ} to the GKPZ case
\be
   \beta_n \,=\, \beta \,,\quad
   \gamma_n \,=\, \alpha n + \gamma \,,\quad
   \beta'_n \,=\, \beta' \,,\quad
   \gamma'_n \,=\, \alpha' n + \gamma' \,,\quad
   \sigma_n \,=\, \sigma \,,\quad
   \tau_n \,=\, \tau \,,
 \label{eq.betagamma.GKPZ}
\ee
we get:

\begin{corollary}
 \label{cor.case4.GKPZ2}
Let $\bA = \bigl( A(n,k) \bigr)_{0 \le k \le n}$
be a triangular array defined by the GKPZ recurrence
\begin{eqnarray}
  A(n,k)
  & = &
  (\alpha n + \beta k + \gamma)    \, A(n-1,k)
  \:+\:
  (\alpha' n + \beta' k + \gamma') \, A(n-1,k-1)
  \qquad
         \nonumber \\[1mm]
  & & \;\;
  \;+\;
  \sigma \, (n-k+1) \, A(n-1,k-2)
  \:+\:
  \tau \, (k+1) \, A(n-1,k+1)
 \label{eq.case4.GKPZ2}
\end{eqnarray}
for $n \ge 1$, with initial condition $A(0,k) = \delta_{k0}$.
Then the matrix product $\bC = \bA \, B_\xi$ satisfies the GKPZ recurrence
\begin{eqnarray}
 C(n,k)
 & = &
 \bigl[ (\alpha + \xi^2 \sigma) n
           \,+\, (\beta + 2\xi \beta' - 3\xi^2 \sigma) k \,+\,
 \gamma + \xi(\beta' +\gamma') - \xi^2 \sigma
 \bigr] \, C(n-1,k)
 \nonumber \\[2mm]
 & &
 \;+\;
 \bigl[ (\alpha' + 2\xi \sigma) n
           \,+\, (\beta' - 3 \xi \sigma) k \,+\,
 \gamma' + \xi\sigma \bigr] \, C(n-1,k-1)
 \nonumber \\[2mm]
 & &
 \;+\;
 \sigma \, (n-k+1) \, C(n-1,k-2)
 \nonumber \\[2mm]
 & &
 \;+\;
 (\tau_n + \xi\beta + \xi^2 \beta' - \xi^3\sigma) \, (k+1) \,
 C(n-1,k+1)
 \label{eq.cor.case4.GKPZ2}
\end{eqnarray}
for $n \ge 1$, with initial condition $C(0,k) = \delta_{k0}$.
\end{corollary}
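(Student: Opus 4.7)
The plan is to obtain Corollary~\ref{cor.case4.GKPZ2} as an immediate specialization of the preceding Corollary~\ref{cor.case4.GKPZ}. The assumed GKPZ recurrence \eqref{eq.case4.GKPZ2} for $\bA$ is exactly the more general recurrence \eqref{eq.case4.GKPZ} under the substitutions \eqref{eq.betagamma.GKPZ}, namely $\beta_n = \beta$, $\gamma_n = \alpha n + \gamma$, $\beta'_n = \beta'$, $\gamma'_n = \alpha' n + \gamma'$, $\sigma_n = \sigma$, $\tau_n = \tau$. So the proof consists simply of inserting these six specializations into the four coefficient functions on the right-hand side of the conclusion \eqref{eq.cor.case4.GKPZ} and regrouping terms.

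The key structural observation is that under these specializations, the output coefficients remain jointly affine in $n$ and $k$ (modulo the prescribed factors $(n-k+1)$ and $(k+1)$ in the $C(n-1,k-2)$ and $C(n-1,k+1)$ terms), which is precisely the shape required of a GKPZ recurrence. This is because $\beta_n, \beta'_n, \sigma_n, \tau_n$ become constants independent of $n$, while the only other sources of $n$-dependence in \eqref{eq.cor.case4.GKPZ} are the affine expressions $\gamma_n$, $\gamma'_n$, $\xi^2\sigma_n(n-1)$, and $\xi\sigma_n(2n+1)$, all of which remain affine in $n$ after the substitution.

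Reading off the new GKPZ parameters is then immediate. The $C(n-1,k-2)$ coefficient is simply $\sigma(n-k+1)$, giving $\widetilde{\sigma} = \sigma$, and the $C(n-1,k+1)$ coefficient becomes $(\tau + \xi\beta + \xi^2\beta' - \xi^3\sigma)(k+1)$, giving $\widetilde{\tau} = \tau + \xi\beta + \xi^2\beta' - \xi^3\sigma$. For the $C(n-1,k)$ and $C(n-1,k-1)$ coefficients one separately collects the terms linear in $n$ (from $\gamma_n$, $\xi\gamma'_n$, and the $\xi^2\sigma_n(n-1)$ and $\xi\sigma_n(2n+1)$ contributions) and the terms linear in $k$, then identifies the constant, $n$-linear, and $k$-linear parts with $\widetilde{\gamma}, \widetilde{\alpha}, \widetilde{\beta}$ and $\widetilde{\gamma}', \widetilde{\alpha}', \widetilde{\beta}'$ respectively.

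Since Corollary~\ref{cor.case4.GKPZ} has already been established and is invoked here as a black box, there is no substantive obstacle: the entire argument is a few lines of algebraic bookkeeping. The only care required is to correctly account for every source of $n$-linearity when regrouping the $C(n-1,k)$ coefficient, where multiple affine-in-$n$ terms collide.
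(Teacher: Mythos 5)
Your proposal is exactly the paper's own derivation: the paper offers no separate proof of this corollary, stating only that it follows by specializing Corollary~\ref{cor.case4.GKPZ} via the substitutions \reff{eq.betagamma.GKPZ}, which is precisely your plan, and your structural remark that all coefficients stay affine in $n$ and $k$ after the substitution is the whole content of the step. One caution: carrying out your bookkeeping literally yields the $n$-coefficient of the $C(n-1,k)$ term as $\alpha + \xi\alpha' + \xi^2\sigma$ (coming from $\gamma_n + \xi\gamma'_n = (\alpha+\xi\alpha')n + \gamma + \xi\gamma'$, and consistent with the $\sigma=0$ case in Corollary~\ref{cor6.prop.spivey.corollary5}, where it is $\alpha+\xi\alpha'$), not the printed $\alpha+\xi^2\sigma$; this, like the stray $\tau_n$ in the last line of \reff{eq.cor.case4.GKPZ2}, appears to be a typographical slip in the stated corollary rather than a defect in your argument.
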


\section{Inverse pairs of lower-triangular arrays}   \label{app.inverse}

Let $A = (a_{nk})_{n \ge k \ge 0}$ be a lower-triangular array
with entries in a commutative ring~$R$,
and define as usual the row-generating polynomials
\be
   A_n(x)  \;=\;  \sum_{k=0}^n a_{nk} \, x^k  \;\in\; R[x]
\ee
and the reversed row-generating polynomials
\be
   \overline{A}_n(x)
   \;=\;
   x^n \, A_n(1/x)
   \;=\;
   \sum_{k=0}^n a_{nk} \, x^{n-k}
   \;\in\; R[x]
   \;.
\ee

\begin{lemma}[Inverse pairs of lower-triangular arrays]
   \label{lemma.inverse}
Let $A = (a_{nk})_{n \ge k \ge 0}$
and $B = (b_{nk})_{n \ge k \ge 0} \vphantom{\displaystyle\sum\limits}$
be lower-triangular arrays
with entries in a commutative ring $R$,
and let $A_n(x)$, $\overline{A}_n(x)$ and $B_n(x)$, $\overline{B}_n(x)$
be their row-generating polynomials.
Let $\alpha \in R$.
Then the following are equivalent:
\begin{itemize}
   \item[(a)]
      $\displaystyle A_n(x)
       \;=\;
       (1 + \alpha x)^n \, B_n \Bigl( \frac{x}{1 + \alpha x} \Bigr) \;.$
   \item[(b)]
      $\displaystyle B_n(x)
       \;=\;
       (1 - \alpha x)^n \, A_n \Bigl( \frac{x}{1 - \alpha x} \Bigr) \;.$
   \item[(c)]
      $\displaystyle \overline{A}_n(x)  \;=\; \overline{B}_n(x + \alpha) \;.$
   \item[(d)]
      $\displaystyle \overline{B}_n(x)  \;=\; \overline{A}_n(x - \alpha) \;.$
   \item[(e)]
      $\displaystyle a_{nk}
       \;=\;
       \sum_{j=0}^k \alpha^{k-j} \, \binom{n-j}{k-j} \, b_{nj}  \;.$
   \item[(f)]
      $\displaystyle b_{nk}
       \;=\;
       \sum_{j=0}^k (-\alpha)^{k-j} \, \binom{n-j}{k-j} \, a_{nj}  \;.$
   \item[(g)]
      $\displaystyle a_{n,n-k}
       \;=\;
       \sum_{j=0}^k b_{n,n-j} \, \binom{j}{k} \, \alpha^{j-k}  \;.$
   \item[(h)]
      $\displaystyle b_{n,n-k}
       \;=\;
       \sum_{j=0}^k a_{n,n-j} \, \binom{j}{k} \, (-\alpha)^{j-k}  \;.$
\end{itemize}
\end{lemma}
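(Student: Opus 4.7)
The plan is to exploit the obvious involutive symmetry of the eight conditions under the simultaneous swap $(A,\alpha) \leftrightarrow (B,-\alpha)$, which pairs up (a)$\leftrightarrow$(b), (c)$\leftrightarrow$(d), (e)$\leftrightarrow$(f), and (g)$\leftrightarrow$(h). It therefore suffices to prove two things: first, the mutual equivalence of (a), (c), (e), (g); and second, the equivalence (a) $\Leftrightarrow$ (b). Every remaining implication is then obtained by applying the symmetry to a previously established one.

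For the first task, I would establish the chain (a) $\Leftrightarrow$ (e) $\Leftrightarrow$ (c) $\Leftrightarrow$ (g) by direct expansion. To see (a) $\Leftrightarrow$ (e), rewrite
\[
   (1+\alpha x)^n \, B_n\!\Bigl(\frac{x}{1+\alpha x}\Bigr)
   \;=\;
   \sum_{j=0}^n b_{nj} \, x^j \, (1+\alpha x)^{n-j}
\]
and apply the binomial theorem to each factor $(1+\alpha x)^{n-j}$; extracting the coefficient of $x^k$ gives precisely the right-hand side of (e). The equivalence (c) $\Leftrightarrow$ (e) follows similarly by expanding $\overline{B}_n(x+\alpha) = \sum_j b_{nj}(x+\alpha)^{n-j}$ by the binomial theorem and equating the coefficient of $x^{n-k}$ with $a_{nk}$, using the symmetry $\binom{n-j}{n-k} = \binom{n-j}{k-j}$. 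Finally, (e) $\Leftrightarrow$ (g) is the purely cosmetic reindexing $j \mapsto n-j$, $k \mapsto n-k$, once more invoking binomial-coefficient symmetry.

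For the second task, the key observation is that the two M\"obius transformations $x \mapsto x/(1+\alpha x)$ and $x \mapsto x/(1-\alpha x)$ are mutual compositional inverses, and satisfy the identity $1 + \alpha \cdot x/(1-\alpha x) = 1/(1-\alpha x)$. Substituting $x \mapsto x/(1-\alpha x)$ on both sides of (a) and simplifying with these two identities yields (b); the converse is the identical argument with $\alpha$ negated.

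All of the computations involved are elementary, so there is no serious obstacle. The only point requiring care is bookkeeping of the summation ranges: the sums in (e)--(h) should be understood as extending over the natural index range in which the binomial coefficients are nonzero, the triangular support of $A$ and $B$ together with the vanishing of $\binom{m}{r}$ for $r \notin \{0,\ldots,m\}$ ensuring that all stated summations agree with the unrestricted ones.
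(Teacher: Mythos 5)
Your proposal is correct and follows essentially the same route as the paper, whose entire proof reads ``The equivalence of (a)--(d) is an easy manipulation of generating functions; extracting the coefficient of $x^k$ yields (e)--(h)''; your symmetry $(A,\alpha)\leftrightarrow(B,-\alpha)$ and the hub (a)$\Leftrightarrow$(e)$\Leftrightarrow$(c)$\Leftrightarrow$(g) plus the M\"obius-inverse substitution for (a)$\Leftrightarrow$(b) are just a careful write-up of that. One caveat on your closing remark about summation ranges: reindexing (e) by $j\mapsto n-j$, $k\mapsto n-k$ produces a sum over $k\le j\le n$, so in (g) and (h) as printed the upper limit $k$ should really be $n$ (as written, only the $j=k$ term survives); this is a typo in the statement rather than a gap in your argument, but your assertion that the printed ranges coincide with the unrestricted ones does not hold for (g)/(h).
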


\begin{proof}
The equivalence of (a)--(d) is an easy manipulation of generating functions;
extracting the coefficient of $x^k$ yields (e)--(h).
\end{proof}

Pairs $(A,B)$ satisfying the conditions of Lemma~\ref{lemma.inverse}
arise frequently in combinatorial applications,
most often with $\alpha = 1$.

Let us remark that
$\displaystyle (1 + \alpha x)^n \, B_n \Bigl( \frac{x}{1 + \alpha x} \Bigr)$
is here simply an abbreviation for
\linebreak
${\sum\limits_{k=0}^n b_{nk} \, x^k \, (1 + \alpha x)^{n-k}}$.
So there is no need to work in a field of rational functions;
everything can be done in the ring $R[x]$ of polynomials,
and moreover the ring $R$ is not required to be an integral domain.

Let us also observe that, in Lemma~\ref{lemma.inverse},
the labels $n$ ``go for the ride'':
the statements for different $n$ are completely unrelated.
So we can just rename $A_n,B_n$ as $f,g$.
Furthermore, in statements (a,b,e,f), $f$ and $g$ need not be polynomials;
they can be general formal power series.
And the power $n$ in $(1 \pm \alpha x)^n$ need not be a positive integer
(provided that the ring $R$ contains the rationals);
it can be an indeterminate, call it $p$.
So what (a)$\iff$(b) really asserts is the easily verified equivalence
\be
   f(x) \:=\: (1 + \alpha x)^p \, g \Bigl( \frac{x}{1 + \alpha x} \Bigr)
   \quad\Longleftrightarrow\quad
   g(x) \:=\: (1 - \alpha x)^p \, f \Bigl( \frac{x}{1 - \alpha x} \Bigr)
   \;,
\ee
in which the coefficient of each power of $x$
is a polynomial (with coefficients in $R \supseteq \Q$)
in the indeterminates $p$ and $\alpha$.
Similarly, (e)$\iff$(f) is the binomial identity
\be
   \sum_{j=0}^k \alpha^{k-j} \, \binom{p-j}{k-j} \,
                (-\alpha)^{j-\ell} \, \binom{p-\ell}{j-\ell}
   \;=\;
   \delta_{k\ell}
   \;,
 \label{lemma.inverse.identity}
\ee
in which both sides are polynomials (with coefficients in $\Q$)
in the indeterminates $p$ and $\alpha$.
The identity \reff{lemma.inverse.identity} can be found in Riordan
\cite[p.~49, Table~2.1, item~3]{Riordan_68}.\footnote{
   However, his $\binom{p-k}{p-n}$ should be rewritten as
   $\binom{p-k}{n-k}$ in order to bring out more clearly that
   $p$ can be an indeterminate.
}
It can also be proven without using generating functions:
First rewrite the Chu--Vandermonde identity
$\displaystyle{ \sum\limits_{i=0}^m \binom{-(x+1)}{i} \binom{x+y+1}{m-i}
                = \binom{y}{m}}$
as
$\displaystyle{ \sum\limits_{i=0}^m (-1)^i \binom{x+i}{i} \binom{x+y+1}{m-i}                    = \binom{y}{m}}$.
Specializing to $y = m-1$ we have
$\displaystyle{ \sum\limits_{i=0}^m (-1)^i \binom{x+i}{i} \binom{x+m}{m-i}}$
                $\displaystyle{= \binom{m-1}{m} = \delta_{m0}}$.\footnote{
   We have not been able to find either of these two latter identities
   in any of the standard tables of binomial identities,
   such as Gould's \cite[Table~3]{Gould_72}.
   But an essentially equivalent identity can be found in
   Riordan \cite[p.~8, eq.~(5)]{Riordan_68},
   and all such identities can in any case be considered as
   simple rephrasings and/or specializations of Chu--Vandermonde.
}
Substitute $m = k-\ell$ and $x = p-k$,
and then rewrite the summation in terms of $j = k-i$:
this yields \reff{lemma.inverse.identity}.

%
%

\end{document}